\title{
Localization theory for triangulated categories} 
\author[Henning Krause]{Henning Krause}
\address{Henning Krause\\ Institut f\"ur Mathematik\\
Universit\"at Paderborn\\ 33095 Paderborn\\ Germany.}
\email{hkrause@math.upb.de}
\theoremstyle{plain}
\newtheorem{lem}{Lemma}[subsection]
\newtheorem{Lem}{Lemma}[section] 
\newtheorem{prop}[lem]{Proposition}
\newtheorem{cor}[lem]{Corollary}
\newtheorem{thm}[lem]{Theorem}
\newtheorem*{Thm}{Theorem}
\theoremstyle{remark}
\theoremstyle{definition}
\newtheorem{rem}[lem]{Remark}
\newtheorem{exm}[lem]{Example}
\numberwithin{equation}{subsection}
\newcommand{\smatrix}[1]{\left[\begin{smallmatrix}#1\end{smallmatrix}\right]}
\renewcommand{\mod}{\operatorname{mod}\nolimits}
\newcommand{\Ob}{\operatorname{Ob}\nolimits}
\newcommand{\Mor}{\operatorname{Mor}\nolimits}
\newcommand{\colim}[1]{\mathop{\underset{#1}
  {\underrightarrow{\mathrm{colim}}}}}
\newcommand{\Add}{\operatorname{Add}\nolimits}
\newcommand{\Mod}{\operatorname{Mod}\nolimits}
\newcommand{\Hom}{\operatorname{Hom}\nolimits}
\newcommand{\RHom}{\operatorname{{\bfR}Hom}\nolimits}
\newcommand{\Ho}{\operatorname{Ho}\nolimits}
\renewcommand{\Im}{\operatorname{Im}\nolimits}
\newcommand{\Ker}{\operatorname{Ker}\nolimits}
\newcommand{\Coker}{\operatorname{Coker}\nolimits}
\newcommand{\Ext}{\operatorname{Ext}\nolimits}
\newcommand{\Tor}{\operatorname{Tor}\nolimits}
\newcommand{\Lex}{\operatorname{Lex}\nolimits}
\newcommand{\Loc}{\operatorname{Loc}\nolimits}
\newcommand{\Coloc}{\operatorname{Coloc}\nolimits}
\newcommand{\Id}{\operatorname{Id}\nolimits} 
\newcommand{\id}{\operatorname{id}\nolimits} 
\newcommand{\card}{\operatorname{card}\nolimits} 
\newcommand{\Ab}{\mathrm{Ab}} 
\newcommand{\op}{\mathrm{op}}
\newcommand{\inc}{\mathrm{inc}}
\newcommand{\can}{\mathrm{can}}
\newcommand{\qis}{\mathrm{qis}}
\newcommand{\ac}{\mathrm{ac}}
\newcommand{\comp}{\mathop{\circ}}
\newcommand{\lto}[1][{}]{\stackrel{#1}{\longrightarrow}} 
\renewcommand{\to}[1][{}]{\stackrel{#1}{\rightarrow}} 
\newcommand{\xto}{\xrightarrow}
\def\a{\alpha}
\def\b{\beta}
\def\d{\delta}
\def\g{\gamma}
\def\p{\phi}
\def\r{\rho}
\def\s{\sigma}
\def\t{\tau}
\def\m{\mu}
\def\n{\nu}
\def\k{\kappa}
\def\la{\lambda}
\def\Ga{\varGamma}
\def\La{\Lambda}
\def\Si{\Sigma}
\def\A{{\mathcal A}}
\def\C{{\mathcal C}}
\def\D{{\mathcal D}}
\def\I{{\mathcal I}}
\def\P{{\mathcal P}}
\def\Sc{{\mathcal S}}
\def\X{{\mathcal X}}
\def\T{{\mathcal T}}
\def\U{{\mathcal U}}
\def\V{{\mathcal V}}
\def\bbN{\mathbb N}
\def\bbQ{\mathbb Q}
\def\bbZ{\mathbb Z}
\def\bfD{\mathbf D}
\def\bfK{\mathbf K}
\def\bfL{\mathbf L}
\def\bfR{\mathbf R}
\begin{document}

\maketitle 

\setcounter{tocdepth}{1}
\tableofcontents

\section{Introduction}

These notes provide an introduction to the theory of localization for
triangulated categories. Localization is a machinery to formally
invert morphisms in a category.  We explain this formalism in some
detail and we show how it is applied to triangulated categories.

There are basically two ways to approach the localization theory for
triangulated categories and both are closely related to each other. To
explain this, let us fix a triangulated category $\T$. The first
approach is \emph{Verdier localization}. For this one chooses a full
triangulated subcategory $\Sc$ of $\T$ and constructs a universal exact
functor $\T\to\T/\Sc$ which annihilates the objects belonging to $\Sc$.
In fact, the quotient category $\T/\Sc$ is obtained by formally
inverting all morphisms $\s$ in $\T$ such that the cone of $\s$
belongs to $\Sc$. 

On the other hand, there is \emph{Bousfield
localization}.  In this case one considers an exact functor
$L\colon\T\to\T$ together with a natural morphism $\eta X\colon X\to
LX$ for all $X$ in $\T$ such that $L(\eta X)=\eta (LX)$ is
invertible. There are two full triangulated subcategories arising from
such a localization functor $L$. We have the subcategory $\Ker L$
formed by all $L$-acyclic objects, and we have the essential image
$\Im L$ which coincides with the subcategory formed by all $L$-local
objects. Note that $L$, $\Ker L$, and $\Im L$ determine each
other. Moreover, $L$ induces an equivalence $\T/\Ker L\xto{\sim}\Im
L$. Thus a Bousfield localization functor $\T\to\T$ is nothing but the
composite of a Verdier quotient functor $\T\to\T/\Sc$ with a fully
faithful right adjoint $\T/\Sc\to\T$.

Having introduced these basic objects, there are a number of immediate
questions. For example, given a triangulated subcategory $\Sc$ of $\T$,
can we find a localization functor $L\colon\T\to\T$ satisfying $\Ker
L=\Sc$ or $\Im L=\Sc$? On the other hand, if we start with $L$, which
properties of $\Ker L$ and $\Im L$ are inherited from $\T$? It turns
out that well generated triangulated categories in the sense of Neeman
\cite{Nee2001} provide an excellent setting for studying these
questions.

Let us discuss briefly the relevance of well generated categories. The
concept generalizes that of a compactly generated triangulated
category. 
For example, the derived category of unbounded chain complexes of
modules over some fixed ring is compactly generated. Also, the stable
homotopy category of CW-spectra is compactly generated. Given any
localization functor $L$ on a compactly generated triangulated
category, it is rare that $\Ker L$ or $\Im L$ are compactly
generated. However, in all known examples $\Ker L$ and $\Im L$ are
well generated. The following theorem provides a conceptual
explanation; it combines several results from Section~\ref{se:loc-wellgen}.

\begin{Thm}
Let $\T$ be a well generated triangulated category and $\Sc$ a full
triangulated subcategory which is closed under small coproducts. Then
the following are equivalent.
\begin{enumerate}
\item The triangulated category $\Sc$ is well generated.
\item The  triangulated category $\T/\Sc$ is well  generated.
\item There exists a cohomological functor $H\colon\T\to\A$ into a
locally presentable abelian category such that $H$ preserves small
coproducts and $\Sc=\Ker H$.
\item There exists a small set $\Sc_0$ of objects in $\Sc$ such that
$\Sc$ admits no proper full triangulated subcategory closed under small
coproducts and containing $\Sc_0$.
\end{enumerate}
Moreover, in this case there exists a localization functor
$L\colon\T\to\T$ such that $\Ker L=\Sc$.
\end{Thm}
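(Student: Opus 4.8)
The plan is to prove the implications $(1)\Rightarrow(4)$, $(4)\Rightarrow(2)$, $(2)\Rightarrow(3)$, $(3)\Rightarrow(4)$ and $(4)\Rightarrow(1)$, and then to deduce the final assertion. Three facts about well generated categories will be used throughout. First, a localizing subcategory $\Loc(\Sc_0)$ of a well generated category generated by a \emph{set} $\Sc_0$ is again well generated, and the Verdier quotient by it is well generated with small Hom-sets \cite{Nee2001}. Second, a well generated category satisfies Brown representability, so every coproduct-preserving exact functor out of it has a right adjoint. Third, every well generated category $\U$ admits a universal coproduct-preserving cohomological functor $H_\U\colon\U\to\mathbf{A}(\U)$ into a locally presentable abelian category, with $\Ker H_\U=0$.

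Granting these, four of the five implications are routine. For $(1)\Rightarrow(4)$, choose a regular cardinal $\alpha$ for which $\Sc$ is $\alpha$-well generated and let $\Sc_0$ be a set of $\alpha$-compact generators of $\Sc$; since coproducts in $\Sc$ are computed in $\T$, any full triangulated subcategory of $\T$ closed under coproducts and containing $\Sc_0$ contains $\Loc(\Sc_0)=\Sc$, which is $(4)$. Statement $(4)$ says precisely that $\Sc=\Loc(\Sc_0)$ for a set $\Sc_0$, so $(4)\Rightarrow(1)$ and $(4)\Rightarrow(2)$ are the cited theorems of Neeman. For $(2)\Rightarrow(3)$, observe that $\Sc$, being triangulated and closed under coproducts, is thick, so the quotient functor $Q\colon\T\to\T/\Sc$ satisfies $\Ker Q=\Sc$; it is exact and preserves coproducts, and hence $H:=H_{\T/\Sc}\circ Q\colon\T\to\mathbf{A}(\T/\Sc)$ is a coproduct-preserving cohomological functor into a locally presentable abelian category with $\Ker H=Q^{-1}(0)=\Sc$. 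Finally, for the supplementary statement: once $(2)$ holds, Brown representability for $\T$ yields a right adjoint $Q_\rho$ of $Q$, necessarily fully faithful, and $L:=Q_\rho\circ Q$ is a localization functor with $\Ker L=\Ker Q=\Sc$.

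The one substantive implication is $(3)\Rightarrow(4)$: from a coproduct-preserving cohomological functor $H\colon\T\to\A$ with $\A$ locally presentable and $\Sc=\Ker H$, one must exhibit a set $\Sc_0\subseteq\Sc$ with $\Loc(\Sc_0)=\Sc$. First choose a regular cardinal $\alpha$ large enough that $\T$ is $\alpha$-well generated, $\A$ is locally $\alpha$-presentable, and $H$ sends the essentially small category $\T^\alpha$ of $\alpha$-compact objects into the $\alpha$-presentable objects of $\A$; such $\alpha$ exist by the usual unboundedness argument in accessibility theory. Put $\Sc_0:=\Sc\cap\T^\alpha$ and $\Sc':=\Loc(\Sc_0)\subseteq\Sc$; the goal is $\Sc'=\Sc$. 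As $\Sc'$ is set-generated, $\T/\Sc'$ is well generated, there is an associated localization functor $L'$ with $\Ker L'=\Sc'$, and $\Im L'\cong\T/\Sc'$ is $\alpha$-well generated with $\alpha$-compact objects built from the images $L'C$ of the objects $C\in\T^\alpha$. Applying $H$ to the localization triangle $\Gamma'X\to X\to L'X$, whose first term lies in $\Sc'\subseteq\Ker H$, one gets $H(X)\cong H(L'X)$; so it suffices to show that the restriction of $H$ to $\Im L'$ has trivial kernel. That restriction is cohomological, preserves the coproducts of $\Im L'$ (which are $L'$ applied to coproducts in $\T$, using that $H$ kills $\Sc'$), and carries the generating $\alpha$-compact objects $L'C$ to the $\alpha$-presentable objects $H(L'C)\cong H(C)$.

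Finally, consider $Z\in\Im L'$ with $H(Z)=0$. By the structure theory of $\alpha$-well generated categories, $Z$ can be built from $\alpha$-compact objects $Z_i$ by an $\alpha$-filtered homotopy colimit; since a coproduct-preserving cohomological functor commutes with such colimits, the resulting system $\bigl(H(Z_i)\bigr)$ in $\A$ consists of $\alpha$-presentable objects and has colimit $0$. As the indexing diagram is $\alpha$-filtered and each $H(Z_i)$ is $\alpha$-presentable, the identity map of each $H(Z_i)$ already becomes zero at a later stage, so every transition map $H(Z_i)\to H(Z_j)$ vanishes for $j$ sufficiently large; feeding this back into the triangles that realize $Z$ as the homotopy colimit forces $Z=0$. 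Thus $\Sc'=\Sc$, which is $(4)$. The main obstacle, I expect, is exactly this final argument: one needs both the delicate choice of a single regular cardinal $\alpha$ that simultaneously controls $\T$, $\A$ and $H$, and the precise cellular-tower (generation) description of objects in an $\alpha$-well generated category, arranged so that the indexing is genuinely $\alpha$-filtered. An alternative route that bypasses the homotopy colimits is to factor $H=\tilde H\circ H_\T$ through the universal functor, with $\tilde H$ exact and coproduct-preserving; then $\Ker\tilde H$ is a localizing subcategory of the locally presentable abelian category $\mathbf{A}(\T)$, hence generated by a set of objects, and one transports a generating set back along $H_\T$ --- trading the homotopy-colimit computation for the (no less delicate) step of descending set-generation through $H_\T$.
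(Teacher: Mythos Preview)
Your implications $(1)\Rightarrow(4)$, $(4)\Rightarrow(1)$, $(4)\Rightarrow(2)$, $(2)\Rightarrow(3)$, and the derivation of the localization functor are correct and essentially match the paper's arguments (which cite Theorem~\ref{th:set-local}, Proposition~\ref{pr:universal}, and Proposition~\ref{pr:loc-brown}).

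The gap is in $(3)\Rightarrow(4)$, precisely at the step you flag yourself. Two problems. First, the sentence ``$Z$ can be built from $\alpha$-compact objects $Z_i$ by an $\alpha$-filtered homotopy colimit'' is not a theorem. What holds (Theorem~\ref{th:filtcolim}) is the cohomological statement $H(Z)\cong\colim{(C,\mu)\in(\Im L')^\alpha/Z}H(C)$; there is no triangulated homotopy colimit over a general $\alpha$-filtered diagram realizing $Z$ itself. Second, even granting a sequential presentation, the conclusion ``$H(Z_i\to Z_j)=0$ for $j\gg i$ forces $Z=0$'' fails: $H$ is not faithful, so vanishing of $H(\gamma)$ says nothing about $\gamma$. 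What you can legitimately extract from $H(Z)=0$ and Theorem~\ref{th:filtcolim} is exactly that every morphism $C\to Z$ with $C$ $\alpha$-compact factors through some morphism $\gamma\colon C\to C'$ in $(\Im L')^\alpha$ with $H\gamma=0$. That is a statement about morphisms annihilated by $H$, not about objects in $\Sc_0$, and it does not by itself give $Z=0$.

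The paper closes this gap by a different, constructive argument (Theorem~\ref{th:kernel-coh}, relying on Theorem~\ref{th:kernel}). One first factors $H\cong h^*\circ h_\beta$ through the restricted Yoneda functor $h_\beta\colon\T\to\Add_\beta((\T^\beta)^\op,\Ab)$ for a suitable $\beta>\aleph_0$; this is close to your ``alternative route''. Lemma~\ref{le:kan}(4) then characterizes $\Ker H$ by the factorization property above. The key additional idea, which your sketch is missing, is to iterate that factorization: given $X\in\Ker H$ and $C\to X$ with $C\in\T^\beta$, one produces a countable tower $C=C_0\xrightarrow{\gamma_0}C_1\xrightarrow{\gamma_1}\cdots$ in $\T^\beta$ with each $H\gamma_i=0$, and $C\to X$ factors through the homotopy colimit of this tower. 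Since $\beta>\aleph_0$, this homotopy colimit lies in $\T^\beta$, and since all $H\gamma_i=0$ it lies in $\Ker H$. By Theorem~\ref{th:set-local}(1) these homotopy colimits generate $\Ker H$. Thus $\Sc$ is generated by the small set of such countable homotopy colimits, which gives $(4)$ (and in fact $(1)$ directly). Your reduction to $\Im L'$ is unnecessary: the argument works in $\T$ itself.
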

Note that every abelian Grothendieck category is locally presentable;
in particular every module category is locally presentable.

Our approach for studying localization functors on well generated
triangulated categories is based on the interplay between triangulated
and abelian structure. A well known construction due to Freyd provides
for any triangulated category $\T$ an abelian category $A(\T)$
together with a universal cohomological functor $\T\to
A(\T)$. However, the category $A(\T)$ is usually far too big and
therefore not manageable. If $\T$ is well generated, then we have a
canonical filtration $$A(\T)=\bigcup_\a A_\a(\T)$$ indexed by all
regular cardinals, such that for each $\a$ the category $A_\a(\T)$ is
abelian and locally $\a$-presentable in the sense of Gabriel and Ulmer
\cite{GU}. Moreover, each inclusion $A_\a(\T)\to A(\T)$ admits an
exact right adjoint and the composite
$$H_\a\colon\T\lto A(\T)\lto A_\a(\T)$$ is the universal cohomological
functor into a locally $\a$-presentable abelian category. Thus we may
think of the functors $\T\to A_\a(\T)$ as successive approximations of
$\T$ by locally presentable abelian categories. For instance, there
exists for each object $X$ in $\T$ some cardinal $\a(X)$ such that the
induced map $\T(X,Y)\to A_\b(\T)(H_\b X,H_\b Y)$ is bijective for all
$Y$ in $\T$ and all $\b\geq\a(X)$.

These notes are organized as follows. We start off with an
introduction to categories of fractions and localization functors for
arbitrary categories. Then we apply this to triangulated
categories. First we treat arbitrary triangulated categories and
explain the localization in the sense of Verdier and Bousfield. Then
we pass to compactly and well generated triangulated categories where
Brown representability provides an indispensable tool for constructing
localization functors. Module categories and their derived categories
are used to illustrate most of the concepts; see \cite{Dw} for
complementary material from topology. The results on well generated
categories are based on facts from the theory of locally presentable
categories; we have collected these in a separate appendix.

\subsection*{Acknowledgement}
The plan to write an introduction to the theory of triangulated
localization took shape during the ``Workshop on Triangulated
Categories'' in Leeds 2006. I wish to thank the organizers Thorsten
Holm, Peter J{\o}rgensen, and Rapha\"el Rouquier for their skill and
diligence in organizing this meeting.  Most of these notes were then
written during a three months stay in 2007 at the Centre de Recerca
Matem\`atica in Barcelona as a participant of the special program
``Homotopy Theory and Higher Categories''.  I am grateful to the
organizers Carles Casacuberta, Joachim Kock, and Amnon Neeman for
creating a stimulating atmosphere and for several helpful discussions.
Finally, I would like to thank Xiao-Wu Chen, Daniel Murfet, and Jan
\v{S}\v{t}ov\'i\v{c}ek for their helpful comments on a preliminary
version of these notes.

\section{Categories of fractions and localization functors}

\subsection{Categories}
Throughout we fix a universe of sets in the sense of Grothendieck
\cite{GV}.  The members of this universe will be called \emph{small
sets}.

Let $\C$ be a category. We denote by $\Ob\C$ the set of objects and by
$\Mor\C$ the set of morphisms in $\C$. Given objects $X,Y$ in $\C$,
the set of morphisms $X\to Y$ will be denoted by $\C(X,Y)$.  The
identity morphism of an object $X$ is denoted by $\id_\C X$ or just
$\id X$. If not stated otherwise, we always assume that the morphisms
between two fixed objects of a category form a small set.

A category $\C$ is called \emph{small} if the isomorphism classes of
objects in $\C$ form a small set. In that case we define the
\emph{cardinality} of $\C$ as $\card\C=\sum_{X,Y\in\C_0}\card\C(X,Y)$
where $\C_0$ denotes a representative set of objects of $\C$, meeting
each isomorphism class exactly once.

Let $F\colon\I\to \C$ be a functor from a  small (indexing)
category $\I$ to a category $\C$.  Then we write $\colim{i\in\I} Fi$
for the colimit of $F$, provided it exists. Given a cardinal $\a$, the
colimit of $F$ is called \emph{$\a$-colimit} if $\card\I<\a$.  An
example of a colimit is the coproduct $\coprod_{i\in I}X_i$ of a family
$(X_i)_{i\in I}$ of objects in $\C$ where the indexing set $I$ is
always assumed to be small.  We say that a category $\C$ \emph{admits
small coproducts} if for every family $(X_i)_{i\in I}$ of objects in
$\C$ which is indexed by a small set $I$ the coproduct $\coprod_{i\in
I}X_i$ exists in $\C$. Analogous terminology is used for limits and products.

\subsection{Categories of fractions}

Let $F\colon\C\to\D$ be a functor.  We say that $F$ makes a morphism
$\s$ of $\C$ invertible if $F\s$ is invertible.  The set of all
those morphisms which $F$ inverts is denoted by $\Si(F)$.

Given a category $\C$ and any set $\Si$ of morphisms of $\C$, we
consider the {\em category of fractions} $\C[\Si^{-1}]$ together with
a canonical \emph{quotient functor}
$$Q_\Si\colon\C\lto\C[\Si^{-1}]$$ having the following properties.
\begin{enumerate}
\item[(Q1)] $Q_\Si$ makes the morphisms in $\Si$ invertible.
\item[(Q2)] If a functor $F\colon\C\to\D$ makes the morphisms in $\Si$
invertible, then there is a unique functor $\bar F\colon \C[\Si^{-1}]\to\D$
such that $F=\bar F\comp Q_\Si$.
\end{enumerate}

Note that $\C[\Si^{-1}]$ and $Q_\Si$ are essentially unique if they
exists.  Now let us sketch the construction of $\C[\Si^{-1}]$ and
$Q_\Si$. At this stage, we ignore set-theoretic issues, that is, the
morphisms between two objects of $\C[\Si^{-1}]$ need not to form a
small set.  We put $\Ob\C[\Si^{-1}]=\Ob\C$. To define the morphisms of
$\C[\Si^{-1}]$, consider the quiver (i.e.\ oriented graph) with set of
vertices $\Ob\C$ and with set of arrows the disjoint union
$(\Mor\C)\amalg\Si^{-1}$, where $\Si^{-1}=\{\s^{-1}\colon Y\to
X\mid\Si\ni\s\colon X\to Y\}$.  Let $\P$ be the set of paths in this
quiver (i.e.\ finite sequences of composable arrows), together with
the obvious composition which is the concatenation operation and
denoted by $\comp_\P$. We define $\Mor\C[\Si^{-1}]$ as the quotient of
$\P$ modulo the following relations:
\begin{enumerate}
\item $\b\comp_\P\a=\b\comp\a$ for all composable morphisms $\a,\b\in\Mor\C$.
\item $\id_\P X=\id_\C X$ for all $X\in\Ob\C$.
\item $\s^{-1}\comp_\P\s=\id_\P X$ and $\s\comp_\P\s^{-1}=\id_\P Y$ for all
$\s\colon X\to Y$ in $\Si$.
\end{enumerate}
The composition in $\P$ induces the composition of morphisms in
$\C[\Si^{-1}]$. The functor $Q_\Si$ is the identity on objects and on
$\Mor\C$ the composite
$$\Mor\C\xto{\inc}
(\Mor\C)\amalg\Si^{-1}\xto{\inc}\P\xto{\can}\Mor\C[\Si^{-1}].$$

Having completed the construction of the category of fractions
$\C[\Si^{-1}]$, let us mention that it is also called \emph{quotient
category} or \emph{localization} of $\C$ with respect to $\Si$.

\subsection{Adjoint functors}

Let $F\colon\C\to\D$ and $G\colon\D\to\C$ be a pair of
functors and assume that $F$ is left adjoint to $G$. We denote by 
$$\theta\colon F\comp G\to \Id\D\quad \text{and}\quad\eta\colon\Id\C\to
G\comp F$$ the corresponding adjunction morphisms. Let $\Si=\Si(F)$ denote
the set of morphisms $\s$  of $\C$ such that $F\s$ is invertible.
Recall that a morphism $\mu\colon F\to F'$ between two functors is
invertible if for each object $X$ the morphism $\mu X\colon FX\to F'X$
is invertible.

\begin{prop}\label{pr:quot}
The following statements are equivalent.
\begin{enumerate}
\item The functor $G$ is fully faithful.
\item The morphism $\theta\colon F\comp G\to \Id\D$ is invertible.
\item The functor $\bar F\colon\C[\Si^{-1}]\to\D$ satisfying $F=\bar
F\comp Q_\Si$ is an equivalence.
\end{enumerate}
\end{prop}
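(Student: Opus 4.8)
The plan is to prove the proposition by establishing the cycle of implications $(1)\Rightarrow(2)\Rightarrow(3)\Rightarrow(1)$, using the universal property (Q1)--(Q2) of the quotient functor $Q_\Si$ throughout.

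First I would show $(1)\Rightarrow(2)$. Recall the triangle identities for the adjunction: $\theta F\comp F\eta=\id_F$ and $G\theta\comp\eta G=\id_G$. If $G$ is fully faithful, then $\eta G$ is an isomorphism (this is the standard characterization of fully faithful right adjoints: the counit evaluated on objects of the form $GY$ is invertible, equivalently $\eta G$ is invertible). From $G\theta\comp\eta G=\id_G$ we get that $G\theta$ is invertible, and since $G$ is faithful and conservative on morphisms between objects in its essential image — more precisely, since $G$ fully faithful means $G$ reflects isomorphisms — it follows that $\theta$ itself is invertible. Conversely, for $(2)\Rightarrow(1)$: if $\theta$ is invertible, then for objects $Y,Y'$ in $\D$ the map $\D(Y,Y')\to\C(GY,GY')$ is inverted by $\D(Y,Y')\xrightarrow{\sim}\D(FGY,Y')\xleftarrow{\mathrm{adj}}\C(GY,GY')$, so $G$ is fully faithful; this already gives $(1)\Leftrightarrow(2)$.

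Next, for $(2)\Rightarrow(3)$, the key point is to produce a quasi-inverse to $\bar F\colon\C[\Si^{-1}]\to\D$. Since $G$ sends every morphism of $\D$ to a morphism of $\C$, and I claim $G$ sends morphisms to $\Si$-local data in a way that factors through $Q_\Si$: actually the cleaner route is to check that $Q_\Si\comp G\colon\D\to\C[\Si^{-1}]$ is the desired quasi-inverse. One composite is $\bar F\comp Q_\Si\comp G=F\comp G=\theta^{-1}$-conjugate to $\Id\D$, hence isomorphic to $\Id\D$ by $(2)$. For the other composite $Q_\Si\comp G\comp\bar F$, observe that the unit $\eta\colon\Id\C\to GF$ has the property that $F\eta$ is invertible (from the triangle identity $\theta F\comp F\eta=\id_F$ together with invertibility of $\theta$), so every component $\eta X$ lies in $\Si=\Si(F)$; therefore $Q_\Si\eta\colon Q_\Si\to Q_\Si\comp G\comp F$ is invertible, and since $F=\bar F\comp Q_\Si$ this says precisely that $\Id_{\C[\Si^{-1}]}$ is isomorphic to $Q_\Si\comp G\comp\bar F$ (using that $Q_\Si$ is the identity on objects and surjective on objects). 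Finally $(3)\Rightarrow(2)$: if $\bar F$ is an equivalence, pick a quasi-inverse $E$; then $G':=Q_\Si\comp E$ is right adjoint to $F=\bar F\comp Q_\Si$ up to the equivalence, hence $F$ has a fully faithful right adjoint, and by uniqueness of adjoints $G$ is fully faithful, so by $(1)\Leftrightarrow(2)$ we are done — or more directly, an equivalence composed with $Q_\Si$ inherits enough of the universal property to force $\theta$ invertible.

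The main obstacle I anticipate is the careful bookkeeping in $(2)\Rightarrow(3)$: one must check that $Q_\Si\comp G$ is well-defined as stated (it is, since it is just a composite of functors, but one should verify it really is quasi-inverse to $\bar F$ and not merely a one-sided inverse), and in particular that the natural isomorphisms $\bar F Q_\Si G\cong\Id$ and $Q_\Si G\bar F\cong\Id$ are genuine natural transformations rather than just objectwise isomorphisms — this requires naturality of $\eta$ and $\theta$ and the fact that $Q_\Si$ is bijective on objects. A secondary subtlety is the set-theoretic caveat flagged in the construction: $\C[\Si^{-1}]$ may have large hom-sets a priori, but once $(3)$ is established the equivalence with $\D$ shows the hom-sets are small, so there is no circularity as long as one is willing to work with possibly-large categories in the intermediate steps.
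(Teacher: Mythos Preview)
The paper itself gives no argument for this proposition; it simply cites \cite[I.1.3]{GZ}. So the comparison is with the standard Gabriel--Zisman proof, and your overall strategy (the equivalence $(1)\Leftrightarrow(2)$, then building a quasi-inverse $Q_\Si\comp G$ for $\bar F$) is exactly the right one.

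Two issues, one minor and one genuine. The minor one: in $(1)\Rightarrow(2)$ you assert that ``$\eta G$ is an isomorphism'' by appealing to ``the standard characterization of fully faithful right adjoints'', but that standard characterization \emph{is} the statement $(1)\Leftrightarrow(2)$ you are proving. A clean direct argument: fullness of $G$ gives $\sigma_Y\colon Y\to FGY$ with $G\sigma_Y=\eta_{GY}$, whence faithfulness and the triangle identity yield $\theta_Y\sigma_Y=\id_Y$; then naturality of $\theta$ at $\theta_Y$ plus the fact that $\theta_Y$ is now split epi give $FG\theta_Y=\theta_{FGY}$, and one deduces $\sigma_Y\theta_Y=\id_{FGY}$.

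The genuine gap is in $(3)\Rightarrow(1)$. Your proposed functor $G'=Q_\Si\comp E$ lands in $\C[\Si^{-1}]$, not in $\C$, so it cannot be a right adjoint of $F\colon\C\to\D$; the sentence ``$Q_\Si\comp E$ is right adjoint to $F$ up to the equivalence'' is a type error, and the fallback ``more directly\ldots'' is not an argument. The correct move is the mirror image: show that $Q_\Si\comp G\colon\D\to\C[\Si^{-1}]$ is right adjoint to $\bar F$, with \emph{the same counit} $\theta\colon\bar F(Q_\Si G)=FG\to\Id_\D$. The unit is the unique $\hat\eta\colon\Id_{\C[\Si^{-1}]}\to Q_\Si G\bar F$ satisfying $\hat\eta Q_\Si=Q_\Si\eta$ (this uses the $2$-universal property of $Q_\Si$ that you already invoked in $(2)\Rightarrow(3)$). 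Both triangle identities for $(\bar F,Q_\Si G,\hat\eta,\theta)$ then reduce, after precomposing with $Q_\Si$ or evaluating at objects $GY$, to the triangle identities for $(F,G,\eta,\theta)$. Once $(\bar F,Q_\Si G)$ is an adjunction and $\bar F$ is an equivalence, its counit $\theta$ is automatically invertible, giving $(2)$.
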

\begin{proof}
See \cite[I.1.3]{GZ}.
\end{proof}

\subsection{Localization functors}

A functor $L\colon \C\to \C$ is called a \emph{localization functor} if
there exists a morphism $\eta\colon\Id\C\to L$ such that $L\eta\colon
L\to L^2$ is invertible and $L\eta=\eta L$. Note that we only require
the existence of $\eta$; the actual morphism is not part of the
definition of $L$. However, we will see that $\eta$ is determined by
$L$, up to a unique isomorphism $L\to L$.

\begin{prop}\label{pr:loc}
Let $L\colon\C\to\C$ be a functor and $\eta\colon\Id\C\to L$ be a morphism.
Then the following are equivalent.
\begin{enumerate}
\item $L\eta\colon L\to L^2$ is invertible and $L\eta=\eta L$.
\item There exists a functor $F\colon\C\to\D$ and a fully faithful
right adjoint $G\colon\D\to\C$ such that $L=G\comp F$ and
$\eta\colon\Id\C\to G\comp F$ is the adjunction morphism.
\end{enumerate}
\end{prop}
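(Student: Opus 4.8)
The plan is to prove the two implications separately, with the implication $(2)\Rightarrow(1)$ being essentially a formal consequence of the triangle identities for adjunctions, and the implication $(1)\Rightarrow(2)$ requiring us to actually \emph{construct} the factorization $L = G\comp F$.

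For $(2)\Rightarrow(1)$: Suppose $L = G\comp F$ with $G$ fully faithful right adjoint to $F$ and $\eta$ the unit. By Proposition~\ref{pr:quot}, since $G$ is fully faithful, the counit $\theta\colon F\comp G\to\Id\D$ is invertible. Now $L\eta = G F\eta$ and $\eta L = \eta G F$, and I want these to agree and to be invertible. The cleanest route is to apply $F$ on the left: $F\eta$ composed with the triangle identity $\theta F\comp F\eta = \id_F$ shows $F\eta X$ is invertible for every $X$ (it is a two-sided inverse together with $\theta FX$, since $\theta$ is invertible). Hence $GF\eta = L\eta$ is invertible. For $L\eta = \eta L$: one has the naturality square for $\eta$ applied to the morphism $\eta X\colon X\to LX$, giving $\eta(LX)\comp\eta X = L(\eta X)\comp\eta X$, and since $\eta X$ need not be monic we instead argue that $GF\eta = \eta GF$ directly from the compatibility $G\theta\comp\eta G = \id_G$ (the other triangle identity) combined with naturality of $\eta$; concretely, $\eta GFX$ and $GF\eta X$ are both inverse to $G\theta FX$ under the appropriate composite, forcing them equal. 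I would write this out as a short diagram chase.

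For $(1)\Rightarrow(2)$: Given $L$ and $\eta$ with $L\eta$ invertible and $L\eta=\eta L$, I take $\D$ to be the full subcategory of $\C$ on the objects of the form $LX$ (equivalently, the essential image of $L$, or the objects $Y$ for which $\eta Y$ is invertible — I would first check these descriptions agree). Let $G\colon\D\to\C$ be the inclusion and let $F\colon\C\to\D$ be $L$ regarded as landing in $\D$. Then $G\comp F = L$ by construction and $G$ is fully faithful since it is a full subcategory inclusion. The real content is the adjunction: I must show $\C(X, GY)\cong\D(FX, Y)$ naturally, i.e.\ that for $Y\in\D$ the map $\eta X\colon X\to LX$ induces a bijection $\C(LX, Y)\to\C(X,Y)$. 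Surjectivity: given $f\colon X\to Y$, naturality of $\eta$ gives $\eta Y\comp f = Lf\comp\eta X$, and since $\eta Y$ is invertible (as $Y\in\D$, using that $\eta L$ is invertible and $Y\cong LX'$ for some $X'$), we get $f = (\eta Y)^{-1}\comp Lf\comp\eta X$, so $f$ factors through $\eta X$. Injectivity: if $g, g'\colon LX\to Y$ satisfy $g\comp\eta X = g'\comp\eta X$, apply $L$ and use naturality plus $L\eta X = \eta LX$ invertible to conclude $g = g'$. Finally I must identify $\eta$ with the unit of this adjunction, which is immediate from how the bijection was defined.

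The main obstacle is the bookkeeping around the hypothesis $L\eta = \eta L$ and the precise handling of invertibility of $\eta Y$ for $Y$ in the essential image: one needs that $\eta(LX)$ is invertible for all $X$, which follows from $L\eta$ invertible only after knowing $L\eta X = \eta LX$, so the two halves of hypothesis~(1) must be used in tandem. I expect no deep difficulty, but care is needed to avoid circular reasoning when passing between "$Y$ is in the essential image of $L$" and "$\eta Y$ is invertible"; I would pin down at the outset that these two conditions on $Y$ are equivalent given~(1), and use whichever is convenient at each step.
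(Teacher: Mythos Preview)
Your proposal is correct and follows essentially the same approach as the paper. The paper streamlines both directions slightly: for $(2)\Rightarrow(1)$ it obtains $L\eta=\eta L$ in a single line via $L\eta=GF\eta=(G\theta F)^{-1}=\eta GF=\eta L$ (using both triangle identities and invertibility of $\theta$), and for $(1)\Rightarrow(2)$ it defines $\D$ directly as the full subcategory of objects $X$ with $\eta X$ invertible---thereby sidestepping the equivalence of descriptions you flag---and writes down explicit mutually inverse bijections $\a\mapsto G\a\comp\eta X$ and $\b\mapsto\theta Y\comp F\b$ rather than arguing surjectivity and injectivity separately.
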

\begin{proof}
(1) $\Rightarrow$ (2): Let $\D$ denote the full subcategory of $\C$
    formed by all objects $X$ such that $\eta X$ is invertible. For each
    $X\in\D$, let $\theta X\colon LX\to X$ be the inverse of $\eta
    X$. Define $F\colon \C\to\D$ by $FX=LX$ and let $G\colon \D\to\C$
    be the inclusion. We claim that $F$ and $G$ form an adjoint pair. In
    fact, it is straightforward to check that the maps
$$\D(FX,Y)\lto \C(X,GY),\quad \a\mapsto G\a\comp\eta X,$$
and $$\C(X,GY)\lto\D(FX,Y),\quad \b\mapsto \theta Y\comp F\b,$$
are mutually inverse bijections.

(2) $\Rightarrow$ (1): Let $\theta\colon FG\to\Id\D$ denote the second
adjunction morphism. Then the composites $$F\xto{F\eta}FGF\xto{\theta
  F} F\quad\text{and}\quad G\xto{\eta G}GFG\xto{G\theta}G$$ are
identity morphisms; see \cite[IV.1]{MacL}. We know from
Proposition~\ref{pr:quot} that $\theta$ is invertible because $G$ is
fully faithful.  Therefore $L\eta=GF\eta$ is invertible. Moreover, we
have
\[L\eta=GF\eta=(G\theta F)^{-1}=\eta GF=\eta L.\qedhere\]
\end{proof}

\begin{cor}\label{co:loc}
A functor $L\colon \C\to\C$ is a localization functor if and only if
there exists a functor $F\colon\C\to\D$ and a fully faithful right
adjoint $G\colon\D\to\C$ such that $L=G\comp F$.  In that case there
exist a unique equivalence $\C[\Si^{-1}]\to \D$ making the following
diagram commutative
$$
\xymatrixrowsep{.5pc}
\xymatrix{&&\C[\Si^{-1}]\ar[dd]^-\sim\ar[rrd]^-{\bar L}\\
\C\ar[rru]^-{Q_{\Si}}\ar[rrd]_-{F}&&&&\C\\ &&\D\ar[rru]_-{G}
}$$
where $\Si$ denotes the set of morphisms $\s$ in $\C$ such that $L\s$
is invertible. 
\end{cor}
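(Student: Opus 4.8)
The first assertion is immediate from Proposition~\ref{pr:loc}: a functor $L$ is a localization functor precisely when it admits a factorization $L=G\comp F$ with $G$ a fully faithful right adjoint to $F$, since condition (2) of that proposition only requires the \emph{existence} of such a factorization (the morphism $\eta$ is then recovered as the adjunction unit). So the content of the corollary is the commutative diagram and the uniqueness of the equivalence $\C[\Si^{-1}]\to\D$.

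The plan is as follows. First I would fix the factorization $L=G\comp F$ from the hypothesis, let $\eta\colon\Id\C\to GF=L$ be the unit of the adjunction, and set $\Si$ to be the set of morphisms that $L$ inverts. The key observation is that $\Si(F)=\Si(L)=\Si$: if $F\s$ is invertible then $L\s=GF\s$ is invertible since $G$ is a functor; conversely, if $L\s=GF\s$ is invertible then $F\s$ is invertible because $G$ is fully faithful (a fully faithful functor reflects isomorphisms). Hence the canonical functor $\bar F\colon\C[\Si^{-1}]\to\D$ with $F=\bar F\comp Q_\Si$, provided by property (Q2) applied to $F$, exists; and by Proposition~\ref{pr:quot}, the equivalence of (1) and (3) there, $\bar F$ is an equivalence because its right adjoint $G$ is fully faithful. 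This gives the left triangle of the diagram, $F=\bar F\comp Q_\Si$, with $\bar F$ the desired equivalence.

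For the right triangle I would define $\bar L\colon\C[\Si^{-1}]\to\C$ to be the unique functor with $L=\bar L\comp Q_\Si$, again via (Q2) applied to $L=GF$ (which inverts $\Si$ by definition of $\Si$). Then $G\comp\bar F$ and $\bar L$ are two functors $\C[\Si^{-1}]\to\C$ whose composites with $Q_\Si$ both equal $L=G\comp F$; by the uniqueness clause in (Q2) they coincide, so $\bar L=G\comp\bar F$, which is exactly commutativity of the right-hand triangle. Uniqueness of the equivalence making the diagram commute follows the same way: any functor $E\colon\C[\Si^{-1}]\to\D$ with $E\comp Q_\Si=F$ must equal $\bar F$ by the uniqueness in (Q2).

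I do not expect a serious obstacle here; the only point requiring a moment's care is the identity $\Si(F)=\Si$, i.e.\ that $G$ reflects isomorphisms, which is where full faithfulness of $G$ is used a second time (beyond its use via Proposition~\ref{pr:quot} to see that $\bar F$ is an equivalence). Everything else is a formal application of the universal property (Q1)--(Q2) of the category of fractions together with Propositions~\ref{pr:quot} and~\ref{pr:loc}.
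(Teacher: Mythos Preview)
Your proposal is correct and follows essentially the same approach as the paper: both deduce the characterization from Proposition~\ref{pr:loc}, observe that $\Si(F)=\Si(L)$ because the fully faithful functor $G$ reflects isomorphisms, and then invoke Proposition~\ref{pr:quot} to obtain the equivalence $\bar F\colon\C[\Si^{-1}]\to\D$. You are simply more explicit than the paper in verifying the right-hand triangle and the uniqueness via (Q2), which the paper leaves implicit.
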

\begin{proof}
The characterization of a localization functor follows from
Proposition~\ref{pr:loc}.  Now observe that $\Si$ equals the set of
morphisms $\s$ in $\C$ such that $F\s$ is invertible since $G$ is
fully faithful. Thus  we can apply Proposition~\ref{pr:quot} to obtain the
equivalence $\C[\Si^{-1}]\to \D$ making the diagram commutative.
\end{proof}

\subsection{Local objects}

Given a localization functor $L\colon\C\to\C$, we wish to describe
those objects $X$ in $\C$ such that $X\xto{\sim} LX$. To this end, it is
convenient to make the following definition. An object $X$ in a
category $\C$ is called {\em local} with respect to a set $\Si$ of
morphisms if for every morphism $W\to W'$ in $\Si$ the induced map
$\C(W',X)\to\C(W,X)$ is bijective. Now let $F\colon\C\to\D$ be a
functor and let $\Si(F)$ denote the set of morphisms $\s$ of $\C$ such
that $F\s$ is invertible. An object $X$ in $\C$ is called $F$-local if
it is local with respect to $\Si(F)$.

\begin{lem}\label{le:local}
Let $F\colon\C\to\D$ be a functor and $X$ an object of $\C$. Suppose
there are two morphisms $\eta_1\colon X\to Y_1$ and $\eta_2\colon X\to
Y_2$ such that $F\eta_i$ is invertible and $Y_i$ is $F$-local for
$i=1,2$. Then there exists a unique isomorphism $\p\colon Y_1\to Y_2$
such that $\eta_2=\p\comp\eta_1$.
\end{lem}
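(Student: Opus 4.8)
The plan is to first establish existence of the map $\p$ and then its uniqueness, using the defining property of $F$-locality applied to the morphisms $\eta_1$ and $\eta_2$ (both of which lie in $\Si(F)$). Since $Y_2$ is $F$-local and $\eta_1\colon X\to Y_1$ belongs to $\Si(F)$, the induced map $\C(Y_1,Y_2)\to\C(X,Y_2)$ is bijective; in particular there is a unique $\p\colon Y_1\to Y_2$ with $\p\comp\eta_1=\eta_2$. Symmetrically, using that $Y_1$ is $F$-local and $\eta_2\in\Si(F)$, the map $\C(Y_2,Y_1)\to\C(X,Y_1)$ is bijective, so there is a unique $\psi\colon Y_2\to Y_1$ with $\psi\comp\eta_2=\eta_1$.

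Next I would check that $\psi$ is a two-sided inverse of $\p$. The composite $\psi\comp\p\colon Y_1\to Y_1$ satisfies $(\psi\comp\p)\comp\eta_1=\psi\comp\eta_2=\eta_1=\id_{Y_1}\comp\eta_1$, and since $Y_1$ is $F$-local and $\eta_1\in\Si(F)$, the map $\C(Y_1,Y_1)\to\C(X,Y_1)$ is injective, forcing $\psi\comp\p=\id_{Y_1}$. The identity $\p\comp\psi=\id_{Y_2}$ follows by the same argument with the roles of $Y_1$ and $Y_2$ interchanged, using $F$-locality of $Y_2$ and $\eta_2\in\Si(F)$. Hence $\p$ is an isomorphism. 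Uniqueness of $\p$ among all morphisms (not just isomorphisms) satisfying $\p\comp\eta_1=\eta_2$ was already built into its construction via the bijectivity of $\C(Y_1,Y_2)\to\C(X,Y_2)$.

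There is no real obstacle here: the entire argument is a formal diagram chase driven by the single observation that for any $F$-local object $Z$ and any $\s\in\Si(F)$ with source $X$, precomposition with $\s$ is a bijection $\C(-,Z)$ on the relevant hom-sets, which gives both existence (surjectivity) and uniqueness (injectivity). The only point requiring a moment's care is making sure that $\eta_1$ and $\eta_2$ genuinely lie in $\Si(F)$ — but this is exactly the hypothesis that $F\eta_i$ is invertible. So the proof is essentially a two-line application of the universal-property-style reasoning, and the mild bookkeeping lies in keeping track of which locality hypothesis is used at each step.
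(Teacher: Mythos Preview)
Your proof is correct and follows essentially the same approach as the paper: both use $F$-locality of $Y_2$ together with $\eta_1\in\Si(F)$ to produce the unique $\p$, and then swap the roles of $\eta_1,\eta_2$ to obtain an inverse. The paper's version is simply terser, leaving the verification that $\psi$ is a two-sided inverse implicit, whereas you spell it out explicitly using the injectivity of the relevant precomposition maps.
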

\begin{proof}
The morphism $\eta_1$ induces a bijection $\C(Y_1,Y_2)\to\C(X,Y_2)$
and we take for $\p$ the unique morphism which is sent to $\eta_2$.
Exchanging the roles of $\eta_1$ and $\eta_2$, we obtain the inverse for $\p$.
\end{proof}

\begin{prop}\label{pr:local-obj}
Let $L\colon\C\to\C$ be a localization functor and $\eta\colon\Id\C\to
L$ a morphism such that $L\eta$ is invertible. Then the following are
equivalent for an object $X$ in $\C$.
\begin{enumerate}
\item The object $X$ is $L$-local.
\item The map $\C(LW,X)\to \C(W,X)$ induced by $\eta W$ is bijective for all $W$ in $\C$.
\item The morphism $\eta X\colon X\to LX$ is invertible.
\item The map $\C(W,X)\to \C(LW,LX)$ induced by $L$ is bijective for all $W$ in $\C$.
\item The object $X$ is isomorphic to $LX'$ for some object $X'$ in $\C$.
\end{enumerate}
\end{prop}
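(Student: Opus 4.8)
The plan is to prove the cycle of implications $(1)\Rightarrow(2)\Rightarrow(3)\Rightarrow(4)\Rightarrow(5)\Rightarrow(1)$, using Lemma~\ref{le:local} at the key juncture and the relations $L\eta=\eta L$ together with the invertibility of $L\eta$ throughout. First I would record the elementary observation that $\eta X\colon X\to LX$ is always a morphism which $L$ inverts (since $L\eta X=\eta LX$ is invertible by hypothesis applied at $LX$, or directly since $L\eta$ is invertible), so $\eta X\in\Si(L)$ for every $X$; this makes $(1)\Rightarrow(2)$ immediate, as $(2)$ is just the defining bijectivity condition of $L$-locality specialized to the morphisms $\eta W$.

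For $(2)\Rightarrow(3)$, I would apply the bijection $\C(LX,X)\to\C(X,X)$ induced by $\eta X$ and let $\theta X\colon LX\to X$ be the unique preimage of $\id X$, so $\theta X\comp\eta X=\id X$. It then remains to check $\eta X\comp\theta X=\id{LX}$; for this, precompose with $\eta X$ and use naturality of $\eta$ (the square relating $\eta X$ and $\eta LX=L\eta X$) together with injectivity in the bijection $(2)$ at $W=X$. This is the one spot requiring a small diagram chase, but it is the standard argument that a functor equipped with $\eta$ satisfying $L\eta=\eta L$ invertible is idempotent; the relation $L\eta=\eta L$ is exactly what makes the naturality square commute in the needed form.

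The implication $(3)\Rightarrow(4)$ is formal: if $\eta X$ is invertible then for each $W$ the map $\C(W,X)\to\C(LW,LX)$ factors (via naturality of $\eta$) through the bijection $\C(W,X)\xto{\sim}\C(W,LX)$ induced by $\eta X$ and the map $\C(W,LX)\to\C(LW,LX)$ induced by $\eta W$; the latter is bijective because $X\simeq LX$ is $L$-local — more directly, one checks both composites $\C(W,X)\to\C(LW,LX)\to\C(L^2W,L^2X)$ and the evident one are inverse using that $L\eta$ is invertible. Then $(4)\Rightarrow(5)$: taking $W=X$, surjectivity of $\C(X,X)\to\C(LX,LX)$ lets us lift $\id{LX}$ to some $f\colon X\to LX$ with $Lf=\id{LX}$ up to the identification, and a short argument (or simply: $(4)$ says $L$ restricted to maps into $X$ is full and faithful, and combined with $\eta$ gives $X$ as a retract of $LX$ hence, since $LX$ is $L$-local by $(5)\Rightarrow(1)$-type reasoning, actually isomorphic to it) shows $X\cong LX'$ with $X'=X$; the cleanest route is to deduce $(3)$ back from $(4)$ by lifting and then conclude $X\cong LX$. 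Finally $(5)\Rightarrow(1)$: if $X\cong LX'$, then for any $\s\colon W\to W'$ in $\Si(L)$ the map $\C(W',LX')\to\C(W,LX')$ is identified, via the adjunction bijections $\C(V,LX')\cong\D(FV,FX')$ from Corollary~\ref{co:loc} (or Proposition~\ref{pr:loc}), with $\D(FW',FX')\to\D(FW,FX')$ induced by $F\s$, which is bijective since $F\s$ is invertible; hence $X$ is $L$-local.

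The main obstacle is the single genuine verification inside $(2)\Rightarrow(3)$ — confirming $\eta X\comp\theta X=\id{LX}$ — since everything else is either a formal adjunction/Yoneda manipulation or a direct appeal to Lemma~\ref{le:local} and Corollary~\ref{co:loc}. An alternative, perhaps slicker, organization would be to first establish $(3)\Leftrightarrow(5)$ and $(1)\Leftrightarrow$ "$X$ is $L$-local" directly from the adjoint-pair description $L=G\comp F$ of Proposition~\ref{pr:loc}, identifying $\Si(L)=\Si(F)$, and then close the remaining gaps; I would be willing to switch to that if the diagram chase proves unwieldy, but I expect the implication cycle above to be the most transparent for a reader.
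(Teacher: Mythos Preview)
Your cycle has a genuine circularity at $(3)\Rightarrow(4)$. You factor the map $\C(W,X)\to\C(LW,LX)$ through $\C(W,LX)\to\C(LW,LX)$ (precomposition with $\eta W$) and claim the latter is bijective ``because $X\simeq LX$ is $L$-local''. But at this point in the cycle you only have $(3)$; you have not yet established $(1)$ for any object, in particular not for $LX$, so this appeal is illegitimate. Your hedge toward a ``more direct'' check via $\C(W,X)\to\C(LW,LX)\to\C(L^2W,L^2X)$ is too vague to stand as a proof. A clean elementary argument does exist: given $g\colon LW\to LX$, set $f=(\eta X)^{-1}\comp g\comp\eta W$ and verify $Lf=g$ using naturality of $\eta$ together with the invertibility of $L\eta=\eta L$; injectivity is immediate from $\eta X\comp f=Lf\comp\eta W$ with $\eta X$ invertible. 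Your $(4)\Rightarrow(5)$ is correspondingly muddled---you yourself concede the cleanest route is back through $(3)$, so the honest structure is $(4)\Rightarrow(3)\Rightarrow(5)$, and you have not supplied $(4)\Rightarrow(3)$.

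The paper sidesteps all of this by not attempting a pure cycle. It proves $(3)\Leftrightarrow(4)$ in one stroke using the decomposition $L=G\comp F$ from Proposition~\ref{pr:loc}: the map $f\mapsto Lf$ factors as $\C(W,X)\to\C(W,LX)\to\D(FW,FX)\to\C(LW,LX)$, where the second arrow is the adjunction bijection and the third is bijective because $G$ is fully faithful; hence the composite is bijective iff the first map (postcomposition with $\eta X$) is, i.e.\ iff $\eta X$ is invertible. Then $(3)\Rightarrow(5)$ is trivial with $X'=X$, and $(5)\Rightarrow(1)$ goes exactly as you outline. One further small correction: in your $(2)\Rightarrow(3)$, the bijection $(2)$ at $W=X$ concerns maps $LX\to X$, not maps $LX\to LX$, so ``injectivity in $(2)$'' cannot directly force $\eta X\comp\theta X=\id{LX}$. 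The naturality computation alone does the job: $\eta X\comp\theta X=L\theta X\comp\eta LX=L\theta X\comp L\eta X=L(\theta X\comp\eta X)=\id{LX}$, which is precisely the paper's argument.
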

\begin{proof} 
(1) $\Rightarrow$ (2): The morphism $\eta W$ belongs to $\Si(L)$ and
therefore $\C(\eta W,X)$ is bijective if $X$ is $L$-local.

(2) $\Rightarrow$ (3): Put $W=X$. We obtain a morphism $\p\colon LX\to
X$ which is an inverse for $\eta X$. More precisely, we have
$\p\comp\eta X=\id X$. On the other hand,
$$\eta X\comp \p=L\p\comp\eta LX=L\p\comp L\eta X=L(\p\comp\eta X)=\id LX.$$
Thus $\eta X$ is invertible.

(3) $\Leftrightarrow$ (4): We use the factorization
$\C\xto{F}\D\xto{G}\C$ of $L$ from Proposition~\ref{pr:loc}. Then we
obtain for each $W$ in $\C$ a factorization
$$\C(W,X)\lto\C(W,LX)\lto[\sim]\C(FW,FX)\lto[\sim]\C(LW,LX)$$ of the
map $f_W\colon\C(W,X)\to \C(LW,LX)$ induced by $L$. Here, the first map
is induced by $\eta X$, the second follows from the adjunction, and
the third is induced by $G$. Thus $f_W$ is bijective for all $W$ iff the first
map is bijective for all $W$ iff $\eta X$ is invertible.

(3) $\Rightarrow$ (5): Take $X'=X$.

(5) $\Rightarrow$ (1): We use again the factorization
$\C\xto{F}\D\xto{G}\C$ of $L$ from Proposition~\ref{pr:loc}.  Fix $\s$
in $\Si(L)$ and observe that $F\s$ is invertible. Then we have
$\C(\s,X)\cong\C(\s,G(FX'))\cong\D(F\s,FX')$ and this implies that
$\C(\s,X)$ is bijective since $F\s$ is invertible.
\end{proof}

Given a functor $F\colon\C\to\D$, we denote by $\Im F$ the
\emph{essential image} of $F$, that is, the full subcategory of $\D$
which is formed by all objects isomorphic to $FX$ for some object $X$
in $\C$.

\begin{cor}
\label{co:local-objects}
Let $L\colon\C\to\C$ be a localization functor. Then $L$ induces an
equivalence $\C[\Si(L)^{-1}]\xto{\sim}\Im L$ and $\Im L$ is the full
subcategory of $\C$ consisting of all $L$-local subobjects.
\end{cor}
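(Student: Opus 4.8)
The plan is to deduce both assertions from the factorization of $L$ supplied by Corollary~\ref{co:loc}. Write $L=G\comp F$ with $F\colon\C\to\D$ and $G\colon\D\to\C$ a fully faithful right adjoint; by that corollary $\Si(L)$ equals the set of morphisms inverted by $F$, and there is an equivalence $E\colon\C[\Si(L)^{-1}]\xto{\sim}\D$ satisfying $E\comp Q_{\Si(L)}=F$ and $G\comp E=\bar L$, where $\bar L\colon\C[\Si(L)^{-1}]\to\C$ is the functor induced by $L$ via (Q2).

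First I would observe that $F$ is essentially surjective: since $G$ is fully faithful, the counit $\theta\colon F\comp G\to\Id\D$ is invertible by Proposition~\ref{pr:quot}, so every object $Y$ of $\D$ satisfies $Y\cong F(GY)$. Hence $\Im L=\Im(G\comp F)=\Im G$, the only nonformal inclusion being $GY\cong G(FGY)\in\Im(G\comp F)$. Because $G$ is fully faithful it corestricts to an equivalence $\D\xto{\sim}\Im G$; composing this with $E$ yields an equivalence $\C[\Si(L)^{-1}]\xto{\sim}\Im G=\Im L$, and by the identity $G\comp E=\bar L$ this equivalence is exactly the functor induced by $L$. That settles the first assertion.

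For the description of $\Im L$ I would simply invoke Proposition~\ref{pr:local-obj}, namely the equivalence of its conditions (5) and (1): an object $X$ of $\C$ is $L$-local if and only if $X\cong LX'$ for some $X'$ in $\C$, i.e.\ if and only if $X$ lies in $\Im L$. Thus $\Im L$ is precisely the full subcategory of $L$-local objects. There is no genuine obstacle here; the only step worth isolating is the identity $\Im L=\Im G$, and it is robust, since the essential surjectivity of $F$ follows formally from the invertibility of the counit — so the conclusion is independent of the particular model of the factorization built in the proof of Proposition~\ref{pr:loc}.
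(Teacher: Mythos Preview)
Your proof is correct and follows essentially the same route as the paper: both deduce the equivalence from Corollary~\ref{co:loc} and the description of $\Im L$ from Proposition~\ref{pr:local-obj}. The only difference is cosmetic: the paper chooses from the outset the specific factorization $\C\xto{F}\Im L\xto{G}\C$ with $G$ the inclusion (as constructed in the proof of Proposition~\ref{pr:loc}), so that Corollary~\ref{co:loc} yields the equivalence $\C[\Si(L)^{-1}]\xto{\sim}\Im L$ in one step, whereas you work with an arbitrary factorization $L=G\comp F$ and then identify $\D$ with $\Im L$ via the corestriction of $G$; your extra paragraph simply makes explicit that the conclusion is independent of the chosen factorization.
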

\begin{proof}
Write $L$ as composite $\C\xto{F}\Im L\xto{G}\C$ of two functors,
where $FX=LX$ for all $X$ in $\C$ and $G$ is the inclusion
functor. Then it follows from Corollary~\ref{co:loc} that $F$ induces
an equivalence $\C[\Si(L)^{-1}]\xto{\sim}\Im L$. The second assertion is an
immediate consequence of Proposition~\ref{pr:local-obj}.
\end{proof}

Given a localization functor $L\colon\C\to\C$ and an object $X$ in
$\C$, the morphism $X\to LX$ is initial among all morphisms to an
object in $\Im\D$ and terminal among all morphisms in $\Si(L)$. The
following statement makes this precise.

\begin{cor}
Let $L\colon\C\to\C$ be a localization functor and $\eta\colon\Id\C\to
L$ a morphism such that $L\eta$ is invertible. Then for each morphism
$\eta X\colon X\to LX$ the following holds.
\begin{enumerate}
\item The object $LX$ belongs to $\Im L$ and every morphism $X\to Y$
with $Y$ in $\Im L$ factors uniquely through $\eta X$.
\item The morphism $\eta X$ belongs to $\Si(L)$ and factors uniquely
through every morphism $X\to Y$ in $\Si(L)$.
\end{enumerate}
\end{cor}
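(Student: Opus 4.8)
The plan is to deduce both statements directly from Proposition~\ref{pr:local-obj} and Corollary~\ref{co:local-objects}, since the morphism $\eta X\colon X\to LX$ has by construction all the relevant properties; fix $\eta\colon\Id\C\to L$ with $L\eta$ invertible.

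For part (1), I would first note that $LX$ lies in $\Im L$ by the very definition of the essential image. By Corollary~\ref{co:local-objects} the subcategory $\Im L$ consists precisely of the $L$-local objects, so any $Y$ in $\Im L$ is $L$-local. Now invoke the implication (1)~$\Rightarrow$~(2) of Proposition~\ref{pr:local-obj}, applied with $W=X$: the map $\C(LX,Y)\to\C(X,Y)$ induced by $\eta X$ is bijective. Hence every morphism $X\to Y$ is the image of a unique morphism $LX\to Y$, that is, it factors uniquely through $\eta X$.

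For part (2), first $\eta X$ belongs to $\Si(L)$ because $L(\eta X)=(L\eta)X$ is invertible by hypothesis. Next, $LX$ is itself $L$-local: this is the implication (5)~$\Rightarrow$~(1) of Proposition~\ref{pr:local-obj}, applied with $X'=X$. So given any $\s\colon X\to Y$ in $\Si(L)$, the definition of ``local with respect to $\Si(L)$'' applied to the object $LX$ yields that the induced map $\C(Y,LX)\to\C(X,LX)$ is bijective; the unique preimage of $\eta X$ under this map is then the unique morphism $Y\to LX$ through which $\eta X$ factors.

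The argument is essentially bookkeeping: the only point requiring care is to cite the correct equivalences---that $\Im L$ equals the class of $L$-local objects and that $LX$ is $L$-local---so that the bijectivity statements of Proposition~\ref{pr:local-obj} can be applied in the two directions needed here. I do not expect any genuine obstacle beyond keeping track of which object plays the role of the ``local'' one in each of the two parts.
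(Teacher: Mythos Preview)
Your argument is correct and is precisely what the paper intends: its proof reads in full ``Apply Proposition~\ref{pr:local-obj}'', and you have simply unpacked which implications of that proposition (together with Corollary~\ref{co:local-objects}) are being used for each part. There is nothing to add or correct.
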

\begin{proof}
Apply Proposition~\ref{pr:local-obj}.
\end{proof}

\begin{rem}
(1) Let $L\colon\C\to\C$ be a localization functor and suppose there
    are two morphisms $\eta_i\colon\Id\C\to L$ such that $L\eta_i$ is
    invertible for $i=1,2$.  Then there exists a unique isomorphism
    $\p\colon L\xto{\sim} L$ such that $\eta_2=\p\comp\eta_1$. This
    follows from Lemma~\ref{le:local}.

(2) Given any functor $F\colon\C\to\D$, the full subcategory of $F$-local
objects is closed under taking all limits which exist in $\C$.
\end{rem}

\subsection{Existence of localization functors}

We provide a criterion for the existence of a localization functor
$L$; it explains how $L$ is determined by the category of $L$-local
objects.

\begin{prop}
Let $\C$ be a category and $\D$ a full subcategory. Suppose that every
object in $\C$ isomorphic to one in $\D$ belongs to $\D$. Then the
following are equivalent.
\begin{enumerate}
\item There exists a localization functor $L\colon\C\to\C$ with $\Im L=\D$.
\item For every object $X$ in $\C$ there exists a morphism $\eta
X\colon X\to X'$ with $X'$ in $\D$ such that every morphism $X\to Y$
with $Y$ in $\D$ factors uniquely through $\eta X$.
\item The inclusion functor $\D\to\C$ admits a left adjoint.
\end{enumerate}
\end{prop}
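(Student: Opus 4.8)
The plan is to prove the cycle of implications $(1)\Rightarrow(2)\Rightarrow(3)\Rightarrow(1)$, each of which is essentially a translation of one formulation of ``reflective subcategory'' into another. First I would establish $(1)\Rightarrow(2)$: given a localization functor $L\colon\C\to\C$ with $\Im L=\D$, pick a morphism $\eta\colon\Id\C\to L$ with $L\eta$ invertible (Proposition~\ref{pr:loc}). For each $X$, the morphism $\eta X\colon X\to LX$ has target in $\Im L=\D$, and the preceding Corollary (on $\eta X$ being initial among morphisms into $\Im L$) says precisely that every morphism $X\to Y$ with $Y\in\D$ factors uniquely through $\eta X$. So (2) holds with $X'=LX$ and this choice of $\eta X$.

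Next I would prove $(2)\Rightarrow(3)$. Assume for each $X$ a morphism $\eta X\colon X\to X'$ with $X'\in\D$ through which every morphism into $\D$ factors uniquely. Define $F\colon\C\to\D$ on objects by $FX=X'$. For a morphism $f\colon X\to Y$ in $\C$, the composite $X\xto{f}Y\xto{\eta Y}Y'$ is a morphism from $X$ into $\D$, hence factors uniquely through $\eta X$ as $(\eta Y)\comp f=(Ff)\comp(\eta X)$ for a unique $Ff\colon X'\to Y'$; uniqueness makes $F$ a functor and makes $\eta\colon\Id\C\to G\comp F$ natural, where $G\colon\D\to\C$ is the inclusion. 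Then for $X\in\C$, $Y\in\D$ the assignment $\alpha\mapsto (G\alpha)\comp(\eta X)$ is a bijection $\D(FX,Y)\to\C(X,GY)$ by the universal property in (2) — surjectivity is the factorization, injectivity is its uniqueness — and this bijection is natural in $X$ and $Y$ by the usual diagram chase. Hence $F$ is left adjoint to the inclusion $G$, which is (3).

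Finally $(3)\Rightarrow(1)$ is almost immediate from the machinery already in place: if $G\colon\D\to\C$ is the inclusion and admits a left adjoint $F\colon\C\to\D$, then $G$ is fully faithful (it is an inclusion of a full subcategory), so by Corollary~\ref{co:loc} the composite $L=G\comp F\colon\C\to\C$ is a localization functor. It remains to identify $\Im L$ with $\D$. Since $G$ is the inclusion, $LX=FX\in\D$ for all $X$, so $\Im L\subseteq\D$; conversely, for $X\in\D$ the adjunction unit $\eta X\colon X\to GFX$ is invertible because $G$ is fully faithful (Proposition~\ref{pr:quot} applied with the roles suitably read off, or directly: fully faithfulness of the right adjoint forces the unit to be iso on objects of $\D$), so $X\cong LX\in\Im L$; here I use the hypothesis that $\D$ is closed under isomorphism in $\C$ to conclude $\Im L=\D$ on the nose rather than just up to isomorphism.

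I do not expect any serious obstacle here — the statement is a standard ``reflective subcategory'' characterization and every ingredient (Proposition~\ref{pr:loc}, Corollary~\ref{co:loc}, and the Corollary on initiality of $\eta X$) is already available. The one point requiring a little care is the functoriality argument in $(2)\Rightarrow(3)$: one must check that $F$ is well defined on morphisms and that composition and identities are respected, all of which follow formally from the \emph{uniqueness} clause in the universal property, and that the resulting adjunction bijection is natural. That bookkeeping is routine, so I would state it and leave the diagram chase to the reader.
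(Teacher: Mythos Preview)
Your proof is correct and follows essentially the same route as the paper: the cycle $(1)\Rightarrow(2)\Rightarrow(3)\Rightarrow(1)$, using the initiality of $\eta X$ for the first step (the paper cites Proposition~\ref{pr:local-obj} directly rather than the Corollary derived from it), the universal property to build the left adjoint for the second, and Proposition~\ref{pr:loc}/Corollary~\ref{co:loc} for the third. You are slightly more explicit than the paper in verifying $\Im L=\D$ in $(3)\Rightarrow(1)$ and in noting where the isomorphism-closure hypothesis on $\D$ is used, which is a welcome clarification.
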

\begin{proof}
(1) $\Rightarrow$ (2): Suppose there exists a localization functor
    $L\colon\C\to\C$ with $\Im L=\D$ and let $\eta\colon \Id\C\to L$ be a
    morphism such that $L\eta$ is invertible. Then
    Proposition~\ref{pr:local-obj} shows that $\C(\eta X,Y)$ is
    bijective for all $Y$ in $\D$.

(2) $\Rightarrow$ (3): The morphisms $\eta X$ provide a functor
$F\colon\C\to\D$ by sending each $X$ in $\C$ to $X'$.
It is straightforward to check that $F$ is a left adjoint for the inclusion $\D\to\C$.

(3) $\Rightarrow$ (1): Let $G\colon\D\to\C$ denote the inclusion and
    $F$ its right adjoint. Then $L=G\comp F$ is a localization functor
    with $\Im L=\D$ by Proposition~\ref{pr:loc}.
\end{proof}

\subsection{Localization functors preserving coproducts}

We characterize the fact that a localization functor preserves small coproducts.

\begin{prop}
\label{pr:loc-coprod}
Let $L\colon\C\to\C$ be a localization functor and suppose the
category $\C$ admits small coproducts. Then the following are equivalent.
\begin{enumerate}
\item The functor $L$ preserves small coproducts.
\item The $L$-local objects are closed under taking small coproducts
in $\C$.
\item The right adjoint of the quotient functor $\C\to \C[\Si(L)^{-1}]$
preserves small coproducts.
\end{enumerate}
\end{prop}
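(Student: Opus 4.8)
The plan is to prove the three equivalences by a cycle $(1)\Rightarrow(2)\Rightarrow(3)\Rightarrow(1)$, using throughout the factorization $\C\xto{F}\D\xto{G}\C$ of $L=G\comp F$ provided by Corollary~\ref{co:loc}, where $\D\simeq\C[\Si(L)^{-1}]$ and $G$ is fully faithful with left adjoint $F$. The key elementary facts I would invoke are: a left adjoint always preserves whatever colimits exist in the source, and the right adjoint $G$ is fully faithful, so $G\comp F=L$ and $F\comp G\cong\Id_\D$; moreover for $L$-local objects, being in $\Im L$ is the same as being of the form $LX'$ by Proposition~\ref{pr:local-obj}.

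For $(1)\Rightarrow(2)$: if $(X_i)_{i\in I}$ is a small family of $L$-local objects, then by Proposition~\ref{pr:local-obj} each $\eta X_i\colon X_i\to LX_i$ is invertible, so the induced morphism $\coprod_i X_i\to\coprod_i LX_i$ is invertible; if $L$ preserves coproducts then $\coprod_i LX_i\cong L(\coprod_i X_i)$, and one checks this isomorphism is compatible with $\eta(\coprod_i X_i)$ (up to the canonical identifications), hence $\eta(\coprod_i X_i)$ is invertible and $\coprod_i X_i$ is $L$-local by Proposition~\ref{pr:local-obj}(3). For $(2)\Rightarrow(3)$: the right adjoint of $\C\to\C[\Si(L)^{-1}]$ is (up to the equivalence $\C[\Si(L)^{-1}]\simeq\D$) the inclusion $G\colon\D\to\C$, whose essential image is exactly the full subcategory of $L$-local objects by Corollary~\ref{co:local-objects}. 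A coproduct in $\D=\Im L$ of a family $(Y_i)$ can be computed as follows: form $\coprod_i GY_i$ in $\C$, which by hypothesis $(2)$ is again $L$-local, hence lies in $\Im L=\Im G$, so equals $GY$ for some $Y\in\D$; one verifies $Y=\coprod_i Y_i$ in $\D$ and that $G$ sends this coproduct to the coproduct in $\C$, i.e. $G$ preserves small coproducts.

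For $(3)\Rightarrow(1)$: write $L=G\comp F$ with $F$ the quotient functor $\C\to\C[\Si(L)^{-1}]$ (identified with our $F\colon\C\to\D$) and $G$ its fully faithful right adjoint. Since $F$ is a left adjoint, it preserves all small coproducts that exist in $\C$; by hypothesis $(3)$, $G$ also preserves small coproducts. Hence $L=G\comp F$ preserves small coproducts, which is $(1)$.

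The main obstacle I anticipate is purely bookkeeping rather than conceptual: in $(1)\Rightarrow(2)$ and $(2)\Rightarrow(3)$ one must be careful that the canonical comparison morphisms (the map from the coproduct of the $LX_i$ to $L$ of the coproduct, and the compatibility with $\eta$) are the right ones, so that "invertible componentwise" really does translate into "$\eta$ on the coproduct is invertible" and into "$G$ preserves the coproduct". None of this is deep, but it is the only place where a slip is easy; everything else is a direct application of Corollary~\ref{co:loc}, Corollary~\ref{co:local-objects}, Proposition~\ref{pr:local-obj}, and the standard fact that left adjoints preserve colimits. I would keep the exposition brief, citing those results and leaving the routine verifications as "straightforward to check".
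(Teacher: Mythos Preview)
Your proposal is correct and follows essentially the same approach as the paper: the paper proves $(1)\Rightarrow(2)$ via the composite isomorphism $\coprod_i X_i\xrightarrow{\sim}\coprod_i LX_i\xrightarrow{\sim}L(\coprod_i X_i)$, identifies the right adjoint of the quotient functor with the inclusion $\Im L\to\C$ to handle the equivalence with $(3)$, and gets $(3)\Rightarrow(1)$ by writing $L$ as quotient followed by right adjoint with the quotient preserving coproducts as a left adjoint. The only cosmetic difference is that the paper treats $(2)\Leftrightarrow(3)$ as a single biconditional (closed under coproducts in $\C$ $\Leftrightarrow$ inclusion preserves coproducts) rather than as one step in a cycle, but your cycle $(1)\Rightarrow(2)\Rightarrow(3)\Rightarrow(1)$ yields the same equivalences.
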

\begin{proof}
(1) $\Rightarrow$ (2): Let $(X_i)_{i\in I}$ be a family of $L$-local
    objects. Thus the natural morphisms $X_i\to LX_i$ are invertible
    by Proposition~\ref{pr:local-obj} and they induce an isomorphism
$$\coprod_i X_i\xto{\sim}\coprod_i LX_i\xto{\sim}L(\coprod_i X_i).$$
It follows that $\coprod_i X_i$ is $L$-local.

(2) $\Leftrightarrow$ (3): We can identify $\C[\Si(L)^{-1}]=\Im L$ by
Corollary~\ref{co:local-objects} and then the right adjoint of the
quotient functor identifies with the inclusion $\Im L\to\C$. Thus the
right adjoint preserves small coproducts if and only if the inclusion
$\Im L\to\C$ preserves small coproducts.

(3) $\Rightarrow$ (1): Write $L$ as composite
$\C\xto{}\C[\Si(L)^{-1}]\xto{}\C$ of the quotient functor $Q$ with its
right adjoint $\bar L$. Then $Q$ preserves small coproducts since it
is a left adjoint. It follows that $L$ preserves small coproducts if
$\bar L$ preserves small coproducts.
\end{proof}

\subsection{Colocalization functors}
A functor $\Ga\colon \C\to\C$ is called \emph{colocalization functor}
if its opposite functor $\Ga^\op\colon\C^\op\to\C^\op$ is a
localization functor. We call an object $X$ in $\C$
\emph{$\Ga$-colocal} if it is $\Ga^\op$-local when viewed as an object
of $\C^\op$. Note that a colocalization functor $\Ga\colon\C\to\C$
induces an equivalence
\[\C[\Si(\Ga)^{-1}]\lto[\sim]\Im\Ga\] 
and the essential image $\Im\Ga$ equals the full subcategory of $\C$
consisting of all $\Ga$-colocal objects.

\begin{rem}
We think of $\Ga$ as $L$ turned upside down; this explains our
notation. Another reason for the use of $\Ga$ is the interpretation of
local cohomology as colocalization.
\end{rem}

\subsection{Example: Localization of modules}

Let $A$ be an associative ring and denote by $\Mod A$ the category of
(right) $A$-modules. Suppose that $A$ is commutative and let $S\subseteq
A$ be a multiplicatively closed subset, that is, $1\in S$ and $st\in
S$ for all $s,t\in S$. We denote by
$$S^{-1}A=\{x/s\mid x\in A\text{ and }s\in S\}$$ the ring of
fractions. 
For each $A$-module $M$, let
$$S^{-1}M=\{x/s\mid x\in M\text{ and }s\in S\}$$ be the localized
module. An $S^{-1}A$-module $N$ becomes an $A$-module via restriction
of scalars along the canonical ring homomorpism $A\to S^{-1}A$.  We
obtain a pair of functors
$$F\colon\Mod A\lto\Mod S^{-1}A,\quad M\mapsto S^{-1}M\cong
M\otimes_A S^{-1}A,$$ 
$$G\colon\Mod S^{-1}A\lto\Mod A,\quad N\mapsto
N\cong\Hom_{S^{-1}A}(S^{-1}A,N).$$ Moreover, for each pair of modules
$M$ over $A$ and $N$ over $S^{-1}A$, we have natural morphisms
$$\eta M\colon M\lto (G\comp F)M=S^{-1}M,\quad x\mapsto x/1,$$
$$\theta N\colon S^{-1}N=(F\comp G)N\lto N, \quad x/s\mapsto xs^{-1}.$$
These natural morphisms induce mutually inverse bijections as follows:
$$\Hom_A(M,GN)\lto[\sim]\Hom_{S^{-1}A}(FM,N),\quad \a\mapsto \theta N\comp
F\a,$$
$$\Hom_{S^{-1}A}(FM,N)\lto[\sim]\Hom_A(M,GN),\quad\b\mapsto
G\b\comp\eta M.$$ 

It is clear that the functors $F$ and $G$ form an adjoint pair, that
is, $F$ is a left adjoint of $G$ and $G$ is a right adjoint of
$F$. Moreover, the adjunction morphism $\theta\colon F\comp G\to\Id$ is
invertible. Therefore the composite $L=G\comp F$ is a localization
functor.

Let us formulate this slightly more generally. Fix a ring homomorphism
$f\colon A\to B$. Then it is well known that the restriction functor
$\Mod B\to\Mod A$ is fully faithful if and only if $f$ is an
epimorphism; see \cite[Proposition~XI.1.2]{St}. Thus the functor $\Mod
A\to\Mod A$ taking a module $M$ to $M\otimes_AB$ is a localization
functor provided that $f$ is an epimorphism.

\subsection{Example: Localization of spectra}

A \emph{spectrum} $E$ is a sequence of based topological spaces $E_n$
and based homeomorphisms $E_n\to\Omega E_{n+1}$. A morphism of spectra
$E\to F$ is a sequence of based continuous maps $E_n\to F_n$ strictly
compatible with the given structural homeomorphisms. The homotopy
groups of a spectrum $E$ are the groups $\pi_n E=\pi_{n+i}(E_i)$ for
$i\geq 0$ and $n+i\geq 0$. A morphism between spectra is a \emph{weak
equivalence} if it induces an isomorphism on homotopy groups. The
\emph{stable homotopy category} $\Ho\Sc$ is obtained from the category
$\Sc$ of spectra by formally inverting the weak equivalences. Thus
$\Ho\Sc=\Sc[\Si^{-1}]$ where $\Si$ denotes the set of weak
equivalences. We refer to \cite{Ad,Q} for details.

\subsection{Notes} 
The category of fractions is introduced by Gabriel and Zisman in
\cite{GZ}, but the idea of formally inverting elements can be traced
back much further; see for instance \cite{Ore}. The appropriate
context for localization functors is the theory of monads; see \cite{MacL}.

\section{Calculus of fractions}

\subsection{Calculus of fractions}

Let $\C$ be a category and $\Si$ a set of morphisms in $\C$.  The
category of fractions $\C[\Si^{-1}]$ admits an elementary description
if some extra assumptions on $\Si$ are satisfied. We say that $\Si$
{\em admits a calculus of left fractions} if the following holds.
\begin{enumerate}
\item[(LF1)] If $\s,\t$ are composable morphisms in $\Si$, then $\t\comp \s$
is in $\Si$. The identity morphism $\id X$ is in $\Si$ for all $X$ in $\C$.
\item[(LF2)] Each pair of morphisms  $X'\xleftarrow{\s} X\xto{\a}Y$
with $\s$ in $\Si$ can be completed to a commutative square
$$\xymatrix{X\ar[r]^\a\ar[d]^{\s}&Y\ar[d]^{\s'}\\
X'\ar[r]^{\a'}&Y'}$$
such that $\s'$ is in $\Si$.
\item[(LF3)] Let $\a,\b\colon X\to Y$ be morphisms in $\C$. If there is a morphism
$\s\colon X'\to X$ in $\Si$ with $\a\comp\s=\b\comp\s$, then there
exists a morphism $\t\colon Y\to Y'$ in $\Si$ with $\t\comp\a=\t\comp\b$.
\end{enumerate}

Now assume that $\Si$ admits a calculus of left fractions. Then one obtains a new
category $\Si^{-1}\C$ as follows. The objects are those of $\C$. Given
objects $X$ and $Y$, we call a pair $(\a,\s)$ of morphisms 
$$\xymatrix{X\ar[r]^-\a&Y'&Y\ar[l]_-\s}$$ in $\C$ with $\s$ in $\Si$ a
\emph{left fraction}.  The morphisms $X\to Y$ in $\Si^{-1}\C$ are
equivalence classes $[\a,\s]$ of such left fractions, where two
diagrams $(\a_1,\s_1)$ and $(\a_2,\s_2)$ are equivalent if there
exists a commutative diagram
$$\xymatrix{&Y_1\ar[d]\\
X\ar[r]^{\a_3}\ar[ru]^{\a_1}\ar[rd]_{\a_2}&Y_3&
Y\ar[lu]_{\s_1}\ar[ld]^{\s_2}\ar[l]_{\s_3}\\ &Y_2\ar[u]}$$ with $\s_3$
in $\Si$. The composition of two equivalence classes $[\a,\s]$ and
$[\b,\t]$ is by definition the equivalene class
$[\b'\comp\a,\s'\comp\t]$ where $\s'$ and $\b'$ are obtained from
condition (LF2) as in the following commutative diagram.
$$\xymatrix{&&Z''\\ &Y'\ar[ru]^{\b'}&& Z'\ar[lu]_{\s'}\\
X\ar[ru]^\a&&Y\ar[lu]_{\s}\ar[ru]^\b&&Z\ar[lu]_{\t}}$$ We obtain a
canonical functor $$P_\Si\colon\C\lto\Si^{-1}\C$$ by taking the
identity map on objects and by sending a morphism $\a\colon X\to Y$ to the
equivalence class $[\a,\id Y]$. Let us compare $P_\Si$ with the
quotient functor $Q_\Si\colon\C\to\C[\Si^{-1}]$.

\begin{prop}
The functor $F\colon\Si^{-1}\C\to\C[\Si^{-1}]$ which is the identity
map on objects and which takes a morphism $[\a,\s]$ to $(Q_\Si\s)^{-1}\comp
Q_\Si\a$ is an isomorphism.
\end{prop}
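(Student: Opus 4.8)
The plan is to verify that $F$ is well-defined, functorial, and bijective on morphism sets, using the universal property (Q1)--(Q2) of $Q_\Si$ together with the explicit calculus-of-fractions description of $\Si^{-1}\C$. First I would check that $F$ is well-defined: if $(\a_1,\s_1)\sim(\a_2,\s_2)$ via the mediating data in the definition, apply $Q_\Si$ to the commutative diagram; since $Q_\Si$ inverts $\s_1$, $\s_2$, $\s_3$ (and hence the other vertical maps, which lie in $\Si$ or are forced to become invertible), a short diagram chase gives $(Q_\Si\s_1)^{-1}\comp Q_\Si\a_1=(Q_\Si\s_2)^{-1}\comp Q_\Si\a_2$. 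Functoriality is checked against the composition rule $[\b,\t]\comp[\a,\s]=[\b'\comp\a,\s'\comp\t]$: applying $Q_\Si$ to the (LF2)-square relating $\s',\b'$ to $\s,\b$ yields $Q_\Si\b'\comp(Q_\Si\s)^{-1}=(Q_\Si\s')^{-1}\comp Q_\Si\b$, from which $F([\b,\t]\comp[\a,\s])=(Q_\Si\s')^{-1}(Q_\Si\t)^{-1}Q_\Si\b'\,Q_\Si\a = F[\b,\t]\comp F[\a,\s]$ follows, and $F$ clearly preserves identities. Note also that $F\comp P_\Si=Q_\Si$, since $F[\a,\id Y]=Q_\Si\a$.

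Next I would construct the inverse, or equivalently show $F$ is a bijection on each $\Hom$-set. The clean way is to invoke the universal property: the functor $P_\Si\colon\C\to\Si^{-1}\C$ inverts every morphism of $\Si$ (if $\s\colon X\to Y$ is in $\Si$ then $[\id X,\s]$ is a two-sided inverse of $[\s,\id Y]=P_\Si\s$, as one checks from the composition rule). Hence by (Q2) there is a unique functor $\bar P\colon\C[\Si^{-1}]\to\Si^{-1}\C$ with $\bar P\comp Q_\Si=P_\Si$. Then $\bar P\comp F\comp P_\Si=\bar P\comp Q_\Si=P_\Si$ and $F\comp\bar P\comp Q_\Si=F\comp P_\Si=Q_\Si$; since both $F\comp\bar P$ and $\bar P\comp F$ are identity on objects and agree with the identity functor after precomposition with $Q_\Si$ (resp.\ $P_\Si$), uniqueness in (Q2) forces $F\comp\bar P=\Id$, and for $\bar P\comp F=\Id$ one uses that every morphism of $\Si^{-1}\C$ has the form $(P_\Si\s)^{-1}\comp P_\Si\a$, so $\bar P\comp F$ fixes it.

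The main obstacle is the surjectivity/inverse-construction step, specifically the claim that every morphism of $\Si^{-1}\C$ factors as $(P_\Si\s)^{-1}\comp P_\Si\a$; this requires unwinding the composition rule to see that $[\a,\s]=[\id,\s]\comp[\a,\id]=(P_\Si\s)^{-1}\comp P_\Si\a$, which in turn rests on showing $[\id X,\s]$ really is the inverse of $P_\Si\s$ inside $\Si^{-1}\C$ — a computation using (LF2) and (LF3) that the calculus of fractions is precisely designed to make work. Once that identity is in hand, the rest is bookkeeping with the two universal properties. I would present the argument in the order: (i) $F$ well-defined and functorial; (ii) $P_\Si$ inverts $\Si$, giving $\bar P$; (iii) $F\comp\bar P=\Id$ and $\bar P\comp F=\Id$ via uniqueness and the factorization of morphisms in $\Si^{-1}\C$.
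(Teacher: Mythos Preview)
Your approach is correct and essentially the same as the paper's: the paper also notes that $P_\Si$ inverts $\Si$, obtains the inverse functor $G=\bar P$ via the universal property (Q2), and then simply asserts that $F\comp G=\Id$ and $G\comp F=\Id$ are straightforward to check. You have merely spelled out the details that the paper leaves to the reader, including the factorization $[\a,\s]=(P_\Si\s)^{-1}\comp P_\Si\a$ needed for $\bar P\comp F=\Id$.
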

\begin{proof}
The functor $P_\Si$ inverts all morphisms in $\Si$ and factors therefore
through $Q_\Si$ via a functor $G\colon \C[\Si^{-1}]\to\Si^{-1}\C$.
It is straightforward to check that $F\comp G=\Id$ and $G\comp F=\Id$.
\end{proof}
From now on, we will identify $\Si^{-1}\C$ with $\C[\Si^{-1}]$
whenever $\Si$ admits a calculus of left fractions. A set of morphisms
$\Si$ in $\C$ {\em admits a calculus of right fractions} if the dual
conditions of (LF1) -- (LF3) are satisfied. Moreover, $\Si$ is called
a {\em multiplicative system} if it admits both, a calculus of left
fractions and a calculus of right fractions. Note that all results
about sets of morphisms admitting a calculus of left fractions have a
dual version for sets of morphisms admitting a calculus of right
fractions.

\subsection{Calculus of fractions and adjoint functors}
Given a category $\C$ and a set of morphisms $\Si$, it is an
interesting question to ask when the quotient functor
$\C\to\C[\Si^{-1}]$ admits a right adjoint. It turns out that this
problem is closely related to the property of $\Si$ to admit a
calculus of left fractions.

\begin{lem}
Let $F\colon\C\to\D$ and $G\colon\D\to\C$ be a pair of adjoint
functors. Assume that the right adjoint $G$ is fully faithful and let
$\Si$ be the set of morphisms $\s$ in $\C$ such that $F\s$ is invertible.
Then $\Si$ admits a calculus of left fractions.
\end{lem}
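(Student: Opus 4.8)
The plan is to verify the three axioms (LF1)--(LF3) directly, using the adjunction $(F,G)$ together with the fact that $G$ is fully faithful; by Proposition~\ref{pr:quot} the latter is equivalent to the counit $\theta\colon FG\to\Id\D$ being invertible, and I expect to use $\theta$ constantly. Write $\eta\colon\Id\C\to GF$ for the unit, so that $L=GF$ is a localization functor with $L\eta$ invertible by Proposition~\ref{pr:loc}; note also that $\Si=\Si(F)=\Si(L)$ since $G$ is fully faithful (as in the proof of Corollary~\ref{co:loc}). The key elementary observation I would record first: a morphism $\s$ lies in $\Si$ if and only if $L\s$ is invertible, and for every $X$ the morphism $\eta X\colon X\to LX$ lies in $\Si$ (because $L\eta X$ is invertible). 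This gives a plentiful supply of morphisms in $\Si$ of a very controlled shape, namely the units, and the strategy is to reduce each axiom to a statement about $L$-local objects via Proposition~\ref{pr:local-obj}.

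For (LF1), closure under composition is immediate since $L$ is a functor and $L(\t\comp\s)=L\t\comp L\s$ is invertible when both factors are; and $L\id X=\id LX$ is invertible. For (LF2), given $X'\xleftarrow{\s}X\xrightarrow{\a}Y$ with $\s\in\Si$, I would use the naturality square of $\eta$: applying $\eta$ to $\s$ gives a commutative square with $\eta X$, $\eta X'$ on the vertical edges and $L\s$ on the bottom, and $L\s$ is invertible. The cleanest completion is to push everything into the local subcategory: form the square
$$\xymatrix{X\ar[r]^-{\eta X}\ar[d]_-{\s}&LX\ar[d]^-{L\s}\\ X'\ar[r]^-{\eta X'}&LX'}$$
with $L\s$ invertible, and then for the given $\a\colon X\to Y$ set $Y'=LX'$, let $\s'\colon Y\to Y'$ be $(L\s)^{-1}\comp\eta X'\comp\s$ — wait, rather I would take the pushout-style diagram: put $Y'=LY\amalg$ nothing; more honestly, take $\s'=\eta Y\colon Y\to LY$, which lies in $\Si$, and then I need $\a'\colon X'\to LY$ with $\a'\comp\s=\eta Y\comp\a$. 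Since $L\s$ is invertible and $LY$ is $L$-local, the map $\C(X',LY)\to\C(X,LY)$ induced by $\s$ is bijective (here I use that $LY$ being $L$-local and $\s\in\Si(L)$ forces $\C(\s,LY)$ bijective, by definition of $L$-local), so a unique such $\a'$ exists; this closes the square with $\s'\in\Si$. For (LF3), suppose $\a,\b\colon X\to Y$ with $\a\comp\s=\b\comp\s$ for some $\s\colon X'\to X$ in $\Si$. Apply $L$: $L\a\comp L\s=L\b\comp L\s$ and $L\s$ is invertible, so $L\a=L\b$. Now take $\t=\eta Y\colon Y\to LY$, which lies in $\Si$; then $\t\comp\a=\eta Y\comp\a=L\a\comp\eta X=L\b\comp\eta X=\eta Y\comp\b=\t\comp\b$ by naturality of $\eta$, as required.

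The main obstacle, and the only place real care is needed, is (LF2): one must produce the completing square with the right morphism in $\Si$, and the temptation is to build an actual pushout in $\C$, which need not exist. The resolution above avoids this by exploiting that $L$-local objects see morphisms of $\Si$ as isomorphisms — i.e. by mapping the whole configuration into $\Im L$ via $\eta$ and using that $\eta X'$, $\eta Y$ are themselves in $\Si$. I would double-check the uniqueness/coherence needed so that the chosen $\a'$ makes the square commute on the nose, and confirm that $\s'=\eta Y$ is a legitimate choice (it is, since $L(\eta Y)$ is invertible). Everything else is a one-line consequence of functoriality of $L$ and naturality of $\eta$.
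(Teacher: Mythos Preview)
Your proof is correct and follows essentially the same approach as the paper: for both (LF2) and (LF3) you take $\s'=\t=\eta Y$ and exploit that $LY$ is $L$-local so that $\C(\s,LY)$ is bijective, which is exactly what the paper does via Proposition~\ref{pr:local-obj}. Your (LF3) argument phrases the conclusion through naturality of $\eta$ and invertibility of $L\s$ rather than directly invoking injectivity of $\C(\s,LY)$, but this is the same computation unwound.
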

\begin{proof} 
We need to check the conditions (LF1) -- (LF3). Observe first that
$L=G\comp F$ is a localization functor so that we can apply
Proposition~\ref{pr:local-obj}.

(LF1): This condition is clear because $F$ is a functor.

(LF2): Let $X'\xleftarrow{\s} X\xto{\a}Y$ be a pair of morphisms with
 $\s$ in $\Si$. This can be completed to a commutative square
$$\xymatrix{X\ar[r]^\a\ar[d]^{\s}&Y\ar[d]^{\s'}\\ X'\ar[r]^{\a'}&Y'}$$
if we take for $\s'$ the morphism $\eta Y\colon Y\to LY$ in $\Si$,
because the map $\C(\s,LY)$ is surjective by Proposition~\ref{pr:local-obj}.

(LF3): Let $\a,\b\colon X\to Y$ be morphisms in $\C$ and suppose there
is a morphism $\s\colon X'\to X$ in $\Si$ with
$\a\comp\s=\b\comp\s$. Then we take $\t=\eta Y$ in $\Si$ and have
$\t\comp\a=\t\comp\b$, because the map $\C(\s,LY)$ is
injective  by Proposition~\ref{pr:local-obj}.
\end{proof}

\begin{lem}\label{le:calc-adj}
Let $\C$ be a category and $\Si$ a set of morphisms admitting a
calculus of left fractions. Then the following are equivalent for an
object $X$ in $\C$.
\begin{enumerate}
\item  $X$ is local with respect to $\Si$.
\item The quotient functor induces a bijection
$\C(W,X)\to\C[\Si^{-1}](W,X)$ for all $W$.
\end{enumerate}
\end{lem}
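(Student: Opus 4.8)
The plan is to prove the equivalence by exhibiting, for a $\Si$-local object $X$, an explicit inverse to the map $\C(W,X)\to\C[\Si^{-1}](W,X)$ induced by the quotient functor $P_\Si$, using the left-fractions description of $\C[\Si^{-1}]$ that was just established. The implication (2) $\Rightarrow$ (1) is the easy direction: if $P_\Si$ induces a bijection $\C(W,X)\to\C[\Si^{-1}](W,X)$ for all $W$, then for any $\s\colon W\to W'$ in $\Si$ the map $\C(\s,X)$ is identified with $\C[\Si^{-1}](P_\Si\s,X)$, which is bijective because $P_\Si\s$ is invertible; hence $X$ is local with respect to $\Si$. So the substance is in (1) $\Rightarrow$ (2).

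First I would set up the left-fractions calculus: a morphism $W\to X$ in $\C[\Si^{-1}]$ is an equivalence class $[\a,\s]$ of a diagram $W\xto{\a}X'\xleftarrow{\s}X$ with $\s$ in $\Si$. Since $X$ is $\Si$-local, $\C(\s,X)\colon\C(X',X)\to\C(X,X)$ is bijective; let $\s^*\colon X'\to X$ be the unique morphism with $\s^*\comp\s=\id X$. Then define a map $\Phi\colon\C[\Si^{-1}](W,X)\to\C(W,X)$ by $\Phi[\a,\s]=\s^*\comp\a$. The first key step is to check this is well-defined on equivalence classes: given an equivalence datum with $\s_3$ in $\Si$ as in the diagram defining equality of left fractions, one uses the uniqueness characterizing $\s^*$ (together with $\Si$-locality applied to $\s_3$) to see that $\s_1^*\comp\a_1$ and $\s_2^*\comp\a_2$ agree; concretely, both equal $\s_3^*\comp\a_3$ after composing through the common target $Y_3$, where $\s_3^*$ is the retraction of $\s_3$ coming from locality.

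The second key step is to verify $\Phi$ is inverse to $P_\Si$. One composite is immediate: $\Phi(P_\Si\a)=\Phi[\a,\id X]=(\id X)^*\comp\a=\a$, since $(\id X)^*=\id X$. For the other composite, given $[\a,\s]$ with $\s\colon X\to X'$ in $\Si$, one must show $P_\Si(\s^*\comp\a)=[\a,\s]$, i.e.\ that $[\s^*\comp\a,\id X]=[\a,\s]$ in $\C[\Si^{-1}]$. This follows because $P_\Si\s$ is invertible in $\C[\Si^{-1}]$ with inverse $[\s^*,\id X]$ — indeed $\s^*\comp\s=\id X$ gives one relation, and $\s\comp\s^*=\id X'$ holds after applying $\Si$-locality of... actually $\s\comp\s^*$ need not equal $\id X'$ in $\C$, but it does become the identity in $\C[\Si^{-1}]$: precomposing $\s\comp\s^*\comp\s = \s$ with the invertible $P_\Si\s$ shows $P_\Si(\s\comp\s^*)=\id$. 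Hence in $\C[\Si^{-1}]$ we get $[\a,\s]=(P_\Si\s)^{-1}\comp P_\Si\a = P_\Si\s^*\comp P_\Si\a = P_\Si(\s^*\comp\a)$, as required.

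The main obstacle is the well-definedness in the first key step — tracking the equivalence relation on left fractions through the retractions $\s^*$ — and more subtly the point that a single $\s$-local object only guarantees the retraction property $\s^*\comp\s=\id$, not a two-sided inverse in $\C$; one must be careful to pass to $\C[\Si^{-1}]$ before inverting. An alternative, cleaner route would be to argue purely formally: by Lemma~\ref{le:calc-adj}'s hypothesis $\Si$ admits a calculus of left fractions, so $\C[\Si^{-1}](W,X)=\colim{(\a,\s)}\C(W,X')$ over the filtered category of fractions $X\xto{\s}X'$, and $\Si$-locality says precisely that every transition map $\C(W,X)\to\C(W,X')$ along $\s\in\Si$ is bijective, so the colimit collapses to $\C(W,X)$ via $\s=\id X$; this makes (1) $\Leftrightarrow$ (2) transparent but requires unwinding the filtered-colimit description, so I would likely present the explicit-inverse argument above.
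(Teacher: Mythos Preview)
Your proof is correct and follows essentially the same idea as the paper's: both use that $\Si$-locality of $X$ yields, for each $\s\colon X\to X'$ in $\Si$, a retraction $\s^*\colon X'\to X$ with $\s^*\comp\s=\id X$, and both use this to undo left fractions. The only difference is organizational: the paper checks surjectivity and injectivity of $f_W$ directly (for injectivity it notes that $f_W(\a)=f_W(\b)$ forces $\s\comp\a=\s\comp\b$ for some $\s\in\Si$, then cancels $\s$ via its retraction), whereas you package the same content as an explicit inverse $\Phi[\a,\s]=\s^*\comp\a$ and verify well-definedness --- but the underlying argument is the same.
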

\begin{proof}
(1) $\Rightarrow$ (2): To show that
$f_W\colon\C(W,X)\to\C[\Si^{-1}](W,X)$ is surjective, choose a left
fraction $W\xto{\a}X'\xleftarrow{\s}X$ with $\s$ in $\Si$. Then there
exists $\t\colon X'\to X$ with $\t\comp\s=\id X$ since $X$ is
local. Thus $f_W(\t\comp\a)=[\a,\s]$. To show that $f_W$ is injective,
suppose that $f_W(\a)=f_W(\b)$. Then we have $\s\comp\a=\s\comp\b$ for
some $\s\colon X\to X'$ in $\Si$. The morphism $\s$ is a section
because $X$ is local, and therefore $\a=\b$.

(2) $\Rightarrow$ (1): Let $\s\colon W\to W'$ be a morphism in $\Si$.
Then we have $\C(\s,X)\cong\C[\Si^{-1}]([\s,\id W'],X)$. Thus
$\C(\s,X)$ is bijective since $[\s,\id W']$ is invertible.
\end{proof}

\begin{prop}\label{pr:calc-adj}
Let $\C$ be a category, $\Si$ a set of morphisms admitting a calculus
of left fractions, and $Q\colon\C\to\C[\Si^{-1}]$ the quotient
functor. Then the following are equivalent.
\begin{enumerate}
\item The functor $Q$ has a right adjoint
(which is then fully faithful).
\item For each object $X$ in $\C$, there exist a morphism $\eta X\colon X\to
X'$ such that $X'$ is local with respect to $\Si$ and $Q(\eta X)$ is
invertible.
\end{enumerate}
\end{prop}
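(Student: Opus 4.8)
The plan is to prove the two implications separately, using Lemma~\ref{le:calc-adj} as the bridge between ``local'' objects and the image of the quotient functor $Q$.

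For (1) $\Rightarrow$ (2): Suppose $Q$ has a right adjoint $G\colon\C[\Si^{-1}]\to\C$. Since $Q$ inverts the morphisms of $\Si$, the composite $L=G\comp Q$ is a localization functor once we know $G$ is fully faithful; but this follows from Proposition~\ref{pr:quot}, because $Q$ is (up to the identification $\Si^{-1}\C=\C[\Si^{-1}]$) the quotient functor attached to $\Si$, hence $\bar Q=\Id$ is trivially an equivalence, forcing the counit $Q\comp G\to\Id$ to be invertible. So I would first record that the counit $\theta\colon Q\comp G\to\Id\C[\Si^{-1}]$ is invertible. Now take $\eta X\colon X\to GQX$ to be the unit of the adjunction. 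Since $\theta$ is invertible, $Q\eta X$ is invertible (the standard triangle identity $\theta Q\comp Q\eta=\id$ gives $Q\eta X=(\theta QX)^{-1}$), hence $\eta X\in\Si(L)$; and since $Q\eta X$ is invertible in $\C[\Si^{-1}]$ in particular $\eta X\in\Si$ up to the usual caveat — more precisely $Q(\eta X)$ is invertible, which is exactly what (2) demands. It remains to see that $X'=GQX$ is local with respect to $\Si$. By Lemma~\ref{le:calc-adj} it suffices to check that $Q$ induces a bijection $\C(W,GQX)\to\C[\Si^{-1}](W,GQX)$ for all $W$; but the adjunction gives $\C(W,GQX)\cong\C[\Si^{-1}](QW,QX)\cong\C[\Si^{-1}](QW,QGQX)$ — using $\theta$ invertible — and this composite is precisely the map induced by $Q$ together with the isomorphism $\theta$. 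So $GQX$ is local, and (2) holds.

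For (2) $\Rightarrow$ (1): Let $\D$ be the full subcategory of $\C$ consisting of the objects local with respect to $\Si$. By Lemma~\ref{le:calc-adj}, $Q$ restricted to $\D$ is fully faithful into $\C[\Si^{-1}]$; and by hypothesis every object $X$ of $\C$ admits $\eta X\colon X\to X'$ with $X'\in\D$ and $Q(\eta X)$ invertible, so $Q|_\D$ is essentially surjective, hence an equivalence $\D\xto{\sim}\C[\Si^{-1}]$. Thus it is enough to produce a right adjoint to the inclusion $\D\hookrightarrow\C$ (then composing with a quasi-inverse of $Q|_\D$ gives the right adjoint to $Q$, which is automatically fully faithful since $Q|_\D$ is). The assignment $X\mapsto X'$ extends to a functor $\C\to\D$ by Lemma~\ref{le:local}: given $f\colon X\to Y$, the morphism $\eta Y\comp f\colon X\to Y'$ factors uniquely through $\eta X$ because $Q(\eta X)$ is invertible and $Y'$ is local (so $\C(\eta X,Y')$ is bijective — this is the local-object property applied to $\eta X$, which lies in $\Si$ after passing through the calculus of fractions, or directly: any morphism in $\Si$ induces a bijection into a local object). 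Checking functoriality and the adjunction bijection $\D(X',Z)\cong\C(X,Z)$ for $Z\in\D$ is then routine, again via Lemma~\ref{le:local} and the defining property of local objects.

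The main obstacle, I expect, is bookkeeping around the precise relationship between ``$Q(\s)$ is invertible'' and ``$\s\in\Si$'', together with verifying that $\eta X$ behaves like an element of $\Si$ with respect to local objects; the calculus of left fractions is what makes this smooth, since it guarantees that a morphism $W\to W'$ with $Q$-invertible image induces a bijection $\C(W',Z)\to\C(W,Z)$ for every local $Z$ (one can write an explicit inverse using left fractions, exactly as in the proof of Lemma~\ref{le:calc-adj}). Once that point is isolated and proved as a small sublemma, both implications become essentially formal manipulations of adjunctions and universal properties, and the equivalence $\D\simeq\C[\Si^{-1}]$ is the conceptual heart. I would also take care to note explicitly, for (1) $\Rightarrow$ (1)'s parenthetical, that a right adjoint to $Q$ is necessarily fully faithful: this is immediate from Proposition~\ref{pr:quot} since $Q=Q_\Si$ has $\bar Q=\Id$ an equivalence.
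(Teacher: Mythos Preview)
Your argument is essentially correct, and for (1) $\Rightarrow$ (2) it matches the paper's proof closely (the paper invokes Proposition~\ref{pr:local-obj} where you invoke Lemma~\ref{le:calc-adj}, but these are equivalent here). There is one terminological slip: in (2) $\Rightarrow$ (1) you say it suffices to produce a \emph{right} adjoint to the inclusion $\D\hookrightarrow\C$, but the functor $X\mapsto X'$ you then construct, with adjunction bijection $\D(X',Z)\cong\C(X,Z)$, is a \emph{left} adjoint to that inclusion. The rest of your argument is consistent with ``left'', so this is just a misstatement, not a mathematical error.

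For (2) $\Rightarrow$ (1) the paper takes a more direct route that avoids your sublemma about $\Si$-local versus $\bar\Si$-local objects. Rather than first building a left adjoint to $\D\hookrightarrow\C$ and then composing with a quasi-inverse of $Q|_\D$, the paper defines the right adjoint of $Q$ immediately by $Y\mapsto Y'$ and verifies the adjunction bijection as the composite
\[
\C(X,Y')\xrightarrow{\ \sim\ }\C[\Si^{-1}](X,Y')\xleftarrow{\ \sim\ }\C[\Si^{-1}](X,Y),
\]
the first map being bijective by Lemma~\ref{le:calc-adj} (since $Y'$ is $\Si$-local) and the second because $Q(\eta Y)$ is invertible. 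This sidesteps the need to check that $\C(\eta X,Y')$ is bijective for a morphism $\eta X$ that is only known to lie in the saturation $\bar\Si$, which is exactly the point where your sublemma enters. Your sublemma is correct and its proof (via the commuting square with Lemma~\ref{le:calc-adj}) is the right one, so your route works; it is simply longer. What your approach buys is the explicit identification of $\D$ as a reflective subcategory of $\C$, which is conceptually pleasant; what the paper's approach buys is economy.
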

\begin{proof} 
(1) $\Rightarrow$ (2): Denote by $Q_\r$ the right adjoint of $Q$ and
    by $\eta\colon \Id\C\to Q_\r Q$ the adjunction morphism. We take
    for each object $X$ in $\C$ the morphism $\eta X\colon X\to Q_\r Q
    X$. Note that $Q_\r Q X$ is local by
    Proposition~\ref{pr:local-obj}.

(2) $\Rightarrow$ (1): We fix objects $X$ and $Y$. Then we have two
natural bijections
$$\C[\Si^{-1}](X,Y)\xto{\sim}\C[\Si^{-1}](X,Y')\xleftarrow{\sim}\C(X,Y').$$
The first is induced by $\eta Y\colon Y\to Y'$ and is bijective since
$Q(\eta Y)$ is invertible.  The second map is bijective by
Lemma~\ref{le:calc-adj}, since $Y'$ is local with respect to $\Si$.
Thus we obtain a right adjoint for $Q$ by sending each object $Y$
of $\C[\Si^{-1}]$ to $Y'$.
\end{proof}

\subsection{A criterion for the fractions to form a small set}

Let $\C$ be a category and $\Si$ a set of morphisms in $\C$. Suppose
that $\Si$ admits a calculus of left fractions. From the construction
of $\C[\Si^{-1}]$ we cannot expect that for any given pair of objects
$X$ and $Y$ the equivalence classes of fractions in
$\C[\Si^{-1}](X,Y)$ form a small set. The situation is different if
the category $\C$ is  small.  Then it is clear that
$\C[\Si^{-1}](X,Y)$ is a small set for all objects $X,Y$. The following
criterion generalizes this simple observation.

\begin{lem}\label{le:frac-set}
Let $\C$ be a category and $\Si$ a set of morphisms in $\C$ which
admits a calculus of left fractions. Let $Y$ be an object in $\C$ and
suppose that there exists a small set $S=S(Y,\Si)$ of objects in $\C$
such that for every morphism $\s\colon Y\to Y'$ in $\Si$ there is a
morphism $\t\colon Y'\to Y''$ with $\t\comp\s$ in $\Si$ and $Y''$ in
$S$. Then $\C[\Si^{-1}](X,Y)$ is a small set for every object $X$ in
$\C$.
\end{lem}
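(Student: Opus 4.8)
The plan is to show that every morphism $X\to Y$ in $\C[\Si^{-1}]$ can be represented by a left fraction whose target lies in the fixed small set $S$, and then to bound the number of such representatives. First I would take an arbitrary morphism in $\C[\Si^{-1}](X,Y)$ and, using the calculus of left fractions, write it as $[\a,\s]$ with $\a\colon X\to Y'$ and $\s\colon Y\to Y'$ in $\Si$. By hypothesis there is $\t\colon Y'\to Y''$ with $Y''\in S$ and $\t\comp\s\in\Si$. Postcomposing both legs with $\t$ gives an equivalent fraction $[\t\comp\a,\t\comp\s]$ whose target $Y''$ belongs to $S$; here I would invoke the equivalence relation defining morphisms in $\Si^{-1}\C$ (the diagram with $\s_3=\t$) to see that $[\a,\s]=[\t\comp\a,\t\comp\s]$.

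Next I would count. Every morphism $X\to Y$ in $\C[\Si^{-1}]$ is the class of some pair $(\b,\rho)$ with $\b\colon X\to Z$ in $\Mor\C$, $\rho\colon Y\to Z$ in $\Si$, and $Z\in S$. The collection of all such pairs $(\b,\rho)$ is a subset of $\bigsqcup_{Z\in S}\bigl(\C(X,Z)\times\C(Y,Z)\bigr)$, which is a small set because $S$ is small and each $\C(X,Z)$ and $\C(Y,Z)$ is small by our standing assumption on hom-sets. Since $\C[\Si^{-1}](X,Y)$ is a quotient of this small set by the equivalence relation, it is itself small, as a quotient of a small set is small. This completes the argument.

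The one point requiring a little care — and the main obstacle — is verifying that the replacement $[\a,\s]\mapsto[\t\comp\a,\t\comp\s]$ is legitimate, i.e.\ that the target of the right leg can really be pushed into $S$ without changing the morphism. This is exactly where the defining equivalence relation on left fractions is used: the commutative diagram with $Y_1=Y'$, $Y_2=Y''$, $Y_3=Y''$, $\s_1=\s$, $\s_2=\s_3=\t\comp\s$, and the vertical maps $\t$ and $\id$ exhibits $(\a,\s)\sim(\t\comp\a,\t\comp\s)$. Once this is in place, the surjection from the small set of admissible pairs onto $\C[\Si^{-1}](X,Y)$ is immediate, and no further set-theoretic subtlety arises. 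Note that the dual statement holds for sets admitting a calculus of right fractions.
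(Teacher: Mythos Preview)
Your proof is correct and follows essentially the same approach as the paper's: both replace an arbitrary fraction $(\a,\s)$ by the equivalent fraction $(\t\comp\a,\t\comp\s)$ landing in $S$, and then observe that the pairs with target in $S$ form a small set. You give more detail than the paper (in particular the explicit verification of the equivalence via the defining diagram), but the argument is the same.
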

\begin{proof}
The condition on $Y$ implies that every fraction
$X\stackrel{\a}\rightarrow Y'\stackrel{\s}\leftarrow Y$ is equivalent
to one of the form $X\stackrel{\a'}\rightarrow
Y''\stackrel{\s'}\leftarrow Y$ with $Y''$ in $S$.  Clearly, the
fractions of the form $(\a',\s')$ with $\s'\in\C(Y,Y'')$ and $Y''\in
S$ form a small set.
\end{proof}

\subsection{Calculus of fractions for subcategories}
We provide a criterion such that the calculus of fractions for a set
of morphisms in a category $\C$ is compatible with the passage to a
subcategory of $\C$.

\begin{lem}\label{le:sub}
Let $\C$ be a category and $\Si$ a set of morphisms admitting a
calculus of left fractions. Suppose $\D$ is a full subcategory of $\C$
such that for every morphism $\s\colon Y\to Y'$ in $\Si$ with $Y$ in
$\D$ there is a morphism $\t\colon Y'\to Y''$ with $\t\comp\s$ in
$\Si\cap\D$.  Then $\Si\cap \D$ admits a calculus of left fractions
and the induced functor $\D[(\Si\cap\D)^{-1}]\to\C[\Si^{-1}]$ is fully
faithful.
\end{lem}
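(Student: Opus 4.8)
The plan is to verify conditions (LF1)--(LF3) for $\Si\cap\D$ directly, using the hypothesis to push morphisms of $\Si$ with source in $\D$ into $\Si\cap\D$, and then to establish full faithfulness of $\D[(\Si\cap\D)^{-1}]\to\C[\Si^{-1}]$ by identifying fractions on both sides via the description of morphisms provided in the subsection on calculus of fractions.

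First I would check the calculus of left fractions for $\Si\cap\D$. Condition (LF1) is immediate: identities lie in $\Si\cap\D$, and a composite of composable morphisms in $\Si\cap\D$ lies in $\Si$ (since $\Si$ satisfies (LF1)) and in $\D$ (since $\D$ is a subcategory), hence in $\Si\cap\D$. For (LF2), start with $X'\xleftarrow{\s}X\xto{\a}Y$ in $\D$ with $\s\in\Si\cap\D$. Apply (LF2) in $\C$ to get a commutative square with a morphism $\s'\colon Y\to Y_1$ in $\Si$; now $Y\in\D$, so the hypothesis yields $\t\colon Y_1\to Y''$ with $\t\comp\s'\in\Si\cap\D$, and one enlarges the square by composing along $\t$ (using that $X'\to Y_1\to Y''$ is a morphism of $\C$, but since $\D$ is full and $X',Y''\in\D$ it is automatically a morphism of $\D$). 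For (LF3), given $\a,\b\colon X\to Y$ in $\D$ and $\s\colon X'\to X$ in $\Si\cap\D$ with $\a\comp\s=\b\comp\s$, apply (LF3) in $\C$ to get $\t'\colon Y\to Y_1$ in $\Si$ with $\t'\comp\a=\t'\comp\b$; then the hypothesis (with source $Y\in\D$) produces $\t''\colon Y_1\to Y''$ with $\t''\comp\t'\in\Si\cap\D$, and $(\t''\comp\t')\comp\a=(\t''\comp\t')\comp\b$ as required.

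Next I would treat full faithfulness. Fix $X,Y\in\D$. A morphism $X\to Y$ in $\C[\Si^{-1}]$ is an equivalence class of fractions $X\xto{\a}Y_1\xleftarrow{\s}Y$ with $\s\in\Si$; since $Y\in\D$, the hypothesis gives $\t\colon Y_1\to Y''$ with $\t\comp\s\in\Si\cap\D$, and then $(\t\comp\a,\t\comp\s)$ is an equivalent fraction all of whose data lie in $\D$ — this shows the functor $\D[(\Si\cap\D)^{-1}]\to\C[\Si^{-1}]$ is \emph{full}. For faithfulness, suppose two fractions $(\a_1,\s_1)$ and $(\a_2,\s_2)$ in $\D$ become equivalent in $\C[\Si^{-1}]$, witnessed by $\a_3\colon X\to Y_3$, $\s_3\colon Y\to Y_3$ in $\C$ with $\s_3\in\Si$ and the appropriate commutativity. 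Again $Y\in\D$, so we may post-compose with some $\t\colon Y_3\to Y_4$ so that $\t\comp\s_3\in\Si\cap\D$; since $\D$ is full, the morphisms $\t\comp\a_i$ and $\t\comp\s_i$ all land in $\D$, and they still witness equivalence, now inside $\D[(\Si\cap\D)^{-1}]$. Hence the two fractions already agree in $\D[(\Si\cap\D)^{-1}]$.

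The main obstacle is purely bookkeeping rather than conceptual: one must be careful, at each step, to check that after pushing a diagram forward along some $\t$ the resulting morphisms genuinely lie in $\D$ (which is where fullness of $\D$ in $\C$ is used) and that all commutativity relations are preserved under this post-composition. A secondary subtlety is that in (LF2) and (LF3) the morphism supplied by the hypothesis has source the specific object $Y$ arising in the construction, so one should phrase the hypothesis as applying to \emph{every} morphism of $\Si$ with source in $\D$, and note that every such object occurring in these diagrams does lie in $\D$. Once these checks are in place the argument is routine.
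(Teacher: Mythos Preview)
Your proposal is correct and follows essentially the same approach as the paper's proof: verify (LF1)--(LF3) for $\Si\cap\D$ by applying the corresponding condition in $\C$ and then post-composing via the hypothesis to bring everything back into $\D$, and establish bijectivity on fraction sets by the same post-composition trick. The paper is simply terser---it declares the verification of (LF1)--(LF3) ``straightforward'' and says ``a similar argument'' handles injectivity---whereas you have spelled out the details, including the role of fullness of $\D$ in ensuring the composite morphisms lie in $\D$.
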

\begin{proof}
It is straightforward to check (LF1) -- (LF3) for $\Si\cap\D$.  Now let
$X,Y$ be objects in $\D$. Then we need to show that the induced map
$$f\colon \D[(\Si\cap\D)^{-1}](X,Y)\lto\C[\Si^{-1}](X,Y)$$ is
bijective.  The map sends the equivalence class of a fraction to the
equivalence class of the same fraction.  If $[\a,\s]$ belongs to
$\C[\Si^{-1}](X,Y)$ and $\t$ is a morphism with $\t\comp\s$ in
$\Si\cap\D$, then $[\t\comp\a,\t\comp\s]$ belongs to
$\D[(\Si\cap\D)^{-1}](X,Y)$ and $f$ sends it to $[\a,\s]$.  Thus $f$
is surjective. A similar argument shows that $f$ is injective.
\end{proof}

\begin{exm}
Let $A$ be a commutative noetherian ring and $S\subseteq A$ a
multiplicatively closed subset. Denote by $\Si$ the set of morphisms
$\s$ in $\Mod A$ such that $S^{-1}\s$ is invertible. Then $\Si$ is a
multiplicative system and one can show directly that for the
subcategory $\mod A$ of finitely generated $A$-modules and
$T=\Si\cap\mod A$ the dual of the condition in Lemma~\ref{le:sub}
holds. Thus the induced functor
$$(\mod A)[T^{-1}]\lto(\Mod A)[\Si^{-1}]$$
is fully faithful.
\end{exm}

\subsection{Calculus of fractions and coproducts}

We provide a criterion for the quotient functor $\C\to\C[\Si^{-1}]$
to preserve small coproducts.

\begin{prop}\label{pr:coprod}
Let $\C$ be a category which admits small coproducts. Suppose that
$\Si$ is a set of morphisms in $\C$ which admits a calculus of left
fractions. If $\coprod_i\s_i$ belongs to $\Si$ for every family
$(\s_i)_{i\in I}$ in $\Si$, then the category $\C[\Si^{-1}]$ admits
small coproducts and the quotient functor $\C\to\C[\Si^{-1}]$
preserves small coproducts.
\end{prop}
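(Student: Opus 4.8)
The plan is to use the universal property (Q1)--(Q2) of the quotient functor together with the explicit calculus-of-fractions description of $\C[\Si^{-1}]$. First I would show that $\C[\Si^{-1}]$ admits small coproducts by exhibiting the coproduct concretely: given a family $(X_i)_{i\in I}$ of objects of $\C$, I claim that $\coprod_i X_i$ (the coproduct taken in $\C$) together with the morphisms $Q_\Si(\iota_i)\colon X_i\to\coprod_i X_i$ is a coproduct in $\C[\Si^{-1}]$, where $\iota_i$ denotes the canonical inclusion in $\C$. To verify the universal property I must check that for every object $Z$ the natural map
\[
\C[\Si^{-1}]\bigl(\textstyle\coprod_i X_i,\,Z\bigr)\lto\prod_i\C[\Si^{-1}](X_i,Z)
\]
is bijective. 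Here is where the calculus of left fractions enters: every morphism $\coprod_i X_i\to Z$ in $\C[\Si^{-1}]$ is represented by a left fraction $\coprod_i X_i\xto{\a} Z'\xleftarrow{\s} Z$ with $\s\in\Si$, and likewise every morphism $X_i\to Z$ by a fraction with the \emph{same} target $Z$. The hypothesis that coproducts of morphisms in $\Si$ lie in $\Si$ is exactly what lets me find, given a family of fractions $(\a_i,\s_i)$ representing the components, a common denominator: using (LF2) repeatedly (or rather its consequence for the coproduct) one produces a single $\s\colon Z\to Z''$ in $\Si$ through which all the $\s_i$ factor after composing with suitable morphisms in $\Si$, and then $\coprod_i\a_i'$ gives a fraction on $\coprod_i X_i$ restricting to the given ones. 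Injectivity is handled by the same bookkeeping applied to the equivalence relation defining morphisms in $\C[\Si^{-1}]$: two fractions out of $\coprod_i X_i$ that agree on each $X_i$ become equivalent after composing with coproducts of the equalizing morphisms from $\Si$.

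Once $\C[\Si^{-1}]$ is known to have small coproducts, the fact that $Q_\Si$ preserves them is almost immediate from the construction above, since we defined the coproduct in $\C[\Si^{-1}]$ so that $Q_\Si(\coprod_i X_i)=\coprod_i Q_\Si(X_i)$ with the expected structure maps; but to be careful I would instead argue abstractly. For a general family $(Y_i)$ in $\C[\Si^{-1}]$ (equivalently, a family of objects of $\C$, since $\Ob\C[\Si^{-1}]=\Ob\C$), the coproduct exists by the first part, and the canonical comparison morphism $\coprod_i Q_\Si Y_i\to Q_\Si(\coprod_i Y_i)$ is an isomorphism because both sides corepresent the functor $Z\mapsto\prod_i\C[\Si^{-1}](Y_i,Z)$ — the left side by definition of coproduct, the right side by the bijection established above applied with $X_i=Y_i$.

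The main obstacle is the careful manipulation of left fractions to produce a common denominator over an arbitrary small index set: (LF2) and (LF3) are stated for finite data, so one must check that taking coproducts of the chosen $\s'$ and $\a'$ genuinely yields a left fraction with the right restrictions, which is precisely where the hypothesis $\coprod_i\s_i\in\Si$ is indispensable — without it the coproduct of the "completed" squares need not have a denominator in $\Si$. A secondary nuisance is verifying that the equivalence relation on fractions is respected under these coproduct operations, i.e.\ that equivalent component fractions assemble to equivalent fractions on the coproduct; this is routine but must be done using (LF3) coordinatewise and then taking a coproduct. No set-theoretic issue arises here beyond what is already implicit, since we never need $\C[\Si^{-1}]$ to have small Hom-sets for this argument.
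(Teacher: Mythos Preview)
Your approach is essentially the same as the paper's: show that the coproduct $\coprod_i X_i$ formed in $\C$ serves as a coproduct in $\C[\Si^{-1}]$ by verifying the bijection on Hom-sets, treating surjectivity and injectivity separately. There is one subtlety in the injectivity step that you gloss over and that the paper handles explicitly. After reducing to two fractions $(\a',\s)$ and $(\a'',\s)$ with a common denominator $\s\colon Y\to Z$, the equivalence of their restrictions to each $X_i$ yields morphisms $\b_i\colon Z\to Z_i$ with $\b_i\circ\a'_i=\b_i\circ\a''_i$ and $\b_i\circ\s\in\Si$, but the $\b_i$ themselves need \emph{not} lie in $\Si$. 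So your phrase ``coproducts of the equalizing morphisms from $\Si$'' is not quite accurate, and one cannot directly apply the hypothesis to $\coprod_i\b_i$.

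The paper resolves this by passing to the saturation $\bar\Si$ of $\Si$ (the set of all morphisms inverted by $Q_\Si$): one checks that $\phi\in\bar\Si$ iff there exist $\phi',\phi''$ with $\phi\circ\phi'\in\Si$ and $\phi''\circ\phi\in\Si$, from which it follows that $\bar\Si$ is closed under coproducts and still admits a calculus of left fractions. Then (LF2) applied in $\bar\Si$ to $\coprod_i\b_i$ and the fold map $\pi_Z\colon\coprod_i Z\to Z$ produces $\tau\colon Z\to Z^*$ in $\bar\Si$ witnessing the equivalence. Your outline becomes a correct proof once this passage to $\bar\Si$ is made explicit.
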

\begin{proof} 
Let $(X_i)_{i\in I}$ be a family of objects in $\C[\Si^{-1}]$ which is
indexed by a small set $I$.  We claim that the coproduct
$\coprod_iX_i$ in $\C$ is also a coproduct in $\C[\Si^{-1}]$.  Thus we
need to show that for every object $Y$, the canonical map
\begin{equation}\label{eq:coprod}
\C[\Si^{-1}](\coprod_i X_i,Y)\lto\prod_i\C[\Si^{-1}](X_i,Y)
\end{equation}
is bijective.

To check surjectivity of \eqref{eq:coprod}, let
$(X_i\stackrel{\a_i}\rightarrow Z_i\stackrel{\s_i}\leftarrow Y)_{i\in
I}$ be a family of left fractions.  Using (LF2), we obtain a
commutative diagram
$$\xymatrix{
\coprod_iX_i\ar[r]^{\coprod_i\a_i}&\coprod_iZ_i\ar[d]
&\coprod_iY\ar[d]^{\pi_Y}\ar[l]_{\coprod_i\s_i}\\
&Z&Y\ar[l]_\s
}$$
where $\pi_Y\colon\coprod_iY\to Y$ is the summation morphism and
$\s\in\Si$. It is easily checked that
$$(X_i\rightarrow Z\stackrel{\s}\leftarrow
Y)\,\sim\,(X_i\stackrel{\a_i}\rightarrow Z_i\stackrel{\s_i}\leftarrow
Y)$$ for all $i\in I$, and therefore \eqref{eq:coprod} sends
$\coprod_iX_i\rightarrow Z\stackrel{\s}\leftarrow Y$ to the family
$(X_i\stackrel{\a_i}\rightarrow Z_i\stackrel{\s_i}\leftarrow
Y_i)_{i\in I}$.

To check injectivity  of \eqref{eq:coprod}, let $\coprod_iX_i\stackrel{\a'}\rightarrow
Z'\stackrel{\s'}\leftarrow Y$ and
$\coprod_iX_i\stackrel{\a''}\rightarrow Z''\stackrel{\s''}\leftarrow
Y$ be left fraction such that
$$(X_i\stackrel{\a'_i}\rightarrow Z'\stackrel{\s'}\leftarrow
Y)\,\sim\, (X_i\stackrel{\a''_i}\rightarrow
Z''\stackrel{\s''}\leftarrow Y)$$ for all $i$. We may assume that
$Z'=Z=Z''$ and $\s'=\s=\s''$ since we can choose morphisms $\t'\colon
Z'\to Z$ and $\t''\colon Z''\to Z$ with
$\t'\comp\s'=\t''\comp\s''\in\Si$.  Thus there are morphisms
$\b_i\colon Z\to Z_i$ with $\b_i\comp\a'_i=\b_i\comp\a''_i$ and
$\b_i\comp\s\in\Si$ for all $i$.  Each $\b_i$ belongs to the {\em
saturation} ${\bar\Si}$ of $\Si$ which is the set of all
morphisms in $\C$ which become invertible in $\C[\Si^{-1}]$. Note that
a morphism $\p$ in $\C$ belongs to ${\bar\Si}$ if and only if
there are morphisms $\p'$ and $\p''$ such that $\p\comp\p'$ and
$\p''\comp\p$ belong to $\Si$. Therefore ${\bar\Si}$ is also
closed under taking coproducts. Moreover, ${\bar\Si}$ admits a
calculus of left fractions, and we obtain therefore a commutative
diagram
$$\xymatrix{
\coprod_iX_i\ar[r]&\coprod_iZ\ar[d]^{\coprod_i\b_i}\ar[r]^{\pi_Z}&Z\ar[d]^\t\\
&\coprod_iZ_i\ar[r]&Z^*
}$$
with $\tau\in {\bar \Si}$. Thus $\tau\comp\s\in {\bar \Si}$, and we have
$$(\coprod_iX_i\stackrel{\a'}\rightarrow Z\stackrel{\s}\leftarrow
Y)\,\sim\, (\coprod_iX_i\stackrel{\a''}\rightarrow
Z\stackrel{\s}\leftarrow Y)$$ since $\pi_Z\comp\coprod_i\a'_i=\a'$ and
$\pi_Z\comp\coprod_i\a''_i=\a'$. Therefore the map \eqref{eq:coprod}
is also injective, and this completes the proof.
\end{proof}

\begin{exm}
Let $\C$ be a category which admits small coproducts and $L\colon
\C\to\C$ be a localization functor. Then a morphism $\s$ in $\C$
belongs to $\Si=\Si(L)$ if and only if the induced map $\C(\s,LX)$ is
invertible for every object $X$ in $\C$. Thus $\Si$ is closed under
taking small coproducts and therefore the quotient functor
$\C\to\C[\Si^{-1}]$ preserves small coproducts.
\end{exm}

\subsection{Notes}
The calculus of fractions for categories has been developed by Gabriel
and Zisman in \cite{GZ} as a tool for homotopy theory.

\section{Localization for triangulated categories}

\subsection{Triangulated categories}

Let $\T$ be an additive category with an equivalence
$S\colon\T\to\T$. A {\em triangle} in $\T$ is a sequence
$(\a,\b,\g)$ of morphisms
$$X\lto[\a] Y\lto[\b] Z\lto[\g]S X,$$ and a morphism between two
triangles $(\a,\b,\g)$ and $(\a',\b',\g')$ is a triple $(\p_1,\p_2,\p_3)$
of morphisms in $\T$ making the following diagram commutative.
$$\xymatrix{
X\ar[r]^\a\ar[d]^{\p_1}&Y\ar[r]^\b\ar[d]^{\p_2}&Z\ar[r]^\g\ar[d]^{\p_3}&S
X\ar[d]^{S\p_1}\\ X'\ar[r]^{\a'}&Y'\ar[r]^{\b'}&Z'\ar[r]^{\g'}&S X'
}$$ The category $\T$ is called {\em triangulated} if it is equipped
with a set of distinguished triangles (called {\em exact triangles})
satisfying the following conditions.
\begin{enumerate}
\item[(TR1)] A triangle isomorphic to an exact triangle is exact. For
each object $X$, the triangle $0\to X\xto{\id} X\to 0$ is exact.
Each morphism $\a$ fits into an exact triangle $(\a,\b,\g)$.
\item[(TR2)] A triangle $(\a,\b,\g)$ is exact if and only if
$(\b,\g,-S\a)$ is exact.
\item[(TR3)] Given two exact triangles $(\a,\b,\g)$ and
$(\a',\b',\g')$, each pair of morphisms $\p_1$ and $\p_2$ satisfying
$\p_2\comp\a=\a'\comp\p_1$ can be completed to a morphism
$$\xymatrix{
X\ar[r]^\a\ar[d]^{\p_1}&Y\ar[r]^\b\ar[d]^{\p_2}&Z\ar[r]^\g\ar[d]^{\p_3}&S
X\ar[d]^{S\p_1}\\ X'\ar[r]^{\a'}&Y'\ar[r]^{\b'}&Z'\ar[r]^{\g'}&S X'
}$$ of triangles.
\item[(TR4)] Given exact triangles $(\a_1,\a_2,\a_3)$,
$(\b_1,\b_2,\b_3)$, and $(\g_1,\g_2,\g_3)$ with $\g_1=\b_1\comp\a_1$,
there exists an exact triangle $(\d_1,\d_2,\d_3)$ making
the following diagram commutative.
$$\xymatrix{X\ar[r]^{\a_1}\ar@{=}[d]&Y\ar[r]^{\a_2}\ar[d]^{\b_1}&
U\ar[r]^{\a_3}\ar[d]^{\d_1}& S X\ar@{=}[d]\\
X\ar[r]^{\g_1}&Z\ar[r]^{\g_2}\ar[d]^{\b_2}&
V\ar[r]^{\g_3}\ar[d]^{\d_2}&S X\ar[d]^{S\a_1}\\
&W\ar@{=}[r]\ar[d]^{\b_3}& W\ar[d]^{\d_3}\ar[r]^{\b_3}&S Y\\ &S
Y\ar[r]^{S\a_2}&S U }$$
\end{enumerate}

Recall that an idempotent endomorphism $\p=\p^2$ of an object $X$ in
an additive category \emph{splits} if there exists a factorization
$X\xto{\pi}Y\xto{\iota}X$ of $\p$ with $\pi\comp\iota=\id Y$.

\begin{rem}
Suppose a triangulated category $\T$ admits countable coproducts. Then
every idempotent endomorphism splits.  More precisely, let $\p\colon
X\to X$ be an idempotent morphism in $\T$, and denote by $Y$ a homotopy
colimit of the sequence
$$X\lto[\p]X\lto[\p]X\lto[\p]\cdots.$$ The morphism $\p$ factors
through the canonical morphism $\pi\colon X\to Y$ via a morphism
$\iota\colon Y\to X$, and we have $\pi\comp\iota=\id Y$. Thus $\p$
splits; see \cite[Proposition~1.6.8]{Nee2001} for details.
\end{rem}

\subsection{Exact functors}

An {\em exact} functor $\T\to\U$ between triangulated categories is a
pair $(F,\mu)$ consisting of a functor $F\colon\T\to\U$ and
an isomorphism $\mu\colon F\comp S_\T\to S_\U\comp F$ such that for
every exact triangle $X\to[\a]Y\to[\b]Z\to[\g]S_\T X$ in $\T$ the
triangle
$$FX\lto[F\a]FY\lto[F\b]FZ\xto{\mu X\comp F\g}S_\U (FX)$$ is exact in $\U$.

We have the following useful lemma.
\begin{lem}\label{le:exact-adj}
Let $F\colon\T\to\U$ and $G\colon\U\to\T$ be an adjoint pair of functors between triangulated
categories. If one of both functors is exact, then also the other is exact.
\end{lem}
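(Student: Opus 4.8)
By symmetry it suffices to treat one case, say: assume $G$ is exact and deduce that $F$ is exact. (The other implication follows by passing to opposite categories, since $F^{\op}\colon\T^{\op}\to\U^{\op}$ is right adjoint to $G^{\op}$, and a functor is exact iff its opposite is exact.) So fix the adjunction with unit $\eta\colon\Id_\T\to GF$ and counit $\theta\colon FG\to\Id_\U$, and an isomorphism $\nu\colon GS_\U\to S_\T G$ witnessing exactness of $G$. The plan is first to build a canonical natural isomorphism $\mu\colon FS_\T\to S_\U F$ out of $\nu$ by an adjunction argument, and then to check that $(F,\mu)$ sends exact triangles to exact triangles.

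\textbf{Step 1: construction of $\mu$.}
For each object $X$ of $\T$ I want an isomorphism $\mu X\colon FS_\T X\to S_\U FX$, natural in $X$. Using the adjunction twice, there are natural bijections
\[
\U(FS_\T X,\,S_\U FX)\;\cong\;\T(S_\T X,\,GS_\U FX)\;\xrightarrow[\ \sim\ ]{\nu(FX)_*}\;\T(S_\T X,\,S_\T G F X)\;\cong\;\T(X,\,GFX),
\]
where the last bijection uses that $S_\T$ is an equivalence. Define $\mu X$ to be the morphism corresponding under this chain to the unit $\eta X\in\T(X,GFX)$. One then checks $\mu$ is natural in $X$, and that it is invertible: running the same computation with $F$ right-adjoint-style, or more directly using that $\theta$ and $\eta$ satisfy the triangle identities together with the fact that $\nu$ is an isomorphism, exhibits a two-sided inverse. (Concretely, $\mu X$ is the composite $FS_\T X \xrightarrow{FS_\T\eta X} FS_\T GFX \xrightarrow{F\nu(FX)^{-1}} FGS_\U FX \xrightarrow{\theta(S_\U FX)} S_\U FX$.)

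\textbf{Step 2: $(F,\mu)$ is exact.}
Let $X\xrightarrow{\a}Y\xrightarrow{\b}Z\xrightarrow{\g}S_\T X$ be an exact triangle in $\T$; I must show
$FX\xrightarrow{F\a}FY\xrightarrow{F\b}FZ\xrightarrow{\mu X\comp F\g}S_\U FX$
is exact in $\U$. By (TR1) complete $F\a$ to an exact triangle $FX\xrightarrow{F\a}FY\xrightarrow{\b'}Z'\xrightarrow{\g'}S_\U FX$ in $\U$. Since $G$ is exact and $GF\a\comp\eta X=\eta Y\comp\a$ (naturality of $\eta$), I can use (TR3) to fill in a morphism of exact triangles in $\T$ from the original triangle $(\a,\b,\g)$ to the triangle obtained by applying $G$ to $(F\a,\b',\g')$ (rotated/twisted by $\nu$), with first two components $\eta X$ and $\eta Y$; call the third component $\psi\colon Z\to GZ'$. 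Transposing $\psi$ across the adjunction gives $\bar\psi\colon FZ\to Z'$, and a diagram chase with the triangle identities shows $(\id_{FX},\id_{FY},\bar\psi)$ is a morphism of triangles from $(F\a,F\b,\mu X\comp F\g)$ to $(F\a,\b',\g')$. Two of its three components are identities, hence isomorphisms, so by the standard consequence of the octahedral axiom (five-lemma for triangulated categories) $\bar\psi$ is an isomorphism as well; therefore $(F\a,F\b,\mu X\comp F\g)$ is isomorphic to an exact triangle and so, by (TR1), is itself exact.

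\textbf{Main obstacle.}
The routine but genuinely fiddly part is Step 2: checking that the transposed map $\bar\psi$ really does commute with \emph{both} the $\b$'s and the $\g$'s (not just one of them), which is where the compatibility of $\mu$ with $\nu$ and the two triangle identities all get used simultaneously. Once that naturality square is verified, invoking the five-lemma for triangulated categories is immediate. I would carry out the verification by writing everything as composites of $\eta$, $\theta$, $\nu$ and the structural isomorphisms and repeatedly applying the triangle identities; no new idea is needed beyond bookkeeping. The construction of $\mu$ in Step 1 and the symmetry reduction at the outset are straightforward.
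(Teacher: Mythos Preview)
Your argument is correct and is the standard one. The paper itself does not prove this lemma at all: its entire proof is the one-line reference ``See \cite[Lemma~5.3.6]{Nee2001}.'' So there is nothing in the paper to compare against; you have supplied what the paper outsources.

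Two small comments on presentation. First, the invertibility of $\mu$ in Step~1 is cleanest seen via uniqueness of adjoints: $FS_\T$ is left adjoint to $S_\T^{-1}G$ and $S_\U F$ is left adjoint to $GS_\U^{-1}$, and the isomorphism $\nu$ identifies these two right adjoints, so the induced map of left adjoints (your $\mu$, the mate of $\nu$) is automatically an isomorphism. Your phrase ``running the same computation with $F$ right-adjoint-style'' is a bit garbled, though the conclusion is right. Second, the fact that a morphism of exact triangles with two components invertible has its third component invertible does not require the octahedral axiom; it follows from applying the cohomological functor $\U(W,-)$ for all $W$, the ordinary five-lemma in $\Ab$, and Yoneda. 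Otherwise the diagram chase you outline in Step~2 (the verification that $\bar\psi$ makes both remaining squares commute) goes through exactly as you say, using naturality of $\theta$ and the triangle identity for one square, and the definition of $\mu$ for the other.
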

\begin{proof}
See \cite[Lemma~5.3.6]{Nee2001}.
\end{proof}
\subsection{Multiplicative systems}

Let $\T$ be a triangulated category and $\Si$ a set of morphisms
which is a multiplicative system. Recall this means that $\Si$ admits
a calculus of left and right fractions. Then we say that $\Si$ is {\em
compatible with the triangulation} if
\begin{enumerate}
\item given $\s$ in $\Si$, the morphism $S^n\s$ belongs to $\Si$ for
all $n\in\bbZ$, and
\item given a morphism $(\p_1,\p_2,\p_3)$
between exact triangles with $\p_1$ and $\p_2$ in $\Si$, there is also a
morphism $(\p_1,\p_2,\p'_3)$ with $\p'_3$ in $\Si$.
\end{enumerate}

\begin{lem}\label{le:qis}
Let $\T$ be a triangulated category and $\Si$ a multiplicative system
of morphisms which is compatible with the triangulation. Then the
quotient category $\T[\Si^{-1}]$ carries a unique triangulated structure such
that the quotient functor $\T\to\T[\Si^{-1}]$  is exact.
\end{lem}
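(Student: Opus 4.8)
The plan is to transport the triangulated structure along the quotient functor $Q\colon\T\to\T[\Si^{-1}]$. First I would declare the suspension on $\T[\Si^{-1}]$: since $\Si$ is compatible with the triangulation, $S$ sends $\Si$ to $\Si$, so by the universal property (Q2) applied to $Q\comp S\colon\T\to\T[\Si^{-1}]$ there is a unique functor $\bar S\colon\T[\Si^{-1}]\to\T[\Si^{-1}]$ with $\bar S\comp Q=Q\comp S$; applying the same reasoning to $S^{-1}$ shows $\bar S$ is an equivalence. Next I would define the exact triangles in $\T[\Si^{-1}]$ to be those triangles isomorphic (in $\T[\Si^{-1}]$) to the image under $Q$ of an exact triangle in $\T$. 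With these definitions the functor $Q$ is exact essentially by construction, and uniqueness is immediate: any triangulated structure making $Q$ exact must have $\bar S$ as suspension (by uniqueness in (Q2)) and must declare every $Q$ of an exact triangle to be exact, hence — since (TR1) forces closure under isomorphism — must contain our class; conversely (TR3) applied in such a structure forces our class to be \emph{all} the exact triangles.

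The substance is verifying (TR1)--(TR4) for this class, and here is where the calculus of fractions does the work. (TR1) is built into the definition (closure under isomorphism), the identity triangle is the image of the identity triangle in $\T$, and for a morphism in $\T[\Si^{-1}]$, represented as a left fraction $[\a,\s]$ with $\s\in\Si$, one uses that $Q\s$ is invertible to replace $[\a,\s]$ up to isomorphism of triangles by $Q\a$ for an honest morphism $\a$ in $\T$, which embeds in the image of an exact triangle. (TR2) is immediate since rotation commutes with $Q$ and with the sign conventions. For (TR3) I would again use the calculus of fractions to represent the two given vertical maps $\p_1,\p_2$ by genuine morphisms of $\T$ after replacing the two triangles by isomorphic ones coming from $\T$; the square $\p_2\comp\a=\a'\comp\p_1$ need only commute in $\T[\Si^{-1}]$, so one has to absorb the discrepancy by post-composing with a further morphism of $\Si$ — this is exactly condition (LF3)-type reasoning — and then apply (TR3) in $\T$ and push forward. (TR4) is handled the same way: represent all the data by morphisms in $\T$ via a common "denominator'' in $\Si$, invoke the octahedron in $\T$, and apply $Q$.

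The main obstacle I anticipate is (TR3), and specifically the bookkeeping needed to simultaneously (i) replace a left fraction by a genuine morphism, (ii) arrange that a square commuting only in $\T[\Si^{-1}]$ can be made to commute in $\T$ after composing with a morphism in $\Si$, and (iii) do this coherently for the \emph{pair} $(\p_1,\p_2)$ while keeping the source and target exact triangles fixed up to isomorphism. This is precisely the point at which compatibility-with-the-triangulation condition (2) is needed: it guarantees that once the first two components of a morphism of triangles lie in $\Si$, the induced third component can also be taken in $\Si$, so that the isomorphisms of triangles we introduce in $\T[\Si^{-1}]$ are genuinely isomorphisms. For a full account I would refer to \cite[Proposition~2.1.8]{Nee2001} or \cite[\S10.2]{Wei}, but the argument is a routine, if lengthy, diagram chase once the above reductions are in place.
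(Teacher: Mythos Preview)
Your proposal is correct and follows exactly the same approach as the paper: define the suspension on $\T[\Si^{-1}]$ via the universal property (using $S\Si=\Si$), declare the exact triangles to be those isomorphic to images of exact triangles in $\T$, and verify (TR1)--(TR4). The paper is in fact briefer than you are---it simply asserts the verification is straightforward and cites Verdier \cite[II.2.2.6]{V}---whereas you sketch the role of the calculus of fractions and the compatibility condition in (TR3), which is accurate and more informative.
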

\begin{proof}
The equivalence $S\colon\T\to\T$ induces a unique equivalence
$\T[\Si^{-1}]\to\T[\Si^{-1}]$ which commutes with the quotient functor
$Q\colon\T\to\T[\Si^{-1}]$. This follows from the fact that
$S\Si=\Si$. Now take as exact triangles in $\T[\Si^{-1}]$ all those
isomorphic to images of exact triangles in $\T$. It is straightforward
to verify the axioms (TR1) -- (TR4); see \cite[II.2.2.6]{V}. The
functor $Q$ is exact by construction. In particular, we have $Q\comp
S_\T=S_{\T[\Si^{-1}]}\comp Q$.
\end{proof}

\subsection{Cohomological functors}
A functor $H\colon\T\to\A$ from a triangulated category $\T$ to an
abelian category $\A$ is \emph{cohomological} if $H$ sends every exact
triangle in $\T$ to an exact sequence in $\A$.

\begin{exm}
For each object $X$ in $\T$, the representable functors
$\T(X,-)\colon\T\to\Ab$ and $\T(-,X)\colon\T^\op\to\Ab$ into the
category $\Ab$ of abelian groups are cohomological functors.
\end{exm}

\begin{lem}\label{le:coh}
Let $H\colon\T\to\A$ be a cohomological functor. Then the set $\Si$
of morphisms $\s$ in $\T$ such that $H(S^n\s)$ is invertible for all
$n\in\bbZ$ forms a multiplicative system which is compatible with the
triangulation of $\T$.
\end{lem}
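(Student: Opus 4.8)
The plan is to verify directly the three axioms (LF1)--(LF3) for a calculus of left fractions, then invoke the dual argument for right fractions, and finally check the two compatibility conditions with the triangulation. Write $\Si$ for the set of morphisms $\s$ in $\T$ such that $H(S^n\s)$ is invertible for all $n\in\bbZ$; note that by construction $\Si$ is already stable under all powers of $S$, so compatibility condition (1) is immediate, and this stability will be used freely throughout. The key technical input is that $H$ is cohomological: applying $H$ to an exact triangle gives a long exact sequence in $\A$ (combining the triangle axiom (TR2) with the cohomological property), and a morphism of exact triangles with two invertible components forces, via the five lemma in $\A$, the third $H$-image to be invertible as well.

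First I would check (LF1): if $\s,\t\in\Si$ are composable, then $H(S^n(\t\comp\s))=H(S^n\t)\comp H(S^n\s)$ is a composite of isomorphisms, hence invertible; and $H(S^n\id X)=\id$ is trivially invertible. Next, (LF2): given $X'\xleftarrow{\s}X\xto{\a}Y$ with $\s\in\Si$, complete $\s$ to an exact triangle $X\xto{\s}X'\to Z\to SX$ by (TR1), and complete $\a$ similarly; actually the cleanest route is to form the pushout-style square using (TR1) and (TR3): embed $\s$ in an exact triangle $X\xto{\s}X'\xto{}Z\xto{}SX$, embed $\a$ in an exact triangle $X\xto{\a}Y\xto{}Z'\xto{}SX$, and then build an exact triangle on the composite so that (TR4) or a direct octahedron argument produces a commutative square $\s'\comp\a=\a'\comp\s$ in which $\s'$ has the same cone $Z$ (up to shift) as $\s$. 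Since $H$ applied to the triangle containing $\s'$ agrees with $H$ applied to the triangle containing $\s$ on the cone term, the long exact sequence and the five lemma give that $H(S^n\s')$ is invertible, i.e. $\s'\in\Si$. For (LF3): if $\a,\b\colon X\to Y$ satisfy $(\a-\b)\comp\s=0$ for some $\s\colon X'\to X$ in $\Si$, embed $\s$ in an exact triangle $X'\xto{\s}X\xto{\p}Z\xto{}SX'$; then $\a-\b$ factors through $\p$, say $\a-\b=\g\comp\p$. Embed $\g\colon Z\to Y$ in an exact triangle $Z\xto{\g}Y\xto{\t}Y'\xto{}SZ$; then $\t\comp(\a-\b)=\t\comp\g\comp\p=0$, so $\t\comp\a=\t\comp\b$, and it remains to see $\t\in\Si$. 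This follows because the cone of $\t$ is $SZ$, and $H(S^nZ)$ is determined by the triangle containing $\s$, whence $H(S^n\t)$ is an isomorphism by the long exact sequence.

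The dual conditions for a calculus of right fractions are verified by the same arguments applied in $\T^\op$, using that $H$ induces a cohomological functor on $\T^\op$ as well (or rather that the long exact sequence is symmetric); thus $\Si$ is a multiplicative system. Finally, compatibility condition (2): given a morphism $(\p_1,\p_2,\p_3)$ between exact triangles with $\p_1,\p_2\in\Si$, apply $S^n$ and then $H$ to obtain a morphism of long exact sequences in $\A$ in which, at every spot, two out of three consecutive vertical maps are isomorphisms; the five lemma forces $H(S^n\p_3)$ to be an isomorphism for every $n$, so $\p_3\in\Si$ already — in fact we may simply take $\p'_3=\p_3$. I expect the main obstacle to be (LF2): making the completion square precise requires a careful octahedron (TR4) argument to ensure that the newly produced morphism $\s'$ genuinely has a cone on which $H$ is controlled by the original triangle, rather than merely asserting the existence of some completing square. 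Everything else is a routine diagram chase with the five lemma in the abelian category $\A$.
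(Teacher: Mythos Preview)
Your proposal is correct and tracks the paper's proof closely for (LF1), (LF3), the self-duality argument yielding right fractions, and the compatibility conditions (including the observation that one may take $\p'_3=\p_3$). The one place you diverge is (LF2), and here the paper's route is noticeably simpler than the octahedron argument you sketch.

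The paper completes $\a$ (not $\s$) to an exact triangle $W\to X\xto{\a}Y\to SW$, completes the composite $W\to X\xto{\s}X'$ to a triangle $W\to X'\xto{\a'}Y'\to SW$, and uses (TR3) to obtain $\s'\colon Y\to Y'$ filling the square. This yields a morphism of triangles with vertical maps $(\id_W,\s,\s')$; since $H(S^n\id_W)$ and $H(S^n\s)$ are isomorphisms, the five lemma forces $H(S^n\s')$ to be an isomorphism. No (TR4) is needed, and the cone of $\s'$ never enters the argument. Your approach --- arranging that $\s'$ has the same cone $Z$ as $\s$ and then using that $\s\in\Si$ forces $H(S^nZ)=0$ --- also works (for instance via the homotopy pushout octahedron, or by rotating the triangle on $\s$ to $S^{-1}Z\to X\xto{\s}X'$, composing $S^{-1}Z\to X\xto{\a}Y$, completing, and applying (TR3)), but your write-up leaves the actual construction vague (``build an exact triangle on the composite'' --- $\s$ and $\a$ share a domain, so there is no composite to take), and you yourself flag this as the main obstacle. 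The paper's argument sidesteps this entirely: by anchoring the morphism of triangles at the cone of $\a$ rather than the cone of $\s$, it reduces (LF2) to a single application of (TR3) plus the five lemma.
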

\begin{proof}
We need to verify that $\Si$ admits a calculus of left and right
fractions.  In fact, it is sufficient to check conditions (LF1) --
(LF3), because then the dual conditions are satisfied as well since
the definition of $\Si$ is self-dual.

(LF1): This condition is clear because $H$ is a functor.

(LF2): Let $\a\colon X\to Y$ and $\s\colon X\to X'$ be morphisms with
$\s$ in $\Si$. We complete $\a$ to an exact triangle and apply (TR3)
to obtain the following morphism between exact triangles.
$$\xymatrix{
W\ar[r]\ar@{=}[d]&X\ar[r]^\a\ar[d]^{\s}&Y\ar[r]\ar[d]^{\s'}&S
W\ar@{=}[d]\\ W\ar[r]&X'\ar[r]^{\a'}&Y'\ar[r]&S W }$$ Then the
5-lemma shows that $\s'$ belongs to $\Si$.

(LF3): Let $\a,\b\colon X\to Y$ be morphisms in $\T$ and $\s\colon X'\to X$
in $\Si$ such that $\a\comp\s=\b\comp \s$. Complete $\s$ to an exact
triangle $X'\to[\s] X\to[\p] X''\to S X'$. Then $\a-\b$ factors through
$\p$ via some morphism $\psi\colon X''\to Y$. Now complete $\psi$ to an
exact triangle $X''\to[\psi] Y\to[\t] Y'\to S X''$. Then $\t$ belongs
to $\Si$ and $\t\comp\a=\t\comp\b$.

It remains to check that $\Si$ is compatible with the triangulation.
Condition (1) is clear from the definition of $\Si$. For condition (2), observe
that given any morphism $(\p_1,\p_2,\p_3)$ between exact triangles
with $\p_1$ and $\p_2$ in $\Si$, we have that $\p_3$ belongs to
$\Si$. This is an immediate consequence of the 5-lemma.
\end{proof}

\subsection{Triangulated and thick subcategories}
Let $\T$ be a triangulated category. A non-empty full subcategory $\Sc$ is a
{\em triangulated subcategory} if the following conditions hold.
\begin{enumerate}
\item[(TS1)] $S^n X\in\Sc$ for all $X\in\Sc$ and $n\in\bbZ$.
\item[(TS2)] Let $X\to Y\to Z\to S X$ be an exact triangle in $\T$.
  If two objects from $\{X,Y,Z\}$ belong to $\Sc$, then also the third.
\end{enumerate}
A triangulated subcategory $\Sc$ is {\em thick} if in addition the following
condition holds.
\begin{enumerate}
\item[(TS3)] Let $X\xto{\pi} Y\xto{\iota} X$ be morphisms in $\T$ such
that $\id Y=\pi\comp\iota$. If $X$ belongs to $\Sc$, then also $Y$.
\end{enumerate}
Note that a triangulated subcategory $\Sc$ of $\T$ inherits a canonical
triangulated structure from $\T$. 

Next observe that a triangulated subcategory $\Sc$ of $\T$ is
thick provided that $\Sc$ admits countable coproducts. This follows
from the fact that in a triangulated category with countable
coproducts all idempotent endomorphisms split.

Let $\T$ be a triangulated category and let $F\colon\T\to\U$ be an
additive functor. The {\em kernel} $\Ker F$ of $F$ is by definition
the full subcategory of $\T$ which is formed by all objects $X$ such
that $FX=0$. If $F$ is an exact functor into a triangulated category,
then $\Ker F$ is a thick subcategory of $\T$. Also, if $F$ is a
cohomological functor into an abelian category, then
$\bigcap_{n\in\bbZ}S^n(\Ker F)$ is a thick subcategory of $\T$.

\subsection{Verdier localization}
Let $\T$ be a triangulated category. Given a triangulated subcategory
$\Sc$, we denote by $\Si(\Sc)$ the set of morphisms $X\to Y$ in $\T$
which fit into an exact triangle $X\to Y\to Z\to SX$ with $Z$ in $\Sc$.
 
\begin{lem}
Let $\T$ be a triangulated category and $\Sc$  a triangulated
subcategory. Then $\Si(\Sc)$ is a multiplicative system
which is compatible with the triangulation of $\T$.
\end{lem}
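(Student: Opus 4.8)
The plan is to verify conditions (LF1)--(LF3) for $\Si(\Sc)$ together with the two compatibility conditions, exploiting that the definition of $\Si(\Sc)$ is essentially self-dual: if $X\to Y\to Z\to SX$ is exact with $Z$ in $\Sc$, then by (TR2) and the fact that $\Sc$ is closed under $S$ (TS1), the rotated triangles $Z[-1]\to X\to Y\to Z$ also exhibit the same data, so a set of morphisms invertible ``up to $\Sc$'' in one direction is invertible ``up to $\Sc$'' in the other. Thus it suffices to check (LF1)--(LF3), as in the proof of Lemma~\ref{le:coh}.

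For (LF1), if $X\xto{\s}Y$ and $Y\xto{\t}Z$ lie in $\Si(\Sc)$, complete them to exact triangles with third terms in $\Sc$ and apply the octahedral axiom (TR4): the cone of $\t\comp\s$ sits in an exact triangle between the cone of $\s$ and the cone of $\t$, both in $\Sc$, so by (TS2) it lies in $\Sc$. That $\id X\in\Si(\Sc)$ is clear since its cone is $0\in\Sc$. For (LF2), given $X'\xleftarrow{\s}X\xto{\a}Y$ with $\s$ in $\Si(\Sc)$, complete $\s$ to an exact triangle $X\xto{\s}X'\to Z\to SX$ with $Z\in\Sc$, and use (TR3) to build a morphism of triangles with $Z$ fixed; this produces a morphism $\s'\colon Y\to Y'$ whose cone is $Z\in\Sc$ together with the required commutative square. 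For (LF3), given $\a,\b\colon X\to Y$ and $\s\colon X'\to X$ in $\Si(\Sc)$ with $\a\comp\s=\b\comp\s$, complete $\s$ to an exact triangle $X'\xto{\s}X\xto{\p}X''\to SX'$ with $X''\in\Sc$; then $\a-\b$ kills $\s$, so it factors through $\p$ as $\a-\b=\psi\comp\p$; now complete $\psi\colon X''\to Y$ to an exact triangle $X''\xto{\psi}Y\xto{\t}Y'\to SX''$, whose third term $SX''\in\Sc$, so $\t\in\Si(\Sc)$, and $\t\comp(\a-\b)=\t\comp\psi\comp\p=0$.

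Finally, for compatibility with the triangulation: condition (1), that $S^n\s\in\Si(\Sc)$ for $\s\in\Si(\Sc)$, is immediate since applying $S^n$ to an exact triangle with third term in $\Sc$ gives an exact triangle with third term in $S^n\Sc=\Sc$ by (TS1). Condition (2), that a morphism of exact triangles $(\p_1,\p_2,\p_3)$ with $\p_1,\p_2\in\Si(\Sc)$ can be replaced by one $(\p_1,\p_2,\p_3')$ with $\p_3'\in\Si(\Sc)$: here I would argue that the cones of $\p_1,\p_2,\p_3$ themselves fit into an exact triangle (by a standard application of (TR4) to the morphism of triangles, or by the $3\times 3$ lemma), so if the first two cones lie in $\Sc$ then so does the third by (TS2); hence $\p_3$ already belongs to $\Si(\Sc)$ and we may take $\p_3'=\p_3$.

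The main obstacle is the last point: making precise why, for a morphism of exact triangles, the three mapping cones assemble into an exact triangle, so that (TS2) applies. This is the ``$3\times 3$'' or ``nine'' lemma for triangulated categories; rather than reprove it I would cite it (e.g.\ \cite[1.8.8]{Nee2001} or \cite{V}), or alternatively deduce condition~(2) directly by first reducing to the case $\p_1=\id$ (completing the morphism of triangles and using (TR4) twice) and then to the case $\p_1=\id,\ \p_2=\id$, where $\p_3$ becomes an isomorphism. Everything else is a routine unwinding of (TR1)--(TR4) exactly parallel to Lemma~\ref{le:coh}.
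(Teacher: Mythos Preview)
Your approach is essentially the one the paper has in mind: the paper simply says the proof is ``similar to that of Lemma~\ref{le:coh}'' and cites Verdier, and your sketch fills in exactly those details, correctly noting that (LF1) now requires the octahedral axiom (since there is no functor to invoke) and that compatibility condition~(2) needs the $3\times 3$ lemma rather than the $5$-lemma.

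One point deserves correction. You write that ``the cones of $\p_1,\p_2,\p_3$ themselves fit into an exact triangle\ldots\ hence $\p_3$ already belongs to $\Si(\Sc)$ and we may take $\p_3'=\p_3$.'' This is not what the $3\times 3$ lemma says: given only the commutative square formed by $\p_1,\p_2$ and the first maps of the two triangles, the lemma produces \emph{some} completion $\p_3'$ for which the three cones sit in an exact triangle, but an arbitrary $\p_3$ supplied by (TR3) need not have this property. Fortunately the compatibility condition only asks for the existence of such a $\p_3'$, so the $3\times 3$ lemma applied to that square gives exactly what is required; you should just drop the claim that the original $\p_3$ works. Your alternative reduction to $\p_1=\id$ also goes through, and is closer in spirit to the $5$-lemma argument in Lemma~\ref{le:coh}.
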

\begin{proof} 
The proof is similar to that of Lemma~\ref{le:coh}; see
\cite[II.2.1.8]{V} for details.
\end{proof}

The {\em localization} of $\T$ with respect to a triangulated
subcategory $\Sc$ is by definition the quotient category
$$\T/\Sc:=\T[\Si(\Sc)^{-1}]$$ together with the quotient functor
$\T\to\T/\Sc$.

\begin{prop}
Let $\T$ be a triangulated category and $\Sc$ a full triangulated
subcategory. Then the category $\T/\Sc$ and the quotient functor
$Q\colon\T\to\T/\Sc$ have the following properties.
\begin{enumerate}
\item The category $\T/\Sc$ carries a unique triangulated structure
such that $Q$ is exact.
\item A morphism in $\T$ is annihilated by $Q$ if and only if it
factors through an object in $\Sc$.
\item The kernel $\Ker Q$ is the smallest thick subcategory containing
$\Sc$.
\item Every exact functor $\T\to\U$ annihilating $\Sc$ factors
uniquely through $Q$ via an exact functor $\T/\Sc\to\U$.
\item Every cohomological functor $\T\to\A$ annihilating $\Sc$ factors
uniquely through $Q$ via a cohomological functor $\T/\Sc\to\A$.
\end{enumerate}
\end{prop}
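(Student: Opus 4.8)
The plan is to establish the five properties essentially in the order listed, drawing on the general-category machinery already developed (Calculus of fractions, Lemma~\ref{le:qis}) together with the fact that $\Si(\Sc)$ is a multiplicative system compatible with the triangulation. For (1), the category $\T/\Sc=\T[\Si(\Sc)^{-1}]$ inherits a triangulated structure from $\T$ by Lemma~\ref{le:qis}, and $Q$ is exact by construction; uniqueness follows since every exact triangle in $\T/\Sc$ must be isomorphic to the image of one in $\T$.

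For (2), the ``if'' direction is immediate: if $\a=\b\comp\g$ with the middle object in $\Sc$, then the canonical morphism $0\to Z$ for $Z\in\Sc$ lies in $\Si(\Sc)$ (complete $0\to Z$ to the exact triangle $Z\xto{\id}Z\to 0\to SZ$, reread suitably), so $Q$ kills that object and hence $\a$. For the ``only if'' direction I would use the calculus of left fractions: $Q\a=0$ means $[\a,\id]=[0,\id]$, so there is $\t\colon Y\to Y'$ in $\Si(\Sc)$ with $\t\comp\a=0$; completing $\t$ to an exact triangle $Y\xto{\t}Y'\to Z\xto{\g}SY$ with $Z\in\Sc$ and using that $\t\comp\a=0$, the morphism $S\a\colon SY\to SX'$ (or rather $\a$ itself via the rotated triangle) factors through $Z$, giving the desired factorization of $\a$ through an object of $\Sc$. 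This is the step I expect to require the most care, since one must chase the triangle axioms (TR2), (TR3) to produce the factorization cleanly.

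For (3), I would argue that $\Ker Q$ is thick (it is the kernel of an exact functor into a triangulated category, as noted in the subsection on triangulated and thick subcategories), contains $\Sc$ (each object of $\Sc$ is annihilated since $0\to X$ lies in $\Si(\Sc)$ for $X\in\Sc$), and is contained in any thick subcategory $\Sc'\supseteq\Sc$: if $QX=0$ then by (2) the identity $\id X$ factors through an object of $\Sc\subseteq\Sc'$, so $X$ is a direct summand of an object of $\Sc'$, whence $X\in\Sc'$ by thickness (TS3). Thus $\Ker Q$ is the smallest thick subcategory containing $\Sc$.

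For (4), an exact functor $F\colon\T\to\U$ annihilating $\Sc$ inverts every $\s\in\Si(\Sc)$ (its cone lies in $\Sc$, hence maps to $0$, so $F\s$ is an isomorphism by the long exact triangle), hence factors uniquely through $Q$ as a functor $\bar F$ by the universal property (Q2) of the category of fractions; that $\bar F$ is exact follows because it commutes with the suspensions and sends the generating exact triangles (images of exact triangles of $\T$) to exact triangles. Statement (5) is proved identically, replacing ``exact'' by ``cohomological'': a cohomological $H$ annihilating $\Sc$ inverts $\Si(\Sc)$ by the long exact sequence, factors through $Q$ by (Q2), and the induced functor is cohomological since every exact triangle of $\T/\Sc$ is, up to isomorphism, the image of one in $\T$. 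Uniqueness in both cases is just uniqueness in (Q2).
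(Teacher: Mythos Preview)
Your proposal is correct and follows essentially the same route as the paper: both derive (1) from Lemma~\ref{le:qis}, prove (2) via the calculus of fractions by showing $Q\a=0$ iff $\s\comp\a=0$ for some $\s\in\Si(\Sc)$ and then extracting the factorization through an object of $\Sc$ from the cone of $\s$, deduce (3) from (2) by noting $QX=0$ iff $\id X$ factors through an object of $\Sc$, and handle (4) and (5) by the universal property of the category of fractions together with the fact that exact triangles in $\T/\Sc$ are images of exact triangles in $\T$. Your write-up of (2) is slightly more detailed than the paper's (which compresses the triangle argument into a single ``iff''), and your parenthetical about $S\a\colon SY\to SX'$ contains a typo---the clean version is to rotate the triangle to $S^{-1}Z\to Y\xto{\t}Y'$ and observe that $\t\comp\a=0$ forces $\a$ to factor through $S^{-1}Z\in\Sc$---but the substance is identical.
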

\begin{proof}
(1) follows from Lemma~\ref{le:qis}.

(2) Let $\p$ be a morphism in $\T$. We have $Q\p=0$ iff $\s\comp\p=0$
for some $\s\in\Si(\Sc)$ iff $\p$ factors through some object in $\Sc$.

(3) Let $X$ be an object in $\T$. Then $QX=0$ if and only if $Q(\id
    X)=0$. Thus part (2) implies that the kernel of $Q$ conists of all
    direct factors of objects in $\Sc$.

(4) An exact functor $F\colon\T\to\U$ annihilating $\Sc$ inverts every
    morphism in $\Si(\Sc)$. Thus there exists a unique functor $\bar
    F\colon\T/\Sc\to\U$ such that $F=\bar F\comp Q$. The functor $\bar
    F$ is exact because an exact triangle $\Delta$ in $\T/\Sc$ is up to
    isomorphism of the form $Q\Delta'$ for some exact triangle $\Delta'$
    in $\T$. Thus $\bar F\Delta\cong F\Delta'$ is exact.

(5) Analogous to (4).
\end{proof}

\subsection{Localization of subcategories}

Let $\T$ be a triangulated category with two full triangulated
subcategories $\T'$ and $\Sc$.  Then we put $\Sc'=\Sc\cap\T'$ and have
$\Si_{\T'}(\Sc')=\Si_\T(\Sc)\cap\T'$. Thus we can form the following
commutative diagram of exact functors
$$\xymatrix{\Sc'\ar[d]^-\inc\ar[rr]^-\inc&&\T'\ar[d]^-\inc\ar[rr]^-\can&&
\T'/\Sc'\ar[d]^-J\\
\Sc\ar[rr]^-\inc&&\T\ar[rr]^-\can&&\T/\Sc}$$
and ask when the functor $J$ is fully faithful. We have the following criterion.

\begin{lem}\label{le:tria-sub}
Let $\T$, $\T'$, $\Sc$, $\Sc'$ be as above. Suppose that either
\begin{enumerate}
\item every morphism from an object in $\Sc$ to an object in $\T'$
factors through some object in $\Sc'$, or
\item every morphism from an object in $\T'$ to an object in $\Sc$
factors through some object in $\Sc'$.
\end{enumerate}  
Then the induced functor $J\colon\T'/\Sc'\to\T/\Sc$
is fully faithful.
\end{lem}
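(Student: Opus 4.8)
The plan is to deduce the statement from Lemma~\ref{le:sub} together with its dual (for a calculus of right fractions). Take $\C=\T$, $\D=\T'$, and $\Si=\Si_\T(\Sc)$, which is a multiplicative system compatible with the triangulation. Since $\Si\cap\T'=\Si_{\T'}(\Sc')$ by the identity noted before the diagram, the induced functor $\D[(\Si\cap\D)^{-1}]\to\C[\Si^{-1}]$ of Lemma~\ref{le:sub} is precisely $J\colon\T'/\Sc'\to\T/\Sc$. Hence it is enough to verify the hypothesis of Lemma~\ref{le:sub} when (1) holds, and of its dual when (2) holds.

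Assume (1). Let $\s\colon Y\to Y'$ lie in $\Si_\T(\Sc)$ with $Y\in\T'$, and complete $\s$ to an exact triangle $Y\xrightarrow{\s}Y'\to Z\xrightarrow{w}SY$ with $Z\in\Sc$. Since $SY\in\T'$ (by (TS1)) and $Z\in\Sc$, hypothesis (1) provides a factorization $w=w_2\comp w_1$ with $w_1\colon Z\to Z'$, $w_2\colon Z'\to SY$, and $Z'\in\Sc'$. Completing $S^{-1}w_2$ to an exact triangle and rotating, we get an exact triangle $Y\xrightarrow{\t''}Y''\to Z'\to SY$. As $Y,Z'\in\T'$, axiom (TS2) for $\T'$ gives $Y''\in\T'$, and since $Z'\in\Sc$ we get $\t''\in\Si_\T(\Sc)\cap\T'=\Si_{\T'}(\Sc')$. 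Finally, the equality $w=w_2\comp w_1$ together with axiom (TR3), applied to the backward rotations of the two triangles on $\s$ and on $\t''$ with left square given (up to signs) by $S^{-1}w_1$ and $\id_Y$, produces a morphism $\t\colon Y'\to Y''$ with $\t\comp\s=\t''$. Thus $\t\comp\s\in\Si\cap\D$, which is exactly what Lemma~\ref{le:sub} requires, so $J$ is fully faithful.

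Assume (2); this is the dual situation. Let $\s\colon Y'\to Y$ lie in $\Si_\T(\Sc)$ with $Y\in\T'$, and complete $\s$ to an exact triangle $Y'\xrightarrow{\s}Y\xrightarrow{u}Z\to SY'$ with $Z\in\Sc$. Hypothesis (2) factors $u=u_2\comp u_1$ through some $Z'\in\Sc'$, with $u_1\colon Y\to Z'$. Completing $u_1$ to an exact triangle and rotating gives an exact triangle $Y''\xrightarrow{\t''}Y\xrightarrow{u_1}Z'\to SY''$; again (TS2) for $\T'$ yields $Y''\in\T'$, and $\t''\in\Si_{\T'}(\Sc')$. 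Since $u\comp\t''=u_2\comp u_1\comp\t''=0$, applying the cohomological functor $\T(Y'',-)$ to the triangle on $\s$ shows that $\t''$ factors as $\t''=\s\comp\t$ for some $\t\colon Y''\to Y'$. Hence $\s\comp\t=\t''\in\Si\cap\D$, which is the hypothesis of the dual of Lemma~\ref{le:sub}, so $J$ is again fully faithful.

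I expect the only genuinely fiddly point to be the bookkeeping of rotations and signs in case (1), needed to see that the factorization $w=w_2\comp w_1$ really assembles, via (TR3), into a morphism of triangles whose middle component $\t$ satisfies $\t\comp\s=\t''$; in case (2) the analogous step is just the elementary "factors through $\s$" argument and presents no difficulty. Conceptually there is no obstacle: everything is controlled by Lemma~\ref{le:sub}, and all one must do is manufacture, from a morphism of $\Si$ out of (respectively into) an object of $\T'$, a further morphism landing in $\Si\cap\T'=\Si_{\T'}(\Sc')$, which is exactly what the factorization hypotheses (1) and (2) are designed to supply through the octahedral machinery.
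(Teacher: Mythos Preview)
Your proof is correct and follows essentially the same approach as the paper: reduce to Lemma~\ref{le:sub} (and its dual), complete $\s$ to an exact triangle, factor the appropriate map through $\Sc'$, complete one of the factors to a new triangle whose cone lies in $\Sc'$, and use (TR3) (or the cohomological factorization) to produce $\t$. The only cosmetic difference is that the paper completes $\s$ as $X\xto{\p}Y\xto{\s}Y'\to SX$ and factors $\p$, whereas you rotate one step further and factor $w\colon Z\to SY$; these are the same data up to a shift and a sign, and the arguments are interchangeable.
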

\begin{proof}
Suppose that condition (1) holds. We apply the criterion from
Lemma~\ref{le:sub}. Thus we take a morphism $\s\colon Y\to Y'$ from
$\Si(\Sc)$ with $Y$ in $\T'$ and need to find $\t\colon Y'\to Y''$
such that $\t\comp\s$ belongs to $\Si(\Sc)\cap\T'$. To this end
complete $\s$ to an exact triangle $X\xto{\p} Y\xto{\s} Y'\to
SX$. Then $X$ belongs to $\Sc$ and by our assumption we have a
factorization $X\xto{\p'} Z\xto{\p''} Y$ of $\p$ with $Z$ in
$\Sc'$. Complete $\p''$ to an exact triangle $Z\xto{\p''} Y\xto{\psi}
Y''\to SZ$. Then (TR3) yields a morphism $\t\colon Y'\to Y''$
satisfying $\psi=\t\comp\s$.  In particular, $\t\comp\s$ lies in
$\Si(\Sc)\cap\T'$ since $Z$ belongs to $\Sc'$.  The proof using
condition (2) is dual.
\end{proof}

\subsection{Orthogonal subcategories}

Let $\T$ be a triangulated category and $\Sc$ a triangulated
subcategory. Then we define two full subcategories
\begin{align*}
\Sc^\perp&=\{Y\in\T\mid\T(X,Y)=0\text{ for all }X\in\Sc\}\\
^\perp\Sc&=\{X\in\T\mid\T(X,Y)=0\text{ for all }Y\in\Sc\}
\end{align*}
and call them \emph{orthogonal subcategories} with respect to $\Sc$.
Note that $\Sc^\perp$ and $^\perp\Sc$ are thick subcategories of $\T$.

\begin{lem} 
\label{le:local-acyclic} 
Let $\T$ be a triangulated category and $\Sc$ a triangulated
subcategory. Then the following are equivalent for an object $Y$ in $\T$.
\begin{enumerate}
\item $Y$ belongs to $\Sc^\perp$.
\item $Y$ is $\Si(\Sc)$-local, that is, $\T(\s,Y)$ is bijective for all $\s$ in $\Si(\Sc)$.
\item The quotient functor induces a bijection
$\T(X,Y)\to\T/\Sc(X,Y)$ for all $X$ in $\T$.
\end{enumerate}
\end{lem}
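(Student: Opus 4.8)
The plan is to prove the chain of implications $(2)\Rightarrow(3)\Rightarrow(1)\Rightarrow(2)$, using the earlier results on calculus of fractions. First I would observe that $\Si(\Sc)$ is a multiplicative system compatible with the triangulation, so $\Si(\Sc)^{-1}\T$ can be identified with $\T/\Sc$ and computed via left fractions; in particular Lemma~\ref{le:calc-adj} applies directly and gives the equivalence of (2) and (3) with essentially no further work, since ``local with respect to $\Si(\Sc)$'' is exactly condition (2) and Lemma~\ref{le:calc-adj} says this is equivalent to the quotient functor inducing bijections $\T(X,Y)\to\T/\Sc(X,Y)$, which is condition (3).

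For $(1)\Rightarrow(2)$, suppose $Y\in\Sc^\perp$ and take a morphism $\s\colon W\to W'$ in $\Si(\Sc)$. By definition of $\Si(\Sc)$ there is an exact triangle $W\xto{\s}W'\to Z\to SW$ with $Z\in\Sc$. Applying the cohomological functor $\T(-,Y)$ gives a long exact sequence, and since $\T(S^n Z,Y)=0$ for all $n$ (because $\Sc$ is a triangulated subcategory, so $S^nZ\in\Sc$), the connecting maps force $\T(\s,Y)\colon\T(W',Y)\to\T(W,Y)$ to be bijective. Hence $Y$ is $\Si(\Sc)$-local, which is (2).

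For $(2)\Rightarrow(1)$, suppose $Y$ is $\Si(\Sc)$-local and let $X\in\Sc$. I want $\T(X,Y)=0$. The key point is that for $X\in\Sc$ the zero morphism $X\to 0$ lies in $\Si(\Sc)$: indeed the triangle $X\xto{0}0\to SX\xto{\id}SX$ (up to the sign and rotation conventions of (TR1)/(TR2)) is exact with third term $SX\in\Sc$. Then $\Si(\Sc)$-locality gives that $\T(0,Y)\to\T(X,Y)$ is bijective, and since $\T(0,Y)=0$ we conclude $\T(X,Y)=0$, so $Y\in\Sc^\perp$.

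The only mild obstacle is bookkeeping with the rotation/sign conventions of (TR1)--(TR2) when exhibiting $X\to 0$ as a member of $\Si(\Sc)$ and when reading off bijectivity from the long exact sequence; but these are routine, and the substantive content is entirely supplied by Lemma~\ref{le:calc-adj} together with the fact that $\Sc$ being a triangulated subcategory makes it closed under the shift $S$. Alternatively, one can package $(1)\Leftrightarrow(2)$ more symmetrically by noting that a general $\s\colon W\to W'$ in $\Si(\Sc)$ sits in a triangle with third term in $\Sc$, and $\T(-,Y)$ applied to that triangle shows $\T(\s,Y)$ is invertible for all such $\s$ precisely when $\T(X,Y)=0$ for all $X\in\Sc$, closing the loop in one stroke.
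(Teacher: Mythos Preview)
Your proof is correct and follows essentially the same approach as the paper: both use Lemma~\ref{le:calc-adj} for $(2)\Leftrightarrow(3)$, the cohomological functor $\T(-,Y)$ applied to the defining triangle of $\s\in\Si(\Sc)$ for $(1)\Rightarrow(2)$, and the observation that $X\to 0$ lies in $\Si(\Sc)$ for $X\in\Sc$ to get $(2)\Rightarrow(1)$. Your write-up is slightly more detailed on the rotation/sign bookkeeping, but the argument is identical in substance.
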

\begin{proof}
(1) $\Rightarrow$ (2): Suppose $\T(X,Y)=0$ for all $X$ in $\Sc$. Then
every $\s$ in $\Si(\Sc)$ induces a bijection $\T(\s,Y)$ because
$\T(-,Y)$ is cohomological. Thus $Y$ is $\Si(\Sc)$-local.

(2) $\Rightarrow$ (1): Suppose that $Y$ is $\Si(\Sc)$-local. If $X$
    belongs to $\Sc$, then the morphism $\s\colon X\to 0$ belongs to
    $\Si(\Sc)$ and induces therefore a bijection $\C(\s,Y)$. Thus $Y$
    belongs to $\Sc^\perp$.

(2) $\Leftrightarrow$ (3): Apply Lemma~\ref{le:calc-adj}.
\end{proof}

\subsection{Bousfield localization}

Let $\T$ be a triangulated category. We wish to study exact
localization functors $L\colon\T\to\T$.  To be more precise, we assume
that $L$ is an exact functor and that $L$ is a localization functor in
the sense that there exists a morphism $\eta\colon\Id\C\to L$ with
$L\eta\colon L\to L^2$ being invertible and $L\eta=\eta L$. Note that
there is an isomorphism $\mu\colon L\comp S\xto{\sim} S\comp L$ since
$L$ is exact, and there exists a unique choice such that $\mu
X\comp\eta SX=S\eta X$ for all $X$ in $\T$. This follows from
Lemma~\ref{le:local}.

We observe that the kernel of an exact localization functor is a thick
subcategory of $\T$. The following fundamental result characterizes
the thick subcategories of $\T$ which are of this form.

\begin{prop}\label{pr:tria-loc}
Let $\T$ be a triangulated category and $\Sc$ a thick subcategory. Then
the following are equivalent.
\begin{enumerate}
\item There exists an exact localization functor $L\colon\T\to\T$ with $\Ker L=\Sc$.
\item The inclusion functor $\Sc\to\T$ admits a right adjoint.
\item For each $X$ in $\T$ there exists an exact triangle $X'\to X\to
X''\to SX''$ with $X'$ in $\Sc$ and $X''$ in $\Sc^\perp$.
\item The quotient functor $\T\to\T/\Sc$ admits a right adjoint.
\item The composite $\Sc^\perp\xto{\inc}\T\xto{\can}\T/\Sc$ is an equivalence.
\item The inclusion functor $\Sc^\perp\to\T$ admits a left adjoint and  $^\perp(\Sc^\perp)=\Sc$.
\end{enumerate}
\end{prop}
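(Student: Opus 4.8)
The plan is to prove the equivalence of (1)--(6) in Proposition~\ref{pr:tria-loc} by establishing a cycle of implications, using the abstract localization theory from Section~2 together with the triangulated structure. The guiding principle is that an exact localization functor $L$ with $\Ker L=\Sc$ should be the composite $\T\xto{\can}\T/\Sc\xto{G}\T$ where $G$ is a fully faithful right adjoint of the quotient functor, and that the essential image of $G$ is exactly $\Sc^\perp$.

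First I would treat the block (2) $\Leftrightarrow$ (3): given a right adjoint $R$ of the inclusion $\Sc\to\T$, the adjunction counit $RX\to X$ can be completed to an exact triangle $RX\to X\to X''\to S(RX)$, and one checks that $X''\in\Sc^\perp$ by applying $\T(W,-)$ for $W\in\Sc$ to the triangle and using that $\T(W,RX)\to\T(W,X)$ is bijective. Conversely, a functorial choice of such triangles produces the right adjoint; the functoriality of $X'$ in $X$ follows because $\T(W,X')\to\T(W,X)$ is bijective for $W\in\Sc$ (from the long exact sequence and $X''\in\Sc^\perp$), which is exactly the adjunction property, and uniqueness of $X'$ up to unique isomorphism makes it a functor. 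Dually, I would handle (5) $\Leftrightarrow$ (6): by Lemma~\ref{le:local-acyclic}, the composite $\Sc^\perp\xto{\inc}\T\xto{\can}\T/\Sc$ is fully faithful, so it is an equivalence iff it is essentially surjective iff the inclusion $\Sc^\perp\to\T$ has a left adjoint (namely the composite of $\can$ with a quasi-inverse); the extra condition $^\perp(\Sc^\perp)=\Sc$ is needed to pin down that the kernel of this adjoint is $\Sc$ rather than something larger, and one always has $\Sc\subseteq{}^\perp(\Sc^\perp)$ with equality forced by thickness plus the triangle from (3).

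Next I would link these two blocks through Verdier localization: (3) $\Rightarrow$ (5) by showing that if every $X$ sits in a triangle $X'\to X\to X''$ with $X'\in\Sc$, $X''\in\Sc^\perp$, then $\can\colon\Sc^\perp\to\T/\Sc$ is essentially surjective (every $X$ becomes isomorphic to $X''$ in $\T/\Sc$ since $X\to X''$ lies in $\Si(\Sc)$) and fully faithful by Lemma~\ref{le:local-acyclic}. For (5) $\Rightarrow$ (4): if $\Sc^\perp\to\T/\Sc$ is an equivalence, compose its quasi-inverse with $\inc\colon\Sc^\perp\to\T$ to get a functor $\T/\Sc\to\T$, and check via Lemma~\ref{le:local-acyclic} that it is right adjoint to $\can$. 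For (4) $\Rightarrow$ (1): given a right adjoint $G$ of $\can\colon\T\to\T/\Sc$, Proposition~\ref{pr:loc} (or rather its proof, noting $G$ is fully faithful by Proposition~\ref{pr:quot} since $\can$ is a quotient functor and $\Si(\can)=\Si(\Sc)$) shows $L=G\comp\can$ is a localization functor; it is exact by Lemma~\ref{le:exact-adj} since $\can$ is exact, and $\Ker L=\Ker(\can)=\Sc$ because $\Sc$ is thick (using the earlier Proposition that $\Ker(\can)$ is the smallest thick subcategory containing $\Sc$, which is $\Sc$ itself). Finally (1) $\Rightarrow$ (2): given an exact localization functor $L$ with $\Ker L=\Sc$ and unit $\eta$, apply Proposition~\ref{pr:loc} to factor $L=G\comp F$ with $G$ fully faithful; for each $X$, complete $\eta X\colon X\to LX$ to an exact triangle $\Ga X\to X\xto{\eta X} LX\to S(\Ga X)$, observe $L(\eta X)$ is invertible so $L(\Ga X)=0$, i.e. $\Ga X\in\Sc$, and check $\Ga$ is functorial and right adjoint to the inclusion $\Sc\to\T$ --- here one needs that $\T(W,\Ga X)\to\T(W,X)$ is bijective for $W\in\Sc$, equivalently that $LX$ is $L$-local hence $\Sc$-local, which is $\Sc^\perp$-membership of $LX=\Im L$, following from Proposition~\ref{pr:local-obj} together with Lemma~\ref{le:local-acyclic}.

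The main obstacle, I expect, is the careful bookkeeping around functoriality and the identification $\Im L=\Sc^\perp$: one must show that the essential image of an exact localization functor with kernel $\Sc$ coincides with the orthogonal $\Sc^\perp$, and that $L$-local is the same as $\Si(\Sc)$-local. This rests on the fact that $\Si(L)=\Si(\Sc)$, which in turn uses that $\Ker L=\Sc$ is thick and that a morphism $\s$ is inverted by the exact functor $L$ precisely when its cone lies in $\Ker L$. Getting this equality cleanly, and then invoking Lemma~\ref{le:local-acyclic} to translate between the orthogonality condition, the locality condition, and the quotient-functor condition, is the technical heart; the rest is assembling adjunctions from triangles via (TR3) and the five lemma, which is routine.
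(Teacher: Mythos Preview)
Your proposal is correct and uses the same key ingredients as the paper: completing adjunction morphisms to exact triangles, invoking Lemma~\ref{le:local-acyclic} to identify $\Sc^\perp$ with the $\Si(\Sc)$-local objects, and the observation $\Si(L)=\Si(\Sc)$. The routing differs slightly: the paper proves $(1)\Rightarrow(2)\Rightarrow(3)\Rightarrow(4)\Rightarrow(1)$ together with $(4)\Rightarrow(5)\Rightarrow(6)\Rightarrow(3)$, using Proposition~\ref{pr:calc-adj} for $(3)\Rightarrow(4)$, whereas you go $(3)\Rightarrow(5)\Rightarrow(4)$ directly and treat $(5)\Leftrightarrow(6)$ as a separate block. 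One point to tighten in your $(5)\Leftrightarrow(6)$: the assertion ``essentially surjective iff the inclusion $\Sc^\perp\to\T$ has a left adjoint'' is not correct as stated---the left adjoint alone does not force $\Sc^\perp\to\T/\Sc$ to be essentially surjective unless you also use $^\perp(\Sc^\perp)=\Sc$ (so that the cone of the unit $X\to J_\la X$ actually lies in $\Sc$, not merely in $^\perp(\Sc^\perp)$); and your justification of $^\perp(\Sc^\perp)=\Sc$ in $(5)\Rightarrow(6)$ invokes the triangle from (3), which is only available after the rest of your cycle is closed, so the block is not self-contained. The paper avoids this by proving $(5)\Rightarrow(6)$ directly from the equivalence: for $X\in{^\perp(\Sc^\perp)}$ one computes $\T/\Sc(QX,QX)\cong\T(X,JFQX)=0$, hence $QX=0$ and $X\in\Sc$.
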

\begin{proof}
Let  $I\colon\Sc\to\T$ and $J\colon\Sc^\perp\to\T$ denote the inclusions and
$Q\colon \T\to\T/\Sc$ the quotient functor.

(1) $\Rightarrow$ (2): Suppose that $L\colon\T\to\T$ is an exact
localization functor with $\Ker L=\Sc$ and let $\eta\colon \Id\T\to L$ be a
morphism such that $L\eta$ is invertible.  We obtain a right adjoint
$I_\r\colon\T\to\Sc$ for the inclusion $I$ by completing for each $X$
in $\T$ the morphism $\eta X$ to an exact triangle $I_\r X\xto{\theta
  X} X\xto{\eta X} LX\to S(I_\r X)$. Note that $I_\r X$ belongs to
$\Sc$ since $L\eta X$ is invertible. Moreover, $\T(W,\theta X)$ is
bijective for all $W$ in $\Sc$ since $\T(W,LX)=0$ by
Lemma~\ref{le:local-acyclic}.  Here we use that $LX$ is $\Si(L)$-local
by Proposition~\ref{pr:local-obj} and that $\Si(L)=\Si(\Sc)$.  Thus
$I_\r$ provides a right adjoint for $I$ since $\T(W,I_\r
X)\cong\T(IW,X)$ for all $W$ in $\Sc$ and $X$ in $\T$.  In particular,
we see that the exact triangle defining $I_\r X$ is, up to a unique
isomorphism, uniquely determined by $X$.  Therefore $I_\r$ is well
defined.

(2) $\Rightarrow$ (3): Suppose that $I_\r\colon\T\to\Sc$ is a right
adjoint of the inclusion $I$.  We fix an object $X$ in $\T$ and
complete the adjunction morphism $\theta X\colon I_\r X\to X$ to an
exact triangle $I_\r X\xto{\theta X} X\to X''\to S(I_\r X)$. Clearly,
$I_\r X$ belongs to $\Sc$. We have $\T(W,X'')=0$ for all $W$ in $\Sc$
since $\T(W,\theta X)$ is bijective. Thus $X''$ belongs to $\Sc^\perp$.

(3) $\Rightarrow$ (4): We apply Proposition~\ref{pr:calc-adj} to
obtain a right adjoint for the quotient functor $Q$.  To this end fix
an object $X$ in $\T$ and an exact triangle $X'\to X\xto{\eta} X''\to
SX''$ with $X'$ in $\Sc$ and $X''$ in $\Sc^\perp$. The morphism $\eta$
belongs to $\Si(\Sc)$ by definition, and the object $X''$ is
$\Si(\Sc)$-local by Lemma~\ref{le:local-acyclic}. Now it follows from
Proposition~\ref{pr:calc-adj} that $Q$ admits a
right adjoint.

(4) $\Rightarrow$ (1): Let $Q_\r\colon\T/\Sc\to\T$ denote a right
adjoint of $Q$. This functor is fully faithful by
Proposition~\ref{pr:quot} and exact by Lemma~\ref{le:exact-adj}.  Thus
$L=Q_\r\comp Q$ is an exact functor with $\Ker L=\Ker Q=\Sc$.  Moreover, $L$
is a localization functor by Corollary~\ref{co:loc}.

(4) $\Rightarrow$ (5): Let $Q_\r\colon\T/\Sc\to\T$ denote a right
adjoint of $Q$. The composite
$Q\comp J\colon\Sc^\perp\to\T/\Sc$ is fully faithful by
Lemma~\ref{le:local-acyclic}. Given an object $X$ in $\T/\Sc$, we have
$Q(Q_\r X)\cong X$ by Proposition~\ref{pr:quot}, and $Q_\r X$ belongs to
$\Sc^\perp$, since $\T(W,Q_\r X)\cong \T/\Sc(QW,X)=0$ for all $W$ in
$\Sc$. Thus $Q\comp J$ is dense and therefore an equivalence.

(5) $\Rightarrow$ (6): Suppose $Q\comp J\colon\Sc^\perp\to\T/\Sc$ is an
equivalence and let $F\colon \T/\Sc\to\Sc^\perp$ be a quasi-inverse.  We
have for all $X$ in $\T$ and $Y$ in $\Sc^\perp$
$$\T(X,JY)\xto{\sim}\T/\Sc(QX,QJY)\xto{\sim}\Sc^\perp(FQX,FQJY)\xto{\sim}\Sc^\perp(FQX,Y),$$
where the first bijection follows from Lemma~\ref{le:local-acyclic} and
the others are clear from the choice
of $F$.  Thus $F\comp Q$ is a left adjoint for the inclusion $J$. 

It remains to show that $^\perp(\Sc^\perp)=\Sc$.  The inclusion
${^\perp(\Sc^\perp)}\supseteq \Sc$ is clear.  Now let $X$ be an object
of $^\perp(\Sc^\perp)$. Then we have
$$\T/\Sc(QX,QX)\cong\Sc^\perp(FQX,FQX)\cong\T(X,J(FQX))=0.$$ Thus $QX=0$
and therefore $X$ belongs to $\Sc$.

(6) $\Rightarrow$ (3): Suppose that $J_\la\colon\T\to\Sc^\perp$ is a
    left adjoint of the inclusion $J$. 
We fix an object $X$ in $\T$ and
complete the adjunction morphism $\mu X\colon X\to J_\la X$ to an
exact triangle $X'\to X\xto{\mu X}J_\la X\to SX'$. Clearly,
$J_\la X$ belongs to $\Sc^\perp$. We have $\T(X',Y)=0$ for all $Y$ in $\Sc^\perp$
since $\T(\mu X,Y)$ is bijective. Thus $X'$ belongs to $^\perp(\Sc^\perp)=\Sc$.
\end{proof}

The following diagram displays the functors which arise from a
localization functor $L\colon\T\to\T$. We use the convention that $F_\rho$ denotes
a right adjoint of a functor $F$. 
$$\xymatrix{\Sc\,\ar@<-.7ex>[rr]_-{I=\inc}&&\,\T\,
\ar@<-.7ex>[rr]_-{Q=\can}\ar@<-.7ex>[ll]_-{I_\rho}&&
\,\T/\Sc\ar@<-.7ex>[ll]_-{Q_\rho}}\qquad (L=Q_\rho\comp Q\quad\text{and}\quad\Ga=I\comp I_\rho)$$

\subsection{Acyclic and local objects}

Let $\T$ be a triangulated category and $L\colon\T\to\T$ an exact
localization functor. An object $X$ in $\T$ is by  definition
\emph{$L$-acyclic} if $LX=0$. Recall that an object in $\T$ is
$L$-local if and only if it belongs to the essential image $\Im L$ of
$L$; see Proposition~\ref{pr:local-obj}.  The exactness of $L$ implies
that $\Sc:=\Ker L$ is a thick subcategory and that $\Si(L)=\Si(\Sc)$.
Therefore  $L$-local and $\Si(\Sc)$-local objects coincide.

The following result says that acyclic and local objects form an
orthogonal pair. 

\begin{prop}
\label{pr:local-acyclic} 
Let $L\colon \T\to\T$ be an exact localization functor. Then we have
$$\Ker L={^\perp(\Im L)}\quad\text{and}\quad(\Ker L)^\perp=\Im L.$$
More explictly, the following holds.
\begin{enumerate}
\item $X\in\T$ is $L$-acyclic if and only if $\T(X,Y)=0$
for every  $L$-local object $Y$.
\item $Y\in\T$ is $L$-local if and only if $\T(X,Y)=0$
for every $L$-acyclic object $X$. 
\end{enumerate}
\end{prop}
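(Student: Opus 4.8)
The plan is to reduce everything to the structural results already established, principally Proposition~\ref{pr:tria-loc} and Lemma~\ref{le:local-acyclic}. Since $L$ is an exact localization functor, $\Sc:=\Ker L$ is a thick subcategory and, by Corollary~\ref{co:local-objects}, $\Im L$ coincides with the full subcategory of $L$-local objects, which in turn equals the subcategory of $\Si(\Sc)$-local objects because $\Si(L)=\Si(\Sc)$. By Lemma~\ref{le:local-acyclic} the $\Si(\Sc)$-local objects are precisely $\Sc^\perp$. Hence $\Im L=\Sc^\perp=(\Ker L)^\perp$, which is one of the two claimed identities, and statement~(2) is just the unravelled form of $Y\in\Sc^\perp$.

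For the other identity I would use that $L$, being an exact localization functor, fits into the equivalent conditions of Proposition~\ref{pr:tria-loc} (taking $\Sc=\Ker L$, which is thick). In particular condition~(6) gives $^\perp(\Sc^\perp)=\Sc$, that is, $^\perp(\Im L)=\Ker L$, establishing the first displayed equality and with it statement~(1): $X$ is $L$-acyclic iff $X\in{^\perp(\Im L)}$ iff $\T(X,Y)=0$ for every $L$-local $Y$. Alternatively, and perhaps more self-containedly, one can argue directly: given $X$ with $\T(X,Y)=0$ for all $L$-local $Y$, apply the morphism $\eta X\colon X\to LX$; since $LX$ is $L$-local we get $\T(X,LX)=0$, so $\eta X=0$; but $\eta X$ lies in $\Si(L)$, hence becomes invertible after applying $L$, forcing $LX=0$. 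The reverse inclusion $\Ker L\subseteq{^\perp(\Im L)}$ is immediate from $\Si(L)=\Si(\Sc)$ together with the description of $\Im L$ as the $\Si(\Sc)$-local objects, or simply from the fact that any map $X\to Y$ with $LX=0$ and $Y\in\Im L$ factors through $LX=0$ via $\eta X$ by the universal property in Corollary~1.7 (the statement that $X\to LX$ is initial among morphisms into $\Im L$).

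The only genuine subtlety — the "hard part" — is making sure the various identifications are invoked correctly, namely that $\Si(L)=\Si(\Sc)$ for $\Sc=\Ker L$; this is the content of the remark in the preceding subsection on acyclic and local objects (the exactness of $L$ guarantees $\Ker L$ is thick and that the morphisms inverted by $L$ are exactly those whose cone lies in $\Ker L$), and once it is in hand the proof is a bookkeeping exercise chaining Corollary~\ref{co:local-objects}, Lemma~\ref{le:local-acyclic}, and Proposition~\ref{pr:tria-loc}. I would present it in the order: first record $\Im L = \Sc^\perp$ (proving (2)), then deduce $\Ker L = {}^\perp(\Sc^\perp)$ either via Proposition~\ref{pr:tria-loc}(6) or via the direct $\eta X$ argument (proving (1)), and finally note that the two displayed formulas are exactly (2) and (1) restated.
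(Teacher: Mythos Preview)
Your proposal is correct and aligns closely with the paper's proof: part~(2) is handled identically by invoking Lemma~\ref{le:local-acyclic} together with $\Si(L)=\Si(\Sc)$, and your alternative direct argument for~(1) (using $\eta X=0$ to force $LX=0$) is essentially the paper's argument in different clothing---the paper factors $L=G\comp F$ and deduces $\T(FX,FX)\cong\T(X,GFX)=\T(X,LX)=0$. The only real difference is that you lead with the citation of Proposition~\ref{pr:tria-loc}(6) to get ${}^\perp(\Sc^\perp)=\Sc$, whereas the paper prefers the short self-contained adjunction computation; both are fine, though the direct argument avoids appealing to the longer equivalence result.
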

\begin{proof}
(1) We write $L=G\comp F$ where $F$ is a functor and $G$ a fully
faithful right adjoint; see Corollary~\ref{co:loc}. Suppose first we
have given objects $X,Y$ such that $X$ is $L$-acyclic and $Y$ is
$L$-local. Observe that $FX=0$ since $G$ is faithful. Thus
$$\T(X,Y)\cong\T(X,GFY)\cong\T(FX,FY)=0.$$

Now suppose that $X$ is an object with $\T(X,Y)=0$ for all $L$-local $Y$. Then
$$\T(FX,FX)\cong\T(X,GFX)=0$$ and therefore $FX=0$. Thus $X$ is $L$-acyclic.

(2) This is a reformulation of Lemma~\ref{le:local-acyclic}.
\end{proof}

\subsection{A functorial triangle}
Let $\T$ be a triangulated category and $L\colon\T\to\T$ an exact
localization functor. We denote by $\eta\colon \Id\T\to L$ a morphism
such that $L\eta$ is invertible. It follows from
Proposition~\ref{pr:tria-loc} and its proof that we obtain an exact
functor $\Ga\colon \T\to\T$ by completing for each $X$ in $\T$ the
morphism $\eta X$ to an exact triangle
\begin{equation}\label{eq:extria}
\Ga X\lto[\theta X] X\lto[\eta X] LX\lto S(\Ga X).
\end{equation}
The exactness of $\Ga$ follows from Lemma~\ref{le:exact-adj}. Observe
that $\Ga X$ is $L$-acyclic and that $LX$ is $L$-local.  In fact, the
exact triangle \eqref{eq:extria} is essentially determined by these
properties. This is a consequence of the following basic properties of
$L$ and $\Ga$.

\begin{prop}\label{pr:ker_im}
The functors $L,\Ga\colon \T\to\T$ have the following properties.
\begin{enumerate}
\item $L$ induces an equivalence $\T/\Ker L\xto{\sim}\Im L$.
\item $L$ induces a left adjoint for the inclusion $\Im L\to\T$.
\item $\Ga$ induces a right adjoint for the inclusion $\Ker L\to \T$.
\end{enumerate}
\end{prop}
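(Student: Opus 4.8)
The plan is to deduce all three statements from results already established, matching the abstractly constructed equivalences and adjoints with the concrete functors $L$ and $\Ga$; there is essentially no new content here, only bookkeeping among earlier propositions.

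For (1): since $L$ is exact, $\Sc:=\Ker L$ is a thick subcategory and $\Si(L)=\Si(\Sc)$, as recorded in the subsection on acyclic and local objects, so $\T[\Si(L)^{-1}]=\T[\Si(\Sc)^{-1}]=\T/\Sc=\T/\Ker L$. Corollary~\ref{co:local-objects} then gives an equivalence $\T/\Ker L=\T[\Si(L)^{-1}]\xto{\sim}\Im L$, and by its construction this is precisely the functor induced by $L$ (the corestriction of $L$ to its essential image). It is exact because both the quotient functor $\T\to\T/\Sc$ and $L$ are exact.

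For (2): I would invoke the factorization $L=G\comp F$ from the proof of Proposition~\ref{pr:loc}. There $G$ is the inclusion of the full subcategory of objects $X$ with $\eta X$ invertible, which by Proposition~\ref{pr:local-obj} is exactly $\Im L$, and $F\colon\T\to\Im L$ is defined by $FX=LX$; Proposition~\ref{pr:loc} shows $F$ is left adjoint to $G$. Since $F$ is the functor induced by $L$, this is statement (2). Exactness of $F$ follows from Lemma~\ref{le:exact-adj} once one knows the inclusion $\Im L\to\T$ is exact, which holds since $\Im L=\Sc^\perp$ is a thick subcategory.

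For (3): I would reread the proof of the implication (1)$\Rightarrow$(2) in Proposition~\ref{pr:tria-loc}. Starting from $L$, the right adjoint $I_\r\colon\T\to\Sc$ of the inclusion $I\colon\Sc\to\T$ is built by completing $\eta X$ to an exact triangle $I_\r X\to X\to LX\to S(I_\r X)$ — which is precisely the functorial triangle~\eqref{eq:extria} defining $\Ga X$ — and by the uniqueness of that triangle $\Ga X\cong I_\r X$. Hence $\Ga X$ lies in $\Ker L$ (as already observed just before the proposition), so $\Ga$ corestricts to a functor $\T\to\Ker L$, and this corestriction is the right adjoint of the inclusion. The only points requiring care are the identification $\Si(L)=\Si(\Sc)$ used in (1), the verification that the abstract adjoints supplied by Propositions~\ref{pr:loc} and~\ref{pr:tria-loc} really are the corestriction of $L$ and the functor $\Ga$, and the matching of the two triangles in (3); none of these is a genuine obstacle, and the whole proof reduces to assembling the earlier results.
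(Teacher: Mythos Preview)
Your proof is correct and follows essentially the same route as the paper: (1) via Corollary~\ref{co:local-objects} together with the identification $\T/\Ker L=\T[\Si(L)^{-1}]$, (2) via the factorization $L=G\comp F$ from Proposition~\ref{pr:loc} (the paper cites Corollary~\ref{co:loc}, which is the same content), and (3) via the construction of $\Ga$ in the proof of Proposition~\ref{pr:tria-loc}. Your additional remarks on exactness are fine but not needed for the statement as written.
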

\begin{proof}
(1) is a reformulation of Corollary~\ref{co:local-objects}, and (2)
follows from Corollary~\ref{co:loc}.  (3) is an immediate consequence
of the construction of $\Ga$ via Proposition~\ref{pr:tria-loc}.
\end{proof}

\begin{prop}
Let $L\colon\T\to\T$ be an exact localization functor and $X$ an
object in $\C$. Given any exact triangle $X'\to X\to X''\to SX'$ with
$X'$ $L$-acyclic and $X''$ $L$-local, there are unique isomorphisms
$\a$ and $\b$ making the following diagram commutative.
\begin{equation}\label{eq:loc-tria}
\xymatrix{
X'\ar[r]\ar[d]^{\a}&X\ar[r]\ar@{=}[d]&X''\ar[r]\ar[d]^{\b}&S
X'\ar[d]^{S\a}\\ \Ga X\ar[r]^{\theta X}&X\ar[r]^{\eta X}&L X\ar[r]&S(\Ga X)}
\end{equation}
\end{prop}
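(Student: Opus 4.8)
The plan is to use the universal property of the triangle \eqref{eq:extria} together with the orthogonality established in Proposition~\ref{pr:local-acyclic}. First I would observe that in the given exact triangle $X'\lto X\lto X''\lto SX'$ the object $X'$ is $L$-acyclic and $X''$ is $L$-local, so by Proposition~\ref{pr:local-acyclic} we have $\T(X',Y)=0$ for every $L$-local $Y$ and $\T(Z,X'')=0$ for every $L$-acyclic $Z$. In particular $\T(X',LX)=0$, so the composite $X'\to X\xto{\eta X}LX$ vanishes; hence $\eta X$ factors through $X''$, and by (TR3) applied to the two triangles (together with $\id X$ in the middle) we get a morphism of triangles
$$\xymatrix{X'\ar[r]\ar[d]^{\a}&X\ar[r]\ar@{=}[d]&X''\ar[r]\ar[d]^{\b}&SX'\ar[d]^{S\a}\\ \Ga X\ar[r]^{\theta X}&X\ar[r]^{\eta X}&LX\ar[r]&S(\Ga X).}$$
Here I would note that $\a$ is determined by $\b$ and vice versa once the square with $\id X$ is fixed, but in any case existence of the triple is (TR3).

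Next I would prove that $\a$ and $\b$ are isomorphisms. The cleanest route is to invoke Lemma~\ref{le:local}: the morphism $\eta X\colon X\to LX$ inverts $L$ and $LX$ is $L$-local, while the composite $X\to X''$ also inverts $L$ (its cone $SX'$, equivalently $X'$, is $L$-acyclic, hence annihilated by $L$) and $X''$ is $L$-local. By the uniqueness part of Lemma~\ref{le:local} there is a \emph{unique} isomorphism $\b\colon X''\to LX$ compatible with these two morphisms out of $X$; this is exactly the $\b$ produced above (compatibility with $\eta X$ is the commutativity of the middle square), and it is automatically an isomorphism. Then $\a$ is an isomorphism by the five-lemma applied to the morphism of triangles (using that $\id X$ and $\b$, hence also $S\a$ up to the constraint, force $\a$ to be invertible — more precisely, two out of three vertical maps in a morphism of exact triangles being isomorphisms forces the third to be one as well).

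Finally I would address uniqueness of $\a$ and $\b$. Uniqueness of $\b$ is immediate from Lemma~\ref{le:local} as just noted. For $\a$: given two morphisms of triangles with the same middle identity, their difference $(\a-\a',0,\b-\b')$ is again a morphism of triangles; since $\b-\b'=0$ by uniqueness of $\b$, the map $\a-\a'\colon X'\to\Ga X$ becomes zero after composing with $\theta X$, so $\a-\a'$ factors through $S^{-1}(LX)$; but $\T(X',S^{-1}LX)\cong\T(SX',LX)=0$ since $SX'$ is $L$-acyclic and $LX$ is $L$-local, whence $\a=\a'$. The main obstacle I anticipate is bookkeeping with signs and the precise form of (TR3) when one of the two prescribed maps is an identity; the conceptual content is entirely carried by Lemma~\ref{le:local} and Proposition~\ref{pr:local-acyclic}, and the rest is a routine five-lemma and Hom-vanishing argument.
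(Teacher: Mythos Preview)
Your proof is correct and follows the same overall strategy as the paper: use the orthogonality of Proposition~\ref{pr:local-acyclic} to build one vertical map, complete to a morphism of triangles via (TR3), then argue that the maps are isomorphisms. The differences are minor but worth noting. The paper constructs $\a$ first, via the bijection $\T(X',\theta X)$ coming from $\T(X',LX)=0$, and then obtains $\b$ from (TR3); you construct $\b$ first and then recover $\a$. More substantively, to show $\b$ is an isomorphism the paper applies $L$ to the whole diagram (so $L\b$ is invertible since $LX'=0=L(\Ga X)$, and $L\b\cong\b$ because source and target are $L$-local), whereas you invoke Lemma~\ref{le:local} directly. These are two packagings of the same idea. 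Your treatment of uniqueness is actually more explicit than the paper's: the paper records uniqueness of $\a$ via the bijection $\T(X',\theta X)$ but leaves uniqueness of $\b$ implicit, while you spell out both, including the Hom-vanishing argument for $\a$.
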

\begin{proof}
The morphism $\theta X$ induces a bijection $\T(X',\theta X)$ since $X'$
is acyclic.  Thus $X'\to X$ factors uniquely through $\theta X$ via a
morphism $\a\colon X'\to \Ga X$. An application of (TR3) gives a
morphism $\b\colon X''\to LX$ making the diagram \eqref{eq:loc-tria}
commutative. Now apply $L$ to this diagram. Then $L\b$ is an
isomorphism since $LX'=0=L\Ga X$, and $L\b$ is isomorphic to $\b$
since $X''$ and $LX$ are $L$-local. Thus $\b$ is an isomorphism, and therefore
$\a$ is an isomorphism.
\end{proof}

\subsection{Localization versus colocalization}
For exact functors on triangulated categories, we have the following
symmetry principle relating localization and colocalization.

\begin{prop}\label{pr:sym}
Let $\T$ be a triangulated category.
\begin{enumerate}
\item Suppose $L\colon\T\to\T$ is an exact localization functor and
$\Ga\colon\T\to\T$ the functor which is defined in terms of the exact
triangle \eqref{eq:extria}. Then $\Ga$ is an exact colocalization
functor with $\Ker \Ga=\Im L$ and $\Im\Ga=\Ker L$.
\item Suppose $\Ga\colon\T\to\T$ is an exact colocalization functor and
$L\colon\T\to\T$ the functor which is defined in terms of the exact
triangle \eqref{eq:extria}. Then $L$ is an exact localization functor
with $\Ker L=\Im\Ga$ and $\Im L=\Ker \Ga$.
\end{enumerate}
\end{prop}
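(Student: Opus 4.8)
The plan is to establish (1) directly from the structure theory developed above, and then to obtain (2) from (1) by passing to the opposite category.

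For (1), I would start from the facts recorded in the previous subsections: $\Ga$ is obtained by completing $\eta X$ to the exact triangle \eqref{eq:extria}, and by Proposition~\ref{pr:ker_im}(3) it factors as $\Ga=I\comp I_\rho$, where $I\colon\Ker L\to\T$ is the inclusion and $I_\rho$ its right adjoint. Exactness of $\Ga$ is then immediate: $I_\rho$ is exact by Lemma~\ref{le:exact-adj}, and $\Ga$ is a composite of exact functors. Since $\Ga=I\comp I_\rho$ with $I$ fully faithful and left adjoint to $I_\rho$, the dual of Corollary~\ref{co:loc} shows that $\Ga$ is a colocalization functor. To compute $\Im\Ga$ I would note that $\Ga X=I(I_\rho X)$ always lies in $\Ker L$, while for $X$ in $\Ker L$ the unit $\Id\to I_\rho I$ is invertible (as $I$ is fully faithful), so $\Ga X\cong X$; hence $\Im\Ga=\Ker L$. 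To compute $\Ker\Ga$ I would read off \eqref{eq:extria}: $\Ga X=0$ precisely when $\eta X$ is invertible, which by Proposition~\ref{pr:local-obj} means that $X$ is $L$-local, i.e.\ $X\in\Im L$; hence $\Ker\Ga=\Im L$.

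For (2), the plan is to reduce to (1) by opposites. An exact colocalization functor $\Ga$ on $\T$ is precisely an exact localization functor $\Ga^\op$ on the triangulated category $\T^\op$, and under the identification $\T=(\T^\op)^\op$ the natural morphism $\theta\colon\Ga\to\Id_\T$ used to form \eqref{eq:extria} corresponds to the morphism $\Id_{\T^\op}\to\Ga^\op$ of the localization picture for $\Ga^\op$. Applying part (1) inside $\T^\op$ to $\Ga^\op$ then produces an exact colocalization functor on $\T^\op$ with kernel $\Im\Ga^\op$ and image $\Ker\Ga^\op$; transporting this back to $\T$ identifies that functor with $L$, shows that $L$ is an exact localization functor, and yields $\Ker L=\Im\Ga$ and $\Im L=\Ker\Ga$.

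I expect the only delicate point to be the bookkeeping for (2): one must check that $\T^\op$ carries a triangulated structure with suspension a quasi-inverse of $S$, that exactness is preserved under passage to opposite categories, and---above all---that the exact triangle $\Ga X\to X\to LX\to S(\Ga X)$ of \eqref{eq:extria} in $\T$ really is the image of the triangle produced by part~(1) inside $\T^\op$ (up to the usual rotation and sign on the connecting morphism). Once this dictionary is in place no further computation is required; alternatively one could avoid it entirely by rerunning the argument of part~(1) in dual form, invoking the opposite versions of Proposition~\ref{pr:tria-loc} and Proposition~\ref{pr:local-obj}.
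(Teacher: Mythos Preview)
Your proposal is correct and follows essentially the same approach as the paper: factor $\Ga=I\comp I_\rho$ via Proposition~\ref{pr:ker_im}, invoke Lemma~\ref{le:exact-adj} for exactness and the dual of Corollary~\ref{co:loc} for the colocalization property, then read off $\Ker\Ga$ and $\Im\Ga$ from the triangle \eqref{eq:extria}; part~(2) is handled by duality. The paper is terser---it simply asserts that the kernel and image identities ``are easily derived from the exact triangle'' and that (2) ``is the dual statement''---so your explicit verifications and your remarks on the opposite-category bookkeeping only add detail, not a different idea.
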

\begin{proof}
It suffices to prove (1) because (2) is the dual statement.  So let
$L\colon\T\to\T$ be an exact localization functor. It follows from the
construction of $\Ga$ that it is of the form $\Ga=I\comp I_\r$ where
$I_\r$ denotes a right adjoint of the fully faithful inclusion
$I\colon\Ker L\to\T$. Thus $\Ga$ is a colocalization functor by
Corollary~\ref{co:loc}.  The exactness of $\Ga$ follows from
Lemma~\ref{le:exact-adj}, and the identities $\Ker \Ga=\Im L$ and
$\Im\Ga=\Ker L$ are easily derived from the exact triangle
\eqref{eq:extria}.
\end{proof}

\subsection{Recollements}
A \emph{recollement} is by definition a diagram of exact functors
\begin{equation}\label{eq:recol}
\xymatrix{\T'\,\ar[rr]|-I&&\,\T\,
\ar[rr]|-Q\ar@<1.25ex>[ll]^-{I_\la}\ar@<-1.25ex>[ll]_-{I_\r}&&
\,\T''\ar@<1.25ex>[ll]^-{Q_\la}\ar@<-1.25ex>[ll]_-{Q_\r} }
\end{equation}
satisfying the following conditions.
\begin{enumerate}
\item $I_\lambda$ is a left adjoint and  $I_\rho$ a right adjoint of $I$.
\item $Q_\lambda$ is a left adjoint and  $Q_\rho$ a right adjoint of $Q$.
\item $I_\la I\cong\Id{\T'}\cong I_\rho I$ and $QQ_\rho
\cong\Id{\T''}\cong QQ_\la$.
\item $\Im I=\Ker Q$.
\end{enumerate}
Note that the isomorphisms in (3) are supposed to be the adjunction
morphisms resulting from (1) and (2).

A recollement gives rise to various localization and colocalization
functors for $\T$. First observe that the functors $I$, $Q_\la$, and
$Q_\rho$ are fully faithful; see Proposition~\ref{pr:quot}. Therefore
$Q_\rho Q$ and $II_\la$ are localization functors and $Q_\la Q$ and
$II_\rho$ are colocalization functors. This follows from
Corollary~\ref{co:loc}. Note that the localization functor $L=Q_\rho Q$
has the additional property that the inclusion $\Ker L\to \T$ admits a
left adjoint. Moreover, $L$ determines the recollement up to an
equivalence.

\begin{prop}
Let $L\colon\T\to\T$ be an exact localization functor and suppose that the
inclusion $\Ker L\to \T$ admits a left adjoint. Then $L$ induces a
recollement of the following form.
$$\xymatrix{\Ker L\,\ar[rr]|-\inc&&\,\T\,
\ar[rr]\ar@<1.25ex>[ll]\ar@<-1.25ex>[ll]&& \,\Im
L\ar@<1.25ex>[ll]\ar@<-1.25ex>[ll]}$$ Moreover, any recollement for
$\T$ is, up to equivalences, of this form for some exact localization
functor $L\colon\T\to\T$.
\end{prop}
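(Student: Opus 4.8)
The plan is to prove the two assertions separately. For the first assertion, given an exact localization functor $L\colon\T\to\T$ such that the inclusion $I\colon\Ker L\to\T$ admits a left adjoint, I would assemble the six functors of the claimed recollement. Write $\Sc=\Ker L$. By Proposition~\ref{pr:tria-loc} (the equivalence of its conditions (1), (2), (4), (5)), the inclusion $I$ has a right adjoint $I_\rho$, the quotient functor $Q\colon\T\to\T/\Sc$ has a right adjoint $Q_\rho$, and the composite $\Sc^\perp\xto{\inc}\T\xto{\can}\T/\Sc$ is an equivalence, so we may identify $\T/\Sc\simeq\Im L$ via Proposition~\ref{pr:ker_im}(1). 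Under this identification $Q$ becomes the functor $X\mapsto LX$ and its right adjoint $Q_\rho$ becomes the inclusion $\Im L\to\T$ (Proposition~\ref{pr:ker_im}(2)). By hypothesis $I$ also has a left adjoint $I_\la$; so condition (1) of a recollement holds. Condition (2) holds by the existence of $Q_\rho$ together with the left adjoint of $Q$, which is $Q$ restricted so as to land in $\Im L$, i.e.\ the composite $\T\xto{\can}\T/\Sc\xrightarrow{F}\Sc^\perp$ for a quasi-inverse $F$; this is exactly the left adjoint $Q_\la$ constructed in the proof of (5)$\Rightarrow$(6) of Proposition~\ref{pr:tria-loc}. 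For condition (3): $QQ_\rho\cong\Id$ holds since $Q_\rho$ is fully faithful (Proposition~\ref{pr:quot}), and $QQ_\la\cong\Id$ holds because $Q_\la$ is a quasi-inverse followed by $Q$, composed so that $Q Q_\la\cong\can\comp F$ which is the identity on $\T/\Sc$; similarly $I_\la I\cong\Id\cong I_\rho I$ since $I$ is fully faithful, the relevant units/counits being isomorphisms by a standard adjunction argument (or by noting $\Sc$ is a full subcategory). Condition (4), $\Im I=\Ker Q$, is immediate since $\Im I=\Sc=\Ker L=\Ker Q$.

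For the converse, suppose we are given an arbitrary recollement \eqref{eq:recol}. Set $L=Q_\rho\comp Q$. Since $Q_\rho$ is fully faithful (Proposition~\ref{pr:quot}) and is a right adjoint of $Q$, Corollary~\ref{co:loc} shows $L$ is a localization functor; it is exact by Lemma~\ref{le:exact-adj} applied twice (both $Q$ and $Q_\rho$ are exact, the latter because it is the right adjoint of the exact functor $Q$). Then $\Ker L=\Ker Q=\Im I$ by condition (4), and the inclusion $\Im I\to\T$, which is just $I$ up to the equivalence $I_\la I\cong\Id$, admits the left adjoint $I_\la$ by condition (1). Finally I would check that the recollement produced from this $L$ by the first part of the proposition agrees with the original one up to equivalences: the essential image $\Im L$ is identified with $\Im Q_\rho\simeq\T''$ via $Q_\rho$ (using $QQ_\rho\cong\Id$), and under this identification the four right-hand functors match up, while $\Ker L=\Im I$ is identified with $\T'$ via $I$; uniqueness of adjoints then forces the remaining functors to agree up to natural isomorphism.

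The main obstacle I anticipate is the bookkeeping in establishing condition (3) of the recollement, in particular that the adjunction units and counits $I_\la I\to\Id$, $\Id\to I_\rho I$, $QQ_\rho\to\Id$, $\Id\to QQ_\la$ are genuinely isomorphisms and are \emph{the} adjunction morphisms (as the definition insists), rather than merely exhibiting abstract isomorphisms. For a fully faithful functor a one-sided adjoint always has invertible unit or counit on that side, so this is a standard fact; still, one must be careful to phrase the identifications $\T/\Sc\simeq\Im L\simeq\T''$ so that all the triangles of functors commute up to the prescribed natural transformations. The genuinely substantive input — that $Q$ has a right adjoint, that $\Sc^\perp\to\T/\Sc$ is an equivalence, and that these assemble correctly — is already packaged in Proposition~\ref{pr:tria-loc}, so beyond this diagram-chasing the argument is essentially formal. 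I would therefore keep the exposition brief, citing Propositions~\ref{pr:quot}, \ref{pr:tria-loc}, \ref{pr:ker_im}, Corollary~\ref{co:loc}, and Lemma~\ref{le:exact-adj}, and leaving the routine verification of the adjunction identities to the reader.
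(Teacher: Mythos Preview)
There is a genuine gap in your construction of $Q_\la$ in the first half. Recall the setup: after identifying $\T/\Sc\simeq\Im L$, the functor $Q\colon\T\to\Im L$ is $X\mapsto LX$ and its right adjoint $Q_\rho$ is the inclusion $\Im L\hookrightarrow\T$. You now need a \emph{left} adjoint $Q_\la\colon\Im L\to\T$ of $Q$. The functor you write down, namely the composite $\T\xto{\can}\T/\Sc\xto{F}\Sc^\perp$, goes the wrong way: it is a functor $\T\to\Im L$, and in fact it is nothing other than $Q$ itself. What is constructed in the proof of (5)$\Rightarrow$(6) of Proposition~\ref{pr:tria-loc} is the left adjoint of the inclusion $J\colon\Sc^\perp\to\T$, i.e.\ the left adjoint of $Q_\rho$; but that left adjoint is $Q$, not $Q_\la$. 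So you have not produced $Q_\la$ at all.

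This is not just a notational slip: your argument for condition (2) never uses the hypothesis that $I$ admits a left adjoint $I_\la$, and without that hypothesis $Q$ need not have a left adjoint (otherwise every exact localization functor would yield a recollement, which is false). The missing step is precisely what the paper supplies by invoking ``Proposition~\ref{pr:tria-loc} and its dual assertion'': apply the \emph{dual} of that proposition with the thick subcategory $\Sc=\Ker L$. The dual of condition (2) is that the inclusion $\Ker L\to\T$ admits a left adjoint (your hypothesis), and the dual of condition (4) is that the quotient functor $\T\to\T/\Ker L$ admits a left adjoint; under the identification $\T/\Ker L\simeq\Im L$ this is exactly the existence of $Q_\la$. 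Equivalently, the dual of condition (5) identifies $Q_\la$ with the composite of the equivalence $\Im L\simeq{^\perp(\Ker L)}$ followed by the inclusion ${^\perp(\Ker L)}\hookrightarrow\T$. Once you have $Q_\la$ this way, the rest of your argument (including the converse direction) is fine.
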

\begin{proof}
We apply Proposition~\ref{pr:tria-loc} and its dual assertion.
Observe first that any localization functor $L\colon\T\to\T$ induces
the following diagram.
$$\xymatrix{\Ker L\,\ar@<-.7ex>[rr]_-{I=\inc}&&\,\T\,
\ar@<-.7ex>[rr]_-{Q=L}\ar@<-.7ex>[ll]_-{I_\rho=\Ga}&& \,\Im
L\ar@<-.7ex>[ll]_-{Q_\rho=\inc}}$$ The functor $I$ admits a left
adjoint if and only if $Q$ admits a left adjoint. Thus the diagram can
be completed to a recollement if and only if the inclusion $I$ admits
a left adjoint.

Suppose now there is given a recollement of the form \eqref{eq:recol}.
Then $L=Q_\rho Q$ is a localization functor and the inclusion $\Ker
L\to \T$ admits a left adjoint. The functor $I$ induces an equivalence
$\T'\xto{\sim}\Ker L$ and $Q_\rho$ induces an equivalence
$\T''\xto{\sim}\Im L$. It is straightforward to formulate and check
the various compatibilities of these equivalences.
\end{proof}

As a final remark, let us mention that for any recollement of the form
\eqref{eq:recol}, the functors $Q_\la$ and $Q_\rho$ provide two (in
general different) embeddings of $\T''$ into $\T$. If we identify
$\T'=\Im I$, then $Q_\r$ identifies $\T''$ with $(\T')^\perp$ and
$Q_\la$ identifies $\T''$ with $^\perp(\T')$; see
Proposition~\ref{pr:local-acyclic}.

\subsection{Example: The derived category of a module category}

Let $A$ be an associative ring. We denote by $\bfK(\Mod A)$ the
category of chain complexes of $A$-modules whose morphisms are the
homotopy classes of chain maps. The functor $H^n\colon\bfK(\Mod
A)\to\Mod A$ taking the cohomology of a complex in degree $n$ is
cohomological. A morphism $\p$ is called \emph{quasi-isomorphism} if
$H^n\p$ is an isomorphism for all $n\in\bbZ$, and we denote the set
of all quasi-isomorphisms by $\qis$. Then
$$\bfD(A):=\bfD(\Mod A):=\bfK(\Mod A)[\qis^{-1}]$$ is by definition
the \emph{derived category} of $\Mod A$. The kernel of the quotient
functor $Q\colon\bfK(\Mod A)\to\bfD(\Mod)$ is the full subcategory
$\bfK_\ac(\Mod A)$ which is formed by all acyclic complexes. Note that
$Q$ admits a left adjoint $Q_\la$ taking each complex to its
K-projective resolution and a right adjoint $Q_\r$ taking each complex
to its K-injective resolution. Thus we obtain the following recollement.
\begin{equation}\label{eq:DA}
\xymatrix{\bfK_\ac(\Mod A)\,\ar[rr]|-\inc&&\,\bfK(\Mod A)\,
\ar[rr]|-Q\ar@<1.25ex>[ll]^-{}\ar@<-1.25ex>[ll]_-{}&&
\,\bfD(\Mod A)\ar@<1.25ex>[ll]^-{Q_\la}\ar@<-1.25ex>[ll]_-{Q_\r} }
\end{equation}
It follows that for each pair of chain complexes $X,Y$ the set of
morphisms $\bfD(\Mod A)(X,Y)$ is small, since $Q_\la$ induces a
bijection with $\bfK(\Mod A)(Q_\la X,Q_\la Y)$. The adjoints of $Q$
are discussed in more detail in Section~\ref{se:complexes}.

\subsection{Example: A derived category without small morphism sets}

For any abelian category $\A$, the \emph{derived category} $\bfD(\A)$
is by definition $\bfK(\A)[\qis^{-1}]$. Here, $\bfK(\A)$ denotes the
category of chain complexes in $\A$ whose morphisms are the homotopy
classes of chain maps, and $\qis$ denotes the set of
quasi-isomorphisms. Let us identify objects in $\A$ with chain
complexes concentrated in degree zero. 

We give an example of an abelian category $\A$ and an object $X$
in $\A$ such that the set $\Ext^1_\A(X,X)\cong \bfD(\A)(X,SX)$ is not
small.  This example is taken from Freyd \cite[pp.~131]{Fre1964} and
has been pointed out to me by Neeman.

Let $U$ denote the set of all cardinals of small sets. This set is not
small. Consider the free associative $\bbZ$-algebra $\bbZ\langle
U\rangle$ which is generated by the elements of $U$. Now let $\A=\Mod
A$ denote the category of $A$-modules, where it is assumed that the
underlying set of each module is small. Let $\bbZ$ denote the trivial
$A$-module, that is, $zu=0$ for all $z\in\bbZ$ and $u\in U$.  We claim
that the set $\Ext_\A^1(\bbZ,\bbZ)$ is not small. To see this, define
for each $u\in U$  an $A$-module $E_u=\bbZ\oplus\bbZ$ by
$$(z_1,z_2)x=\begin{cases}(z_2,0)&\quad\text{if $x=u$,}\\
(0,0)&\quad\text{if $x\neq u$,}
\end{cases}$$
where $(z_1,z_2)\in E_u$ and $x\in U$.  Then we have short exact
sequences $0\to \bbZ\xto{\smatrix{1\\ 0}}E_u\xto{\smatrix{0&1}}\bbZ\to
0$ which yield pairwise different elements of $\Ext_\A^1(\bbZ,\bbZ)$ as $u$
runs though the elements in $U$.

\subsection{Example: The recollement induced by an idempotent}

Recollements can be defined for abelian categories in the same way as
for triangulated categories. A typical example arises for any module
category from an idempotent element of the underlying ring.

Let $A$ be an associative ring and $e^2=e\in A$ an idempotent. Then
the functor $F\colon\Mod A\to \Mod eAe$ taking a module $M$ to $Me$
and restriction along $p\colon A\to A/AeA$ induce the following
recollement.
$$\xymatrix{\Mod A/AeA\,\ar[rrr]|-{p_*}&&&\,\Mod A\,
\ar[rrr]|-{F}\ar@<1.25ex>[lll]^-{}\ar@<-1.25ex>[lll]_-{}&&& \,\Mod
eAe\ar@<1.25ex>[lll]^-{-\otimes_{eAe}eA}\ar@<-1.25ex>[lll]_-{\Hom_{eAe}(Ae,-)}
}$$ Note that we can describe adjoints of $F$ since
$$F=\Hom_A(eA,-)=-\otimes_AAe.$$ The recollement for $\Mod A$ induces
the following recollement of triangulated categories for $\bfD(A)$. 
$$\xymatrix{\Ker\bfD(F)\,\ar[rrr]|-{\inc}&&&\,\bfD(A)\,
\ar[rrr]|-{\bfD(F)}\ar@<1.25ex>[lll]^-{}\ar@<-1.25ex>[lll]_-{}&&&
\,\bfD(eAe)\ar@<1.25ex>[lll]^-{-\otimes^\bfL_{eAe}eA}\ar@<-1.25ex>[lll]_-{\RHom_{eAe}(Ae,-)}
}$$ The functor $F$ is exact and $\bfD(F)$ takes by definition a
complex $X$ to $FX$.  The functor
$\bfD(p_*)\colon\bfD(A/AeA)\to\bfD(A)$ identifies $\bfD(A/AeA)$ with
$\Ker\bfD(F)$ if and only if $\Tor_i^A(A/AeA,A/AeA)=0$ for all $i>0$.

\subsection{Notes}
Triangulated categories were introduced independently in algebraic
geometry by Verdier in his th\`ese \cite{V}, and in algebraic topology
by Puppe \cite{P}. Grothendieck and his school used the formalism of
triangulated and derived categories for studying homological
properties of abelian categories. Early examples are Grothendieck
duality and local cohomology for categories of sheaves.  The basic
example of a triangulated category from topology is the stable
homotopy category.

Localizations of triangulated categories are discussed in Verdier's
th\`ese \cite{V}. In particular, he introduced the localization (or
\emph{Verdier quotient}) of a triangulated category with respect to a
triangulated subcategory.  In the context of stable homotopy theory,
it is more common to think of localization functors as endofunctors;
see for instance the work of Bousfield \cite{Bou}, which explains the
term \emph{Bousfield localization}.  The standard reference for
recollements is \cite{BBD}. Resolutions of unbounded complexes were
first studied by Spaltenstein in \cite{Sp}; see also \cite{AH}.

\section{Localization via Brown representability}

\subsection{Brown representatbility}

Let $\T$ be a triangulated category and suppose that $\T$ has small
coproducts. A \emph{localizing subcategory} of $\T$ is by definition a
thick subcategory which is closed under taking small coproducts. A
localizing subcategory of $\T$ is \emph{generated} by a fixed set
of objects if it is the smallest localizing subcategory of
$\T$ which contains this set.

We say that $\T$ is \emph{perfectly generated} by some small set $\Sc$
of objects of $\T$ provided the following holds.
\begin{enumerate}
\item[(PG1)] There is no proper localizing subcategory of $\T$ which
contains $\Sc$.
\item[(PG2)] Given a family $(X_i\to Y_i)_{i\in I}$ of morphisms in
$\T$ such that the induced map $\T(C,X_i)\to\T(C,Y_i)$ is surjective for all
$C\in\Sc$ and $i\in I$,  the induced map $$\T(C,\coprod_i
X_i)\lto\T(C,\coprod_i Y_i)$$ is surjective.
\end{enumerate}
We say that a triangulated category $\T$ with small products
is \emph{perfectly cogenerated} if $\T^\op$ is perfectly generated.

\begin{thm}
\label{th:brown}
Let $\T$ be a triangulated category with small coproducts and suppose
$\T$ is perfectly generated.
\begin{enumerate}
\item A functor $F\colon\T^\op\to\Ab$ is cohomological and sends small
coproducts in $\T$ to products if and only if $F\cong\T(-,X)$ for
some object $X$ in $\T$.
\item An exact functor $\T\to\U$ between triangulated categories preserves 
small coproducts if and only if it has a right adjoint.
\end{enumerate}
\end{thm}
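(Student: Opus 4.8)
The ``if'' parts are immediate: $\T(-,X)$ is always cohomological and turns coproducts into products by the universal property of the coproduct, and a functor admitting a right adjoint is a left adjoint, hence preserves small coproducts. Moreover (2) reduces to (1): if $F\colon\T\to\U$ is exact and preserves small coproducts, then for each object $Y$ of $\U$ the functor $\T^\op\to\Ab$, $X\mapsto\U(FX,Y)$, is cohomological (since $F$ is exact and $\U(-,Y)$ is cohomological) and sends coproducts to products (since $F$ does); if it is represented by an object $F_\rho Y$, then by the Yoneda lemma $Y\mapsto F_\rho Y$ is a functor right adjoint to $F$. So the whole theorem comes down to the ``only if'' part of~(1).

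Let $F\colon\T^\op\to\Ab$ be cohomological and coproduct-preserving. First I would replace $\Sc$ by $\bigcup_{n\in\bbZ}S^n\Sc$, which is again a small perfectly generating set — (PG1) and (PG2) are readily inherited — so that we may assume $S\Sc=\Sc$. The strategy is to build an object $X$ with a natural transformation $\phi\colon\T(-,X)\to F$ and then show $\phi$ is invertible. I would build a sequence $X_0\xto{u_0}X_1\xto{u_1}\cdots$ and compatible natural transformations $\phi_n\colon\T(-,X_n)\to F$: put $X_0=\coprod_{C\in\Sc}\coprod_{x\in FC}C$; since $F$ carries coproducts to products, $F(X_0)$ contains a canonical element whose $(C,x)$-component is $x$, and by Yoneda this is a map $\phi_0$, surjective on each $C\in\Sc$ (the inclusion of the $(C,x)$-summand maps to $x$). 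Given $X_n,\phi_n$, let $v_n\colon X_n'\to X_n$ be the morphism out of $X_n'=\coprod_{C\in\Sc}\coprod_{f}C$ — the inner coproduct over all $f\colon C\to X_n$ with $\phi_n(f)=0$ — whose components are the $f$, complete $v_n$ to an exact triangle $X_n'\xto{v_n}X_n\xto{u_n}X_{n+1}\to SX_n'$, and observe that the element of $F(X_n)$ corresponding to $\phi_n$ restricts to $0$ in $F(X_n')$, hence by exactness of $F$ lifts along $u_n$ to a $\phi_{n+1}\colon\T(-,X_{n+1})\to F$ (again surjective on $\Sc$, restricting to $\phi_n$ along $u_n$). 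Finally let $X$ be a homotopy colimit of $(X_n)$, sitting in an exact triangle $\coprod_nX_n\xto{1-s}\coprod_nX_n\xto{\pi}X\to S\coprod_nX_n$ with structure maps $\pi_n\colon X_n\to X$; applying $F$ and lifting the compatible system $(\phi_n)$ gives a natural transformation $\phi\colon\T(-,X)\to F$ restricting to each $\phi_n$ along $\pi_n$.

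It remains to prove $\phi$ is invertible, and I would do this by testing on $\Sc$. That $\phi(C)\colon\T(C,X)\to FC$ is surjective for $C\in\Sc$ is built into the construction, since it factors the already-surjective $\phi_0(C)$. The delicate point is injectivity of $\phi(C)$ for $C\in\Sc$: if $g\colon C\to X$ satisfies $\phi(g)=0$ and happens to factor as $\pi_n\comp g_n$, then $\phi_n(g_n)=\phi(g)=0$, so $g_n$ is one of the components of $v_n$, whence $u_n\comp g_n$ factors through $u_n\comp v_n=0$ and $g=\pi_{n+1}\comp u_n\comp g_n=0$. So the task is to show that morphisms from an object of $\Sc$ into $X$ are detected on the finite stages, i.e.\ that applying $\T(C,-)$ to the homotopy colimit triangle behaves as if $C$ were compact. \emph{This is the one genuinely hard step, and it is exactly where axiom (PG2) is indispensable}: for a general (non-compact) generator $C$ a morphism $C\to X$ has no a priori reason to factor through a finite stage, and (PG2), applied to the morphisms occurring in the homotopy colimit triangle, is precisely what forces the relevant morphisms to do so. Granting this, $\phi(C)$ is bijective for all $C\in\Sc$.

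The proof then concludes with the standard argument: the full subcategory $\X=\{Y\in\T\mid\phi(S^nY)\text{ is invertible for all }n\in\bbZ\}$ is closed under shifts, small coproducts and direct summands, and — via the five lemma applied to the long exact sequences produced by the cohomological functors $\T(-,X)$ and $F$ — under exact triangles; so it is a localizing subcategory, it contains $\Sc$ (here we use $S\Sc=\Sc$), and by (PG1) it equals $\T$. Hence $\phi$ is invertible and $F\cong\T(-,X)$. I expect essentially all the difficulty to be concentrated in the injectivity step: once one knows how to extract from (PG2) that maps out of a generator into a sequential homotopy colimit are seen on the finite stages, everything else — the surjectivity, the propagation from $\Sc$ to $\T$, and the reduction of (2) to (1) — is routine.
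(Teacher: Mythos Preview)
Your reduction of (2) to (1) is exactly the paper's own argument: represent $\U(F-,Y)$ to define the right adjoint. For (1) the paper does not give a proof at all --- it simply cites \cite[Theorem~A]{K} --- so your sketch already goes well beyond what the paper supplies.

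The approach you outline for (1) is the standard one (build a tower killing kernel elements, take a homotopy colimit, propagate from $\Sc$ to $\T$ via a localizing-subcategory argument), and the steps you write out in detail are correct. You have also correctly located the only genuine difficulty: showing that a morphism $C\to X$ with $C\in\Sc$ is already visible at some finite stage $X_n$, which is precisely where (PG2) is invoked. That step is not merely ``hard'' but is the whole content of the theorem beyond the compactly generated case; in the cited paper \cite{K} it is handled by passing through the abelian category of coherent functors rather than by a bare hands argument on the homotopy colimit triangle, but the underlying mechanism is the same. Since you explicitly flag this as the step you are granting rather than proving, your proposal is an honest and accurate outline rather than a complete proof --- which, given that the paper itself defers to a reference here, is entirely appropriate.
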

\begin{proof} For a proof of (1) see \cite[Theorem~A]{K}.
To prove (2), suppose that $F$ preserves small coproducts. Then one
defines the right adjoint $G\colon\U\to\T$ by sending an object $X$ in
$\U$ to the object in $\T$ representing $\U(F-,X)$.  Thus
$\U(F-,X)\cong\T(-,GX)$.  Conversely, given a right adjoint of
$F$, it is automatic that $F$ preserves small coproducts.
\end{proof}

\begin{rem}
(1) In the presence of (PG2), condition (PG1) is equivalent to the
    following: For an object $X$ in $\T$, we have $X=0$ if $\T(S^nC,X)=0$ for
    all $C\in\Sc$ and $n\in\bbZ$.

(2) A perfectly generated triangulated category $\T$ has small
products. In fact, Brown representability implies that for any family
of objects $X_i$ in $\T$ the functor $\prod_i\T(-,X_i)$ is represented
by an object in $\T$.
\end{rem}

\subsection{Localization functors via Brown representability}
The existence of localization functors is basically equivalent to the
existence of certain right adjoints; see
Proposition~\ref{pr:tria-loc}. We combine this observation with
Brown's representability theorem and obtain the following.

\begin{prop}
\label{pr:loc-brown}
Let $\T$ be a triangulated category which admits small coproducts and
fix a localizing subcategory $\Sc$.
\begin{enumerate}
\item Suppose $\Sc$ is perfectly generated. Then there exists an exact
localization functor $L\colon \T\to\T$ with $\Ker L=\Sc$.
\item Suppose $\T$ is perfectly generated. Then there exists an exact
localization functor $L\colon \T\to\T$ with $\Ker L=\Sc$ if and only if
the morphisms between any two objects in $\T/\Sc$ form a small set.
\end{enumerate}
\end{prop}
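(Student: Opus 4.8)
The plan is to deduce both statements from the adjoint-functor characterization of Bousfield localization in Proposition~\ref{pr:tria-loc}, using Brown representability (Theorem~\ref{th:brown}(2)) to manufacture the required adjoint functor.

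For part~(1), I would first note that a localizing subcategory $\Sc$ is by definition thick, and that the inclusion $I\colon\Sc\to\T$ is exact and preserves small coproducts, since coproducts of objects of $\Sc$ taken in $\T$ already lie in $\Sc$ and hence serve as coproducts in $\Sc$. As $\Sc$ is perfectly generated, Theorem~\ref{th:brown}(2), applied with $\Sc$ in the role of the source category, provides a right adjoint for $I$. Thus condition~(2) of Proposition~\ref{pr:tria-loc} holds, and condition~(1) of that proposition yields an exact localization functor $L\colon\T\to\T$ with $\Ker L=\Sc$.

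For the backward implication of part~(2), I would work with the Verdier quotient $Q\colon\T\to\T/\Sc$, which is exact by Lemma~\ref{le:qis}. The set $\Si(\Sc)$ is closed under small coproducts: if each $\s_i$ fits into an exact triangle $X_i\to Y_i\to Z_i\to SX_i$ with $Z_i\in\Sc$, then $\coprod_i\s_i$ fits into the coproduct triangle, whose third term $\coprod_i Z_i$ again lies in $\Sc$ because $\Sc$ is closed under small coproducts. Hence Proposition~\ref{pr:coprod} shows that $\T/\Sc$ admits small coproducts and that $Q$ preserves them. Under the hypothesis that the morphism sets of $\T/\Sc$ are small, $\T/\Sc$ is a genuine triangulated category, so Theorem~\ref{th:brown}(2) applies to the coproduct-preserving exact functor $Q$ out of the perfectly generated category $\T$ and produces a right adjoint of $Q$. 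Then condition~(4) of Proposition~\ref{pr:tria-loc} is satisfied, and condition~(1) gives the desired exact localization functor $L$ with $\Ker L=\Sc$. For the forward implication, suppose such an $L$ exists; by Corollary~\ref{co:local-objects} (equivalently Proposition~\ref{pr:ker_im}(1)) it induces an equivalence $\T/\Sc=\T/\Ker L\xto{\sim}\Im L$, and since $\Im L$ is a full subcategory of $\T$ it inherits small morphism sets, whence so does $\T/\Sc$.

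The step that I expect to need the most care is checking that the hypotheses of Brown representability are genuinely in force in the backward direction of~(2): one must verify that $\T/\Sc$ has small coproducts (supplied by Proposition~\ref{pr:coprod}) and, crucially, that it has small morphism sets, since this is exactly what guarantees that the functor $\T/\Sc(Q-,X)$ takes values in $\Ab$ and is therefore representable. Once these are in place, the argument is just a matter of assembling results already established.
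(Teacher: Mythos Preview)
Your proposal is correct and follows essentially the same approach as the paper's proof: reduce to the existence of right adjoints via Proposition~\ref{pr:tria-loc}, observe that the inclusion $\Sc\to\T$ and the quotient $\T\to\T/\Sc$ preserve small coproducts (the paper cites Proposition~\ref{pr:coprod} for this), and then invoke Brown representability. You are simply more explicit than the paper, which compresses all of this into three sentences; in particular you spell out why $\Si(\Sc)$ is closed under coproducts and you make the forward direction of~(2) explicit via the equivalence $\T/\Sc\xto{\sim}\Im L$, whereas the paper leaves this implicit.
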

\begin{proof}
The existence of a localization functor $L$ with $\Ker L=\Sc$ is
equivalent to the existence of a right adjoint for the inclusion
$\Sc\to\T$, and it is equivalent to the existence of a right adjoint for the
quotient functor $\T\to\T/\Sc$.  Both functors preserve small
coproducts since $\Sc$ is closed under taking small coproducts; see
Proposition~\ref{pr:coprod}. Now apply Theorem~\ref{th:brown} for the
existence of right adjoints.
\end{proof}

\subsection{Compactly generated triangulated categories}

Let $\T$ be a triangulated category with small coproducts.  An object
$X$ in $\T$ is called \emph{compact} (or \emph{small}) if every
morphism $X\to\coprod_{i\in I}Y_i$ in $\T$ factors through
$\coprod_{i\in J}Y_i$ for some finite subset $J\subseteq I$. Note that
$X$ is compact if and only if the representable functor
$\T(X,-)\colon\T\to\Ab$ preserves small coproducts.  The compact
objects in $\T$ form a thick subcategory which we denote by $\T^c$.

The triangulated category $\T$ is called \emph{compactly generated} if
it is perfectly generated by a small set of compact objects. Observe
that condition (PG2) is automatically satisfied if every object in
$\Sc$ is compact.

A compactly generated triangulated category $\T$ is perfectly
cogenerated. To see this, let $\Sc$ be a set of compact generators.
Then the objects representing $\Hom_\bbZ(\T(C,-),\bbQ/\bbZ)$, where
$C$ runs through all objects in $\Sc$, form a set of perfect
cogenerators for $\T$.

The following proposition is a reformulation of Brown representability
for compactly generated triangulated categories.

\begin{prop}
Let $F\colon\T\to\U$ be an exact functor between triangulated
categories. Suppose that $\T$ has small coproducts and that $\T$ is
compactly generated.
\begin{enumerate}
\item The functor $F$ admits a right adjoint if and only if $F$
preserves small coproducts.
\item The functor $F$ admits a left adjoint if and only if $F$
preserves small products.
\end{enumerate}
\end{prop}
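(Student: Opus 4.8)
The plan is to derive both parts from the Brown representability theorem, more precisely from Theorem~\ref{th:brown}(2) and its dual, by checking the coproduct/product preservation hypotheses.

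First I would handle part (1). The key observation is that an exact functor $F\colon\T\to\U$ on a compactly generated category either preserves small coproducts or it does not, and the point is to connect this to the existence of a right adjoint. If $F$ preserves small coproducts, then since $\T$ is in particular perfectly generated, Theorem~\ref{th:brown}(2) immediately yields a right adjoint $G\colon\U\to\T$. Conversely, if $F$ has a right adjoint, then $F$ is a left adjoint and hence preserves all colimits that exist in $\T$, in particular small coproducts; this direction is formal and needs no generation hypothesis. So part (1) is essentially a direct citation of Theorem~\ref{th:brown}(2) once one notes that a compactly generated category is perfectly generated (which is built into the definition).

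For part (2) I would apply the dual statement. The remark following Theorem~\ref{th:brown} records that a compactly generated triangulated category is perfectly cogenerated (using that the objects representing $\Hom_\bbZ(\T(C,-),\bbQ/\bbZ)$ form a set of perfect cogenerators, as explained just above the proposition). Hence $\T^\op$ is perfectly generated and has small coproducts, so Theorem~\ref{th:brown}(2) applied to $F^\op\colon\T^\op\to\U^\op$ says that $F^\op$ has a right adjoint if and only if $F^\op$ preserves small coproducts. Translating back: $F$ has a left adjoint if and only if $F$ preserves small products. Again the ``only if'' direction (a right adjoint preserves limits, hence products) is formal.

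The main obstacle, such as it is, is purely bookkeeping: one must be careful that $F^\op$ is exact when $F$ is (immediate from the axioms, since the opposite of a triangulated category is triangulated and $F^\op$ sends exact triangles to exact triangles), and that ``perfectly cogenerated'' is exactly the hypothesis needed to invoke the dual of Theorem~\ref{th:brown}(2). No genuinely new argument is required; the proposition is a repackaging of Brown representability together with the fact, already established in the preceding paragraphs, that compact generation gives perfect generation of both $\T$ and $\T^\op$. I would therefore write the proof as two short paragraphs, one per part, each reducing to Theorem~\ref{th:brown} and the trivial adjoint-functor observation, and conclude with a sentence noting that part (2) is part (1) applied to $F^\op$.
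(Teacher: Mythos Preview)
Your proposal is correct and matches the paper's approach exactly: the paper gives no formal proof, stating only that the proposition is ``a reformulation of Brown representability for compactly generated triangulated categories,'' and your argument is precisely the unpacking of that remark via Theorem~\ref{th:brown}(2) for part (1) and its dual (using that a compactly generated category is perfectly cogenerated) for part (2).
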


\subsection{Right adjoint functors preserving coproducts}

The following lemma provides a characterization of the fact that a
right adjoint functor preserves small coproducts. This will be useful
in the context of compactly generated categories.

\begin{lem}
\label{le:adj-coprod}
Let $F\colon\T\to\U$ be an exact functor between triangulated
categories which has a right adjoint $G$.
\begin{enumerate}
\item If $G$ preserves small coproducts, then $F$ preserves compactness.
\item If $F$ preserves compactness and $\T$ is generated by compact
objects, then $G$ preserves small coproducts.
\end{enumerate}
\end{lem}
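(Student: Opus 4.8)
The plan is to derive both statements from the adjunction isomorphism $\U(FX,Y)\cong\T(X,GY)$ together with the characterization of compactness: an object $C$ of $\T$ is compact precisely when $\T(C,-)$ commutes with small coproducts.

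For (1), I would fix a compact object $X$ in $\T$ and a family $(Y_i)_{i\in I}$ in $\U$, and factor the canonical map $\coprod_i\U(FX,Y_i)\to\U(FX,\coprod_i Y_i)$ as a composite of natural isomorphisms
\[\coprod_i\U(FX,Y_i)\;\cong\;\coprod_i\T(X,GY_i)\;\cong\;\T(X,\coprod_i GY_i)\;\cong\;\T(X,G(\coprod_i Y_i))\;\cong\;\U(FX,\coprod_i Y_i),\]
where the outer two isomorphisms are the adjunction, the middle-left one uses compactness of $X$, and the middle-right one uses the hypothesis that $G$ preserves small coproducts. A short check with the coproduct inclusions $Y_i\to\coprod_i Y_i$ shows this composite is the canonical map, whence $FX$ is compact.

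For (2), let $\Sc_0$ be a small set of compact objects generating $\T$, fix a family $(Y_i)_{i\in I}$ in $\U$, and let $c\colon\coprod_i GY_i\to G(\coprod_i Y_i)$ be the canonical morphism, whose $i$\nobreakdash-th component is $G$ applied to the inclusion $Y_i\to\coprod_i Y_i$. For an arbitrary compact object $C$ of $\T$ I would run the analogous chain
\[\T(C,\coprod_i GY_i)\;\cong\;\coprod_i\T(C,GY_i)\;\cong\;\coprod_i\U(FC,Y_i)\;\cong\;\U(FC,\coprod_i Y_i)\;\cong\;\T(C,G(\coprod_i Y_i)),\]
the third isomorphism being legitimate because $FC$ is compact, by the hypothesis that $F$ preserves compactness. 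Identifying this composite with $\T(C,c)$ again by a naturality check, one concludes that $\T(C,c)$ is bijective for every compact $C$.

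Finally, complete $c$ to an exact triangle $\coprod_i GY_i\xto{c}G(\coprod_i Y_i)\to Z\to S(\coprod_i GY_i)$ in $\T$. Since $\T(C',c)$ is bijective for every compact $C'$, and $\T(C,S^{n}c)\cong\T(S^{-n}C,c)$ with $S^{-n}C$ again compact, the long exact sequence obtained by applying $\T(C,-)$ forces $\T(S^{n}C,Z)=0$ for all $C\in\Sc_0$ and all $n\in\bbZ$. As $\Sc_0$ generates $\T$ (using that (PG2) is automatic for compact objects, cf. the remark after Theorem~\ref{th:brown}), this gives $Z=0$, so $c$ is an isomorphism and $G$ preserves small coproducts. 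The only genuinely delicate point is the naturality bookkeeping that identifies the composites of adjunction and coproduct-comparison isomorphisms with the canonical comparison maps; once compactness of $FC$ is in hand, the rest is formal.
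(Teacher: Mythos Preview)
Your proof is correct and follows essentially the same route as the paper: both parts rest on the same adjunction/compactness chain of isomorphisms, and for (2) the paper simply phrases the endgame as ``the objects $X'$ with $\T(X',c)$ invertible form a localizing subcategory containing the compacts'', which is equivalent to your cone argument. One small point: the hypothesis in (2) is only that $\T$ is generated by compact objects, not necessarily by a \emph{small} set, so instead of invoking the remark after Theorem~\ref{th:brown} it is cleaner to observe directly that $\{X':\T(X',Z)=0\}$ is a localizing subcategory containing all compacts and hence equals $\T$.
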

\begin{proof}
Let $X$ be an object in $\T$ and $(Y_i)_{i\in I}$ a family of objects
in $\U$.

(1) We have
\begin{equation}
\label{eq:adj-coprod}
\U(FX,\coprod_i Y_i)\cong\T(X,G(\coprod_i Y_i))\cong\T(X,\coprod_i
GY_i).
\end{equation}
If $X$ is compact, then the isomorphism shows that a morphism
$FX\to\coprod_i Y_i$ factors through a finite coproduct. It follows
that $FX$ is compact.

(2) Let $X$ be compact. Then the canonical morphism $\p\colon\coprod_i
    GY_i\to G(\coprod_i Y_i)$ induces an isomorphism
$$\T (X,\coprod_iGY_i)\cong\coprod_i\T(X,GY_i)\cong\coprod_i\U(FX,Y_i)
\cong\U(FX,\coprod_i Y_i)\cong\T(X,G(\coprod_i Y_i)),$$ 
where the last isomorphism uses that $FX$ is compact. It is easily
checked that the objects $X'$ in $\T$ such that $\T(X',\p)$ is an
isomorphism form a localizing subcategory of $\T$.  Thus $\p$ is an
isomorphism because the compact objects generate $\T$.
\end{proof}

\subsection{Localization functors preserving coproducts}

The following result provides a characterization of the fact that an
exact localization functor $L$ preserves small coproducts; in that
case one calls $L$ \emph{smashing}. The example given below explains
this terminology.

\begin{prop}\label{pr:recol}
Let $\T$ be a category with small coproducts and $L\colon\T\to\T$ an
exact localization functor. Then the following are
equivalent.
\begin{enumerate}
\item The functor $L\colon\T\to\T$ preserves small coproducts.
\item The colocalization functor $\Ga\colon\T\to\T$ with
$\Ker\Ga=\Im L$ preserves small coproducts.
\item The right adjoint of the inclusion functor $\Ker L\to\T$
preserves small coproducts.
\item The right adjoint of the quotient functor $\T\to\T/\Ker L$
preserves small coproducts.
\item The subcategory $\Im L$ of all $L$-local objects is closed
under taking small coproducts.
\end{enumerate}
If $\T$ is perfectly generated, in addition the following is equivalent.
\begin{enumerate}
\item[(6)] 
There exists a recollement of the following form.
\begin{equation}\label{eq:rec}
\xymatrix{\Im L\,\ar[rr]|-\inc&&\,\T\, \ar[rr]\ar@<1.25ex>[ll]\ar@<-1.25ex>[ll]&&
\,\Ker L\ar@<1.25ex>[ll]\ar@<-1.25ex>[ll]
}
\end{equation}
\end{enumerate}
\end{prop}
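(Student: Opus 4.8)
Write $\Sc=\Ker L$, let $I\colon\Sc\to\T$ be the inclusion with right adjoint $I_\r$, so that the functor $\Ga$ attached to the triangle \eqref{eq:extria} is $\Ga=I\comp I_\r$, and recall from Proposition~\ref{pr:ker_im} that $L$ induces a left adjoint $\bar L\colon\T\to\Im L$ of the inclusion $\Im L\to\T$ together with an equivalence $\T/\Sc\xto{\sim}\Im L$. Two elementary facts will be used throughout: by Proposition~\ref{pr:local-obj} and the triangle \eqref{eq:extria}, an object $Z$ of $\T$ is $L$-local if and only if $\Ga Z=0$; and in a triangulated category with small coproducts a coproduct of exact triangles is again exact. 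The plan is to handle (1)--(5) by formal manipulations with \eqref{eq:extria}, reducing the core equivalence to Proposition~\ref{pr:loc-coprod}, and then to obtain (6) from Brown representability --- but fed in on the colocalization side, where the ambient category is the perfectly generated one.

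For (1)--(5): since $L$ is exact, $\Si(L)=\Si(\Sc)$ and $\T[\Si(L)^{-1}]=\T/\Sc$, so Proposition~\ref{pr:loc-coprod} gives at once the equivalence of (1), (4) and (5). To bring in (2): if $L$ preserves coproducts, then for a family $(X_i)$ the coproduct of the triangles $\Ga X_i\to X_i\to LX_i\to S\Ga X_i$ is exact, its first term $\coprod_i\Ga X_i$ is $L$-acyclic because $L(\coprod_i\Ga X_i)\cong\coprod_i L\Ga X_i=0$, and its third term is $\coprod_i LX_i\cong L(\coprod_iX_i)$, which is $L$-local; by the uniqueness of the functorial triangle \eqref{eq:extria} this triangle is compatibly isomorphic to the canonical one for $\coprod_iX_i$, whence the canonical map $\coprod_i\Ga X_i\to\Ga(\coprod_iX_i)$ is an isomorphism, i.e.\ (2). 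Conversely (2) $\Rightarrow$ (5) is immediate: if each $Y_i$ is $L$-local then $\Ga Y_i=0$, so $\Ga(\coprod_iY_i)\cong\coprod_i\Ga Y_i=0$ and $\coprod_iY_i$ is $L$-local. Finally, under (1) the subcategory $\Sc$ is closed under small coproducts (so they are those of $\T$) and $\Ga=I\comp I_\r$ preserves coproducts by (2); since $I$ is fully faithful this forces $I_\r$ to preserve coproducts, i.e.\ (3). And if (3) holds, then for $L$-local $Y$ one has $\Ga Y=I(I_\r Y)=0$, hence $I_\r Y=0$ as $I$ is faithful, so $I_\r(\coprod_iY_i)\cong\coprod_iI_\r Y_i=0$ for any family of $L$-local objects, giving $\Ga(\coprod_iY_i)=0$ and (5). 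Thus (1)--(5) are equivalent, and under any of them $\Sc$ and $\Im L$ are localizing subcategories with coproducts inherited from $\T$.

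Now suppose $\T$ is perfectly generated. For (3) $\Rightarrow$ (6): by the above $I_\r\colon\T\to\Sc$ is an exact functor (Lemma~\ref{le:exact-adj}) between triangulated categories with small coproducts which, by (3), preserves coproducts, and its source $\T$ is perfectly generated, so Theorem~\ref{th:brown}(2) provides a right adjoint for $I_\r$. Next I would identify $I_\r$, up to equivalence, with a Verdier quotient functor: the colocalization functor $\Ga$ (Proposition~\ref{pr:sym}) inverts exactly the morphisms whose cone is $L$-local, i.e.\ $\Si(\Ga)$ is the set of morphisms with cone in $\Im L$, so $\Ga$ induces an equivalence $\T/\Im L\xto{\sim}\Im\Ga=\Sc$ under which the quotient functor $\T\to\T/\Im L$ corresponds to $I_\r$. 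Hence $\T\to\T/\Im L$ has a right adjoint, so by Proposition~\ref{pr:tria-loc} applied to the thick subcategory $\Im L$ the inclusion $\Im L\to\T$ has a right adjoint and there is an exact localization functor with kernel $\Im L$; since the inclusion $\Im L\to\T$ also has the left adjoint $\bar L$ (Proposition~\ref{pr:ker_im}), the correspondence between localization functors and recollements from the previous section produces a recollement $\Im L\to\T\to\T/\Im L\simeq\Sc$, which is diagram \eqref{eq:rec}. For the converse, in the recollement \eqref{eq:rec} the inclusion $\Im L\to\T$ has both a left and a right adjoint: the left adjoint makes $\Im L$ admit small coproducts, the right adjoint makes the inclusion preserve them, and being fully faithful the inclusion then shows $\Im L$ is closed under small coproducts in $\T$, i.e.\ (5). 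Therefore (1)--(6) are all equivalent.

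The hard step is ``(5) $\Rightarrow$ (6)''. One cannot simply apply Brown representability to the inclusion $\Im L\to\T$ to manufacture its missing right adjoint, since Theorem~\ref{th:brown} requires the \emph{source} of the functor to be perfectly generated and we know nothing of the kind about $\Im L$. The device is to pass to the colocalization functor: prove that $\Ga$, equivalently $I_\r\colon\T\to\Sc$, preserves coproducts, apply Theorem~\ref{th:brown}(2) with source the perfectly generated category $\T$, and only then convert a right adjoint of $I_\r$ into the recollement \eqref{eq:rec} via Proposition~\ref{pr:tria-loc} and the identification $\T/\Im L\simeq\Sc$. Checking that $\Si(\Ga)$ consists of the morphisms with cone in $\Im L$, that $I_\r$ really is the corresponding quotient functor up to that equivalence, and that the adjoints assemble into a recollement with the inclusion $\Im L\to\T$ on the left is the remaining bookkeeping, which is routine.
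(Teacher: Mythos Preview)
Your proposal is correct and follows essentially the same route as the paper: the equivalences among (1)--(5) come from Proposition~\ref{pr:loc-coprod} together with manipulations of the functorial triangle~\eqref{eq:extria}, and for (5)$\Leftrightarrow$(6) both you and the paper apply Brown representability to the coproduct-preserving functor $I_\r\colon\T\to\Ker L$ (the paper calls it $Q$), then use the identification of $I_\r$ with the Verdier quotient $\T\to\T/\Im L$ and Proposition~\ref{pr:tria-loc} to produce the missing right adjoint of the inclusion $\Im L\to\T$. Your write-up is in fact more explicit than the paper's on two points: the paper compresses (1)$\Leftrightarrow$(2)$\Leftrightarrow$(3) into ``easily deduced from the functorial triangle'', and your closing paragraph articulating \emph{why} Brown must be fed the functor $I_\r$ (whose source is the perfectly generated $\T$) rather than the inclusion $\Im L\to\T$ is exactly the point.
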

\begin{proof}
(1) $\Leftrightarrow$ (4) $\Leftrightarrow$ (5) follows from
Proposition~\ref{pr:loc-coprod}.

(1) $\Leftrightarrow$ (2) $\Leftrightarrow$ (3) is easily deduced from
the functorial triangle \eqref{eq:extria} relating $L$ and $\Ga$.

(5) $\Leftrightarrow$ (6): Assume that $\T$ is perfectly
generated. Then we can apply Brown's representability theorem and
consider the sequence
$$\Im L\lto[I]\T\lto[Q]\Ker L$$ where $I$ denotes the inclusion and
$Q$ a right adjoint of the inclusion $\Ker L\to\T$.  Note that $Q$
induces an equivalence $\T/\Im L\xto{\sim}\Ker L$; see
Propositions~\ref{pr:ker_im} and \ref{pr:sym}.  The functors $I$ and
$Q$ have left adjoints. Thus the pair $(I,Q)$ gives rise to a
recollement if and only if $I$ and $Q$ both admit right adjoints. It
follows from Proposition~\ref{pr:tria-loc} that this happens if and
only if $Q$ admits a right adjoint. Now Brown's representability
theorem implies that this is equivalent to the fact that $Q$ preserves
small coproducts. And Proposition~\ref{pr:coprod} shows that $Q$
preserves small coproducts if and only if $\Im L$ is closed under
taking small coproducts. This finishes the proof.
\end{proof}

\begin{rem}
(1) The implication (6) $\Rightarrow$ (5) holds without any extra
    assumption on $\T$.

(2) Suppose an exact localization functor $L\colon\T\to\T$ preserves small
coproducts. If $\T$ is compactly generated, then $\Im L$ is compactly
generated.  This follows from Lemma~\ref{le:adj-coprod}, because
the left adjoint of the inclusion $\Im L\to\T$ sends the compact
generators of $\T$ to compact generators for $\Im L$. A similar argument
shows that $\Im L$ is perfectly generated provided that $\T$ is
perfectly generated.
\end{rem}

\begin{exm}
Let $\Sc$ be the stable homotopy category of spectra and $\wedge$ its
smash product.  Then an exact localization functor $L\colon \Sc\to\Sc$
preserves small coproducts if and only if $L$ is of the form
$L=-\wedge E$ for some spectrum $E$. We sketch the argument. Let $S$
denote the sphere spectrum. There exists a natural morphism $\eta
X\colon X\wedge LS\to LX$ for each $X$ in $\Sc$. Suppose that $L$
preserves small coproducts. Then the subcategory of objects $X$ in
$\Sc$ such that $\eta X$ is invertible contains $S$ and is closed under
forming small coproduts and exact triangles. Thus $L=-\wedge E$ for $E=LS$.
\end{exm}

Let $L\colon\T\to\T$ be an exact localization functor which induces a
recollement of the form \eqref{eq:rec}.  Then the sequence
$\Ker L\to\T\to\Im L$ of left adjoint functors induces a sequence
$$(\Ker L)^c\lto\T^c\lto (\Im L)^c$$ of exact functors, by
Lemma~\ref{le:adj-coprod}.  This sequence is of some interest. The
functor $(\Ker L)^c\to\T^c$ is fully faithful and identifies $(\Ker
L)^c$ with a thick subcategory of $\T^c$, whereas the functor $\T^c\to
(\Im L)^c$ shares some formal properties with a quotient functor.  A
typical example arises from finite localization; see
Theorem~\ref{th:finite}. However, there are examples where $\T$ is
compactly generated but $0=(\Ker L)^c\subseteq\Ker L\neq 0$; see
\cite{Kr2005} for details.

\subsection{Finite localization}

A common type of localization for triangulated categories is finite
localization. Here, we explain the basic result and refer to our discussion of well
generated categories for a more general approach and further details.

Let $\T$ be a compactly generated triangulated category and suppose we
have given a subcategory $\Sc'\subseteq\T^c$.  Let $\Sc$ denote the
localizing subcategory generated by $\Sc'$.  Then $\Sc$ is compactly
generated and therefore the inclusion functor $\Sc\to\T$ admits a right
adjoint by Brown's representability theorem. In particular, we have a
localization functor $L\colon\T\to\T$ with $\Ker L=\Sc$ and the
morphisms between any pair of objects in $\T/\Sc$ form a small set; see
Proposition~\ref{pr:loc-brown}.  We observe that the compact objects
of $\Sc$ identify with the smallest thick subcategory of $\T^c$
containing $\Sc'$. This follows from Corollary~\ref{co:loc-gen}.  Thus
we obtain the following commutative diagram of exact functors.
$$\xymatrix{\Sc^c\ar[d]^-\inc\ar[rr]^-\inc&&\T^c\ar[d]^-\inc\ar[rr]^-\can&&
\T^c/\Sc^c\ar[d]^-J\\
\Sc\ar[rr]^-{I=\inc}&&\T\ar[rr]^-{Q=\can}&&\T/\Sc}$$

\begin{thm}\label{th:finite}
Let $\T$ and $\Sc$ be as above. Then the quotient category $\T/\Sc$ is
compactly generated. The induced exact functor
$J\colon\T^c/\Sc^c\to\T/\Sc$ is fully faithful and the category
$(\T/\Sc)^c$ of compact objects equals the full subcategory consisting
of all direct factors of objects in the image of $J$.  Moreover, the
inclusion $\Sc^\perp\to\T$ induces the following recollement.
$$\xymatrix{\Sc^\perp\,\ar[rr]|-{\inc}&&\,\T\,
\ar[rr]\ar@<1.25ex>[ll]\ar@<-1.25ex>[ll]&&
\,\Sc\ar@<1.25ex>[ll]\ar@<-1.25ex>[ll] }$$
\end{thm}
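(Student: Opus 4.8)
The plan is to assemble Theorem~\ref{th:finite} from the results already in hand, treating the three assertions --- compact generation of $\T/\Sc$ together with the description of $(\T/\Sc)^c$, full faithfulness of $J$, and the recollement --- more or less in that order, since each feeds into the next.

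\smallskip

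First I would recall what was established just before the statement: $\Sc$ is a localizing subcategory generated by the set $\Sc'$ of compact objects, hence $\Sc$ is compactly generated, the inclusion $\Sc\to\T$ has a right adjoint, and consequently there is an exact localization functor $L\colon\T\to\T$ with $\Ker L=\Sc$ (Proposition~\ref{pr:loc-brown}), with $\T/\Sc$ having small Hom-sets. Since $\Sc$ is generated by compact objects of $\T$, the inclusion $\Sc\to\T$ preserves compactness, so by Lemma~\ref{le:adj-coprod}(2) its right adjoint preserves small coproducts; equivalently (Proposition~\ref{pr:recol}, or directly Proposition~\ref{pr:loc-coprod}) $L$ is smashing, $\Im L=\Sc^\perp$ is closed under coproducts, and the quotient functor $Q\colon\T\to\T/\Sc$ has a coproduct-preserving right adjoint $Q_\rho$ identifying $\T/\Sc$ with $\Sc^\perp$. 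Now $\Sc^\perp$, being coproduct-closed in the perfectly generated $\T$, is itself perfectly generated (Remark after Proposition~\ref{pr:recol}); and in the compactly generated case the left adjoint $Q$ of the inclusion $\Sc^\perp\to\T$ sends the compact generators of $\T$ to compact objects of $\Sc^\perp$ that generate it, so $\T/\Sc\cong\Sc^\perp$ is compactly generated. For the description of $(\T/\Sc)^c$: the functor $\T^c\to(\T/\Sc)^c$ obtained by restricting $Q$ lands in compacts (again Lemma~\ref{le:adj-coprod}), its image generates $\T/\Sc$, and a standard argument --- compact objects of a quotient are retracts of images of compacts --- shows $(\T/\Sc)^c$ consists exactly of the direct factors of objects $QX$ with $X\in\T^c$; combined with the identification $\Sc^c=$ smallest thick subcategory of $\T^c$ containing $\Sc'$ (from Corollary~\ref{co:loc-gen}), this gives the factorization of $Q|_{\T^c}$ through $\T^c/\Sc^c$ and identifies $(\T/\Sc)^c$ with the idempotent completion of the image of $J$.

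\smallskip

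For the full faithfulness of $J\colon\T^c/\Sc^c\to\T/\Sc$, I would invoke Lemma~\ref{le:tria-sub} with $\T'=\T^c$ and $\Sc$ as given (so $\Sc'=\Sc\cap\T^c=\Sc^c$). One must verify the hypothesis: every morphism from a compact object of $\T$ to an object of $\Sc$ factors through an object of $\Sc^c$ --- equivalently (passing to a triangle, the dual formulation in Lemma~\ref{le:tria-sub}) that every morphism $Y\to Y'$ in $\Si(\Sc)$ with $Y\in\T^c$ can be completed so that $Y'$ is replaced by something in $\Si(\Sc)\cap\T^c$. This reduces to the statement that $\Sc$ is generated, as a localizing subcategory, by its compact objects and that a compact object mapping into $\Sc$ hits only a ``compact part'': writing an object of $\Sc$ as a homotopy colimit built from $\Sc'$ and using compactness of $Y$ to push the morphism into a finite stage. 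This --- the honest verification that the calculus-of-fractions hypothesis of Lemma~\ref{le:sub}/\ref{le:tria-sub} holds --- is the one step I expect to be the main obstacle, and it is exactly the point where compact generation of $\Sc$ is used in an essential way rather than formally.

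\smallskip

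Finally, for the recollement $\Sc^\perp\to\T\to\Sc$: we already have that $L=Q_\rho Q$ is a smashing localization, so by Proposition~\ref{pr:recol} (with $\T$ perfectly generated) condition (6) holds, yielding a recollement with outer terms $\Im L=\Sc^\perp$ and $\Ker L=\Sc$; reading that diagram with the roles arranged as in the statement --- inclusion $\Sc^\perp\to\T$ on the left, the functor $\T\to\Sc$ (the right adjoint $\Ga$ of $\Sc\hookrightarrow\T$, which by Proposition~\ref{pr:ker_im} induces $\T/\Sc^\perp\xto{\sim}\Sc$) on the right --- gives precisely the asserted recollement, its existence hinging on the fact, noted above, that $\Ga$ preserves coproducts. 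It then only remains to record that the localization and quotient functors appearing in the recollement restrict on compact objects to the sequence $\Sc^c\to\T^c\to\T^c/\Sc^c$ already discussed, which closes the circle.
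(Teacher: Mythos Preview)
Your proposal is correct and follows essentially the same route as the paper's proof: use Lemma~\ref{le:adj-coprod} to see that $I_\rho$ and hence $Q_\rho$ preserve coproducts, so $Q$ preserves compactness and $\T/\Sc$ is compactly generated by the image of $\T^c$; invoke Lemma~\ref{le:tria-sub} for full faithfulness of $J$; and read off the recollement from Proposition~\ref{pr:recol}. The only notable difference is that for the crux you rightly isolate --- that every morphism from a compact object into $\Sc$ factors through an object of $\Sc^c$ --- the paper does not give the direct homotopy-colimit argument you sketch but instead makes a forward reference to Theorem~\ref{th:set-local} (part (1), specialized to $\a=\aleph_0$), and similarly the claim that every object of $(\T/\Sc)^c$ is a retract of something in the image of $J$ is deduced from Corollary~\ref{co:loc-gen} rather than argued directly. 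Your sketch of the direct argument is the standard one and would work, so this is a matter of packaging rather than substance.
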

\begin{proof}
The inclusion $I$ preserves compactness and therefore the right
adjoint $I_\r$ preserves small coproducts by
Lemma~\ref{le:adj-coprod}. Thus $Q_\r$ preserve small coproducts by
Proposition~\ref{pr:recol}, and therefore $Q$ preserves compactness,
again by Lemma~\ref{le:adj-coprod}.  It follows that $J$ induces a functor
$\T^c/\Sc^c\to (\T/\Sc)^c$.  In particular, $Q$ sends a set of compact
generators of $\T$ to a set of compact generators for $\T/\Sc$.

Next we apply Lemma~\ref{le:tria-sub} to show that $J$ is fully
faithful. For this, one needs to check that every morphism from a
compact object in $\T$ to an object in $\Sc$ factors through some
object in $\Sc^c$. This follows from Theorem~\ref{th:set-local}. The image
of $J$ is a full triangulated subcategory of $\T^c$ which generates
$\T/\Sc$.  Another application of Corollary~\ref{co:loc-gen} shows that
every compact object of $\T/\Sc$ is a direct factor of some object in
the image of $J$.

Let $L\colon\T\to\T$ denote the localization functor with $\Ker
L=\Sc$. Then $\Sc^\perp$ equals the full subcategory of $L$-local
objects. This subcategory is closed under small coproducts since $\Sc$
is generated by compact objects. Thus the existence of the recollement
follows from Proposition~\ref{pr:recol}.
\end{proof}

\subsection{Cohomological localization via localization of graded modules}

Let $\T$ be a triangulated category which admits small
coproducts. Suppose that $\T$ is generated by a small set of compact
objects. We fix a graded\footnote{All graded rings and modules are graded over
$\bbZ$. Morphisms between graded modules are degree zero maps.} ring
$\La$ and a graded cohomological functor
$$H^*\colon\T\lto\A$$ into the category $\A$ of graded $\La$-modules.
Thus $H^*$ is a functor which sends each exact triangle in $\T$ to an
exact sequence in $\A$, and we have an isomorphism $H^*\comp S\cong
T\comp H^*$ where $T$ denotes the shift functor for $\A$.  In
addition, we assume that $H^*$ preserves small products and
coproducts.

\begin{thm}
\label{th:locloc}
Let $L\colon\A\to\A$ be an exact localization functor for the category
$\A$ of graded $\La$-modules. Then there exists an exact localization
functor $\tilde L\colon\T\to\T$ such that the following square
commutes up to a natural isomorphism.
$$\xymatrix{ \T\ar[rr]^-{\tilde L}\ar[d]^-{H^*}&&\T\ar[d]^-{H^*}\\
\A\ar[rr]^-L&&\A }$$ More precisely, the adjunction morphisms
$\Id\A\to L$ and $\Id\T\to\tilde L$ induce for each $X$ in $\T$ the following
isomorphisms.
$$H^*\tilde LX\stackrel{\sim}\longrightarrow L(H^*\tilde
LX)=LH^*(\tilde LX)\stackrel{\sim}\longleftarrow LH^*(X)$$ An object
$X$ in $\T$ is $\tilde L$-acyclic if and only if $H^*X$ is
$L$-acyclic.  If an object $X$ in $\T$ is $\tilde L$-local, then
$H^*X$ is $L$-local. The converse holds, provided that $H^*$ reflects
isomorphisms.
\end{thm}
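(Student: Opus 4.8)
The plan is to produce $\tilde L$ as a Bousfield localization functor on $\T$ by exhibiting its kernel as a localizing subcategory and invoking Proposition~\ref{pr:loc-brown}. First I would let $\B=\Ker L\subseteq\A$ be the subcategory of $L$-acyclic graded $\La$-modules, and set $$\Sc:=\{X\in\T\mid H^*X\in\B\}=\{X\in\T\mid LH^*X=0\}.$$ Since $H^*$ is cohomological and commutes with the shift, and since $\B=\Ker L$ is a thick subcategory of $\A$ closed under the shift $T$, the subcategory $\Sc$ is a triangulated subcategory of $\T$; because $H^*$ preserves small coproducts and $\B$ is closed under small coproducts (Proposition~\ref{pr:loc-coprod}), $\Sc$ is localizing, hence in particular thick. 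The first real step is therefore to check that $\Sc$ is \emph{perfectly generated}, so that Proposition~\ref{pr:loc-brown}(1) yields an exact localization functor $\tilde L\colon\T\to\T$ with $\Ker\tilde L=\Sc$; this is where the hypotheses that $\T$ is compactly generated and $H^*$ preserves products really enter, and it is the step I expect to be the main obstacle.

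Granting this, the next step is to identify the $\tilde L$-local objects and prove the commutativity of the square. Fix $X$ in $\T$ and the functorial triangle $\Ga X\to X\xto{\eta X}\tilde LX\to S\Ga X$ from \eqref{eq:extria}, with $\Ga X$ $\tilde L$-acyclic, i.e.\ $\Ga X\in\Sc$, i.e.\ $H^*(\Ga X)\in\Ker L$. Applying the cohomological functor $H^*$ gives a long exact sequence in $\A$; since $H^*(\Ga X)$ and $H^*(S\Ga X)\cong T H^*(\Ga X)$ are both $L$-acyclic and $L$ is exact, applying $L$ to this long exact sequence shows that $H^*(\eta X)$ becomes an isomorphism after applying $L$, i.e.\ $L H^*(\eta X)\colon L H^*X\to L H^*(\tilde LX)$ is invertible. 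This already gives the middle isomorphism $LH^*(X)\xto{\sim}LH^*(\tilde LX)$ of the displayed chain. For the left-hand isomorphism $H^*\tilde LX\xto{\sim}L(H^*\tilde LX)$ one must show $H^*\tilde LX$ is $L$-local, equivalently (Proposition~\ref{pr:local-obj}) that $\eta_{\A}(H^*\tilde LX)$ is invertible; I would deduce this from the fact that $\tilde LX$ is $\tilde L$-local: applying $\tilde L$ to the defining triangle of $\tilde L(\tilde LX)$ shows $\tilde LX$ is $\tilde L$-local, then the $L$-acyclic–$L$-local argument above, run with $X$ replaced by $\tilde LX$, forces $LH^*(\tilde LX)=0$-acyclic part to vanish; more directly, $H^*\tilde LX$ receives a map from $LH^*X$ which is $L$-local (being in $\Im L$), and one checks this map is the $L$-localization of $H^*X$, hence $H^*\tilde LX\in\Im L$ is $L$-local by Proposition~\ref{pr:local-obj}(5). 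Composing the square $H^* \tilde L X \xto{\sim} L H^* \tilde L X = L H^*(\tilde L X) \xleftarrow{\sim} L H^* X$ gives the claimed natural isomorphism $H^*\circ\tilde L\cong L\circ H^*$.

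Finally the acyclicity and locality assertions: by construction $X$ is $\tilde L$-acyclic $\iff X\in\Sc\iff H^*X$ is $L$-acyclic, which is immediate. If $X$ is $\tilde L$-local then $X\cong\tilde LX'$ for some $X'$ (Proposition~\ref{pr:local-obj}(5)), so $H^*X\cong H^*\tilde LX'\cong LH^*X'$ is $L$-local, again by Proposition~\ref{pr:local-obj}(5). For the converse under the hypothesis that $H^*$ reflects isomorphisms: if $H^*X$ is $L$-local, then in the triangle $\Ga X\to X\to\tilde LX$ the map $\eta X$ induces $LH^*X\xto{\sim}H^*\tilde LX$; but $H^*X$ $L$-local means $H^*X\xto{\sim}LH^*X$, so $H^*(\eta X)\colon H^*X\to H^*\tilde LX$ is an isomorphism, whence $\eta X$ is an isomorphism since $H^*$ reflects isomorphisms, i.e.\ $X$ is $\tilde L$-local by Proposition~\ref{pr:local-obj}(3).

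**Expected main obstacle.** The crux is verifying (PG1) and (PG2) for $\Sc$. For a natural perfect generating set one takes the images in $\Sc$ of a suitable set of "relative" objects: for each compact generator $C$ of $\T$ and each generator of the $L$-acyclic graded modules one would like to build an object of $\Sc$ by killing the non-acyclic part of $H^*C$ via a triangle; making this precise, and checking that the resulting small set satisfies (PG2) using that $H^*$ preserves coproducts while $\B$ is closed under coproducts and the $\T(C,-)$ are "approximately" controlled by $H^*$, is the delicate point. One clean route is: since $\T$ is compactly generated, $\Sc$ is automatically closed under coproducts and one only needs a small generating set together with (PG2); (PG2) can be arranged because a morphism $X_i\to Y_i$ inducing surjections $\T(C,X_i)\to\T(C,Y_i)$ for all compact $C$ is a componentwise split-epi-like condition whose coproduct behaviour is inherited from that of $\T$ itself, and membership in $\Sc$ is detected by the coproduct-preserving functor $H^*$. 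I would spell this out carefully, as it is the only genuinely non-formal ingredient.
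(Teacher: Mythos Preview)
Your overall architecture is sensible, but there are two genuine gaps, and the paper's proof avoids both by working dually.

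The first gap is the one you flag: producing a small set of perfect generators for $\Sc=\{X\in\T\mid LH^*X=0\}$. Your sketch (``build an object of $\Sc$ by killing the non-acyclic part of $H^*C$ via a triangle'') is circular, since killing the $L$-local part of $H^*C$ amounts to forming $\Ga C$ for the colocalization $\Ga$ attached to the very $\tilde L$ you are trying to construct. There is no evident way, from a compact generator $C$ of $\T$ and a generator of $\Ker L\subseteq\A$, to manufacture an object of $\Sc$; the functor $H^*$ need not be surjective on objects. The second gap is your argument that $H^*\tilde LX$ is $L$-local. Knowing only that $\Ker\tilde L=\Sc$ tells you $\tilde LX\in\Sc^\perp$, i.e.\ $\T(W,\tilde LX)=0$ whenever $LH^*W=0$; it does not obviously imply $\A(M,H^*\tilde LX)=0$ for every $L$-acyclic module $M$, because not every such $M$ is of the form $H^*W$. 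Your ``more directly'' sentence is circular: to factor $H^*X\to H^*\tilde LX$ through $LH^*X$ you would already need $H^*\tilde LX$ to be $L$-local.

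The paper bypasses both problems in one move. Instead of generating $\Sc$, it \emph{cogenerates} the local objects. Since $\T$ is compactly generated it is perfectly cogenerated, so Brown representability applied to the product-preserving cohomological functor $\A(H^*-,I)$, where $I$ is an injective cogenerator of the Grothendieck category $\Im L$, produces a single object $\tilde I\in\T$ with $\T(-,\tilde I)\cong\A(H^*-,I)$. One checks directly (via a free presentation of $H^*C$) that $H^*\tilde I$ is $L$-local. Let $\U$ be the smallest triangulated subcategory of $\T$ containing $\tilde I$ and closed under small products; then $H^*X$ is $L$-local for every $X\in\U$, and $\tilde I$ is a perfect cogenerator for $\U$ (here one uses that $H^*$ preserves products and $I$ is an injective cogenerator). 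Brown representability now gives a left adjoint $F$ to the inclusion $G\colon\U\hookrightarrow\T$, and $\tilde L=G\comp F$ is the desired localization. The identity $\Ker\tilde L=\Sc$ follows from the chain $\tilde LX=0\iff\T(X,\tilde I)=0\iff\A(H^*X,I)=0\iff LH^*X=0$, and the $L$-locality of $H^*\tilde LX$ is immediate because $\tilde LX\in\U$. The rest of your argument (the square \eqref{eq:2} and the acyclicity/locality statements) then goes through as you wrote it.
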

\begin{proof}
We recall that $\T$ is perfectly cogenerated because it is compactly
generated.  Thus Brown's representability theorem provides a compact
object $C$ in $\T$ such that
$$H^*X\cong\T(C,X)^*:=\coprod_{i\in\bbZ}\T(C,S^i Y)\quad\text{for
all}\quad X\in\T.$$

Now consider the essential image $\Im L$ of $L$ which equals the full
subcategory formed by all $L$-local objects in $\A$. Because
$L$ is exact, this subcategory is {\em coherent}, that is, for any
exact sequence $X_1\to X_2\to X_3\to X_4\to X_5$ with
$X_1,X_2,X_4,X_5\in\A$, we have $X_3\in\A$.  This is an immediate
consequence of the 5-lemma.  In addition, $\Im L$ is closed under
taking small products. The $L$-local objects form an abelian
Grothendieck category and therefore $\Im L$ admits an injective
cogenerator, say $I$; see \cite{G}.  Using again Brown's
representability theorem, there exists $\tilde I$ in $\T$ such that
\begin{equation}\label{eq:1}\A(H^*-,I)\cong\T(-,\tilde I)\quad\text{and therefore}
\quad\A(H^*-,I)^*\cong\T(-,\tilde I)^*.
\end{equation} 
Now consider the subcategory $\V$ of $\T$ which is formed by all
objects $X$ in $\T$ such that $H^*X$ is $L$-local. This is a triangulated
subcategory which is closed under taking small products.  Observe that
$\tilde I$ belongs to $\V$. To prove this, take a free presentation
$$F_1\lto F_0\lto H^*C\lto 0$$ over $\La$ and apply $\A(-,I)^*$ to
it. Using the isomorphism \eqref{eq:1}, we see that
$H^*\tilde I$ belongs to $\Im L$ because $\Im L$ is coherent and closed
under taking small products.

Now let $\U$ denote the smallest triangulated subcategory of $\T$
containing $\tilde I$ and closed under taking small products. Observe
that $\U\subseteq \V$. We claim that $\U$ is perfectly cogenerated by
$\tilde I$. Thus, given a family of morphisms $\p_i\colon X_i\to Y_i$
in $\U$ such that $\T(Y_i,\tilde I)\to\T(X_i,\tilde I)$ is surjective
for all $i$, we need to show that $\T(\prod_iY_i,\tilde
I)\to\T(\prod_iX_i,\tilde I)$ is surjective. We argue as follows. If
$\T(Y_i,\tilde I)\to\T(X_i,\tilde I)$ is surjective, then the
isomorphism \eqref{eq:1} implies that $H^*\p_i$ is a monomorphism
since $I$ is an injective cogenerator for $\Im L$. Thus the product
$\prod_i\p_i\colon \prod_iX_i\to \prod_iY_i$ induces a monomorphism
$H^*\prod_i\p_i=\prod_iH^*\p_i$ and therefore $\T(\prod_i\p_i,\tilde
I)$ is surjective.  We conclude from Brown's representability theorem
that the inclusion functor $G\colon \U\to\T$ has a left adjoint
$F\colon\T\to\U$. Thus $\tilde L=G\comp F$ is a localization functor
by Corollary~\ref{co:loc}.

Next we show that an object $X\in\T$ is $\tilde L$-acyclic if and only
if $H^*X$ is $L$-acyclic. This follows from
Proposition~\ref{pr:local-acyclic} and the isomorphism \eqref{eq:1},
because we have
$$\tilde LX=0 \iff \T(X,\tilde I)=0\iff\A(H^*X, I)=0\iff LH^*X=0.$$

Now denote by $\eta\colon\Id\A\to L$ and $\tilde\eta\colon\Id\T\to
\tilde L$ the adjunction morphisms and consider the following
commutative square.
\begin{equation}\label{eq:2}
\xymatrix{H^*X\ar[rr]^-{\eta H^*X}\ar[d]^-{H^*\tilde\eta X}
&&LH^*X\ar[d]^-{LH^*\tilde\eta X}\\ H^*\tilde LX\ar[rr]^-{\eta
H^*\tilde LX}&&LH^*\tilde LX}
\end{equation} 
We claim that $LH^*\tilde\eta X$ and $\eta H^*\tilde L X$ are
invertible for each $X$ in $\T$. The morphism $\tilde\eta X$ induces an
exact triangle
$$X'\to X\xto{\tilde\eta X}\tilde LX\to S X'$$ with $\tilde
LX'=0=\tilde LS X'$. Applying the cohomological functor $LH^*$, we
see that $LH^*\tilde\eta X$ is an isomorphism, since $LH^*X'=0=LH^*S
X'$.  Thus $LH^*\tilde\eta$ is invertible.  The morphism $\eta H^*\tilde
LX$ is invertible because $H^*\tilde LX$ is $L$-local. This follows
from the fact that $\tilde LX$ belongs to $\U$.

The commutative square \eqref{eq:2} implies that $H^*\tilde\eta X$ is
invertible if and only if $\eta H^*X$ is invertible.  Thus if $X$ is
$\tilde L$-local, then $H^*X$ is $L$-local. The converse holds if
$H^*$ reflects isomorphisms. 
\end{proof}

\begin{rem}
(1) The localization functor $\tilde L$ is essentially uniquely determined
by $H^*$ and $L$, because $\Ker \tilde L=\Ker LH^*$.

(2) Suppose that $C$ is a generator of $\T$. If $L$ preserves small
coproducts, then it follows that $\tilde L$ preserves small
coproducts.  In fact, the assumption implies that $H^*\tilde L$
preserves small coproducts, since $LH^*\cong H^*\tilde L$. But $H^*$
reflects isomorphisms because $C$ is a generator of $\T$. Thus $\tilde
L$ preserves small coproducts.
\end{rem}

\subsection{Example: Resolutions of chain complexes}
\label{se:complexes}

Let $A$ be an associative ring. Then the derived category $\bfD(A)$ of
unbounded chain complexes of modules over $A$ is compactly
generated. A compact generator is the ring $A$, viewed as a complex
concentrated in degree zero. Let us be more precise, because we want
to give an explicit construction of $\bfD(A)$ which implies that the
morphisms between any two objects in $\bfD(A)$ form a small set.
Moreover, we combine Brown representability with
Proposition~\ref{pr:tria-loc} to provide descriptions of the adjoints
$Q_\la$ and $Q_\r$ of the quotient functor $Q\colon\bfK(\Mod
A)\to\bfD(A)$ which appear in the recollement \eqref{eq:DA}.

Denote by $\Loc A$ the localizing subcategory of $\bfK(\Mod A)$ which
is generated by $A$. Then $\Loc A$ is a compactly generated
triangulated category and $(\Loc A)^\perp=\bfK_\ac(\Mod A)$ since
$$\bfK(\Mod A)(A,S^nX)\cong H^nX.$$ Brown representability provides a
right adjoint for the inclusion $\Loc A\to\bfK(\Mod A)$ and therefore
the composite $F\colon\Loc A\xto{\inc}\bfK(\Mod A)\xto{\can}\bfD(A)$
is an equivalence by Proposition~\ref{pr:tria-loc}. The right adjoint
of the inclusion $\Loc A\to\bfK(\Mod A)$ annihilates the acyclic
complexes and induces therefore a functor $\bfD(A)\to\Loc A$ (which is
a quasi-inverse for $F$). The composite with the inclusion $\Loc
A\to\bfK(\Mod A)$ is the left adjoint $Q_\la$ of $Q$ and takes a complex
to its \emph{K-projective resolution}.

Now fix an injective cogenerator $I$ for the category of $A$-modules,
for instance $I=\Hom_\bbZ(A,\bbQ/\bbZ)$.  We denote by $\Coloc I$ the
smallest thick subcategory of $\bfK(\Mod A)$ closed under small
products and containing $I$. Then $I$ is a perfect cogenerator for
$\Coloc I$ and $^\perp(\Coloc I)=\bfK_\ac(\Mod A)$ since
$$\bfK(\Mod A)(S^nX, I)\cong \Hom_A(H^nX,I).$$ Brown representability
provides a left adjoint for the inclusion $\Coloc I\to\bfK(\Mod A)$
and therefore the composite $G\colon\Coloc I\xto{\inc}\bfK(\Mod
A)\xto{\can}\bfD(A)$ is an equivalence by
Proposition~\ref{pr:tria-loc}. The left adjoint of the inclusion
$\Coloc I\to\bfK(\Mod A)$ annihilates the acyclic complexes and
induces therefore a functor $\bfD(A)\to\Coloc I$ (which is a
quasi-inverse for $G$). The composition with the inclusion $\Coloc
I\to\bfK(\Mod A)$ is the right adjoint $Q_\r$ of $Q$  and takes a complex
to its \emph{K-injective resolution}.

\subsection{Example: Homological epimorphisms}
Let $f\colon A\to B$ be a ring homomorphism and $f_*\colon \bfD
(B)\to\bfD (A)$ the functor given by restriction of scalars. Clearly,
$f_*$ preserves small products and coproducts.  Thus Brown
representability implies the existence of left and right adjoints for
$f_*$ since $\bfD(A)$ is compactly generated.  For instance, the left
adjoint is the derived tensor functor ${-\otimes^\bfL_A
B}\colon\bfD(A)\to\bfD(B)$ which preserves compactness.

The functor $f_*$ is fully faithful if and only if
$(f_*-)\otimes^\bfL_AB\cong\Id\bfD(B)$ iff $B\otimes_AB\cong B$ and
$\Tor^A_i(B,B)=0$ for all $i>0$.  In that case $f$ is called
\emph{homological epimorphism} and the exact functor
$L\colon\bfD(A)\to\bfD(A)$ sending $X$ to $f_*(X\otimes^\bfL_AB)$ is a
localization functor.

Take for instance a commutative ring $A$ and let $f\colon A\to
S^{-1}A=B$ be the localization with respect to a multiplicatively
closed subset $S\subseteq A$. Then the induced exact localization
functor $L\colon\bfD(A)\to\bfD(A)$ takes a chain complex $X$ to
$S^{-1}X$.  Note that $L$ preserves small coproducts. In particular,
$L$ gives rise to the following recollement.
$$\xymatrix{\bfD(B)\,\ar[rr]|-{f_*}&&\,\bfD(A)\,
\ar[rr]\ar@<1.25ex>[ll]^-{-\otimes^\bfL_AB}\ar@<-1.25ex>[ll]_-{\RHom_A(B,-)}&&
\,\U\ar@<1.25ex>[ll]^-{}\ar@<-1.25ex>[ll]_-{} }$$ The triangulated
category $\U$ is equivalent to the kernel of $-\otimes_A^\bfL B$, and
one can show that $\Ker (-\otimes_A^\bfL B)$ is the localizing
subcategory of $\bfD(A)$ generated by the complexes of the form
$$\cdots\to 0\to A\xto{x}A\to 0\to\cdots\quad (x\in S).$$

\subsection{Notes}
The Brown representability theorem in homotopy theory is due to Brown
\cite{Bro1962}.  Generalizations of the Brown representability theorem
for triangulated categories can be found in work of Franke
\cite{Fra2001}, Keller \cite{Kel1994}, and Neeman
\cite{Nee1996,Nee2001}. The version used here is taken from \cite{K}.
The finite localization theorem for compactly generated triangulated
categories is due to Neeman \cite{Nee1992}; it is based on previous
work of Bousfield, Ravenel, Thomason-Trobaugh, Yao, and others.  The
cohomological localization functors commuting with localization
functors of graded modules have been used to set up local cohomology
functors in \cite{BIK}.

\section{Well generated triangulated categories}
\label{se:wellgen}

\subsection{Regular cardinals}
A cardinal $\a$ is called \emph{regular} if
$\a$ is not the sum of fewer than $\a$ cardinals, all smaller than
$\a$.  For example, $\aleph_0$ is regular because the sum of finitely
many finite cardinals is finite. Also, the successor $\k^+$ of every
infinite cardinal $\k$ is regular.  In particular, there are
arbitrarily large regular cardinals. For more details on regular
cardinals, see for instance \cite{Lev1979}.

\subsection{Localizing subcategories}
Let $\T$ be a triangulated category and $\a$ a regular cardinal. A
coproduct in $\T$ is called \emph{$\a$-coproduct} if it has less than
$\a$ factors. A full subcategory of $\T$ is called
\emph{$\a$-localizing} if it is a thick subcategory and closed under
taking $\a$-coproducts. Given a subcategory $\Sc\subseteq\T$, we denote
by $\Loc_\a\Sc$ the smallest $\a$-localizing subcategory of $\T$ which
contains $\Sc$. Note that $\Loc_\a\Sc$ is  small provided that
$\Sc$ is  small.

A full subcategory of $\T$ is called \emph{localizing} if it is a
thick subcategory and closed under taking small coproducts. The
smallest localizing subcategory containing a subcategory
$\Sc\subseteq\T$ is $\Loc\Sc=\bigcup_\a\Loc_\a\Sc$ where $\a$ runs
through all regular cardinals. We call $\Loc\Sc$ the localizing
subcategory generated by $\Sc$.

\subsection{Well generated triangulated categories}

Let $\T$ be a triangulated category which admits small coproducts and
fix a regular cardinal $\a$. An object $X$ in $\T$ is called
\emph{$\a$-small} if every morphism $X\to\coprod_{i\in I}Y_i$ in $\T$
factors through $\coprod_{i\in J}Y_i$ for some subset $J\subseteq I$
with $\card J<\a$.  The triangulated category $\T$ is called
\emph{$\a$-well generated} if it is perfectly generated by a small set of
$\a$-small objects, and $\T$ is called \emph{well generated} if it is
$\b$-well generated for some regular cardinal $\b$.

Suppose $\T$ is $\a$-well generated by a small set $\Sc$ of $\a$-small
objects. Given any regular cardinal $\b\geq\a$, we denote by $\T^\b$
the $\b$-localizing subcategory $\Loc_\b\Sc$ generated by $\Sc$ and call
the objects of $\T^\b$ \emph{$\b$-compact}. Choosing a representative
for each isomorphism class, one can show that the $\b$-compact objects
form a small set of $\b$-small perfect generators for $\T$. Moreover,
$\T^\b$ does not depend on the choice of $\Sc$. For a proof we refer to
\cite[Lemma~5]{Kr2001}; see also Proposition~\ref{pr:wellgen} and
Remark~\ref{rm:wellgen}. Note that $\T=\bigcup_{\b}\T^\b$, where $\b$
runs through all regular cardinals greater or equal than $\a$, because
$\bigcup_{\b}\T^\b$ is a triangulated subcategory containing $\Sc$ and
closed under small coproducts.

\begin{rem}
The $\a$-small objects of $\T$ form an $\a$-localizing subcategory.
\end{rem}

\begin{exm}
A triangulated category $\T$ is $\aleph_0$-well generated if and only
if $\T$ is compactly generated. In that case $\T^{\aleph_0}=\T^c$.
\end{exm}

\begin{exm}
Let $\A$ be the category of sheaves of abelian groups on a
non-compact, connected manifold of dimension at least $1$. Then the
derived category $\bfD(\A)$ of unbounded chain complexes is well
generated, but the only compact object in $\bfD(\A)$ is the zero
object; see \cite{Nee2001a}. For more examples of well generated but
not compactly generated triangulated categories, see \cite{Nee1998}.
\end{exm}

\subsection{Filtered categories}

Let $\a$ be a regular cardinal. A category $\C$ is called
\emph{$\a$-filtered} if the following holds.
\begin{enumerate}
\item[(FIL1)] There exists an object in $\C$.
\item[(FIL2)] For every family $(X_i)_{i\in I}$ of fewer than $\a$
objects there exists an object $X$ with morphisms $X_i\to X$ for all
$i$.
\item[(FIL3)] For every family $(\p_i\colon X\to Y)_{i\in I}$ of fewer
than $\a$ morphisms there exists a morphism $\psi\colon Y\to Z$ with
$\psi\p_i=\psi\p_j$ for all $i$ and $j$.
\end{enumerate}
One drops the cardinal $\a$ and calls $\C$ \emph{filtered} in case it is $\aleph_0$-filtered.

Given a functor $F\colon\C\to\D$, we use the term \emph{$\a$-filtered
colimit} for the colimit $\colim{X\in\C}FX$ provided that $\C$ is a
 small $\a$-filtered category.

\begin{lem}\label{le:cofinal}
Let $i\colon \C'\to \C$ be a fully faithful functor with $\C$ a small
$\a$-filtered category.  Suppose that $i$ is cofinal in the sense that
for any $X\in\C$ there is an object $Y\in\C'$ and a morphism $X\to
iY$. Then $\C'$ is a small $\a$-filtered category, and for any functor
$F\colon \C\to\D$ into a category which admits $\a$-filtered colimits,
the natural morphism
$$\colim{Y\in\C'}F(iY)\lto \colim{X\in\C}FX$$ is an isomorphism.
\end{lem}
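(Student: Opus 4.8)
The plan is to verify that $\C'$ is small $\a$-filtered and then exhibit the natural comparison morphism as an isomorphism by checking that both colimits satisfy the same universal property, using cofinality at each step.

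First I would check that $\C'$ inherits the $\a$-filtered property. Condition (FIL1) is immediate since $\C$ is nonempty and any object of $\C$ admits a morphism into some $iY$. For (FIL2), given fewer than $\a$ objects $Y_i$ in $\C'$, apply (FIL2) in $\C$ to the objects $iY_i$ to get an object $X$ with morphisms $iY_i\to X$, then use cofinality to obtain $X\to iY$ with $Y\in\C'$; the composites $iY_i\to iY$ come from morphisms $Y_i\to Y$ in $\C'$ since $i$ is fully faithful. For (FIL3), given fewer than $\a$ morphisms $\p_i\colon Y\to Y'$ in $\C'$, apply (FIL3) in $\C$ to the morphisms $i\p_i\colon iY\to iY'$ to get $\psi\colon iY'\to X$ equalizing them, then use cofinality on $X$ to land in $i\C'$ and invoke fully faithfulness again. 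Smallness of $\C'$ is clear since it is a full subcategory of the small category $\C$.

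Next I would construct the comparison morphism and prove it is an isomorphism. Write $C=\colim{X\in\C}FX$ with structure maps $u_X\colon FX\to C$, and $C'=\colim{Y\in\C'}F(iY)$ with structure maps $u'_Y\colon F(iY)\to C'$. The family $(u_X)_{X\in\C}$ restricts to a cocone on $F\comp i$, hence induces the natural morphism $\k\colon C'\to C$. To see $\k$ is an isomorphism, I would exhibit an inverse directly: for each $X\in\C$ choose (by cofinality) a morphism $f_X\colon X\to iY_X$ with $Y_X\in\C'$ and set $v_X = u'_{Y_X}\comp F f_X\colon FX\to C'$. The key point is that $(v_X)_{X\in\C}$ is a well-defined cocone on $F$ independent of the choices: given two choices $f_X\colon X\to iY$ and $f'_X\colon X\to iY'$, use (FIL2) and (FIL3) in $\C$ together with cofinality to find $Y''\in\C'$ and a morphism $iY\to iY''$, $iY'\to iY''$ through which both factor compatibly after composing with $X$, which forces the two values in $C'$ to agree; compatibility with morphisms $X\to X''$ in $\C$ is checked the same way. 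This cocone induces $\la\colon C\to C'$, and one checks $\la\comp\k=\id$ and $\k\comp\la=\id$ by evaluating on structure maps and using that for $Y\in\C'$ one may take $f_{iY}=\id_{iY}$.

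The main obstacle is the well-definedness of the cocone $(v_X)$, i.e.\ showing the chosen map $FX\to C'$ does not depend on the choice of $f_X\colon X\to iY_X$ and is natural in $X$. This is exactly where the full force of $\a$-filteredness of $\C$ (not just $\C'$) is used: two parallel roads from $X$ into $\C'$ must be coequalized somewhere in $\C'$, and filteredness guarantees this. Once this bookkeeping is done the rest is a routine verification that the two induced maps are mutually inverse, so I would state it briefly and leave the diagram-chase to the reader.
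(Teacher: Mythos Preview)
Your argument is correct. The paper does not give a proof at all; it simply refers to \cite[Proposition~8.1.3]{GV}. What you have written is the standard direct verification that a cofinal fully faithful subcategory inherits $\a$-filteredness and that the comparison map on colimits is invertible by building the inverse cocone from a choice of cofinalizing arrows. The only remark I would add is purely cosmetic: when you say ``full subcategory of the small category $\C$'' you are silently identifying $\C'$ with its essential image under $i$; this is harmless since $i$ is fully faithful, but it would be cleaner to phrase smallness via the injection on isomorphism classes and the bijection on hom-sets that full faithfulness provides. Otherwise the bookkeeping you outline (independence of the choice of $f_X$ via (FIL2) then (FIL3) in $\C$ plus cofinality, and the check that $\la\comp\k$ and $\k\comp\la$ are identities on structure maps) is exactly what is needed and goes through as you describe.
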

\begin{proof}
See \cite[Proposition~8.1.3]{GV}.
\end{proof}
A full subcategory $\C'$ of a  small $\a$-filtered
category $\C$ is called \emph{cofinal} if for any $X\in\C$ there is an object
$Y\in\C'$ and a morphism $X\to Y$.

\subsection{Comma categories}

Let $\T$ be a triangulated category which admits small coproducts and
fix a full subcategory $\Sc$.  Given an object $X$ in $\T$, let $\Sc/X$
denote the category whose objects are pairs $(C,\m)$ with $C\in\Sc$ and
$\m\in\T(C,X)$. The morphisms $(C,\m)\to (C',\m')$ are the morphisms
$\g\colon C\to C'$ in $\T$ making the following diagram commutative.
$$\xymatrixcolsep{1pc}
\xymatrix{C\ar[rr]^\g\ar[rd]_\m&&C'\ar[ld]^{\m'}\\&X }$$
Analogously, one defines for a morphism $\p\colon X\to X'$ in $\T$ the
category $\Sc/\p$ whose objects are 
commuting squares in $\T$ of the form
$$\xymatrix{C\ar[r]^\g\ar[d]_\m&C'\ar[d]^{\m'}\\X\ar[r]^\p&X' }$$
with $C,C'\in\Sc$.

\begin{lem}
Let $\a$ be a regular cardinal and $\Sc$ an $\a$-localizing subcategory
of $\T$. Then the categories $\Sc/X$ and $\Sc/\p$ are $\a$-filtered for
each object $X$ and each morphism $\p$ in $\T$.
\end{lem}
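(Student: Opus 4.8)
The plan is to verify the three axioms (FIL1)--(FIL3) for each of the comma categories $\Sc/X$ and $\Sc/\p$. The only features of $\Sc$ I will use are that, being $\a$-localizing, it contains the zero object, is closed under the suspension $S$, is closed under the cone of any of its morphisms (by (TS2)), and is closed under coproducts of fewer than $\a$ of its objects.

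Axioms (FIL1) and (FIL2) are easy. For (FIL1) one takes the zero object of $\Sc$ together with the zero morphism to $X$ (respectively the zero square over $\p$). For (FIL2), given objects $(C_i,\m_i)_{i\in I}$ of $\Sc/X$ with $\card I<\a$, the coproduct $C=\coprod_iC_i$ lies in $\Sc$, and the morphism $C\to X$ restricting to $\m_i$ on each summand turns the coproduct injections into morphisms of $\Sc/X$; for $\Sc/\p$ one instead takes coproducts of both the sources and the targets of the given squares, with the induced map of coproducts as the new top arrow.

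The real content is (FIL3), and I expect the version for $\Sc/\p$ to be where the main work lies. For $\Sc/X$: given fewer than $\a$ parallel morphisms $\g_k\colon(C,\m)\to(D,\n)$, I would fix an index $k_0$, put $b_k=\g_k-\g_{k_0}$ (so that $\n\comp b_k=0$), assemble the $b_k$ into one morphism $b\colon\coprod_kC\to D$, and complete $b$ to an exact triangle $\coprod_kC\xto{b}D\xto{\psi}E\to S(\coprod_kC)$. Since $\coprod_kC$ and $D$ lie in $\Sc$, so does $E$. Now $\psi\comp b=0$ forces $\psi\g_k=\psi\g_{k_0}$ for all $k$, while $\n\comp b=0$, together with the long exact sequence obtained by applying the cohomological functor $\T(-,X)$ to this triangle, shows that $\n$ factors as $\r\comp\psi$ for some $\r\colon E\to X$. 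Then $(E,\r)$ lies in $\Sc/X$ and $\psi\colon(D,\n)\to(E,\r)$ coequalizes all the $\g_k$.

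For $\Sc/\p$, with $\p\colon X\to X'$, one carries out this construction in both columns of the square. Given fewer than $\a$ parallel morphisms of squares from $(\g\colon C\to C',\m,\m')$ to $(\d\colon D\to D',\n,\n')$, I would fix an index $k_0$ and form, for each $k$, the difference morphisms $b_k\colon C\to D$ and $b'_k\colon C'\to D'$ of the given pairs (so that $\d b_k=b'_k\g$, $\n b_k=0$, $\n' b'_k=0$), assemble them into $b\colon\coprod_kC\to D$ and $b'\colon\coprod_kC'\to D'$, and pass to the cones $E$ and $E'$, both of which lie in $\Sc$, with canonical morphisms $\psi\colon D\to E$, $\psi'\colon D'\to E'$ and connecting morphism $w\colon E\to S(\coprod_kC)$. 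As before one obtains $\r\colon E\to X$ lifting $\n$ and $\r'\colon E'\to X'$ lifting $\n'$, and (TR3), applied to the evident morphism between the two triangles that is built from $\g$ and $\d$, furnishes $\e\colon E\to E'$ with $\e\psi=\psi'\d$; thus $\psi$ and $\psi'$ annihilate all the $b_k$ and $b'_k$. \textbf{The delicate point, which I expect to be the main obstacle, is that the resulting square $E\xto{\e}E'$, $E\xto{\r}X$, $E'\xto{\r'}X'$ need not commute on the nose}: the morphism $\t=\p\r-\r'\e\colon E\to X'$ only satisfies $\t\psi=0$, hence factors as $\t=\s\comp w$ for some $\s\colon S(\coprod_kC)\to X'$. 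I would absorb this obstruction by enlarging the second cone: replace $E'$ by $E'\oplus S(\coprod_kC)$, still an object of $\Sc$; take the new morphism $D'\to E'\oplus S(\coprod_kC)$ to be $\psi'$ composed with the first inclusion, the new morphism $E\to E'\oplus S(\coprod_kC)$ to be $(\e,-w)$, and the new morphism $E'\oplus S(\coprod_kC)\to X'$ to be $(\r',-\s)$. Using $w\psi=0$ one checks directly that the enlarged square commutes strictly, that its right-hand face still lifts $\n'$, and that $\psi'$ still annihilates the $b'_k$; hence the enlarged square is an object of $\Sc/\p$ and the pair $(\psi,\psi')$ is the required cocone. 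Throughout, all index sets have cardinality $<\a$, so $\Sc$ is never left; the degenerate cases of empty or one-element families are handled by identity morphisms.
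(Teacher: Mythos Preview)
Your argument is correct. The paper's own proof is the single word ``Straightforward'', so there is nothing to compare against beyond confirming that the verification goes through; your treatment of (FIL1) and (FIL2) is the obvious one, and your cone argument for (FIL3) in $\Sc/X$ is the natural way to coequalize a small family of parallel maps in a triangulated setting. The subtlety you flag for $\Sc/\p$---that the fill-in $\e$ from (TR3) need not make the square over $\p$ commute, only its precomposition with $\psi$---is genuine, and your fix of enlarging $E'$ to $E'\oplus S(\coprod_kC)$ with structure maps $(\e,-w)$ and $(\r',-\s)$ is a clean and correct way to absorb the obstruction $\t=\s w$: the identities $(\r',-\s)(\e,-w)=\r'\e+\s w=\p\r$ and $w\psi=0$ give exactly the commutativity and compatibility you need. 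This is presumably what the author had in mind by ``straightforward'', but it is not entirely trivial, and you have handled it carefully.
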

\begin{proof}
Straightforward.
\end{proof}

\subsection{The comma category of an exact triangle}
Let $\T$ be a triangulated category. We consider the category of pairs
$(\p_1,\p_2)$ of composable morphisms $X_1\xto{\p_1}X_2\xto{\p_2}X_3$
in $\T$. A morphism $\m\colon (\p_1,\p_2)\to (\p'_1,\p'_2)$ is a
triple $\m=(\m_1,\m_2,\m_3)$ of morphisms in $\T$ making the following
diagram commutative.
$$\xymatrix{X_1\ar[r]^{\p_1}\ar[d]^{\m_1}&X_2\ar[r]^{\p_2}\ar[d]^{\m_2}&
X_3\ar[d]^{\m_3}\\ X'_1\ar[r]^{\p'_1}&X'_2\ar[r]^{\p'_2}&X'_3}$$ A pair
$(\p_1,\p_2)$ of composable morphisms is called \emph{exact} if it fits into
an exact triangle $X_1\xto{\p_1}X_2\xto{\p_2}X_3\xto{\p_3}SX_1$.

\begin{lem}
Let $\m\colon (\g_1,\g_2)\to (\p_1,\p_2)$ be a morphism between pairs
of composable morphisms and suppose that $(\p_1,\p_2)$ is exact.  Then
$\m$ factors through an exact pair of composable morphisms which
belong to the smallest full triangulated subcategory containing $\g_1$
and $\g_2$.
\end{lem}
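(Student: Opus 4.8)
The plan is to build the required intermediate exact pair by hand, out of mapping cones and the octahedral axiom, and to obtain the map into $(\p_1,\p_2)$ from the exactness of the target. Write the source pair as $Y_1\xto{\g_1}Y_2\xto{\g_2}Y_3$, extend $(\p_1,\p_2)$ to an exact triangle $X_1\xto{\p_1}X_2\xto{\p_2}X_3\xto{\p_3}SX_1$, put $\m=(\m_1,\m_2,\m_3)$, and let $\U$ denote the smallest full triangulated subcategory of $\T$ containing $Y_1$, $Y_2$, $Y_3$ (equivalently, containing $\g_1$ and $\g_2$). The first point is that $\Cone(\g_1)$, $\Cone(\g_2)$ and $\Cone(\g_2\comp\g_1)$ all lie in $\U$: for the last this is the content of \textrm{(TR4)} applied to $Y_1\xto{\g_1}Y_2\xto{\g_2}Y_3$, which in addition places these three cones in an exact triangle with terms in $\U$.

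\emph{Reduction.} It is enough to produce an exact pair $Z_1\xto{\d_1}Z_2\xto{\d_2}Z_3$ with all $Z_i\in\U$, a morphism $\nu=(\nu_1,\nu_2,\nu_3)\colon(\g_1,\g_2)\to(\d_1,\d_2)$, and morphisms $\mu_1\colon Z_1\to X_1$, $\mu_2\colon Z_2\to X_2$ with $\mu_2\comp\d_1=\p_1\comp\mu_1$, $\mu_1\comp\nu_1=\m_1$ and $\mu_2\comp\nu_2=\m_2$. Indeed, complete $(\d_1,\d_2)$ to an exact triangle $Z_1\xto{\d_1}Z_2\xto{\d_2}Z_3\to SZ_1$ and apply \textrm{(TR3)} to the square $\mu_2\comp\d_1=\p_1\comp\mu_1$ to get $\mu_3\colon Z_3\to X_3$ with $\mu_3\comp\d_2=\p_2\comp\mu_2$. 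Then $\m$ and $(\mu_1,\mu_2,\mu_3)\comp\nu$ already agree on $Y_1$ and $Y_2$, and their difference $(\mu_3\comp\nu_3)-\m_3$ vanishes after precomposition with $\g_2$, hence factors through $\Cone(\g_2)$; for a suitably chosen $\mathbf Z$ this difference can be removed using the freedom in the \textrm{(TR3)}-completion of $\mu_3$, giving $(\mu_1,\mu_2,\mu_3)\comp\nu=\m$. So everything comes down to constructing $\mathbf Z$, $\nu$, $\mu_1$, $\mu_2$.

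\emph{Constructing $\mathbf Z$.} Because $\m$ must be recovered on $Y_1$, $Y_2$, $Y_3$, the pair $\mathbf Z$ is built so that $Z_i$ contains a copy of $Y_i$, the remaining parts being the cones from the octahedron above, arranged so that $\mathbf Z$ is exact. The prototype is the cone pair of $\g_1$: complete $\g_1$ to $Y_1\xto{\g_1}Y_2\xto{\a}\Cone(\g_1)\to SY_1$; by \textrm{(TR3)} the pair $(\m_1,\m_2)$ extends to a morphism of triangles, so there is $\m_1'\colon\Cone(\g_1)\to X_3$ with $\m_1'\comp\a=\p_2\comp\m_2$, and $(\m_1,\m_2,\m_1')$ is a morphism from the exact pair $(\g_1,\a)$, which lies in $\U$, to $(\p_1,\p_2)$. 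However $(\g_1,\g_2)$ admits a morphism into $(\g_1,\a)$ compatible with $\m$ only when $\a$ factors through $\g_2$, and the obstruction to that is exactly the octahedral connecting morphism $\Cone(\g_2)\to S\Cone(\g_1)$, which is governed by $\g_2\comp\g_1$ and is nonzero in general. To get around it one enlarges $(\g_1,\a)$ using the mapping cone of the morphism $\smatrix{\g_2\comp\g_1\\ \g_1}\colon Y_1\to Y_3\oplus Y_2$ together with the octahedral triangle: the middle term of the enlarged pair carries $Y_2$ through the split monomorphism $\smatrix{\g_2\\ \id}$, so that $\m_2$ is recovered as $\mu_2\comp\nu_2$ with $\mu_2=(0,\m_2)$ and no factorization condition, while the extra cone terms provide exactly the room to define the third component of $\nu$ and absorb the obstruction.

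\emph{Main obstacle.} The substance of the proof is the verification that this enlarged $\mathbf Z$ works: that it is exact with all terms in $\U$ (an octahedron on the factorization $Y_1\xto{\g_1}Y_2\to Y_3\oplus Y_2$ of $\smatrix{\g_2\comp\g_1\\ \g_1}$ identifies its last term), that $\nu$, $\mu_1$, $\mu_2$ can be chosen making every square commute, and that the octahedral connecting morphism really does cancel the obstruction in the third component of $\nu$, so that the reduction applies. This is a somewhat intricate but completely elementary diagram chase; it uses only \textrm{(TR1)}--\textrm{(TR4)} and the $5$-lemma, and no hypothesis on $\T$ beyond being a triangulated category enters.
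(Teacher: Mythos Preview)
Your proposal is a strategy outline rather than a proof, and the gaps are real. The ``Reduction'' paragraph asserts that once $\nu$, $\mu_1$, $\mu_2$ are in hand, the discrepancy $\mu_3\nu_3-\m_3$ can be absorbed into the \textrm{(TR3)} ambiguity of $\mu_3$. But the freedom in $\mu_3$ consists of maps $Z_3\to X_3$ that vanish on $\d_2$; after precomposing with $\nu_3$ there is no reason every map $Y_3\to X_3$ vanishing on $\g_2$ arises this way. You acknowledge this by writing ``for a suitably chosen $\mathbf Z$'', which pushes the entire content of the lemma into the unspecified construction of $\mathbf Z$. The subsequent paragraphs then sketch what $\mathbf Z$ should look like (an enlargement involving the cone of $\smatrix{\g_2\g_1\\\g_1}$) but never write down $\d_1$, $\d_2$, $\nu$, $\mu_1$, $\mu_2$, nor verify exactness or commutativity; the final paragraph explicitly defers all of this as ``a somewhat intricate but completely elementary diagram chase''. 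That chase \emph{is} the proof.

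The paper's argument avoids your third-component obstruction by a preliminary reduction you do not make: it first factors $\m$ through a pair $(\g_1',\g_2')$ with $\g_2'\g_1'=0$, using the cone of $\g_1$ and a further cone to enlarge $C_3$. Only then, with the composite already zero, does it build the exact intermediate pair: complete $\g_2$ to a triangle $\bar C_1\xto{\bar\g_1}C_2\xto{\g_2}C_3\to S\bar C_1$, factor $\g_1$ through $\bar\g_1$, and take the direct sum of this triangle with the split triangle $C_1\to 0\to SC_1$. All maps, including the third component into $X_3$, are written down explicitly and checked. The two-step decomposition is what makes the construction manageable; attempting it in one step, as you do, forces you to control the octahedral obstruction and the third-component discrepancy simultaneously, and your outline does not show how.
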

\begin{proof}
We proceed in two steps. The first step provides a factorization of
$\m$ through a pair $(\g'_1,\g'_2)$ of composable morphisms such that
$\g'_2\g'_1=0$.  To achieve this, complete $\g_1$ to an exact triangle
$C_1\xto{\g_1}C_2\xto{\bar\g_2}\bar C_3\to SC_1$. Note that $\p_2\m_2$
factors through $\bar\g_2$.  Now complete $\smatrix{\g_2\\ \bar\g_2}$
to an exact triangle $C_2\xto{\smatrix{\g_2\\ \bar\g_2}} C_3\amalg
\bar C_3\xto{\smatrix{\d&\bar\d}} C'_3\to SC_2$ and observe that
$\m_3$ factors through $\d$ via a morphism $\m_3'\colon C'_3\to
X_3$. Thus we obtain the following factorization of $\m$ with
$(\d\g_2)\g_1=-\bar\d\bar\g_2\g_1=0$.
$$\xymatrix{C_1\ar[rr]^{\g_1}\ar[d]^{\id}&&C_2\ar[rr]^{\g_2}\ar[d]^{\id}&&
C_3\ar[d]^{\d}\\
C_1\ar[rr]^{\g_1}\ar[d]^{\m_1}&&C_2\ar[rr]^{\d\g_2}\ar[d]^{\m_2}&&
C'_3\ar[d]^{\m_3'}\\ X_1\ar[rr]^{\p_1}&&X_2\ar[rr]^{\p_2}&&X_3}$$

For the second step we may assume that $\g_2\g_1=0$. We complete
$\g_2$ to an exact triangle $\bar C_1\xto{\bar\g_1}C_2\xto{\g_2}
C_3\to S\bar C_1$. Clearly, $\g_1$ factors through $\bar\g_1$ via a
morphism $\r\colon C_1\to \bar C_1$ and $\m_2\bar\g_1$ factors through
$\p_1$ via a morphism $\s\colon \bar C_1\to X_1$.  Thus we obtain the
following factorization of $\m$
$$\xymatrix{C_1\ar[rr]^{\g_1}\ar[d]^{\smatrix{\r\\
\id}}&&C_2\ar[rr]^{\g_2}\ar[d]^{\id}&& C_3\ar[d]^{\smatrix{\id \\ 0}}\\ \bar
C_1\amalg
C_1\ar[rr]^{\smatrix{\bar\g_1&0}}\ar[d]^{\smatrix{\s&\m_1-\s\r}}&&C_2\ar[rr]^{\smatrix{\g_2\\
0}}\ar[d]^{\m_2}&& C_3\amalg SC_1\ar[d]^{\smatrix{\m_3&0}}\\
X_1\ar[rr]^{\p_1}&&X_2\ar[rr]^{\p_2}&&X_3}$$
where the middle row fits into an exact triangle.
\end{proof}

The following statement is a reformulation of the previous one in
terms of cofinal subcategories.

\begin{prop}
\label{pr:cof-triangle}
Let $\T$ be a triangulated category and $\Sc$ a full triangulated
subcategory. Suppose that
$X_1\xto{\p_1}X_2\xto{\p_2}X_3\xto{\p_3}SX_1$ is an exact triangle in
$\T$ and denote by $\Sc/(\p_1,\p_2)$ the category whose objects are
the commutative diagrams in $\T$ of the following form.
$$\xymatrix{C_1\ar[r]^{\g_1}\ar[d]&C_2\ar[r]^{\g_2}\ar[d]&C_3\ar[d]\\
X_1\ar[r]^{\p_1}&X_2\ar[r]^{\p_2}&X_3
}$$ such each $C_i$ belongs to $\Sc$.
Then the full subcategory formed by the diagrams such that there exists an exact triangle
$C_1\xto{\g_1}C_2\xto{\g_2}C_3\xto{\g_3}SC_1$ is a cofinal subcategory of $\Sc/(\p_1,\p_2)$.
\end{prop}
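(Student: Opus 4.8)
The plan is to obtain this as an immediate consequence of the preceding lemma, the only real work being to translate between the comma category $\Sc/(\p_1,\p_2)$ and the category of pairs of composable morphisms lying over $(\p_1,\p_2)$, which is the language in which that lemma is phrased.

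First I would record the translation. An object of $\Sc/(\p_1,\p_2)$ is precisely a morphism $\m\colon(\g_1,\g_2)\to(\p_1,\p_2)$ of pairs of composable morphisms whose source $(\g_1,\g_2)$, say $C_1\xto{\g_1}C_2\xto{\g_2}C_3$, has all three terms in $\Sc$; and a morphism in $\Sc/(\p_1,\p_2)$ from one such object to another is a morphism $(\g_1,\g_2)\to(\g'_1,\g'_2)$ of pairs that is compatible with the given maps down to $(\p_1,\p_2)$. Under this identification the full subcategory named in the statement consists of exactly those objects $\m$ for which the source pair $(\g_1,\g_2)$ is exact. So, by the definition of cofinality recalled just above, it suffices to show that every object $\m\colon(\g_1,\g_2)\to(\p_1,\p_2)$ of $\Sc/(\p_1,\p_2)$ admits a morphism in $\Sc/(\p_1,\p_2)$ to one whose source pair is exact.

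To produce such a morphism I would apply the preceding lemma to $\m$ directly: since $(\p_1,\p_2)$ is exact (it is the first two maps of the given exact triangle), the lemma factors $\m$ as $(\g_1,\g_2)\xto{\n}(\g'_1,\g'_2)\xto{\m'}(\p_1,\p_2)$ where $(\g'_1,\g'_2)$ is an exact pair belonging to the smallest full triangulated subcategory of $\T$ containing the morphisms $\g_1$ and $\g_2$. Because $\Sc$ is a full triangulated subcategory of $\T$ and the source and target of each of $\g_1,\g_2$ already lie in $\Sc$, that smallest subcategory is contained in $\Sc$; hence the three terms of $(\g'_1,\g'_2)$ lie in $\Sc$. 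Therefore $\m'$ is an object of $\Sc/(\p_1,\p_2)$ belonging to the full subcategory of exact diagrams, and $\n$ is a morphism in $\Sc/(\p_1,\p_2)$ from $\m$ to $\m'$. This is exactly the cofinality assertion.

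I do not expect any genuine obstacle: the substantive content — turning an arbitrary lift of $(\p_1,\p_2)$ through objects of $\Sc$ into a nearby lift that is exact — is entirely packaged in the preceding lemma. The only thing to watch is the purely formal point that a full triangulated subcategory containing a given collection of objects automatically contains the full triangulated subcategory generated by them, which is what places the new pair $(\g'_1,\g'_2)$ back inside $\Sc$; and, if one wants it, that $\Sc/(\p_1,\p_2)$ is $\a$-filtered when $\Sc$ is $\a$-localizing follows from the comma-category lemma already proved, so the word ``cofinal'' is being used in its established sense.
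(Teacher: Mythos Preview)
Your proposal is correct and matches the paper's approach exactly: the paper states this proposition immediately after the lemma with the remark that it is ``a reformulation of the previous one in terms of cofinal subcategories'' and gives no further proof. Your translation between objects of $\Sc/(\p_1,\p_2)$ and morphisms of pairs, together with the observation that the smallest triangulated subcategory containing $\g_1,\g_2$ sits inside $\Sc$, is precisely the unpacking the paper leaves implicit.
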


\subsection{A Kan extension}
Let $\T$ be a triangulated category with small coproducts and $\Sc$ a
 small full subcategory. Suppose that the objects of $\Sc$
are $\a$-small and that $\Sc$ is closed under $\a$-coproducts. We
denote by $\Add_\a(\Sc^\op,\Ab)$ the category of $\a$-product
preserving functors $\Sc^\op\to\Ab$. This is a locally presentable
abelian category in the sense of \cite{GU} and we refer to the
Appendix~\ref{se:locpres} for basic facts on locally presentable
categories. Depending on the choice of $\Sc$, we can think of
$\Add_\a(\Sc^\op,\Ab)$ as a locally presentable approximation of the
triangulated category $\T$. In order to make this precise, we need to
introduce various functors.

Let $h_\T\colon\T\to A(\T)$ denote the abelianization of $\T$; see
Appendix~\ref{se:abel}. Sometimes we write $\widehat\T$ instead of
$A(\T)$.  The inclusion functor $f\colon \Sc\to\T$ induces a functor
$$f_*\colon A(\T)\lto \Add_\a(\Sc^\op,\Ab),\quad X\mapsto
A(\T)((h_\T\comp f)-,X),$$  and we observe that the composite
$$\T\lto[h_\T]A(\T)\lto[f_*]\Add_\a(\Sc^\op,\Ab)$$ is the restricted
Yoneda functor sending each $X\in\T$ to $\T(-,X)|_\Sc$.

The next proposition discusses a left adjoint for $f_*$.  To this end,
we denote for any category $\C$ by $h_\C$ the Yoneda functor sending
$X$ in $\C$ to $\C(-,X)$.

\begin{prop}\label{pr:A(f)}
The functor $ f_*$ admits a left adjoint $ f^*$ which makes
the following diagram commutative.
$$\xymatrix{
\Sc\ar[rr]^-{h_\Sc}\ar[d]^{f=\inc}&&
\Add_\a(\Sc^\op,\Ab)\ar[d]^{f^*}\\
\T\ar[rr]^-{h_\T}&&A(\T) }$$
Moreover, the functor $ f^*$ has the following properties.
\begin{enumerate}
\item $ f^*$ is fully faithful and identifies
$\Add_\a(\Sc^\op,\Ab)$ with the full subcategory formed by all
colimits of objects in $\{\T(-,X)\mid X\in \Sc\}$.
\item $ f_*$ preserves small coproducts if and only if \emph{(PG2)}
holds for $\Sc$.
\item Suppose that $\Sc$ is a triangulated subcategory of $\T$. Then for $X$
in $\T$, the adjunction morphism $ f^* f_*(h_\T X)\to h_\T X$ identifies
with the canonical morphism $$\colim{(C,\m)\in \Sc/X}h_\T C\lto h_\T
X.$$
\end{enumerate}
\end{prop}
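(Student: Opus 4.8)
The plan is to obtain $f^*$ from the universal property of $\Add_\a(\Sc^\op,\Ab)$ as a free cocompletion, and then to read off (1)--(3) from the adjunction unit and counit on representables. First I would check that $A(\T)$ is cocomplete: it is abelian by Freyd's construction, and it has small coproducts inherited from $\T$, since an object $Z$ of $A(\T)$, viewed as a finitely presented functor $\T^\op\to\Ab$, is a cokernel of a morphism of representables and therefore sends coproducts in $\T$ to products, so that $A(\T)(h_\T(\coprod_iX_i),Z)\cong Z(\coprod_iX_i)\cong\prod_iZ(X_i)\cong\prod_iA(\T)(h_\T X_i,Z)$, i.e.\ $h_\T(\coprod_iX_i)\cong\coprod_ih_\T X_i$. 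Because $\Sc$ is closed under $\a$-coproducts in $\T$ and $h_\T$ preserves all coproducts, $h_\T\comp f$ preserves $\a$-coproducts, so by the universal property of $\Add_\a(\Sc^\op,\Ab)$ --- it is the free cocompletion of $\Sc$ with respect to $\a$-small coproducts (Appendix) --- it extends to a colimit-preserving $f^*$ with $f^*\comp h_\Sc\cong h_\T\comp f$, left adjoint to $X\mapsto A(\T)((h_\T\comp f)-,X)=f_*X$. Since $h_\Sc$ is dense, every object $M$ is the canonical colimit of representables over the comma category $h_\Sc\downarrow M$, so $f^*M\cong\colim{h_\Sc\downarrow M}h_\T C$.

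\textbf{Property (1).} Being a left adjoint, $f^*$ is fully faithful iff the unit $\eta_M\colon M\to f_*f^*M$ is invertible for all $M$. On a representable it is: $f_*f^*(h_\Sc C)$ sends $D$ to $A(\T)(h_\T(fD),h_\T C)=\T(fD,C)=\Sc(D,C)$, as $h_\T$ is fully faithful and $f$ the inclusion, so $f_*f^*(h_\Sc C)\cong h_\Sc C$. For arbitrary $M$ I would use two facts: (a) $f_*$ is exact, because it is restriction of functors along $\Sc^\op\hookrightarrow\T^\op$ and short exact sequences in $A(\T)$ are pointwise exact (the subcategory $A(\T)$ of additive functors $\T^\op\to\Ab$ is closed under cokernels and, as $\T$ has weak kernels, under kernels); (b) $f_*$ sends a small coproduct of representables $h_\T C$, $C\in\Sc$, to the coproduct, because $f_*(\coprod_ih_\T C_i)=f_*(h_\T(\coprod_iC_i))=\T(-,\coprod_iC_i)|_\Sc$ and at each $D\in\Sc$ both this and $\coprod_ih_\Sc C_i$ evaluate to $\colim{|J|<\a}\T(D,\coprod_{i\in J}C_i)$ --- the former by $\a$-smallness of $D$, the latter because a coproduct is the $\a$-filtered colimit of its $\a$-small subcoproducts, which coincide with those formed in $\Sc$, and $\a$-filtered colimits in $\Add_\a(\Sc^\op,\Ab)$ are pointwise. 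Given (a) and (b), pick a presentation $P_1\to P_0\to M\to 0$ with $P_0,P_1$ small coproducts of representables; applying the right exact $f^*$, then the exact $f_*$, and using that $\eta_{P_0},\eta_{P_1}$ are invertible, a short diagram chase gives $\eta_M$ invertible. Thus $f^*$ is fully faithful; its essential image contains each $\T(-,C)$, $C\in\Sc$, is closed under colimits (lift a diagram along the fully faithful $f^*$ to $\Add_\a(\Sc^\op,\Ab)$; $f^*$ preserves its colimit), and lies in the class of colimits of such objects (every object of $\Add_\a(\Sc^\op,\Ab)$ is a colimit of representables and $f^*$ preserves colimits), so it equals that class.

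\textbf{Properties (2) and (3).} For (2): exactness of $f_*$ together with the fact that every object of $A(\T)$ is a cokernel of a morphism of representables reduces the question to whether $f_*$ preserves coproducts of arbitrary representables $h_\T Y$, $Y\in\T$; comparing $f_*(\coprod_ih_\T Y_i)=\T(-,\coprod_iY_i)|_\Sc$ with $\coprod_i(\T(-,Y_i)|_\Sc)$, one verifies that invertibility of the canonical comparison morphism for every family is exactly condition (PG2) for $\Sc$. For (3): when $\Sc$ is a triangulated subcategory the preceding lemma applies and $\Sc/X$ is $\a$-filtered; moreover morphisms $h_\Sc C\to f_*(h_\T X)$ correspond under Yoneda to elements of $(f_*h_\T X)(C)=A(\T)(h_\T C,h_\T X)=\T(C,X)$, so the comma category $h_\Sc\downarrow f_*(h_\T X)$ is canonically $\Sc/X$. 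Since $f^*$ takes an object to the colimit over its comma category of the corresponding representables $h_\T C$, this identifies $f^*f_*(h_\T X)$ with $\colim{(C,\m)\in\Sc/X}h_\T C$, and under this identification the adjunction counit $f^*f_*(h_\T X)\to h_\T X$ is the morphism induced by the $h_\T\m$, i.e.\ the canonical one.

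\textbf{Main obstacle.} The delicate step is the full faithfulness of $f^*$: the functor $f_*$ does not preserve arbitrary colimits --- indeed its failure to preserve coproducts is precisely (PG2) --- so one cannot simply invoke that $\eta$ is invertible on generators and that everything in sight preserves colimits. The key is to pin down exactly which colimits $f_*$ respects (cokernels, and small coproducts of the representables coming from $\Sc$), and fact (b) above is where the hypotheses that the objects of $\Sc$ are $\a$-small and that $\Sc$ is closed under $\a$-coproducts really enter.
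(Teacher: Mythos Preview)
Your argument is correct. It parallels the paper's but unpacks by hand several steps that the paper delegates to appendix lemmas. For the construction and (1), the paper identifies $\Add_\a(\Sc^\op,\Ab)$ with $\Lex_\a(\widehat\Sc^\op,\Ab)$ via Lemma~\ref{le:Add} and then applies Lemma~\ref{le:kan-inc} to the induced functor $\widehat f\colon\widehat\Sc\to A(\T)$ (fully faithful, right exact, $\a$-coproduct preserving, landing in $\a$-presentables), which yields both the left adjoint and its full faithfulness in one stroke; you instead write down the Kan extension formula and verify the unit is invertible directly --- first on representables, then on coproducts of representables from $\Sc$ (this is exactly where $\a$-smallness enters), and finally on arbitrary objects via a presentation and the exactness of $f_*$. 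Your route makes it more transparent which hypothesis is used where. For (2), the paper passes through the calculus of fractions: since $f^*$ is fully faithful, $f_*$ induces an equivalence $A(\T)[\Si(f_*)^{-1}]\simeq\Add_\a(\Sc^\op,\Ab)$, so by Proposition~\ref{pr:coprod} preservation of coproducts reduces to closure of $\Si(f_*)$ (equivalently $\Ker f_*$) under coproducts, which is then unpacked as (PG2); your reduction to coproducts of representables avoids the fraction machinery, and the ``one verifies'' you leave unexpanded is the same cokernel computation. For (3) the two arguments agree (the paper phrases it via Lemma~\ref{le:flat}). One wording quibble: $\Add_\a(\Sc^\op,\Ab)$ is the free cocompletion of $\Sc$ \emph{preserving} the existing $\a$-coproducts, not the free cocompletion ``with respect to $\a$-small coproducts'', but you never actually rely on that phrasing.
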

\begin{proof} 
The functor $f^*$ is constructed as a left Kan extension. To explain
this, it is convenient to identify $\Add_\a(\Sc^\op,\Ab)$ with the
category $\Lex_\a(\widehat\Sc^\op,\Ab)$ of left exact functors
$\widehat\Sc^\op\to\Ab$ which preserve $\a$-products. To be more
precise, the Yoneda functor $h\colon\Sc\to \widehat\Sc$ induces an
equivalence
$$\Lex_\a(\widehat\Sc^\op,\Ab)\lto[\sim]\Add_\a(\Sc^\op,\Ab),\quad
F\mapsto F\comp h,$$ because  every additive functor
$\Sc^\op\to\Ab$ extends uniquely to a left exact functor
$\widehat\Sc^\op\to\Ab$; see Lemma~\ref{le:coherent}.

Using this identification, the existence of a fully faithful left
adjoint $\Lex_\a(\widehat\Sc^\op,\Ab)\to A(\T)$ for $ f_*$ and its
basic properties follow from Lemma~\ref{le:kan-inc}, because the
inclusion $ f\colon \Sc\to\T$ induces a fully faithful and right exact
functor $\widehat f\colon\widehat\Sc\to\widehat\T=A(\T)$. This functor
preserves $\a$-coproducts and identifies $\widehat \Sc$ with a full
subcategory of $\a$-presentable objects, since the objects from $\Sc$
are $\a$-small in $\T$.

(2) Let $\Si=\Si( f_*)$ denote the set of morphisms of $A(\T)$ which
$ f_*$ makes invertible. It follows from Proposition~\ref{pr:quot}
that $ f_*$ induces an equivalence
$$A(\T)[\Si^{-1}]\lto[\sim]\Lex_\a(\widehat\Sc^\op,\Ab),$$ and therefore
$ f_*$ preserves small coproducts if and only if $\Si$ is closed
under taking small coproducts, by Proposition~\ref{pr:coprod}. It is
not hard to check that $ f_*$ is exact, and therefore a morphism in
$A(\T)$ belongs to $\Si$ if and only if its kernel and cokernel are
annihilated by $ f_*$. Now observe that an object $F$ in $A(\T)$
with presentation $\T(-,X)\to \T(-,Y)\to F\to 0$ is annihilated by
$ f_*$ if and only if $\T(C,X)\to \T(C,Y)$ is surjective for all
$C\in\Sc$. It follows hat $ f_*$ preserves small coproducts if and
only if (PG2) holds for $\Sc$.

(3)  Let $F= f_*(h_\T X)=\T(-,X)|_\Sc$. Then Lemma~\ref{le:flat} implies that
$F=\colim{(C,\m)\in\Sc/X}h_\Sc C$, since $\Sc/F=\Sc/X$.  Thus
$f^*F=\colim{(C,\m)\in\Sc/X}h_\T C$.
\end{proof}

\begin{cor}
\label{co:kan}
Let $\T$ be a triangulated category with small coproducts. Suppose
$\T$ is $\a$-well generated and denote by $\T^\a$ the full subcategory
formed by all $\a$-compact objects. Then the functor $\T\to A(\T)$
taking an object $X$ to $$\colim{(C,\m)\in \T^\a/X}\T(-, C)$$ preserves
small coproducts.
\end{cor}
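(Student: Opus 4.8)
The plan is to apply Proposition~\ref{pr:A(f)} to the small full subcategory $\Sc=\T^\a$. Since $\T$ is $\a$-well generated, $\T^\a=\Loc_\a\Sc_0$ for a small set $\Sc_0$ of $\a$-small perfect generators, so $\T^\a$ is a small $\a$-localizing subcategory; in particular it is a triangulated subcategory, its objects are $\a$-small, and it is closed under $\a$-coproducts, so all the hypotheses of Proposition~\ref{pr:A(f)} are met. Write $f\colon\T^\a\to\T$ for the inclusion and $f^*\dashv f_*$ for the resulting adjoint pair, with $f^*$ fully faithful. Because $\T^\a$ is a triangulated subcategory, part~(3) of that proposition shows that for each $X$ the adjunction morphism $f^*f_*(h_\T X)\to h_\T X$ is the canonical morphism $\colim{(C,\m)\in\T^\a/X}h_\T C\to h_\T X$; hence the functor $G$ of the statement is naturally isomorphic to the composite $f^*\comp f_*\comp h_\T$, and it suffices to show this composite preserves small coproducts.

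Now $f^*$ preserves small coproducts because it is a left adjoint, and $f_*$ preserves small coproducts by Proposition~\ref{pr:A(f)}(2): the criterion there is that \emph{(PG2)} holds for $\Sc=\T^\a$, which is true because the $\a$-compact objects form a set of perfect generators of $\T$. Consequently $G$ preserves small coproducts as soon as $f_*\comp h_\T$ does. As noted just before Proposition~\ref{pr:A(f)}, $f_*\comp h_\T$ is the restricted Yoneda functor $X\mapsto\T(-,X)|_{\T^\a}$, so the task reduces to showing that for every small family $(X_i)_{i\in I}$ the canonical morphism
$$\coprod_i\bigl(\T(-,X_i)|_{\T^\a}\bigr)\longrightarrow\T(-,\coprod_iX_i)|_{\T^\a}$$
is invertible in $\Add_\a(\T^{\a,\op},\Ab)$.

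I would check this by evaluating at the representable functors $\T(-,C)$ with $C\in\T^\a$: these are $\a$-presentable (the objects of $\T^\a$ being $\a$-small, exactly as used in the proof of Proposition~\ref{pr:A(f)}) and they generate $\Add_\a(\T^{\a,\op},\Ab)$, so it is enough that the morphism become bijective after applying $\Hom(\T(-,C),-)$, that is, by Yoneda, after evaluation at $C$. Writing the target as the $\a$-filtered colimit $\colim{(D,\m)\in\T^\a/\coprod_iX_i}\T(-,D)|_{\T^\a}$ afforded by Lemma~\ref{le:flat}, writing the source as the coproduct of the corresponding colimits over the categories $\T^\a/X_i$, and using that evaluation at the $\a$-presentable object $\T(-,C)$ commutes with $\a$-filtered colimits, the claim becomes the following factorization statement: every morphism $C\to\coprod_iX_i$ from an $\a$-compact object $C$ factors, for some $J\subseteq I$ with $\card J<\a$, through a coproduct $\coprod_{j\in J}C_j$ with $C_j\in\T^\a$ and compatible morphisms $C_j\to X_j$, and any two such factorizations agree after a further morphism of this type. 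The $\a$-smallness of $C$ produces a factorization through some $\coprod_{j\in J}X_j$ with $\card J<\a$; closure of $\T^\a$ under $\a$-coproducts makes $\coprod_{j\in J}C_j$ again $\a$-compact; and the perfect generation \emph{(PG2)}, through the description of each $X_j$ as a weak colimit of objects of $\T^\a$, is what lets the map be pushed down to the $C_j$ and makes such factorizations essentially unique. I expect this factorization step, which is precisely where perfect generation enters, to be the main obstacle; everything else is formal from Proposition~\ref{pr:A(f)} and the adjunction $f^*\dashv f_*$.
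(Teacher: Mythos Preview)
Your first two paragraphs are exactly the paper's intended argument, and in fact you have already finished the proof there without realizing it. You correctly identify the functor as $f^*\comp f_*\comp h_\T$, you note that $f^*$ preserves small coproducts as a left adjoint, and you invoke Proposition~\ref{pr:A(f)}(2) to see that $f_*$ preserves small coproducts because (PG2) holds for $\T^\a$. The only missing ingredient is that the Yoneda functor $h_\T\colon\T\to A(\T)=\widehat\T$ itself preserves small coproducts, and this is Lemma~\ref{le:coherent}(3). Composing these three coproduct-preserving functors gives the result immediately.

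Your third paragraph is therefore unnecessary, and the ``main obstacle'' you anticipate does not exist. You set out to prove directly that the restricted Yoneda functor $f_*\comp h_\T$ preserves coproducts, but you have already shown that $f_*$ does; combining this with the fact about $h_\T$ is a one-line conclusion. The factorization argument you sketch is moreover delicate: coproducts in $\Add_\a(\T^{\a,\op},\Ab)$ are not computed pointwise (even $\a$-small ones need not be, once $\a>\aleph_0$), so ``evaluation at $C$'' does not interact with them as cleanly as you suggest, and you would essentially be reproving Proposition~\ref{pr:A(f)}(2) by hand. Drop the last paragraph and cite Lemma~\ref{le:coherent}(3) instead.
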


\subsection{A criterion for well generatedness}

Let $\T$ be a triangulated category which admits small coproducts.
The following result provides a useful criterion for $\T$ to be well
generated in terms of cohomological functors into locally presentable
abelian categories.

\begin{prop}\label{pr:wellgen}
Let $\T$ be a triangulated category with small coproducts and $\a$ a
regular cardinal.  Let $\Sc_0$ be a small set of objects and denote by $\Sc$
the full subcategory formed by all $\a$-coproducts of objects in $\Sc_0$.
Then the following are equivalent.
\begin{enumerate}
\item The objects of $\Sc_0$ are $\a$-small and \emph{(PG2)} holds for
$\Sc_0$.
\item The objects of $\Sc$ are $\a$-small and \emph{(PG2)} holds for
$\Sc$.
\item The functor $H\colon\T\to\Add_\a(\Sc^\op,\Ab)$ taking $X$ to $\T(-,X)|_\Sc$ preserves
small coproducts.
\end{enumerate}
\end{prop}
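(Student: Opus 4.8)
The plan is to get $(1)\Leftrightarrow(2)$ by an elementary computation with $\a$-coproducts, and to identify condition $(3)$ with $(2)$ by means of Proposition~\ref{pr:A(f)}. For $(1)\Leftrightarrow(2)$: a typical object of $\Sc$ has the form $C=\coprod_{j\in J}C_j$ with $C_j\in\Sc_0$ and $\card J<\a$, so $\T(C,-)\cong\prod_{j\in J}\T(C_j,-)$. Since a product of surjections of abelian groups is a surjection, both the hypothesis and the conclusion of (PG2) hold for all $C\in\Sc$ exactly when they hold for all $C\in\Sc_0$, so (PG2) for $\Sc_0$ is equivalent to (PG2) for $\Sc$. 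For $\a$-smallness: a morphism $C\to\coprod_{i\in I}Y_i$ restricts along the coproduct inclusions $C_j\to C$ to morphisms each factoring through some $\coprod_{i\in I_j}Y_i$ with $\card I_j<\a$, hence $C\to\coprod_{i\in I}Y_i$ factors through $\coprod_{i\in I'}Y_i$ for $I'=\bigcup_{j\in J}I_j$, and $\card I'<\a$ because $\a$ is regular; the converse is trivial since $\Sc_0\subseteq\Sc$.

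Next I would treat $(2)\Rightarrow(3)$. Under $(2)$ the subcategory $\Sc$ is small, its objects are $\a$-small, and it is closed under $\a$-coproducts, so Proposition~\ref{pr:A(f)} applies, and by construction $H$ is the composite $\T\xto{h_\T}A(\T)\xto{f_*}\Add_\a(\Sc^\op,\Ab)$. By Proposition~\ref{pr:A(f)}(2), (PG2) for $\Sc$ is exactly the assertion that $f_*$ preserves small coproducts; since the abelianization $h_\T$ preserves small coproducts (Appendix~\ref{se:abel}), so does $H=f_*\comp h_\T$.

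For $(3)\Rightarrow(1)$ I would first extract $\a$-smallness. For $C\in\Sc$ the functor $H(C)=\T(-,C)|_\Sc$ is the representable functor $\Sc(-,C)$ and lies in $\Add_\a(\Sc^\op,\Ab)$; it is $\a$-presentable there, because by Yoneda $\Hom(H(C),-)$ is evaluation at $C$ and $\a$-filtered colimits in $\Add_\a(\Sc^\op,\Ab)$ are computed pointwise (Appendix~\ref{se:locpres}). Given a morphism $C\to\coprod_{i\in I}Y_i$, write the coproduct $\coprod_{i\in I}H(Y_i)$ in $\Add_\a(\Sc^\op,\Ab)$ as the $\a$-filtered colimit of the sub-coproducts $\coprod_{i\in J}H(Y_i)\cong H(\coprod_{i\in J}Y_i)$ over the subsets $J\subseteq I$ with $\card J<\a$; applying $\Hom(H(C),-)$, using $\a$-presentability and Yoneda, one obtains $\T(C,\coprod_{i\in I}Y_i)\cong\colim{J}\T(C,\coprod_{i\in J}Y_i)$, so that $C\to\coprod_{i\in I}Y_i$ factors through some $\coprod_{i\in J}Y_i$ with $\card J<\a$, i.e.\ $C$ is $\a$-small. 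For (PG2) for $\Sc$: if $(X_i\to Y_i)_{i\in I}$ is a family with $\T(C,X_i)\to\T(C,Y_i)$ surjective for all $C\in\Sc$, then the $H(X_i)\to H(Y_i)$ are pointwise epimorphisms, hence epimorphisms in $\Add_\a(\Sc^\op,\Ab)$ (cokernels there are pointwise, since products are exact in $\Ab$); by $(3)$ the morphism $H(\coprod_i X_i)\to H(\coprod_i Y_i)$ is the coproduct of the $H(X_i)\to H(Y_i)$, hence again an epimorphism, hence pointwise surjective, and evaluating at $C\in\Sc$ gives that $\T(C,\coprod_i X_i)\to\T(C,\coprod_i Y_i)$ is surjective. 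Combining these two facts with $(1)\Leftrightarrow(2)$ yields $(1)$.

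The step I expect to be the crux is the extraction of $\a$-smallness from $(3)$: it hinges on the fact that in $\Add_\a(\Sc^\op,\Ab)$ the kernels, cokernels and $\a$-filtered colimits are computed pointwise, whereas small coproducts are \emph{not} — so one has to realize a small coproduct as the $\a$-filtered colimit of its $<\a$-indexed sub-coproducts, where pointwise computation does become available, and combine this with the $\a$-presentability of the representables $\Sc(-,C)$. The remaining ingredients — coproduct-preservation of the abelianization $h_\T$, the exactness properties of $f_*$, and the pointwise description of $\a$-filtered colimits — are standard and recorded in the proof of Proposition~\ref{pr:A(f)} and in the appendix.
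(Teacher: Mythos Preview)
Your proof is correct and follows essentially the same route as the paper: $(1)\Leftrightarrow(2)$ is elementary (the paper just says ``clear''), $(2)\Rightarrow(3)$ via Proposition~\ref{pr:A(f)}(2), and the $\a$-smallness part of $(3)\Rightarrow(2)$ via the same $\a$-filtered-colimit computation with the $\a$-presentable representable $\Sc(-,C)$. The one small divergence is in deducing (PG2) from $(3)$: the paper simply invokes Proposition~\ref{pr:A(f)}(2) again (which, strictly speaking, requires the observation that $H=f_*\comp h_\T$ preserving coproducts forces $f_*$ to preserve them, via exactness of $f_*$ and the presentation of objects in $A(\T)$), whereas you give the direct argument that coproducts of epimorphisms are epimorphisms in $\Add_\a(\Sc^\op,\Ab)$ and epimorphisms there are pointwise surjective. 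Your version is self-contained and arguably cleaner; the paper's is terser but leans on the same underlying facts.
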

\begin{proof}
It is clear that (1) and (2) are equivalent, and it follows from
Proposition~\ref{pr:A(f)} that (2) implies (3).  To prove that (3)
implies (2), assume that $H$ preserves small coproducts. Let $\p\colon
X\to\coprod_{i\in I}Y_i$ be a morphism in $\T$ with $X\in\Sc$. Write
$\coprod_{i\in I}Y_i=\colim{J\subseteq I}Y_J$ as $\a$-filtered colimit
of coproducts $Y_J=\coprod_{i\in J}Y_i$ with $\card J<\a$.  Then we
have
\begin{align*}\colim{J\subseteq I}\T(X,Y_J)
&\cong\colim{J\subseteq I}\Hom_\Sc(\Sc(-,X),HY_J)\\
&\cong\Hom_\Sc(\Sc(-,X),\colim{J\subseteq I}HY_J)\\
&\cong\Hom_\Sc(\Sc(-,X),\coprod_{i\in I}HY_i)\\
&\cong\Hom_\Sc(\Sc(-,X),H(\coprod_{i\in I}Y_i))\\
&\cong\T(X,\coprod_{i\in I}Y_i).
\end{align*}
Thus $\p$ factors through some $Y_J$, and it follows that $X$ is
$\a$-small. Now  Proposition~\ref{pr:A(f)} implies that (PG2) holds for $\Sc$.
\end{proof}

\subsection{Cohomological functors via filtered colimits}

The following theorem shows that cohomological functors on well
generated triangulated categories can be computed via filtered
colimits. This generalizes a fact which is well known for compactly
generated triangulated categories. We say that an abelian category has
\emph{exact $\a$-filtered colimits} provided that every $\a$-filtered
colimit of exact sequences is exact.

\begin{thm}
\label{th:filtcolim}
Let $\T$ be a triangulated category with small coproducts. Suppose
$\T$ is $\a$-well generated and denote by $\T^\a$ the full subcategory
formed by all $\a$-compact objects.  Let $\A$ be an abelian category
which has small coproducts and exact $\a$-filtered colimits.  If
$H\colon \T\to\A$ is a cohomological functor which preserves small
coproducts, then we have for $X$ in $\T$ a natural isomorphism
\begin{equation}\label{eq:colim}
\colim{(C,\m)\in \T^\a/X}HC\lto[\sim]HX.
\end{equation}
\end{thm}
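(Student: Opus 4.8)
The plan is to reduce the general statement to the two building blocks already at hand: the Kan extension description of $A(\T)$ from Corollary~\ref{co:kan}, and the fact that $\a$-filtered colimits in $\A$ are exact. First I would observe that the assignment $X\mapsto \colim{(C,\m)\in\T^\a/X}HC$ is functorial: a morphism $\p\colon X\to Y$ induces a functor $\T^\a/X\to\T^\a/Y$ by composing with $\p$ (this uses that $\T^\a$ is a triangulated, hence in particular closed under the relevant structure) and hence a map between the colimits, and the comparison maps $HC\to HX$ (induced by $\m\colon C\to X$) are compatible, so they assemble into a natural transformation $\Phi X\colon \colim{(C,\m)\in\T^\a/X}HC\to HX$. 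The comma category $\T^\a/X$ is $\a$-filtered by the lemma on comma categories, so the colimit is an $\a$-filtered colimit and $\A$ knows how to compute it.

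Next I would check that both sides, as functors of $X$, are cohomological and preserve small coproducts. The right side $HX$ is cohomological and coproduct-preserving by hypothesis. For the left side: it preserves small coproducts because, writing $Z=\coprod_i X_i$, the category $\T^\a/Z$ receives the coproduct of the categories $\T^\a/X_i$ as a cofinal subcategory (any $\m\colon C\to\coprod_i X_i$ with $C$ being $\a$-compact, hence $\a$-small, factors through a sub-coproduct with fewer than $\a$ summands, and one then pushes $C$ inside using that $\T^\a$ is closed under $\a$-coproducts), so by Lemma~\ref{le:cofinal} the colimit over $\T^\a/Z$ agrees with the colimit over that cofinal subcategory, which by exactness of $\a$-filtered colimits and the fact that $H$ preserves $\a$-coproducts on $\T^\a$ computes $\coprod_i\big(\colim{\T^\a/X_i}HC\big)$. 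For the cohomological property, fix an exact triangle $X_1\xto{\p_1}X_2\xto{\p_2}X_3\xto{\p_3}SX_1$. Here I would invoke Proposition~\ref{pr:cof-triangle}: inside $\T^\a/(\p_1,\p_2)$ the diagrams coming from genuine exact triangles in $\T^\a$ form a cofinal subcategory, so by Lemma~\ref{le:cofinal} the colimit $\colim{\T^\a/X_2}HC$ sits in the middle of an $\a$-filtered colimit of exact sequences $HC_1\to HC_2\to HC_3$ (each exact because $H$ is cohomological), hence is itself exact; one must also match the outer comma categories $\T^\a/X_i$ with the projections from $\T^\a/(\p_1,\p_2)$, which are cofinal for the same reason. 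Since $\a$-filtered colimits are exact in $\A$, the left-hand side sends the triangle to an exact sequence.

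Finally, having a natural transformation $\Phi$ between two cohomological, coproduct-preserving functors $\T\to\A$, it suffices to show $\Phi X$ is an isomorphism for $X$ ranging over the small set $\Sc$ of $\a$-small perfect generators (indeed over all $\a$-compact $X$), because the full subcategory of $\T$ on which $\Phi X$ is invertible is then a localizing subcategory containing $\Sc$, hence all of $\T$ by (PG1). For $X\in\T^\a$ the comma category $\T^\a/X$ has a terminal object $(X,\id X)$, so the left-hand side is just $HX$ and $\Phi X=\id$. The main obstacle I anticipate is the bookkeeping in the cohomological-functor step: one must verify carefully that the cofinal subcategory supplied by Proposition~\ref{pr:cof-triangle} really lets one replace all three comma categories $\T^\a/X_i$ simultaneously and compatibly with the connecting maps, so that the resulting $\a$-filtered diagram of three-term exact sequences has the colimits $\colim{\T^\a/X_i}HC$ as its three terms; everything else is a routine application of Lemma~\ref{le:cofinal} and exactness of filtered colimits.
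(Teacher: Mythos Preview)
Your overall architecture is the same as the paper's: define $\tilde H X=\colim_{\T^\a/X}HC$, show that $\tilde H$ is cohomological and coproduct-preserving, and then conclude by observing that $\tilde H$ and $H$ agree on $\T^\a$, so the locus where $\tilde H\to H$ is invertible is a localizing subcategory containing the generators. The cohomological step via Proposition~\ref{pr:cof-triangle} and Lemma~\ref{le:cofinal} is exactly what the paper does, and you correctly flag the bookkeeping there as the delicate point.

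The genuine gap is in your coproduct-preservation step. You claim that ``the coproduct of the categories $\T^\a/X_i$'' is cofinal in $\T^\a/Z$ for $Z=\coprod_i X_i$. If ``coproduct'' means the disjoint union $\coprod_i(\T^\a/X_i)$, this is false: a morphism $C\to Z$ with nonzero components in several $X_i$ (e.g.\ the diagonal $\bbZ\to\bbZ\oplus\bbZ$ in $\bfD(\bbZ)$) admits no map in $\T^\a/Z$ to an object coming from a single $\T^\a/X_i$. If instead you intend the subcategory of ``split'' objects $(\coprod_{j\in J}C_j,\coprod_j\iota_j\m_j)$ with $|J|<\a$, then Lemma~\ref{le:cofinal} does apply (this is what your factoring-through-a-sub-coproduct argument gives), but the morphisms in that full subcategory of $\T^\a/Z$ are not diagonal, so you cannot read off the colimit as $\coprod_i\colim_{\T^\a/X_i}HC$ directly; you would need a further finality argument comparing this full subcategory to an honest restricted product of the $\T^\a/X_i$, and Lemma~\ref{le:cofinal} as stated requires full faithfulness, which the natural functor from the restricted product lacks.

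The paper sidesteps all of this by passing through the abelianization: extend $H$ to an exact coproduct-preserving $\bar H\colon A(\T)\to\A$, observe that $\tilde H X=\bar H\big(\colim_{\T^\a/X}\T(-,C)\big)$, and then invoke Corollary~\ref{co:kan}, which says that $X\mapsto\colim_{\T^\a/X}\T(-,C)$ preserves small coproducts. That corollary in turn rests on Proposition~\ref{pr:A(f)}(2), where condition (PG2) for $\T^\a$ is the essential input. So the missing ingredient in your argument is precisely (PG2): $\a$-smallness alone gives you the factorization through a sub-coproduct, but it does not by itself organize the comma categories into something whose colimit visibly decomposes as a coproduct. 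Replace your cofinality paragraph by an appeal to Corollary~\ref{co:kan} (or redo its proof) and the rest of your argument goes through.
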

\begin{proof}
The left hand term of \eqref{eq:colim} defines a functor $\tilde
H\colon \T\to\A$ and we need to show that the canonical morphism
$\tilde H\to H$ is invertible.

First observe that $\tilde H$ is cohomological. This is a consequence
of Proposition~\ref{pr:cof-triangle} and Lemma~\ref{le:cofinal}, because for any
exact triangle $X_1\to X_2\to X_3\to SX_1$ in $\T$, the sequence $\tilde
HX_1\to \tilde HX_2\to \tilde HX_3$ can be written as $\a$-filtered
colimit of exact sequences in $\A$.

Next we claim that $\tilde H$ preserves small coproducts. To this end
consider the exact functor $\bar H \colon A(\T)\to\A$ which extends
$H$; see Lemma~\ref{le:abel}.  Note that $\bar H$ preserve small
coproducts because $H$ has this property. We have for $X$ in $\T$
$$\tilde HX=\colim{(C,\m)\in \T^\a/X}\bar H\big(\T(-,C)\big)\cong
\bar H\big(\colim{(C,\m)\in \T^\a/X}\T(-,C)\big).$$
Now the assertion follows from Corollary~\ref{co:kan}.

To complete the proof, consider the full subcategory $\T'$ consisting
of those objects $X$ in $\T$ such that the morphism $\tilde HX\to HX$
is an isomorphism. Clearly, $\T'$ is a triangulated subcategory since
both functors are cohomological, it is closed under taking small
coproducts since they are preserved by both functors, and it contains
$\T^\a$. Thus $\T'=\T$.
\end{proof}

\begin{rem}
For an alternative proof of the fact that $\tilde H$ is cohomological,
one uses Lemma~\ref{le:yon-exact}.
\end{rem}

\subsection{A universal property}
Let $\T$ be a triangulated category which admits small coproducts and
is $\a$-well generated.  We denote by $A_\a(\T)$ the full subcategory
of $A(\T)$ which is formed by all colimits of objects $\T(-,X)$ with
$X$ in $\T^\a$. Observe that $A_\a(\T)$ is a locally presentable
abelian category with exact $\a$-filtered colimits. This follows from
Proposition~\ref{pr:A(f)} and the discussion in
Appendix~\ref{se:locpres}, because $A_\a(\T)$ can be identified with a
category of left exact functors.

We have two functors
$$H_\a\colon\T\lto A_\a(\T),\quad
X\mapsto\colim{(C,\m)\in\T^\a/X}\T(-,C),$$
$$h_\a\colon\T\lto\Add_\a((\T^\a)^\op,\Ab),\quad X\mapsto\T(-,X)|_{\T^\a},$$
which are related by an equivalence as follows.
$$\xymatrixrowsep{.5pc}\xymatrix{
&&\Add_\a((\T^\a)^\op,\Ab)\ar[dd]^{f^*}_\sim\\
\T\ar[rru]^-{h_\a}\ar[rrd]_-{H_\a}\\ && A_\a(\T)}$$ The functor $f^*$
is induced by the inclusion $f\colon\T^\a\to\T$ and discussed in
Proposition~\ref{pr:A(f)}. In particular, there it is shown that
$f^*(h_\a X)=f^*f_*(h_\T X)=H_\a X$ for all $X$ in $\T$.

\begin{prop}
\label{pr:universal}
The functor $H_\a\colon\T\to A_\a(\T)$ has the following universal
property.
\begin{enumerate}
\item The functor $H_\a$ is a cohomological functor to an abelian
category with small coproducts and exact $\a$-filtered colimits and
$H_\a$ preserves small coproducts.
\item Given a cohomological functor $H\colon\T\to\A$ to an abelian
category with small coproducts and exact $\a$-filtered colimits such
that $H$ preserves small coproducts, there exists an essentially unique
exact functor $\bar H\colon A_\a(\T)\to\A$ which preserves small
coproducts and satisfies $H=\bar H\comp H_\a$.
\end{enumerate}
\end{prop}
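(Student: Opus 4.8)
The plan is to handle the two parts separately. For part (1), recall that the paragraph preceding the proposition records that $A_\a(\T)$ is a locally presentable abelian category with exact $\a$-filtered colimits, and that $H_\a$ preserves small coproducts by Corollary~\ref{co:kan}; so only the claim that $H_\a$ is cohomological requires an argument. Here I would invoke the identity $H_\a=f^*\comp f_*\comp h_\T$ noted just before the proposition: the abelianization $h_\T\colon\T\to A(\T)$ is cohomological, $f_*$ is exact (this is checked inside the proof of Proposition~\ref{pr:A(f)}), and $f^*$ restricts to an equivalence $\Add_\a((\T^\a)^\op,\Ab)\xto{\sim}A_\a(\T)$ by Proposition~\ref{pr:A(f)}(1); a composite of a cohomological functor with exact functors is cohomological.

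For part (2) I would first restrict $H$ along $\T^\a\to\T$ to obtain a cohomological functor $\T^\a\to\A$, and then apply the universal property of the abelianization (Lemma~\ref{le:abel}, for the triangulated category $\T^\a$) to extend it essentially uniquely to an exact functor $H_0\colon A(\T^\a)\to\A$. Identifying $A(\T^\a)$ with the full subcategory of $\a$-presentable objects of $A_\a(\T)$ via the functor $\widehat f$ of Proposition~\ref{pr:A(f)}, one has $A_\a(\T)=\mathrm{Ind}_\a(A(\T^\a))$, i.e.\ every object of $A_\a(\T)$ is an $\a$-filtered colimit of objects of $A(\T^\a)$. Since every object of $\T^\a$ lies in $\Loc_\a$ of a set of $\a$-small objects and the $\a$-small objects form an $\a$-localizing subcategory, every object of $\T^\a$ is $\a$-small; hence the Yoneda functor $\T^\a\to A(\T^\a)$ preserves $\a$-coproducts, the category $A(\T^\a)$ has $\a$-coproducts, and $H_0$ preserves them (because $H$ preserves small coproducts and $H_0$ of a finitely presented functor is a cokernel of a map between coproducts of values of $H$). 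I would then let $\bar H\colon A_\a(\T)\to\A$ be the essentially unique functor that extends $H_0$ and preserves $\a$-filtered colimits.

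The remaining verifications are then routine. \emph{Exactness of $\bar H$:} a short exact sequence in $A_\a(\T)=\mathrm{Ind}_\a(A(\T^\a))$ is an $\a$-filtered colimit of short exact sequences in $A(\T^\a)$, $H_0$ is exact, and $\a$-filtered colimits are exact in $A_\a(\T)$ and in $\A$. \emph{Preservation of small coproducts:} a small coproduct is an $\a$-filtered colimit (over subsets of size $<\a$) of $\a$-coproducts, and writing the summands as $\a$-filtered colimits of objects of $A(\T^\a)$ and reshuffling --- legitimate since $\a$ is regular --- exhibits it as an $\a$-filtered colimit of $\a$-coproducts of objects of $A(\T^\a)$, on which $\bar H$ acts by $H_0$, which preserves $\a$-coproducts. \emph{Compatibility $\bar H\comp H_\a\cong H$:} for $X$ in $\T$ the comma category $\T^\a/X$ is $\a$-filtered and $H_\a X=\colim{(C,\m)\in\T^\a/X}\T(-,C)$, while $\bar H(\T(-,C))=H_0(\T(-,C))=HC$, so $\bar H(H_\a X)\cong\colim{(C,\m)\in\T^\a/X}HC\cong HX$ by Theorem~\ref{th:filtcolim}. \emph{Essential uniqueness:} any exact, coproduct-preserving $\bar H'$ with $\bar H'\comp H_\a\cong H$ agrees with $\bar H$ on the objects $\T(-,C)=H_\a C$ for $C\in\T^\a$ (using that $\T^\a/C$ has a terminal object), and both functors preserve cokernels and $\a$-filtered colimits, which build all of $A_\a(\T)$ from those objects, so $\bar H\cong\bar H'$. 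The one genuinely delicate point is the $\a$-filtered-colimit/$\a$-coproduct bookkeeping behind exactness and coproduct-preservation of $\bar H$; it goes through precisely because the $\a$-compact objects of $\T$ are $\a$-small and $A_\a(\T)=\mathrm{Ind}_\a(A(\T^\a))$, both of which come from Proposition~\ref{pr:A(f)} and Appendix~\ref{se:locpres}.
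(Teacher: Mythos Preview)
Your proof is correct, and your treatment of part~(1) is essentially the paper's (the paper just observes that $h_\a$ is clearly cohomological as a restricted Yoneda functor, while you route through $H_\a=f^*f_*h_\T$; both are fine).

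For part~(2) you take a genuinely different construction of $\bar H$. The paper extends $H$ to the exact functor $\tilde H\colon A(\T)\to\A$ via the abelianization of the \emph{whole} of $\T$ (Lemma~\ref{le:abel}) and then simply restricts $\tilde H$ to the full subcategory $A_\a(\T)\subseteq A(\T)$. Coproduct preservation of $\bar H$ is then immediate, since $\tilde H$ preserves small coproducts and $A_\a(\T)$ is closed under them in $A(\T)$; exactness follows from Lemma~\ref{le:yon-exact}; and $H=\bar H\comp H_\a$ is deduced from Theorem~\ref{th:filtcolim} because both sides agree on $\T^\a$. Your route instead restricts $H$ to $\T^\a$, extends to $H_0\colon A(\T^\a)\to\A$, and then Kan-extends along $A_\a(\T)=\mathrm{Ind}_\a(A(\T^\a))$. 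This is perfectly valid and has the conceptual advantage of being intrinsic to $A_\a(\T)$ rather than invoking the ambient $A(\T)$, but it forces you to verify by hand that $H_0$ preserves $\a$-coproducts and then to carry out the $\a$-filtered/$\a$-coproduct reshuffling to get preservation of arbitrary coproducts---work that the paper's approach absorbs into the single observation that $\tilde H$ preserves coproducts. The uniqueness and compatibility arguments are essentially identical in both approaches.
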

\begin{proof}
(1) It is clear that $h_\a$ is cohomological and it follows from
Proposition~\ref{pr:A(f)} that $h_\a$ preserves small coproducts. 

(2) Given $H\colon\T\to\A$, we denote by $\tilde H\colon A(\T)\to\A$
the exact functor which extends $H$, and we define $\bar H\colon
A_\a(\T)\to\A$ by sending each $X$ to $\tilde HX$. The following
commutative diagram illustrates this construction.
$$\xymatrix{
\T^\a\ar[rr]^-{h_{\T^\a}}\ar[d]^{f=\inc}&&A(\T^\a)\ar[rr]^-{h_{A(\T^\a)}}\ar[d]^{A(f)}&&
A_\a(\T)\ar[d]^{f^*=\inc}\\
\T\ar@{=}[d]\ar[rr]^-{h_\T}&&A(\T)\ar[d]^{\tilde
H}\ar@{=}[rr]&&A(\T)\\\T\ar[rr]^H&&\A }$$

Let us check the properties of $\bar H$. The functor $\bar H$
preserves small coproducts since $\tilde H$ has this property. The
functor $\bar H$ is exact when restricted to $A(\T^\a)$. Thus it
follows from Lemma~\ref{le:yon-exact} that $\bar H$ is exact.  The
equality $H=\bar H\comp H_\a$ is a consequence of
Theorem~\ref{th:filtcolim} since both functors coincide on $\T^\a$.
Suppose now there is a second functor $A_\a(\T)\to\A$ having the
properties of $\bar H$. Then both functors agree on $\P=\{\T(-,X)\mid
X\in\T^\a\}$ and therefore on all of $A_\a(\T)$ since each object in
$A_\a(\T)$ is a colimit of objects in $\P$ and both functors preserve
colimits.
\end{proof}

\begin{rem}\label{rm:wellgen}
The universal property can be used to show that the category $\T^\a$
of $\a$-compact objects does not depend on the choice of a perfectly
generating set for $\T$. More precisely, if $\T$ is $\a$-well
generated, then two $\a$-localizing subcategories coincide if each
contains a small set of $\a$-small perfect generators. This follows
from the fact that the functor $H_\a$ identifies the $\a$-compact
objects with the $\a$-presentable projective objects of $A_\a(\T)$.
\end{rem}

\subsection{Notes}
Well generated triangulated were introduced and studied by Neeman in
his book \cite{Nee2001} as a natural generalization of compactly
generated triangulated categories. For an alternative approach which
simplifies the definition, see \cite{Kr2001}.  More recently, well
generated categories with specific models have been studied; see
\cite{Por,Tab} for work involving algebraic models via differential
graded categories, and \cite{Hei} for topological models. In
\cite{Ros2005}, Rosick\'y used combinatorial models and showed that
there exist universal cohomological functors into locally presentable
categories which are full. Interesting consequences of this fact are
discussed in \cite{Nee2007}.  The description of the universal
cohomological functors in terms of filtered colimits seems to be new.

\section{Localization for well generated categories}
\label{se:loc-wellgen}

\subsection{Cohomological localization}

The following theorem shows that cohomological functors on well
generated triangulated categories induce localization functors. This
generalizes a fact which is well known for compactly generated
triangulated categories.

\begin{thm}
\label{th:cohloc}
Let $\T$ be a triangulated category with small coproducts which is
well generated. Let $H\colon \T\to \A$ be a cohomological functor into
an abelian category which has small coproducts and exact $\a$-filtered
colimits for some regular cardinal $\a$. Suppose also that $H$
preserves small coproducts. Then there exists an exact localization
functor $L\colon\T\to\T$ such that for each object $X$ we have $LX=0$
if and only if $H(S^nX)=0$ for all $n\in\bbZ$.
\end{thm}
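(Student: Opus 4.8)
The plan is to reduce the theorem, via Brown representability, to the assertion that a certain localizing subcategory of $\T$ is perfectly generated, and then to verify this by transporting information from the locally presentable abelian approximation $A_\a(\T)$ of $\T$. First I would enlarge $\a$ if necessary so that $\T$ is $\a$-well generated; this is harmless since a $\gamma$-filtered colimit is in particular $\a$-filtered for $\gamma\geq\a$, so $\A$ still has exact $\a$-filtered colimits. Put $\Sc=\bigcap_{n\in\bbZ}S^n(\Ker H)$. This is a thick subcategory, as recalled in the discussion of kernels of cohomological functors, and it is closed under small coproducts because $H$ preserves them; hence $\Sc$ is a localizing subcategory of $\T$. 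Since $\Sc$ is localizing and $\T$ admits small coproducts, Proposition~\ref{pr:loc-brown}(1) reduces everything to showing that $\Sc$ is perfectly generated: this yields an exact localization functor $L\colon\T\to\T$ with $\Ker L=\Sc$, and then $LX=0$ if and only if $X\in\Sc$, i.e.\ if and only if $H(S^nX)=0$ for all $n\in\bbZ$.

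To prove that $\Sc$ is perfectly generated I would pass to the universal locally presentable approximation of $\T$. By Proposition~\ref{pr:universal}, $H$ factors as $H=\bar H\comp H_\a$ where $H_\a\colon\T\to A_\a(\T)$ is the universal coproduct-preserving cohomological functor into a locally $\a$-presentable abelian category and $\bar H\colon A_\a(\T)\to\A$ is exact and preserves small coproducts. Writing $T$ for the shift on $A_\a(\T)$ one checks $H_\a\comp S\cong T\comp H_\a$, whence $\Sc=\{X\in\T\mid H_\a X\in\K\}$ with $\K:=\bigcap_{n\in\bbZ}T^n(\Ker\bar H)$ a shift-stable localizing subcategory of $A_\a(\T)$. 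Here the localization theory of locally presentable abelian categories from Appendix~\ref{se:locpres} enters: $\K$ is generated, as a localizing subcategory of $A_\a(\T)$, by a small set of objects, and every object of $A_\a(\T)$ is a colimit of representable functors $\T(-,C)$ with $C\in\T^\a$. Using Proposition~\ref{pr:cof-triangle} to lift presentations of such generators to exact triangles in $\T$ with vertices in $\T^\a$, and the relations $\T(C,X)\cong A_\a(\T)(H_\a C,H_\a X)$ for $C\in\T^\a$ together with $\bigcap_{n\in\bbZ}S^n(\Ker H_\a)=0$ (valid because $\T^\a$ perfectly generates $\T$), I would assemble a small set $\Sc_0\subseteq\Sc$ and check that (PG1) and (PG2) hold for $\Sc_0$. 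An appeal to Proposition~\ref{pr:loc-brown}(1) then finishes the proof.

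The step I expect to be the main obstacle is precisely this last transport: turning a generating set of $\K\subseteq A_\a(\T)$ into a perfect generating set of $\Sc\subseteq\T$ and verifying (PG1) and (PG2) there. The difficulty is that $H_\a$ is neither coproduct-preserving nor essentially surjective, so the generators of $\K$ cannot simply be realized in $\T$; one has to exploit the structural properties of $A_\a(\T)$ (local smallness, well-poweredness, and the fact that its localizing subcategories are generated by sets and have locally presentable quotients) in order to keep the construction inside a small set. An essentially equivalent alternative would be to use Proposition~\ref{pr:loc-brown}(2) and show directly that the Verdier quotient $\T/\Sc$ has small morphism sets, using that $H_\a$ induces a functor $\T/\Sc\to A_\a(\T)/\K$ into a locally small category and bounding left fractions by means of Lemma~\ref{le:frac-set}.
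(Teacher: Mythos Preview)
Your proposal and the paper diverge precisely at the point you flag as the main obstacle, and the paper in fact takes your ``alternative'' route: it uses Proposition~\ref{pr:loc-brown}(2), not (1), and verifies via Lemma~\ref{le:frac-set} that $\T[\Si^{-1}]$ has small morphism sets. Concretely, for each object $Y$ the paper builds a small set $S(Y,\Si)=\T^{\bar\k}$ by a cardinal induction, and for a given $\s\colon X\to Y$ in $\Si$ constructs a tower $X_0\to X_1\to\cdots$ of approximations in successively larger $\T^{\k_i}$ using Theorem~\ref{th:filtcolim} to control the cohomological functor $H$; the homotopy colimit $X'$ then lies in $S(Y,\Si)$ and maps to $X$ by a morphism inverted by $H$. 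No lifting of generators from $A_\a(\T)$ is needed.

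By contrast, your primary plan to show $\Sc$ is perfectly generated by transporting a generating set of $\K\subseteq A_\a(\T)$ back to $\T$ is left as a sketch, and I do not see how Proposition~\ref{pr:cof-triangle} accomplishes it: that proposition concerns cofinality of exact pairs in comma categories over objects of $\T$, not realization of arbitrary objects of $A_\a(\T)$ in $\T$. The statement you are implicitly aiming for is Theorem~\ref{th:kernel-coh}, whose proof is nontrivial and proceeds via homotopy colimits of countable sequences of morphisms annihilated by $H$ (this is the argument of Theorem~\ref{th:kernel}), not by the mechanism you describe. So the gap is real: the step you identify as the obstacle is indeed the whole content, and your outline does not resolve it.

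One factual slip: you write that $H_\a$ is not coproduct-preserving, but Proposition~\ref{pr:universal}(1) (and Proposition~\ref{pr:A(f)}) say exactly that $H_\a$ \emph{does} preserve small coproducts. This matters, since the availability of Theorem~\ref{th:filtcolim} for $H_\a$ depends on it, and it is the key ingredient in both the paper's proof here and in Theorem~\ref{th:kernel-coh}.
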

\begin{proof}
We may assume that $\T$ is $\a$-well generated. Let $\Si=\Si(H)$
denote the set of morphisms $\s$ in $\T$ such $H\s$ is
invertible. Next we assume that $S\Si=\Si$. Otherwise, we replace $\A$
by a countable product $\A^\bbZ$ of copies of $\A$ and $H$ by
$(HS^n)_{n\in\bbZ}$.  Then $\Si$ admits a calculus of right fractions
by Lemma~\ref{le:coh}, and we apply the criterion of
Lemma~\ref{le:frac-set} to show that the morphisms between any two
objects in $\T[\Si^{-1}]$ form a small set. The existence of a localization
functor $L\colon\T\to\T$ with $\Ker L=\Ker H$ then follows from
Proposition~\ref{pr:loc-brown}.

Thus we need to specify for each object $Y$ of $\T$ a small set of
objects $S(Y,\Si)$ such that for every morphism $X\to Y$ in $\Si$,
there exist a morphism $X'\to X$ in $\Si$ with $X'$ in
$S(Y,\Si)$. Suppose that $Y$ belongs to $\T^\k$. We define by
induction $\k_{-1}=\k+\a$ and
$$\k_{n}=\sup\{\card\T^\a/U\mid
U\in\T^{\k_{n-1}}\}^++\k_{n-1}\quad\text{for}\quad n\geq 0.$$ Then we
put $S(Y,\Si)=\T^{\bar\k}$ with $\bar\k=(\sum_{n\geq 0}\k_n)^+$.

Now fix $\s\colon X\to Y$ in $\Si$.  The morphism $X'\to X$ in $\Si$
with $X'$ in $S(Y,\Si)$ is constructed as follows. The canonical
morphism $\pi\colon \coprod_{(C,\m)\in\T^{\a}/X}C\to X$ induces an
epimorphism $H\pi$ by Theorem~\ref{th:filtcolim}. We can choose
$\C\subseteq \T^{\a}/X$ with $\card\C\leq\card \T^{\a}/Y$ such that
$\pi_0\colon X_0=\coprod_{(C,\m)\in\C}C\to X$ induces an epimorphism
$H\pi_0$ since $H\s$ is invertible. More precisely, we call two
objects $(C,\m)$ and $(C',\m')$ of $\T^\a/X$ equivalent if
$\s\m=\s\m'$, and we choose as objects of $\C$ precisely one
representative for each equivalence class.

Suppose we have already constructed $\pi_i\colon
X_i\to X$ with $X_i$ in $\T^{\k_i}$ for some $i\geq 0$. Then we
form the following commutative diagram with exact rows.
$$\xymatrix{U_i\ar[r]^{\iota_i}\ar[d]^{\s_i}&X_i\ar@{=}[d]\ar[r]^{\pi_i}&
X\ar[d]^\s\ar[r]&SU_i\ar[d]^{S\s_i}\\
V_i\ar[r]&X_i\ar[r]&Y\ar[r]&SV_i}$$ Note that $H\s_i$ is
invertible. Thus we can choose $\C_i\subseteq \T^{\a}/U_i$ with
$$\card\C_i\leq\card \T^{\a}/V_i\leq\card \T^{\a}/X_i +\card\T^{\a}/Y<\k_{i+1}$$
such that $\xi_i\colon \coprod_{(C,\m)\in\C_i}C\to U_i$ induces an
epimorphism $H\xi_i$.  Now complete $\iota_i\comp\xi_i$ to an exact
triangle and define $\pi_{i+1}\colon X_{i+1}\to X$ by the
commutativity of the following diagram.
$$\xymatrixcolsep{1pc}
\xymatrix{\coprod_{(C,\m)\in\C_i}C\ar[rr]^-{\iota_i\comp\xi_i}&&
X_i\ar[rr]^{\p_i}\ar[rd]_{\pi_i}&& X_{i+1}\ar[ld]^{\pi_{i+1}}\ar[rr]
&&S\big(\coprod_{(C,\m)\in\C_i}C\big)\\ &&&X}$$ Observe that $X_{i+1}$
belongs to $\T^{\k_{i+1}}$ and that $\Ker H\pi_i=\Ker
H\p_i$. The $\p_i$ induce an exact triangle
\begin{equation}\label{eq:hclim}
\coprod_{i\in\bbN}X_i\xto{(\id-\p_i)}\coprod_{i\in\bbN}X_i\lto[\psi]
X'\lto S\big(\coprod_{i\in\bbN}X_i\big)
\end{equation} 
such that $X'$ belongs to $S(Y,\Si)$ and the morphism
$(\pi_i)\colon\coprod_{i\in\bbN}X_i\to X$ factors through $\psi$ via a
morphism $\t\colon X'\to X$. We claim that $H\t$ is invertible. In
fact, the lemma below implies that the $\pi_i$ induce the following
exact sequence $$0\lto\coprod_{i\in\bbN}HX_i\xto{(\id-H\p_i)}
\coprod_{i\in\bbN}HX_i\xto{(H\pi_i)}HX\lto 0.$$ On the other hand, the
exact triangle \eqref{eq:hclim} induces the exact sequence
$$H\big(\coprod_{i\in\bbN}X_i\big)\xto{H(\id-\p_i)}H\big(\coprod_{i\in\bbN}X_i\big)\lto[H\psi]
HX'\lto HS\big(\coprod_{i\in\bbN}X_i\big),$$ and a comparison shows
that $H\t$ is invertible. Here, we use again that $H$ preserves small
coproducts, and this completes the proof.
\end{proof}

\begin{lem}
Let $\A$ be an abelian category which admits countable coproducts.
Then a sequence of epimorphisms $(\pi_i)_{i\in\bbN}$
$$\xymatrixcolsep{1pc}
\xymatrix{X_i\ar[rr]^{\p_i}\ar[rd]_{\pi_i}&&X_{i+1}\ar[ld]^{\pi_{i+1}}\\&Y
}$$ satisfying $\pi_i=\pi_{i+1}\comp\p_i$ and $\Ker\pi_i=\Ker\p_i$ for
all $i$ induces an exact sequence
$$0\lto\coprod_{i\in\bbN}X_i\xto{(\id-\p_i)}\coprod_{i\in\bbN}X_i\xto{(\pi_i)}Y\lto 0.$$
\end{lem}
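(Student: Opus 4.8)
The plan is to exploit the two hypotheses — $\pi_{i+1}\comp\p_i=\pi_i$ with all $\pi_i$ epi, and $\Ker\p_i=\Ker\pi_i$ — to put the telescope into a normal form, and then read off exactness. First I would record the structural input. Writing $\p_i=m_i\comp e_i$ for the image factorisation, with $e_i\colon X_i\twoheadrightarrow\Im\p_i$ and $m_i\colon\Im\p_i\hookrightarrow X_{i+1}$, the equality $\Ker e_i=\Ker\p_i=\Ker\pi_i$ forces $\pi_i$ to factor uniquely as $\phi_i\comp e_i$, and comparison with $\pi_{i+1}\comp m_i\comp e_i=\pi_i$ identifies $\phi_i=\pi_{i+1}\comp m_i$; a short argument (using that $e_i$ is epi and $\Ker(\phi_i\comp e_i)=\Ker e_i$) then shows $\phi_i\colon\Im\p_i\to Y$ is an isomorphism. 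Hence $m_i\comp\phi_i^{-1}\colon Y\to X_{i+1}$ is a section of $\pi_{i+1}$ with image $\Im\p_i$, so
$$X_{i+1}=\Im\p_i\oplus\Ker\p_{i+1}\qquad(i\geq 0),$$
and we identify $\Im\p_i$ with $Y$ via $\phi_i$. Feeding this back, one checks that in these coordinates $\p_0\colon X_0\to Y\oplus\Ker\p_1$ is $\smatrix{\pi_0\\0}$, that $\p_i\colon Y\oplus\Ker\p_i\to Y\oplus\Ker\p_{i+1}$ is $\smatrix{\id_Y&0\\0&0}$ for $i\geq 1$, and that $\pi_0$ is unchanged while $\pi_i\colon Y\oplus\Ker\p_i\to Y$ becomes $\smatrix{\id_Y&0}$ for $i\geq 1$.

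Next, putting $P=X_0$, $Q=\coprod_{i\geq 1}Y$ and $R=\coprod_{i\geq 1}\Ker\p_i$, one gets $\coprod_iX_i\cong P\oplus Q\oplus R$, and in this decomposition
$$\id-\p=\begin{pmatrix}\id_P&0&0\\ -\iota_1^Q\comp\pi_0&\id_Q-\mathrm{sh}&0\\ 0&0&\id_R\end{pmatrix},\qquad (\pi_i)=\bigl(\pi_0,\ \nabla_Q,\ 0\bigr),$$
where $\mathrm{sh}\colon Q\to Q$ sends the $i$-th summand to the $(i{+}1)$-st by $\id_Y$, $\iota_1^Q\colon Y\to Q$ is the first inclusion, and $\nabla_Q\colon Q\to Y$ is the fold map. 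So, apart from the diagonal blocks $\id_P$ and $\id_R$, everything is reduced to the \emph{constant telescope} on $Q$.

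The heart of the argument is then that
$$0\lto Q\xrightarrow{\,\id_Q-\mathrm{sh}\,}Q\xrightarrow{\,\nabla_Q\,}Y\lto 0$$
is split exact. For this I would define $\rho\colon Q\to Q$ by $\rho\comp\iota_k^Q=-\sum_{1\leq l<k}\iota_l^Q$ — a finite sum, hence a legitimate morphism — and check the identity $\rho\comp(\id_Q-\mathrm{sh})\comp\iota_j^Q=\iota_j^Q$ for all $j$, so that $\id_Q-\mathrm{sh}$ is a split monomorphism. The associated idempotent $(\id_Q-\mathrm{sh})\comp\rho$ telescopes to $\id_Q-\iota_1^Q\comp\nabla_Q$, whose image equals $\Ker(\iota_1^Q\comp\nabla_Q)=\Ker\nabla_Q$ (as $\iota_1^Q$ is mono); hence $\Im(\id_Q-\mathrm{sh})=\Ker\nabla_Q$, and since $\nabla_Q$ is a split epimorphism the sequence is exact.

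Finally I would assemble: $(\pi_i)$ is epi because its restriction $\pi_0$ to $P$ is; $\id-\p$ is a monomorphism because it is block lower triangular with the monomorphisms $\id_P$, $\id_Q-\mathrm{sh}$, $\id_R$ on the diagonal; and $\Ker\bigl((\pi_i)\bigr)=\Im(\id-\p)$ follows from the matrix relation $(\pi_i)\comp(\id-\p)=0$ together with the reverse chase — given a generalised element $(p,q,r)$ with $\pi_0p+\nabla_Qq=0$, one has $q+\iota_1^Q\comp\pi_0p\in\Ker\nabla_Q=\Im(\id_Q-\mathrm{sh})$, writes it as $(\id_Q-\mathrm{sh})q'$, and obtains $(\id-\p)(p,q',r)=(p,q,r)$. (In passing this exhibits $Y$ as the colimit of the sequence.) I expect the only step that is not pure bookkeeping to be the splitting in the third paragraph: the naive left inverse $\id_Q+\mathrm{sh}+\mathrm{sh}^2+\cdots$ of $\id_Q-\mathrm{sh}$ does not exist, and the real point is that the alternative left inverse $\rho$, assembled from finite partial sums, does; extracting the decomposition $X_{i+1}=\Im\p_i\oplus\Ker\p_{i+1}$ from the hypotheses in the first part is the other spot requiring a little care.
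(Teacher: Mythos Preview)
Your proof is correct and follows essentially the same route as the paper: both arguments use the hypotheses to produce compatible sections $Y\to X_i$ for $i\geq 1$, hence splittings $X_i\cong Y\oplus\Ker\p_i$ in which $\p_i$ becomes $\smatrix{\id_Y&0\\0&0}$, reducing the telescope to the constant system on $Y$ plus a zero system. The paper then simply says ``taking colimits on both sides, the assertion follows'', whereas you spell out the remaining step --- the split exactness of $0\to\coprod_{i\geq1}Y\xrightarrow{\id-\mathrm{sh}}\coprod_{i\geq1}Y\xrightarrow{\nabla}Y\to0$ via the finite-sum retraction $\rho$ --- which is exactly the content hidden in that phrase.
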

\begin{proof}
The assumption $U_i:=\Ker\pi_i=\Ker\p_i$ implies that there exists a
morphism $\pi'_i\colon Y\to X_i$ with $\pi_i\pi'_i=\id Y$ and
$\p_i\pi'_i=\pi'_{i+1}$ for all $i\geq 1$. Thus we have a sequence of commuting squares
$$\xymatrix{U_i\amalg
Y\ar[d]^{\smatrix{0&0\\0&\id}}\ar[rr]^-{\smatrix{\inc&\pi'_i}}&&
X_i\ar[d]^{\p_i}\\ U_{i+1}\amalg
Y\ar[rr]^-{\smatrix{\inc&\pi'_{i+1}}}&&X_{i+1}}$$ where the horizontal
maps are isomorphisms. Taking colimits on both sides, the assertion
follows.
\end{proof}

\subsection{Localization with respect to a small set of objects}

Let $\T$ be a well generated triangulated category and $\Sc$ a
localizing subcategory which is generated by a small set of objects. The
following result says that $\Sc$ and $\T/\Sc$ are both well generated
and that the filtration $\T=\bigcup_\a\T^\a$ via $\a$-compact objects
induces canonical filtrations
$$\Sc=\bigcup_\a(\Sc\cap\T^\a)\quad\text{and}\quad
\T/\Sc=\bigcup_\a\T^\a/(\Sc\cap\T^\a).$$

\begin{thm}
\label{th:set-local}
Let $\T$ be a well generated triangulated category and $\Sc$ a
localizing subcategory which is generated by a small set of objects.
Fix a regular cardinal $\a$ such that $\T$ is $\a$-well generated and
$\Sc$ is generated by $\a$-compact objects.
\begin{enumerate}
\item An object $X$ in $\T$ belongs to $\Sc$ if and only if every
morphism $C\to X$ from an object $C$ in $\T^\a$ factors through some
object in $\Sc\cap\T^\a$.
\item The localizing subcategory $\Sc$ and the quotient category
$\T/\Sc$ are $\a$-well generated.
\item We have $\Sc^\a=\Sc\cap\T^\a$ and a commutative diagram of exact
functors
$$\xymatrix{\Sc^\a\ar[d]^-\inc\ar[rr]^-\inc&&\T^\a\ar[d]^-\inc\ar[rr]^-\can&&
\T^\a/\Sc^\a\ar[d]^-J\\ \Sc\ar[rr]^-{\inc}&&\T\ar[rr]^-{\can}&&\T/\Sc}$$
such that $J$ is fully faithful. Moreover, $J$ induces a functor
$\T^\a/\Sc^\a\to (\T/\Sc)^\a$ such that every object of $(\T/\Sc)^\a$ is
a direct factor of an object in the image of $J$.  This functor is an
equivalence if $\a>\aleph_0$.
\end{enumerate}
\end{thm}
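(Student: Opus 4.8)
The plan is to first reduce to a convenient generating set. Choose a small set $\G$ of $\a$-compact objects generating $\Sc$ as a localizing subcategory and put $\B=\Loc_\a\G$. Then $\B$ is a small $\a$-localizing subcategory of $\T$ contained in $\T^\a$, its objects are $\a$-small in $\T$ (hence in $\Sc$, since coproducts in $\Sc$ agree with those in $\T$), and $\Loc\B=\Sc$. The strategy is to show that $\B$ is a small set of $\a$-small \emph{perfect} generators of $\Sc$, then to identify $\B$ with $\Sc\cap\T^\a=\Sc^\a$, and to read off (1)--(3) from this.

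The technical heart is to verify that \emph{(PG2) holds for} $\B$. By Proposition~\ref{pr:wellgen} this is equivalent to the functor $H\colon\T\to\Add_\a(\B^\op,\Ab)$, $X\mapsto\T(-,X)|_\B$, preserving small coproducts. Now $H$ factors as $\T\to\Add_\a((\T^\a)^\op,\Ab)\to\Add_\a(\B^\op,\Ab)$, where the first functor $h_\a$ sends $X$ to $\T(-,X)|_{\T^\a}$ and the second is restriction along $\B\hookrightarrow\T^\a$. The functor $h_\a$ preserves small coproducts by Corollary~\ref{co:kan} together with the equivalence $f^*$ of Proposition~\ref{pr:A(f)}, and the restriction functor preserves small coproducts because it admits a right adjoint, namely a right Kan extension, which stays inside $\Add_\a$ (a fact from the theory of locally presentable categories). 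Hence $H$ preserves small coproducts and (PG2) holds for $\B$. Since (PG1) for $\B$ inside $\Sc$ is clear, $\Sc$ is perfectly generated by the small set $\B$ of $\a$-small objects; thus $\Sc$ is $\a$-well generated, with $\Sc^\a=\Loc_\a^{\Sc}\B=\B$, and Proposition~\ref{pr:loc-brown}(1) also produces an exact localization functor $L\colon\T\to\T$ with $\Ker L=\Sc$. I expect this step to be the main obstacle, since it is where the interplay with locally presentable abelian categories is genuinely used.

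Next I would prove (1). For "$\Rightarrow$", fix $X\in\Sc$ and apply Theorem~\ref{th:filtcolim} inside $\Sc$ to the cohomological, coproduct-preserving functor $Y\mapsto\T(-,Y)|_{\T^\a}$; as $\Sc^\a=\B$, this gives an isomorphism $\colim{(B,\mu)\in\B/X}\T(-,B)|_{\T^\a}\to\T(-,X)|_{\T^\a}$ in $\Add_\a((\T^\a)^\op,\Ab)$. The indexing category $\B/X$ is $\a$-filtered because $\B$ is $\a$-localizing, and $\a$-filtered colimits in $\Add_\a((\T^\a)^\op,\Ab)$ are computed pointwise, so $\colim{(B,\mu)\in\B/X}\T(C,B)\to\T(C,X)$ is bijective for every $C\in\T^\a$; in particular every morphism $C\to X$ factors through some object of $\B$, hence through an object of $\Sc\cap\T^\a$. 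For "$\Leftarrow$", given $X$ with the factorization property I would write $X$ as a homotopy colimit of a sequence $Z_0\to Z_1\to\cdots$ inside $\Sc$, together with compatible morphisms $q_n\colon Z_n\to X$: take $Z_0$ to be the coproduct of all $D\in\Sc\cap\T^\a$ mapping to $X$, together with the analogous coproducts over the shifts, and build $Z_{n+1}$ from $Z_n$ by coning off a coproduct of objects of $\B$ chosen so that $\T(C,q_n)$ remains surjective and $Z_n\to Z_{n+1}$ annihilates the kernel of $\T(C,q_n)$ for all $C\in\T^\a$; here one combines "$\Rightarrow$" applied to $Z_n\in\Sc$ (to push kernel elements through $\B$) with the factorization hypothesis on $X$ and the surjectivity of $\T(C,q_n)$. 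The homotopy colimit lies in $\Sc$, and the induced morphism to $X$ becomes an isomorphism after applying $\T(C,-)$ for all $C\in\T^\a$ and all shifts, hence is an isomorphism by perfect generation; so $X\in\Sc$. This iterative construction, rather than "$\Rightarrow$", is the fiddly part of (1).

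For (2) and (3): the morphism sets of $\T/\Sc$ are small by Lemma~\ref{le:frac-set} applied with the help of (1) (as in the proof of Theorem~\ref{th:cohloc}), the functor $Q\colon\T\to\T/\Sc$ preserves small coproducts by Proposition~\ref{pr:coprod}, and using the adjunction isomorphism $(\T/\Sc)(QC,QY)\cong\T(C,Q_\rho QY)$ one checks that $Q$ carries a small set of $\a$-small perfect generators of $\T$ to one of $\T/\Sc$, so $\T/\Sc$ is $\a$-well generated. The equality $\Sc^\a=\Sc\cap\T^\a$ follows from $\Sc^\a=\B\subseteq\Sc\cap\T^\a$ together with the converse: if $X\in\Sc\cap\T^\a$ then by "$\Rightarrow$" the identity of $X$ factors through some $B\in\B$, so $X$ is a direct factor of $B$ and hence lies in $\B$ since $\B$ is thick. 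The displayed square commutes by construction, and $J$ is fully faithful by Lemma~\ref{le:tria-sub}, whose hypothesis is exactly "$\Rightarrow$" of (1). Since $Q$ is exact and preserves $\a$-coproducts it maps $\T^\a$ into $(\T/\Sc)^\a$, so $Q|_{\T^\a}$ induces a functor $J\colon\T^\a/\Sc^\a\to(\T/\Sc)^\a$; its essential image is the full triangulated subcategory $Q(\T^\a)$, which is closed under $\a$-coproducts and whose thick closure is $(\T/\Sc)^\a$, whence every $\a$-compact object of $\T/\Sc$ is a direct factor of one in the image of $J$. If $\a>\aleph_0$, then $\T^\a$, and hence $Q(\T^\a)$, is closed under countable coproducts, so $Q(\T^\a)$ is already thick and equals $(\T/\Sc)^\a$, making $J$ an equivalence.
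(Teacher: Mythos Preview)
Your outline is close to the paper's, but two steps deserve scrutiny.

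For (PG2) you factor $H$ as $i_*\circ h_\a$ and claim that the restriction $i_*\colon\Add_\a((\T^\a)^\op,\Ab)\to\Add_\a(\B^\op,\Ab)$ preserves small coproducts because it has a right adjoint given by right Kan extension ``which stays inside $\Add_\a$''. This is the point that needs proof, and you have not given one: coproducts in $\Add_\a$ are \emph{not} computed pointwise, so it is not automatic that the pointwise right Kan extension of an $\a$-product preserving functor is again $\a$-product preserving, nor that $i_*$ preserves arbitrary coproducts. The paper avoids this by arguing only on the image of $h_\a|_\Sc$: one first observes that $h_\a^{-1}(\Im i^*)$ is a localizing subcategory containing $\B$, hence containing $\Sc$; since $i^*$ is a fully faithful \emph{left} adjoint, $\Im i^*$ is closed under small coproducts, and the equivalence $\Add_\a(\B^\op,\Ab)\xrightarrow{\sim}\Im i^*$ then transports coproducts back. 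Thus $H|_\Sc\cong i_*i^*H|_\Sc = i_*\circ h_\a|_\Sc$ preserves small coproducts without ever needing $i_*$ to preserve coproducts globally.

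For (1)\,``$\Leftarrow$'' your iterative homotopy-colimit construction may be made to work, but it is substantially more delicate than necessary and your sketch leaves the inductive step (simultaneously killing the kernel of $\T(C,q_n)$ for all $C\in\T^\a$ while keeping surjectivity) unverified. The paper's route is much shorter: once $\Sc$ is perfectly generated, Proposition~\ref{pr:loc-brown} gives a localization functor $L$ with $\Ker L=\Sc$. For $X$ with $h_\a X\in\Im i^*$, the triangle $\Ga X\to X\to LX$ has $\Ga X\in\Sc\subseteq h_\a^{-1}(\Im i^*)$, so $LX\in h_\a^{-1}(\Im i^*)$; but $i_*h_\a LX=0$ since $LX\in\Sc^\perp$, and $i_*i^*\cong\Id$ then forces $h_\a LX=0$, hence $LX=0$ and $X\in\Sc$. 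Combining this with Lemma~\ref{le:kan}(3) yields both directions of (1) at once. The same use of $L$ also makes the smallness of morphism sets in $\T/\Sc$ immediate, so your appeal to Lemma~\ref{le:frac-set} is unnecessary. For the well-generation of $\T/\Sc$ the paper again uses the abelian machinery (via $q^*$ and $q_*$) rather than a bare adjunction computation; your sketch there is correct in spirit but would need the same care about (PG2) that you glossed over for $\Sc$.
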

\begin{proof}
Let $\C=\Sc\cap\T^\a$. Then the inclusion $i\colon\C\to\T^\a$ induces a
fully faithful and exact functor
$i^*\colon\Add_\a(\C^\op,\Ab)\to\Add_\a((\T^\a)^\op,\Ab)$ which is
left adjoint to the functor $i_*$ taking $F$ to $F\comp i$; see
Lemma~\ref{le:kan}.  Note that the image $\Im i^*$ of $i^*$ is closed
under small coproducts. We consider the restricted Yoneda functor
$h_\a\colon\T\to\Add_\a((\T^\a)^\op,\Ab)$ taking $X$ to
$\T(-,X)|_{\T^\a}$ and observe that $h_\a^{-1}(\Im i^*)$ is a
localizing subcategory of $\T$ containing $\C$. Thus we obtain a
functor $H$ making the following diagram commutative.
$$\xymatrix{ \Sc\ar[rr]^\inc\ar[d]^H&&\T\ar[d]^{h_\a}\\
\Add_\a(\C^\op,\Ab)\ar[rr]^{i^*}&&\Add_\a((\T^\a)^\op,\Ab) }$$ 
Let us compare $H$ with the restricted Yoneda functor
$$H'\colon\Sc\lto\Add_\a(\C^\op,\Ab),\quad X\mapsto\Sc(-,X)|_\C.$$
In fact, we have an isomorphism
$$H\xto{\sim}i_*\comp i^*\comp H= i_*\comp h_\a|_\Sc=H'$$ and $H'$
preserves small coproducts since $h_\a$ does. It follows from
Proposition~\ref{pr:wellgen} that $\C$ provides a small set of
$\a$-small perfect generators for $\Sc$. Thus $\Sc$ is $\a$-well
generated and $\Sc^\a=\Loc_\a\C=\Sc\cap\T^\a$.

Next we apply Proposition~\ref{pr:loc-brown} and obtain a localization
functor $L\colon \T\to\T$ with $\Ker L=\Sc$. We use $L$ to show that
$\Sc= h_\a^{-1}(\Im i^*)$. We know already that $\Sc\subseteq
h_\a^{-1}(\Im i^*)$. Now let $X$ be an object in $h_\a^{-1}(\Im i^*)$
and consider the exact triangle $\Ga X\to X\to LX\to S(\Ga X)$. Then
$\T(C,LX)=0$ for all $C\in\C$ and therefore $i_*h_\a LX=0$. On the
other hand, $h_\a LX=i^*F$ for some functor $F$ and therefore
$0=i_*h_\a LX=i_*i^*F\cong F$. Thus $LX=0$ and therefore $X$ belongs
to $\Sc$. This shows $\Sc=h_\a^{-1}(\Im i^*)$.

Now we prove (1) and use the description of the essential image of
$i^*$ from Lemma~\ref{le:kan}. We have for an object $X$ in $\T$ that
$X$ belongs to $\Sc$ iff $h_\a X$ belongs to $\Im i^*$ iff every
morphism $\T^\a(-,C)\to h_\a X$ with $C\in\T^\a$ factors through
$\T^\a(-,C')$ for some $C'\in \C$ iff every morphism $C\to X$ with
$C\in\T^\a$ factors through some $C'\in \C$.

An immediate consequence of (1) is the fact that $J$ is fully
faithful. This follows from Lemma~\ref{le:tria-sub}.

Now consider the quotient functor $q\colon\T^\a\to\T^\a/\Sc^\a$. This
induces an exact functor
$q^*\colon\Add_\a((\T^\a)^\op,\Ab)\to\Add_\a((\T^\a/\Sc^\a)^\op,\Ab)$
which is left adjoint to the fully faithful functor $q_*$ taking $F$
to $F\comp q$; see Lemma~\ref{le:kan}. Clearly, $q^*\comp h_\a$
annihilates $\Sc$ and induces therefore a functor $K$ making the
following diagram commutative.
$$\xymatrix{ \T\ar[rr]^{Q=\can}\ar[d]^{h_\a}&&\T/\Sc\ar[d]^{K}\\
\Add_\a((\T^\a)^\op,\Ab)\ar[rr]^{q^*}&&\Add_\a((\T^\a/\Sc^\a)^\op,\Ab)
}$$ Note that $Q$ admits a right adjoint which we denote by $Q_\r$. We
identify $\T^\a/\Sc^\a$ via $J$ with a full triangulated subcategory of
$\T/\Sc$ and consider the restricted Yoneda functor $$K'\colon\T/\Sc\lto
\Add_\a((\T^\a/\Sc^\a)^\op,\Ab),\quad
X\mapsto\T/\Sc(-,X)|_{\T^\a/\Sc^\a}.$$ Adjointness gives the following
isomorphism
$$\T/\Sc(JqC,X)=\T/\Sc(QC,X)\cong\T(C,Q_\r X)$$ for all $C\in\T^\a$ and
$X\in\T/\Sc$.  Thus we have an isomorphism
$$K\xleftarrow{\sim}K\comp Q\comp Q_\r=q^*\comp h_\a\comp Q_\r\cong
q^*\comp q_*\comp K'\xto{\sim} K'$$ and $K'$ preserves small
coproducts since $h_\a$ does. It follows from
Proposition~\ref{pr:wellgen} that $\T^\a/\Sc^\a$ provides a small set of
$\a$-small perfect generators for $\T/\Sc$. Thus $\T/\Sc$ is $\a$-well
generated and $(\T/\Sc)^\a=\Loc_\a(\T^\a/\Sc^\a)$.
\end{proof}

\begin{cor}\label{co:loc-gen}
Let $\T$ be an $\a$-well generated triangulated category and $\Sc$ a
localizing subcategory generated by a small set $\Sc_0$ of $\a$-compact
objects. Then $\Sc$ is $\a$-well generated and $\Sc^\a$ equals the
$\a$-localizing subcategory generated by $\Sc_0$.
\end{cor}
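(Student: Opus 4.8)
The plan is to reduce the corollary to Theorem~\ref{th:set-local} together with the fact that in an $\a$-well generated category the subcategory $\Sc^\a$ of $\a$-compact objects is the $\a$-localizing subcategory generated by \emph{any} small set of $\a$-small perfect generators (Remark~\ref{rm:wellgen}, i.e.\ the independence of $\T^\a$ on the choice of generating set). First I would apply Theorem~\ref{th:set-local}, whose hypotheses are exactly those of the corollary: thus $\Sc$ is $\a$-well generated and $\Sc^\a=\Sc\cap\T^\a$. Writing $\D:=\Loc_\a\Sc_0$, the inclusion $\D\subseteq\Sc\cap\T^\a$ is immediate, since $\Sc_0\subseteq\Sc$, since $\Sc_0\subseteq\T^\a$, and since $\T^\a$ is $\a$-localizing. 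So it remains to prove the reverse inclusion, and for this I would show that $\D$ is itself a small set of $\a$-small perfect generators for $\Sc$; the uniqueness statement above then forces $\Sc^\a=\Loc_\a\D=\D$.

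To see that $\D$ is such a set: it is small because $\Sc_0$ is; it consists of $\a$-small objects because $\Sc_0$ does and the $\a$-small objects form an $\a$-localizing subcategory of $\T$; and it satisfies (PG1) because $\Loc\D=\Loc\Sc_0=\Sc$. For (PG2) I would repeat the argument from the proof of Theorem~\ref{th:set-local} with $\D$ in place of $\Sc\cap\T^\a$. The inclusion $\D\to\T^\a$ induces, by the Kan extension construction used there, a fully faithful exact left adjoint $\Add_\a(\D^\op,\Ab)\to\Add_\a((\T^\a)^\op,\Ab)$; being a left adjoint it preserves small coproducts, so its essential image is closed under small coproducts, and hence the preimage of that image under $h_\a\colon\T\to\Add_\a((\T^\a)^\op,\Ab)$ is a localizing subcategory of $\T$. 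This subcategory contains $\D$, hence contains $\Loc\D=\Sc$, so $h_\a|_\Sc$ factors through that fully faithful coproduct-preserving functor; therefore the restricted Yoneda functor $\Sc\to\Add_\a(\D^\op,\Ab)$, $X\mapsto\Sc(-,X)|_\D$, preserves small coproducts. By Proposition~\ref{pr:A(f)}(2) applied inside $\Sc$ (equivalently, by Proposition~\ref{pr:wellgen}) this is exactly the assertion that (PG2) holds for $\D$.

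Combining the two steps yields $\Sc^\a=\Loc_\a\Sc_0$, as claimed. The main obstacle is precisely the verification of (PG2) for $\D=\Loc_\a\Sc_0$: in the compactly generated case $\a=\aleph_0$ this is free, since (PG2) holds automatically for every set of compact objects, but for larger $\a$ a small set of $\a$-compact objects need not satisfy (PG2), so one really has to enlarge $\Sc_0$ to the $\a$-localizing subcategory it generates and route the argument through $h_\a$ and the Kan-extension functor. Everything else — the easy inclusion and the final appeal to uniqueness of $\Sc^\a$ — is routine.
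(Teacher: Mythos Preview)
Your proposal is correct and follows essentially the same route as the paper: the paper's proof simply says to re-run the argument of Theorem~\ref{th:set-local} with $\C=\Loc_\a\Sc_0$ in place of $\Sc\cap\T^\a$, which is precisely your Step~3, and then concludes $\Sc^\a=\C$ ``by definition'' (your Step~4 via Remark~\ref{rm:wellgen}). Your Step~1, invoking the theorem first to get $\Sc^\a=\Sc\cap\T^\a$, is harmless but redundant, since the re-run with $\C=\Loc_\a\Sc_0$ already shows that $\Sc$ is $\a$-well generated.
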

\begin{proof}
In the preceding proof of Theorem~\ref{th:set-local}, we can choose
for $\C$ instead of $\Sc\cap\T^\a$ the $\a$-localizing subcategory of
$\T$ which is generated by $\Sc_0$. Then the proof shows that $\C$
provides a small set of $\a$-small perfect generators for $\Sc$. Thus we have
$\Sc^\a=\C$ by definition.
\end{proof}

The localization with respect to a localizing subcategory generated by
a small set of objects can be interpreted in various ways. The following
remark provides some indication.

\begin{rem}
(1) Let $\T$ be a well generated triangulated category and $\p$ a
morphism in $\T$.  Then there exists a universal exact localization
functor $L\colon\T\to\T$ inverting $\p$. To see this, complete $\p$ to
an exact triangle $X\xto{\p} Y\to Z\to SX$ and let $L$ be the
localization functor such that $\Ker L$ equals the localizing
subcategory generated by $Z$.  Conversely, any exact localization
functor $L\colon\T\to\T$ is the universal exact localization functor
inverting some morphism $\p$ provided that $\Ker L$ is generated by a
small set $\Sc_0$ of objects. To see this, take $\p\colon
0\to\coprod_{X\in\Sc_0}X$.

(2) Let $\T$ be a triangulated category and $L\colon\T\to\T$ an exact
localization functor such that $\Sc=\Ker L$ is generated by a single
object $W$. Then the first morphism $\Ga X\to X$ from the functorial
triangle $\Ga X\to X\to LX\to S(\Ga X)$ is called \emph{cellularization} and the
second morphism $X\to LX$ is called \emph{nullification} with respect
to $W$. The objects in $\Sc$ are \emph{built from} $W$.
\end{rem}

\subsection{Functors between well generated categories}

We consider functors between well generated triangulated categories
which are exact and preserve small coproducts.  The following result
shows that such functors are controlled by their restriction to the
subcategory of $\a$-compact objects for some regular cardinal $\a$.

\begin{prop}
\label{pr:fun-well}
Let $F\colon\T\to\U$ be an exact functor between $\a$-well generated
triangulated categories. Suppose that $F$ preserves small coproducts
and let $G$ be a right adjoint.
\begin{enumerate}
\item There exists a regular cardinal $\b_0\geq \a$ such that $F$
preserves $\b_0$-compactness. In that case $F$ preserves $\b$-compactness
for all regular $\b\geq\b_0$.
\item Given a regular cardinal $\b\geq\b_0$, the restriction
 $f\colon\T^\b\to\U^\b$ of $F$ induces the following diagram of
 functors which commute up to natural isomorphisms.
$$\xymatrix{ \T^\b\ar[rr]^{f=F^\b}\ar[d]^{\inc}&&\U^\b\ar[d]^{\inc}\\
\T\ar[rr]^F\ar[d]^{h_\b(\T)}&&\U\ar[d]^{h_\b(\U)}
\ar[rr]^G&&\T\ar[d]^{h_\b(\T)}\\
\Add_\b((\T^\b)^\op,\Ab)\ar[rr]^{f^*}&&\Add_\b((\U^\b)^\op,\Ab)
\ar[rr]^{f_*}&&\Add_\b((\T^\b)^\op,\Ab) }$$ 
\end{enumerate}
\end{prop}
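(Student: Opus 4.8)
The plan is to prove (1) by a cardinality‐counting argument of the kind used in the proof of Theorem~\ref{th:set-local}, and then to deduce (2) essentially formally from (1) together with Proposition~\ref{pr:A(f)}.

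For part~(1), first I would fix small sets of $\a$-small perfect generators $\Sc$ for $\T$ and $\U$, so that $\T^\b=\Loc_\b\Sc$ and $\U^\b=\Loc_\b\Sc$ for every regular $\b\ge\a$. Since $F$ preserves small coproducts and is exact, its right adjoint $G$ exists by Brown representability (Theorem~\ref{th:brown}), and $G$ is exact by Lemma~\ref{le:exact-adj}. The key observation is that $F$ preserves $\b$-compactness as soon as $F$ sends the generators $\Sc$ of $\T$ into $\U^\b$: indeed the full subcategory $\{X\in\T\mid FX\in\U^\b\}$ is $\b$-localizing (it is triangulated because $F$ is exact, thick because $\U^\b$ is thick, and closed under $\b$-coproducts because $F$ preserves coproducts and $\U^\b$ is closed under $\b$-coproducts), hence contains $\T^\b=\Loc_\b\Sc$. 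So it suffices to find a regular cardinal $\b_0\ge\a$ with $FC\in\U^{\b_0}$ for every $C\in\Sc$. For each $C$, the object $FC$ lies in $\T=\bigcup_\b\U^\b$, so $FC\in\U^{\b(C)}$ for some regular $\b(C)$; take $\b_0$ to be any regular cardinal strictly above $\a$ and above $\sup_{C\in\Sc}\b(C)$ (this sup is taken over a small set, hence is a genuine cardinal, and one can pass to its successor, which is regular). Then $FC\in\U^{\b_0}$ for all $C\in\Sc$, so $F$ preserves $\b_0$-compactness. For the last sentence of (1), note that once $FC\in\U^{\b_0}\subseteq\U^\b$ for all $C\in\Sc$ and all regular $\b\ge\b_0$, the same subcategory argument gives $F(\T^\b)\subseteq\U^\b$.

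For part~(2), fix $\b\ge\b_0$ and let $f=F^\b\colon\T^\b\to\U^\b$ be the restriction, which exists by part~(1). The top square commutes by construction. For the middle square involving $h_\b$: the restricted Yoneda functor $h_\b(\U)$ sends $Y$ to $\U(-,Y)|_{\U^\b}$, and $f^*\comp h_\b(\T)$ sends $X$ to the left Kan extension along $f\colon\T^\b\to\U^\b$ of $\T(-,X)|_{\T^\b}$; by the universal property in Proposition~\ref{pr:A(f)} (together with Corollary~\ref{co:kan}, i.e. the fact that $H_\b X=\colim_{(C,\m)\in\T^\b/X}\T(-,C)$ and that $f^*$ preserves colimits) one computes $f^*(\T(-,X)|_{\T^\b})\cong\colim_{(C,\m)\in\T^\b/X}\U(-,fC)|_{\U^\b}$. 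The canonical morphism from this colimit to $\U(-,FX)|_{\U^\b}$ is an isomorphism by applying Theorem~\ref{th:filtcolim} to the coproduct‐preserving cohomological functor $h_\b(\U)\comp F\colon\T\to\Add_\b((\U^\b)^\op,\Ab)$, whose value on $\T^\b/X$ is $(C,\m)\mapsto\U(-,FC)|_{\U^\b}$; since $\T^\b/X$ is $\b$-filtered and $\Add_\b((\U^\b)^\op,\Ab)$ has exact $\b$-filtered colimits, the hypotheses of that theorem are met. This gives $f^*\comp h_\b(\T)\cong h_\b(\U)\comp F$. For the right square, $G$ has a right adjoint is not claimed; rather one checks $f_*\comp h_\b(\U)\cong h_\b(\T)\comp G$ directly from adjointness: for $C\in\T^\b$ and $Y\in\U$, $\big(h_\b(\T)(GY)\big)(C)=\T(C,GY)\cong\U(FC,Y)=\big(h_\b(\U)(Y)\big)(FC)=\big(f_*(h_\b(\U)(Y))\big)(C)$, and these isomorphisms are natural, giving the required natural isomorphism of functors $\U\to\Add_\b((\T^\b)^\op,\Ab)$.

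The main obstacle is the verification in part~(2) that the canonical comparison morphism $f^*(h_\b(\T)X)\to h_\b(\U)(FX)$ is an isomorphism for all $X\in\T$, not merely for $X\in\T^\b$. The clean way to handle this is exactly as indicated: recognize $h_\b(\U)\comp F$ as a coproduct‐preserving cohomological functor into an abelian category with exact $\b$-filtered colimits, so that Theorem~\ref{th:filtcolim} expresses its value at any $X$ as the $\b$-filtered colimit over $\T^\b/X$ of its values on $\b$-compact objects; combined with the fact that $f^*$, being a left adjoint, preserves this colimit, and that $f^*(h_\b(\T)X)$ is by Proposition~\ref{pr:A(f)} precisely that colimit, the identification is forced. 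One should also remark that $F$ preserves coproducts is used twice: once to guarantee the existence of $G$, and once in the subcategory argument of part~(1).
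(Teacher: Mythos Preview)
Your proof is correct and follows essentially the same approach as the paper. For part~(1) you spell out the cardinality argument that the paper compresses into the single line $F(\T^\b)=F(\Loc_\b\T^\a)\subseteq\Loc_\b F(\T^\a)\subseteq\Loc_\b\U^{\b_0}=\U^\b$; for part~(2) both you and the paper verify $h_\b(\U)\comp F\cong f^*\comp h_\b(\T)$ by invoking Theorem~\ref{th:filtcolim} (the paper cites Proposition~\ref{pr:universal} and Lemma~\ref{le:kan} where you cite Proposition~\ref{pr:A(f)} and Corollary~\ref{co:kan}, but these are essentially the same ingredients), and the right square $h_\b(\T)\comp G\cong f_*\comp h_\b(\U)$ is handled identically via the adjunction isomorphism $\T(C,GY)\cong\U(fC,Y)$.
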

\begin{proof}
(1) Choose $\b_0\geq\a$ such that
$F(\T^\a)\subseteq\U^{\b_0}$. Then we get for $\b\geq\b_0$
$$F(\T^\b)= F(\Loc_\b\T^\a)\subseteq \Loc_\b
F(\T^\a)\subseteq\Loc_\b\U^{\b_0}=\U^\b.$$

(2) We apply Theorem~\ref{th:filtcolim} to show that $h_\b(\U)\comp
F\cong f^*\comp h_\b(\T)$. In fact, it follows from
Proposition~\ref{pr:universal} and Lemma~\ref{le:kan} that both
composites are cohomological functors, preserve small coproducts,
and agree on $\T^\b$.

The isomorphism $h_\b(\T)\comp G\cong f_*\comp h_\b(\U)$ follows from
the adjointness of $F$ and $G$, since $\T(C,GX)\cong\U(fC,X)$ for
every $C\in\T^\b$ and $X\in\U$.
\end{proof}

\subsection{The kernel of a functor between well generated categories}

We show that the class of well generated triangulated categories is
closed under taking kernels of exact functors which preserve small
coproducts.

\begin{thm}
\label{th:kernel}
Let $F\colon\T\to\U$ be an exact functor between $\a$-well generated
triangulated categories and suppose that $F$ preserves small
coproducts. Let $\Sc=\Ker F$ and choose
a regular cardinal $\b\geq\a$ such that $F$ preserves $\b$-compactness.
\begin{enumerate}
\item An object $X$ in $\T$ belongs to $\Sc$ if and only if every
morphism $C\to X$ with $C\in\T^\b$ factors through a morphism
$\g\colon C\to C'$ in $\T^\b$ satisfying $F\g=0$.
\item Suppose $\b>\aleph_0$. Then $\Sc$ is $\b$-well generated and
$\Sc^\b=\Sc\cap\T^\b$.
\end{enumerate}
\end{thm}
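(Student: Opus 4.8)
The plan is to handle (1) and (2) in turn, in both cases working through the restricted Yoneda functors $h_\b(\T)\colon\T\to\Add_\b((\T^\b)^\op,\Ab)$ and $h_\b(\U)$. Since $F$ preserves $\b$-compactness it restricts to $f\colon\T^\b\to\U^\b$, and Proposition~\ref{pr:fun-well} gives the commuting square $h_\b(\U)\comp F\cong f^*\comp h_\b(\T)$, where $f^*$ is the (colimit-preserving) left Kan extension of Lemma~\ref{le:kan}, with $f^*(\T(-,C)|_{\T^\b})\cong\U(-,FC)|_{\U^\b}$ for $C\in\T^\b$.

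For (1) I would first use Theorem~\ref{th:filtcolim} to write $h_\b(\T)X\cong\colim{(C,\m)\in\T^\b/X}\T(-,C)|_{\T^\b}$, then apply $f^*$ to obtain $h_\b(\U)(FX)\cong\colim{(C,\m)\in\T^\b/X}\U(-,FC)|_{\U^\b}$. Since $\U^\b$ generates $\U$, the functor $h_\b(\U)$ reflects zero objects, so $X\in\Ker F$ if and only if this colimit vanishes, i.e.\ (colimits being pointwise, and $\T^\b/X$ being $\b$-filtered) if and only if $\colim{(C,\m)\in\T^\b/X}\U(E,FC)=0$ for all $E\in\U^\b$. The two implications of the statement then come out by unwinding this filtered colimit: given $FX=0$ and a morphism $\m\colon C\to X$ with $C\in\T^\b$, the class of $\id_{FC}$ at the vertex $(C,\m)$ vanishes, which means some transition $\g\colon(C,\m)\to(C',\m')$ in $\T^\b/X$ has $F\g=0$, i.e.\ $\m=\m'\comp\g$ with $F\g=0$; conversely, if every such $\m$ factors through a $\g$ in $\T^\b$ with $F\g=0$, then any $\phi\colon E\to FC$ over $(C,\m)$ is killed by the transition produced from factoring $\m$, so the colimit is $0$.

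For (2) I would set $\C=\Sc\cap\T^\b$. Being the intersection of the thick, coproduct-closed subcategory $\Sc$ with the $\b$-localizing subcategory $\T^\b$, this $\C$ is itself $\b$-localizing, and is a small set of $\b$-small objects. The restricted Yoneda functor $\Sc\to\Add_\b(\C^\op,\Ab)$, $X\mapsto\Sc(-,X)|_\C$, preserves small coproducts (these agree with coproducts in $\T$, and both $h_\b(\T)$ and restriction along $\C\hookrightarrow\T^\b$ preserve them), so Proposition~\ref{pr:wellgen} tells us that the objects of $\C$ are $\b$-small in $\Sc$ and that \emph{(PG2)} holds for $\C$. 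Consequently $\Loc\C$ is perfectly generated, so Proposition~\ref{pr:loc-brown} yields an exact localization functor $L\colon\T\to\T$ with $\Ker L=\Loc\C$, and it remains to prove $\Loc\C=\Ker F$ (the inclusion $\subseteq$ being clear). To get the reverse inclusion I would take $X\in\Ker F$ and consider the triangle $\Ga X\to X\to LX\to S(\Ga X)$; as $\Ga X\in\Loc\C\subseteq\Ker F$ this gives $F(LX)\cong FX=0$, so $LX\in\Ker F\cap(\Loc\C)^\perp\subseteq\Ker F\cap\C^\perp$, and it is enough to show $\Ker F\cap\C^\perp=0$. Here I would take $Y\in\Ker F\cap\C^\perp$ and $\m\colon C_0\to Y$ with $C_0\in\T^\b$, and iterate part (1) on $Y$ to build a sequence $C_0\xto{\g_0}C_1\xto{\g_1}C_2\to\cdots$ in $\T^\b$ with all $F\g_i=0$ and morphisms $\n_i\colon C_i\to Y$ satisfying $\n_0=\m$ and $\n_i=\n_{i+1}\comp\g_i$. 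Since $\b>\aleph_0$, the subcategory $\T^\b$ is closed under countable coproducts, hence under countable homotopy colimits, so $C_\infty:=\operatorname{hocolim}C_i$ lies in $\T^\b$; and since $F$ preserves coproducts it preserves this homotopy colimit, so $FC_\infty=\operatorname{hocolim}(FC_0\xto{0}FC_1\xto{0}\cdots)=\Cone(\id)=0$, i.e.\ $C_\infty\in\Ker F\cap\T^\b=\C$. The cocone $(\n_i)_i$ induces $\bar\n\colon C_\infty\to Y$ through which $\m$ factors, and $Y\in\C^\perp$ forces $\bar\n=0$, hence $\m=0$. As $\T^\b$ generates $\T$, this gives $Y=0$. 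Thus $\Loc\C=\Ker F=\Sc$, $\C$ is a set of $\b$-small perfect generators of $\Sc$, so $\Sc$ is $\b$-well generated, and $\Sc^\b=\Loc_\b\C=\C=\Sc\cap\T^\b$ because $\C$ is $\b$-localizing.

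The step I expect to be the main obstacle is exactly the identity $\Loc(\Sc\cap\T^\b)=\Sc$, i.e.\ that $\Ker F$ is generated by its $\b$-compact objects: this is the only place part (1) gets used, and the only place the hypothesis $\b>\aleph_0$ is needed — it is what makes the homotopy colimit of the telescope of the vanishing morphisms $\g_i$ a $\b$-compact object, hence an $F$-acyclic $\b$-compact object through which the chosen morphism into a putative object of $\Ker F\cap\C^\perp$ factors.
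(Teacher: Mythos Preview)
Your argument for part~(1) is correct and essentially unfolds what the paper packages as Lemma~\ref{le:kan}(4): you apply $f^*$ to the filtered-colimit description of $h_\b(\T)X$ and read off the factorization criterion directly.

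For part~(2), the telescope construction and the orthogonality argument showing $\Ker F\cap\C^\perp=0$ are correct and constitute a clean alternative to the paper's route (the paper shows every map $\T^\b\to X$ factors through an object of $\Sc'\cap\T^\b$ and then quotes Theorem~\ref{th:set-local}(1), whereas you close the argument with the localization triangle directly). However, there is a gap in the step where you produce the localization functor $L$ with $\Ker L=\Loc\C$. You claim that the restricted Yoneda $\Sc\to\Add_\b(\C^\op,\Ab)$ preserves small coproducts because ``restriction along $\C\hookrightarrow\T^\b$ preserves them''. But restriction $i_*\colon\Add_\b((\T^\b)^\op,\Ab)\to\Add_\b(\C^\op,\Ab)$ is a \emph{right} adjoint, and there is no evident reason it should preserve coproducts; coproducts in $\Add_\b$ are not computed pointwise. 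In the paper's proof of Theorem~\ref{th:set-local} the analogous step works because one already knows $h_\a(\Sc)\subseteq\Im i^*$, so $i_*$ restricted to $\Im i^*$ is an equivalence; here you cannot assume this since you have not yet shown $\C$ generates $\Sc$.

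The fix is immediate: instead of trying to verify (PG2) for $\C$ via Proposition~\ref{pr:wellgen}, simply invoke Theorem~\ref{th:set-local} (or Corollary~\ref{co:loc-gen}) for the localizing subcategory $\Loc\C$ generated by $\C\subseteq\T^\b$. This yields directly that $\Loc\C$ is $\b$-well generated with $(\Loc\C)^\b=\C$, and Proposition~\ref{pr:loc-brown} then supplies $L$. The remainder of your argument goes through unchanged and gives $\Loc\C=\Sc$, hence $\Sc^\b=\C=\Sc\cap\T^\b$. This is in fact how the paper concludes as well, so your organization and the paper's end up relying on Theorem~\ref{th:set-local} at comparable points.
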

\begin{proof}
Let $f\colon\T^\b\to\U^\b$ be the restriction of $F$ and denote by
$\mathfrak I$ the set of morphisms in $\T^\b$ which are annihilated
by $F$. 

(1) Let $X$ be an object in $\T$. Then it follows from
Proposition~\ref{pr:fun-well} that $FX=0$ if and only $f^*h_\b X=0$.
Now Lemma~\ref{le:kan} implies that $f^*h_\b X=0$ iff each
morphism $C\to X$ with $C\in\T^\b$ factors through some morphism $C\to
C'$ in $\mathfrak I$.

(2) Let $\Sc'$ denote the localizing subcategory of $\T$ which is
generated by all homotopy colimits of sequences
$$C_0\lto C_1\lto C_2\lto \cdots$$ of morphisms in $\mathfrak I$.  We
claim that $\Sc'=\Ker F$. Clearly, we have $\Sc'\subseteq\Ker F$. Now
fix an object $X\in\Ker F$. We have seen in (1) that each morphism
$\m\colon C\to X$ with $C\in\T^\b$ factors through some morphism $C\to
C'$ in $\mathfrak I$.  We obtain by induction a sequence
$$C=C_0\lto[\g_0]C_1\lto[\g_1]C_2\lto[\g_2]\cdots$$ of morphisms in
$\mathfrak I$ such that $\m$ factors through each finite composite
$\g_i\ldots\g_0$. Thus $\m$ factors through the homotopy colimit of
this sequence and therefore through an object of $\Sc'\cap\T^\b$. Here
one uses that $\b>\aleph_0$. We conclude from
Theorem~\ref{th:set-local} that $X$ belongs to $\Sc'$. Moreover, we
conclude from this theorem that $\Sc'$ is $\b$-well generated.
\end{proof}

\begin{rem}
It is necessary to assume in part (2) of the preceding theorem that
$\b>\aleph_0$. For example, there exists a ring $A$ with Jacobson
radical $\mathfrak r$ such that the functor
$F=-\otimes^\bfL_AA/\mathfrak r\colon \bfD (A)\to\bfD (A/\mathfrak r)$
satisfies $\Sc=\Ker F\neq 0$ but $\Sc\cap\bfD (A)^c=0$; see \cite{Kel1994a}.
\end{rem}

Observe that Theorem~\ref{th:kernel} provides a partial answer to the
telescope conjecture for compactly generated categories. This
conjecture claims that the kernel of a localization functor
$L\colon\T\to\T$ is generated by compact objects provided that $L$
preserves small coproducts. Part (1) implies that $\Sc=\Ker L$ is
generated by morphisms between compact objects, and part (2) says that
$\Sc$ is generated by $\aleph_1$-compact objects. I am grateful to
Amnon Neeman for explaining to me how to deduce (2) from (1). The
following corollary makes the connection with the telescope conjecture
more precise; just put $\a=\aleph_0$.

\begin{cor}
Let $L\colon\T\to\T$ be an exact localization functor which preserves
small coproducts. Suppose that $\T$ is $\a$-well generated and let
$\b\geq\max(\a,\aleph_1)$. Then $\Sc=\Ker L$ is $\b$-well generated and
$\Sc^\b=\Sc\cap\T^\b$.
\end{cor}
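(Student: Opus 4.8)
The plan is to obtain the corollary as the special case $F=L$ of Theorem~\ref{th:kernel}. The functor $L\colon\T\to\T$ is exact and preserves small coproducts by hypothesis, and $\T$ is $\a$-well generated, so the only ingredient of Theorem~\ref{th:kernel} that still has to be arranged is that $L$ preserve $\b$-compactness for the cardinals $\b$ in question. This is supplied by Proposition~\ref{pr:fun-well}(1): there is a regular cardinal $\b_0\ge\a$ such that $L$ preserves $\b$-compactness for every regular $\b\ge\b_0$. Since being $\a$-well generated is inherited by all larger regular cardinals, I would first enlarge $\a$ so that $\a\ge\b_0$; thus we may assume from the start that $L$ preserves $\b$-compactness for all regular $\b\ge\a$.

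Now I would fix a regular cardinal $\b\ge\max(\a,\aleph_1)$. Then $\b\ge\a$, $\b>\aleph_0$, and $L$ preserves $\b$-compactness, so all hypotheses of Theorem~\ref{th:kernel} hold for $F=L$ and this $\b$. Part~(2) of that theorem then yields precisely that $\Sc=\Ker L$ is $\b$-well generated and that $\Sc^\b=\Sc\cap\T^\b$, which is the claim. Alternatively --- and this is perhaps cleaner for propagating the statement across all admissible $\b$ --- one applies Theorem~\ref{th:kernel}(2) just once, at $\b_1:=\max(\a,\aleph_1)$, to see that $\Sc$ is a localizing subcategory generated by the small set $\Sc\cap\T^{\b_1}$ of $\b_1$-compact objects; since $\T^{\b_1}\subseteq\T^\b$ for every regular $\b\ge\b_1$, this same set consists of $\b$-compact objects and generates $\Sc$, so Theorem~\ref{th:set-local} gives that $\Sc$ is $\b$-well generated with $\Sc^\b=\Sc\cap\T^\b$ for all such $\b$.

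I do not expect a genuinely hard step: the real work is entirely contained in Theorem~\ref{th:kernel} and Proposition~\ref{pr:fun-well}. The one place that calls for a little care --- and the main (modest) obstacle --- is the cardinal bookkeeping. Proposition~\ref{pr:fun-well} only provides a threshold $\b_0$ depending on $L$, so this threshold must be absorbed into the choice of $\a$ before the bound $\b\ge\max(\a,\aleph_1)$ becomes meaningful; and the hypothesis $\b>\aleph_0$ in Theorem~\ref{th:kernel}(2) cannot be dropped --- it enters through the homotopy-colimit argument there, where a countable composite of morphisms killed by $L$ must be captured by a single $\b$-compact object --- which is exactly why $\aleph_1$ appears in the statement.
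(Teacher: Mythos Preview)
Your overall strategy --- reduce to Theorem~\ref{th:kernel} --- is correct, but the step ``enlarge $\a$ so that $\a\ge\b_0$'' is not a legal move and leaves a genuine gap. The corollary fixes $\a$ in the hypothesis and asserts the conclusion for \emph{every} $\b\ge\max(\a,\aleph_1)$. Proposition~\ref{pr:fun-well} only produces a threshold $\b_0$ with $L(\T^\a)\subseteq\T^{\b_0}$, and there is no reason why $\b_0=\a$; replacing $\a$ by some $\a'\ge\b_0$ proves the conclusion only for $\b\ge\max(\a',\aleph_1)$, which is strictly weaker when $\b_0>\a$. Your alternative route via Theorem~\ref{th:set-local} has the same defect, since it still needs $L$ to preserve $\b_1$-compactness at $\b_1=\max(\a,\aleph_1)$.

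The paper closes this gap by not applying Theorem~\ref{th:kernel} to $L$ itself. Instead one factors $L=G\comp F$ with $F\colon\T\to\U=\T/\Sc$ the quotient and $G$ its fully faithful right adjoint; since $L$ preserves small coproducts, Proposition~\ref{pr:recol} gives that $G$ preserves small coproducts. Then the isomorphism \eqref{eq:adj-coprod} from Lemma~\ref{le:adj-coprod},
\[
\U(FX,\textstyle\coprod_i Y_i)\cong\T(X,G(\coprod_i Y_i))\cong\T(X,\coprod_i GY_i),
\]
shows directly that $F$ preserves $\a$-smallness and sends perfect generators to perfect generators; hence $\U$ is $\a$-well generated and $F$ preserves $\b$-compactness for \emph{all} regular $\b\ge\a$, with no enlargement of $\a$ needed. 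Since $G$ is fully faithful, $\Ker F=\Ker L=\Sc$, and Theorem~\ref{th:kernel} applied to $F$ gives the corollary as stated. The point you are missing is precisely that the smashing hypothesis buys you more than the generic Proposition~\ref{pr:fun-well}: it forces the quotient functor to preserve compactness from the very cardinal $\a$ onward.
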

\begin{proof}
Let $L\colon\T\to\T$ be a localization functor which preserves small
coproducts. Write $L=G\comp F$ as the composite of a quotient
functor $F\colon \T\to\U$ and a fully faithful right adjoint $G$,
where $\U=\T/\Sc$ and $\Sc=\Ker L$. Then $G$ preserves small coproducts
by Proposition~\ref{pr:recol}. The isomorphism \eqref{eq:adj-coprod}
from the proof of Lemma~\ref{le:adj-coprod} shows that $F$ preserves
$\a$-smallness and sends a set of perfect generators of $\T$ to a set
of perfect generators of $\U$. In particular, $F$ preserves
$\b$-compactness for all regular $\b\geq \a$. Now apply
Theorem~\ref{th:kernel}.
\end{proof}

\subsection{The kernel of a cohomological functor on a well generated category}

The following result says that kernels of cohomological functors from
well generated triangulated categories into locally presentable
abelian categories are well generated. The argument is basically the
same as that for kernels of exact functors between well generated
triangulated categories.

\begin{thm}
\label{th:kernel-coh}
Let $H\colon\T\to\A$ be a cohomological functor from a well generated
triangulated category into a locally presentable abelian category and
suppose that $H$ preserves small coproducts. Let $\Sc$ denote the
localizing subcategory of $\T$ consisting of all objects $X$ such that
$H(S^nX)=0$ for all $n\in\bbZ$. Then $\Sc$ is a well generated
triangulated category.
\end{thm}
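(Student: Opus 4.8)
The plan is to reduce this statement to the already-established machinery, in three moves. First, since $H$ preserves small coproducts and $\A$ is locally presentable, $\A$ has small coproducts and exact $\a$-filtered colimits for some regular cardinal $\a$ (this is part of the general theory of locally presentable categories recalled in the Appendix). We may enlarge $\a$ so that $\T$ is $\a$-well generated as well. Then Theorem~\ref{th:cohloc} applies and yields an exact localization functor $L\colon\T\to\T$ such that for every $X$, we have $LX=0$ if and only if $H(S^nX)=0$ for all $n\in\bbZ$; that is, $\Ker L=\Sc$, the subcategory described in the statement.

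Second, I would factor $L$ as $L=G\comp F$ where $F\colon\T\to\U:=\T/\Sc$ is the Verdier quotient functor and $G$ a fully faithful right adjoint; this is Corollary~\ref{co:loc}. The point is that $F$ is an exact functor between triangulated categories with $\Ker F=\Sc$, and $F$ preserves small coproducts: indeed $F$ is a left adjoint. Moreover $\T$ is well generated, and one needs that $\U=\T/\Sc$ is well generated too — but this is not yet known abstractly, so instead I would apply Theorem~\ref{th:kernel} directly to $F\colon\T\to\U$. To do so I must first know that $\U$ is $\a'$-well generated for some $\a'$; here is where a little care is needed. The cleanest route: observe that $\Sc=\Ker L$ where $L$ preserves small coproducts (it does, by Remark~(2) following Theorem~\ref{th:locloc}? — no; rather, $L$ need not be smashing in general). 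So Theorem~\ref{th:kernel} is not immediately applicable because it presupposes $\U$ is well generated.

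Third, therefore, the honest approach is to mimic the proof of Theorem~\ref{th:kernel}\,(2) using the cohomological functor $H$ in place of an exact functor into a triangulated category, exactly as the statement's preamble suggests (``The argument is basically the same''). Concretely: fix $\a$ with $\T$ $\a$-well generated, $\A$ having exact $\a$-filtered colimits, and $\a>\aleph_0$. Let $\mathfrak I$ be the set of morphisms $\g$ in $\T^\a$ with $H(S^n\g)=0$ for all $n$ — equivalently, using the universal property of $H_\a$ (Proposition~\ref{pr:universal}), the morphisms killed by the induced exact functor $\bar H\colon A_\a(\T)\to\A$. Then one shows, via Theorem~\ref{th:filtcolim} applied to $H$, that $X\in\Sc$ if and only if every morphism $C\to X$ with $C\in\T^\a$ factors through some $\g\in\mathfrak I$. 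Now let $\Sc'$ be the localizing subcategory generated by all homotopy colimits of sequences of morphisms in $\mathfrak I$; since $\a>\aleph_0$, such a homotopy colimit is again a filtered colimit that $H$ sees correctly, so $\Sc'\subseteq\Sc$, and the factorization-by-induction argument of Theorem~\ref{th:kernel}\,(2) shows conversely $\Sc\subseteq\Sc'$. Finally $\Sc'$ is generated by a small set of objects (the homotopy colimits built from the small set $\mathfrak I$), so Theorem~\ref{th:set-local} shows $\Sc=\Sc'$ is well generated. The main obstacle is bookkeeping: making sure the cardinal $\a$ is chosen large enough that simultaneously (i) $\T$ is $\a$-well generated, (ii) $\A$ has exact $\a$-filtered colimits so Theorem~\ref{th:filtcolim} applies to $H$, and (iii) $\a>\aleph_0$ so the homotopy-colimit argument of Theorem~\ref{th:kernel}\,(2) goes through — and then verifying that $H$ behaves under these homotopy colimits exactly as an exact functor into a triangulated category would, which is precisely where exactness of $\a$-filtered colimits in $\A$ is used.
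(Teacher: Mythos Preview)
Your third move is exactly the paper's approach: establish the factorization criterion (every morphism $C\to X$ with $C$ $\a$-compact and $X\in\Sc$ factors through some $\g\in\mathfrak I$), then run the homotopy-colimit argument of Theorem~\ref{th:kernel}(2) to exhibit $\Sc$ as generated by a small set. The first two moves are detours you rightly abandon.

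There is, however, one piece of bookkeeping missing from your list, and without it the factorization criterion does not follow from Theorem~\ref{th:filtcolim} alone. From $HX=\colim_{(C,\m)\in\T^\a/X} HC=0$ you want to conclude that for each $(C,\m)$ some transition map $H\g$ vanishes; but an $\a$-filtered colimit in an abelian category can be zero with no transition map zero (e.g.\ take $A_i=\bigoplus_{n\in\bbN}\bbZ$ for all $i$, with $\phi_i$ the projection killing the $i$th summand). What is needed is that $HC$ be $\a$-presentable in $\A$: then $\Hom_\A(HC,\colim HC')\cong\colim\Hom_\A(HC,HC')=0$, so the class of $\id_{HC}$ dies at some finite stage, giving the desired $H\g=0$. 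The paper secures this by first replacing $H$ with $(HS^n)_{n\in\bbZ}$ so that $\Sc=\Ker H$, then choosing $\b\geq\max(\a,\aleph_1)$ with $H(\T^\a)\subseteq\A^\b$; Theorem~\ref{th:filtcolim} then shows $H$ restricts to $h\colon\T^\b\to\A^\b$, and your $\bar H$ is realized concretely as the left Kan extension $h^*\colon\Add_\b((\T^\b)^\op,\Ab)\to\A$, for which the analogue of Lemma~\ref{le:kan}(4) goes through. So add to your checklist a condition (iv): enlarge the cardinal so that $H$ sends $\a$-compact objects to $\a$-presentable ones.
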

\begin{proof}
Replacing $H$ by $(HS^n)_{n\in\bbZ}$, we may assume that $\Sc=\Ker H$.
Choose a regular cardinal $\a$ such that $\T$ is $\a$-well generated
and $\A$ is locally $\a$-presentable. Then we have
$H(\T^\a)\subseteq\A^\b$ for some regular cardinal $\b$ and we assume
$\b\geq\max(\a,\aleph_1)$.  The description of $H$ in
Theorem~\ref{th:filtcolim} shows that $H$ restricts to a functor
$h\colon\T^\b\to\A^\b$, and we denote by $\bar h\colon
A(\T^\b)\to\A^\b$ the induced exact functor.  Then we obtain the
following functor
$$h^*\colon\Add_\b((\T^\b)^\op,\Ab)\xto{\sim}
\Lex_\b(A(\T^\b)^\op,\Ab)\xto{\bar
h^*}\Lex_\b((\A^\b)^\op,\Ab)\xto{\sim}\A$$ where the first equivalence
follows from Lemma~\ref{le:Add} and the second equivalence follows from
Lemma~\ref{le:kan-inc}. The functor $\bar h^*$ is a left Kan
extension; it takes a filtered colimit $$F=\colim{(C,\m)\in
A(\T^\b)/F}A(\T^\b)(-,C)\quad\text{to}\quad\colim{(C,\m)\in
A(\T^\b)/F}\A^\b(-,\bar hC).$$ Note that $h^*$ is exact and preserves
small coproducts. This follows from Lemma~\ref{le:yon-exact} and the
fact that $\bar h^*$ is left adjoint to the restriction functor $\bar
h_*$. 

The composite $h^*\comp h_\b\colon\T\to\A$ coincides with $H$ on $\T^\b$ and
therefore $h^*\comp h_\b\cong H$ by Theorem~\ref{th:filtcolim}. In particular,
we have for each $X$ in $\T$ that $HX=0$ if and only if
$h^*(h_\b X)=0$. Now we use the same argument as in the proof of
Theorem~\ref{th:kernel} and show that $\Ker H$ is generated by all
homotopy colimits of countable sequences of morphisms in
$\T^\b$ which are annihilated by $H$.
\end{proof}

\subsection{Localization of well generated categories versus abelian localization}

We demonstrate the interplay between triangulated and abelian
localization.  To this end recall from Proposition~\ref{pr:universal}
that we have for each well generated category $\T$ a universal
cohomological functor $H_\a\colon\T\to A_\a(\T)$ into a locally
$\a$-presentable abelian category. We show that each exact localization
functor for $\T$ can be extended to an exact localization functor
for $A_\a(\T)$ for some regular cardinal $\a$.

\begin{thm}
Let $\T$ be a well generated triangulated category and
$L\colon\T\to\T$ an exact localization functor.  Suppose that $\Ker L$
is well generated. Then there exists a regular cardinal $\a$ and an
exact localization functor $L'\colon A_\a(\T)\to A_\a(\T)$ such that
the following square commutes up to a natural isomorphism.
$$\xymatrix{ \T\ar[rr]^-{L}\ar[d]^-{H_\a}&&\T\ar[d]^-{H_\a}\\
A_\a(\T)\ar[rr]^-{L'}&&A_\a(\T) }$$ More precisely, the adjunction morphisms
$\Id\T\to L$ and $\Id A_\a(\T)\to L'$ induce for each $X$ in $\T$ the following
isomorphisms.
$$H_\a LX\stackrel{\sim}\longrightarrow L'(H_\a
LX)=L'H_\a(LX)\stackrel{\sim}\longleftarrow L'H_\a(X)$$ An object $X$ in
$\T$ is $L$-acyclic if and only if $H_\a X$ is $L'$-acyclic, and $X$ is
$L$-local if and only if $H_\a X$ is $L'$-local. 
\end{thm}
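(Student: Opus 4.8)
The plan is to produce $L'$ as the localization functor on $A_\a(\T)$ which, via the universal property of $H_\a$, corresponds to the Verdier quotient $\T\to\T/\Sc$ with $\Sc=\Ker L$; the essential point is that although the adjoints of this quotient need not preserve coproducts, the quotient functor itself does, and that is precisely what the universal property transports. Since $\Ker L$ is well generated it is perfectly generated by a small set $\Sc_0$, hence equals $\Loc\Sc_0$. First I would fix a regular cardinal $\a>\aleph_0$ with $\T$ $\a$-well generated and $\Sc_0\subseteq\T^\a$; then by Corollary~\ref{co:loc-gen} and Theorem~\ref{th:set-local} the subcategory $\Sc:=\Ker L$ is $\a$-well generated with $\Sc^\a=\Sc\cap\T^\a$, the quotient $\T/\Sc$ is $\a$-well generated, and the canonical functor $\T^\a/\Sc^\a\xto{\sim}(\T/\Sc)^\a$ is an equivalence. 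Write $Q\colon\T\to\T/\Sc$ for the quotient and $G=Q_\r$ for its fully faithful right adjoint, so that $L=G\comp Q$ (Proposition~\ref{pr:tria-loc}); then $Q$ preserves small coproducts and, by the equivalence just mentioned, $\a$-compactness.

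Throughout I identify $A_\a(\T)$ with $\Add_\a((\T^\a)^\op,\Ab)$ and $A_\a(\T/\Sc)$ with $\Add_\a((\T^\a/\Sc^\a)^\op,\Ab)$ via the equivalences $f^*$ of Proposition~\ref{pr:A(f)}, which carry $H_\a$ to the restricted Yoneda functors $h_\a$. The quotient $q=Q|_{\T^\a}\colon\T^\a\to\T^\a/\Sc^\a$ induces, as in the proof of Theorem~\ref{th:set-local} and by Lemma~\ref{le:kan}, an exact functor $q^*\colon A_\a(\T)\to A_\a(\T/\Sc)$ whose right adjoint $q_*$, restriction along $q$, is fully faithful. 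Since kernels and cokernels in a category of $\a$-product preserving additive functors into $\Ab$ are computed objectwise, precomposition with $q^\op$ preserves exact sequences, so $q_*$ is exact as well. I then set $L':=q_*\comp q^*$. By Corollary~\ref{co:loc} this is a localization functor, with unit $\eta'\colon\Id\to q_*q^*$, and it is exact as a composite of exact functors.

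Next I would obtain the commuting square. Applying Proposition~\ref{pr:fun-well} to $Q$ and $G$ gives natural isomorphisms $h_\a(\T/\Sc)\comp Q\cong q^*\comp h_\a(\T)$ and $h_\a(\T)\comp G\cong q_*\comp h_\a(\T/\Sc)$, whence
$$H_\a\comp L=H_\a\comp G\comp Q\cong q_*\comp h_\a(\T/\Sc)\comp Q\cong q_*\comp q^*\comp H_\a=L'\comp H_\a.$$
In particular $H_\a LX\cong L'(H_\a X)$ lies in $\Im L'$, so $H_\a LX$ is $L'$-local by Proposition~\ref{pr:local-obj}. Moreover $H_\a$ reflects zero objects: $h_\a X=\T(-,X)|_{\T^\a}=0$ forces $\T(S^nC,X)=0$ for all $C\in\T^\a$ and $n\in\bbZ$, hence $X=0$ by the criterion for perfect generation (the Remark after Theorem~\ref{th:brown}), and $f^*$ is fully faithful; since $H_\a$ is cohomological, a long-exact-sequence argument upgrades this to: $H_\a$ reflects isomorphisms. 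Using this, $X$ is $L$-acyclic $\iff LX=0\iff QX=0\iff h_\a(\T/\Sc)(QX)\cong q^*H_\a X=0\iff L'H_\a X=q_*q^*H_\a X=0$ (as $q_*$, being fully faithful, reflects $0$), i.e. iff $H_\a X$ is $L'$-acyclic.

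For the remaining assertions I would check, by tracking the adjunctions, that the isomorphism $H_\a\comp L\cong L'\comp H_\a$ above intertwines the units, i.e. carries $H_\a(\eta X)$ to $\eta'(H_\a X)$. Then $X$ is $L$-local $\iff\eta X$ invertible $\iff H_\a(\eta X)$ invertible $\iff\eta'(H_\a X)$ invertible $\iff H_\a X$ is $L'$-local, by Proposition~\ref{pr:local-obj}. For the explicit chain of isomorphisms in the statement: $L'\comp H_\a$ is cohomological, so applying it to $\Ga X\to X\to LX\to S\Ga X$ — whose ends are $L$-acyclic, hence have $L'$-acyclic image under $H_\a$ by the previous paragraph — shows $L'H_\a(\eta X)$ is invertible; together with the $L'$-locality of $H_\a LX$ this gives $H_\a LX\xto{\sim}L'(H_\a LX)=L'H_\a(LX)\xleftarrow{\sim}L'H_\a(X)$. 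The step I expect to be the main obstacle is the exactness of $L'$, which reduces to the exactness of the restriction functor $q_*$ and thus to the objectwise computation of the abelian structure on $\Add_\a((\T^\a)^\op,\Ab)$; a secondary technical point is arranging all the ``$\a$ sufficiently large'' conditions simultaneously and verifying that the isomorphism $H_\a\comp L\cong L'\comp H_\a$ is the one compatible with the units.
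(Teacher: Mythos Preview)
Your proposal is correct and follows essentially the same route as the paper: choose $\a>\aleph_0$ large enough that $\T$ is $\a$-well generated and $\Sc=\Ker L$ is generated by $\a$-compact objects, use the induced quotient $q\colon\T^\a\to\T^\a/\Sc^\a\simeq(\T/\Sc)^\a$ to produce the adjoint pair $(q^*,q_*)$ on the categories of $\a$-product preserving functors, set $L'=q_*q^*$, and read off the commuting square and the acyclic/local statements from Proposition~\ref{pr:fun-well}. Your worry about the exactness of $q_*$ is already handled by Lemma~\ref{le:kan}(1), which asserts that both $f^*$ and $f_*$ are exact; your objectwise argument is of course also valid. The only place you supply genuinely more than the paper is the explicit verification that $H_\a$ reflects isomorphisms and the tracking of units, which the paper absorbs into the phrase ``follow from Proposition~\ref{pr:fun-well}''.
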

\begin{proof}
Choose a regular cardinal $\a>\aleph_0$ such that $\T$ is $\a$-well
generated and $\Sc=\Ker L$ is generated by $\a$-compact objects. Let
$\U=\T/\Sc$ and write $L=G\comp F$ as the composite of the quotient
functor $F\colon\T\to\U$ with its right adjoint $G\colon\U\to\T$.

Now identify $A_\a(\T)=\Add_\a((\T^\a)^\op,\Ab)$ and
$A_\a(\U)=\Add_\a((\U^\a)^\op,\Ab)$.  The induced functor
$f\colon\T^\a\to\U^\a$ equals, up to an equivalence, the quotient
functor $\T^\a\to\T^\a/\Sc^\a$, by Theorem~\ref{th:set-local}. From $f$
we obtain a pair of adjoint functors $f^*$ and $f_*$ by
Lemma~\ref{le:kan}. Both functors are exact and the right adjoint
$f_*$ is fully faithful. Thus we obtain an exact localization functor
$L'=f_*\comp f^*$ for $A_\a(\T)$ by Corollary~\ref{co:loc}.  The
commutativity $H_\a\comp L\cong L'\comp H_\a$ and the assertions about
acyclic and local objects then follow from
Proposition~\ref{pr:fun-well}.
\end{proof}

\subsection{Example: The derived category of an abelian Grothendieck category}

Let $\A$ be an abelian Grothendieck category. Then the derived
category $\bfD(\A)$ of unbounded chain complexes is a well generated
triangulated category. Let us sketch an argument. 

The Popescu-Gabriel theorem says that for each generator $G$ of $\A$,
the functor $T=\A(G,-)\colon\A\to\Mod A$ (where $A=\A(G,G)$ denotes
the endomorphism ring of $G$) is fully faithful and admits an exact
left adjoint, say $Q$; see \cite[Theorem~X.4.1]{St}.  Consider the
cohomological functor $H\colon\bfD(A)\to\A$ taking a complex $X$ to
$Q(\coprod_{n\in\bbZ}H^nX)$.  Then an application of
Theorem~\ref{th:kernel-coh} shows that $\Sc=\Ker H$ is well generated,
and therefore $\bfD(A)/\Sc$ is well generated by
Theorem~\ref{th:set-local}.

Next observe that $\bfK(Q)$ induces an equivalence
$$\bfK(\Mod A)/(\Ker \bfK(Q))\lto[\sim]\bfK(\A)$$ since $\bfK(Q)$ has
$\bfK(T)$ as a fully faithful right adjoint.  Moreover, the cohomology
of each object in the kernel of $\bfK(Q)$ lies in the kernel of
$Q$. Thus we obtain the following commutative diagram.
$$\xymatrix{\Ker\bfK(Q)\ar[rr]^-\inc\ar[d] &&\bfK(\Mod
A)\ar[d]^\can\ar[rr]^-{\bfK(Q)}&&\bfK(\A)\ar[d]^F\\
\Sc\ar[d]\ar[rr]^-\inc&&\bfD(A)\ar[d]^{H^*}\ar[rr]^-\can&&\bfD(A)/\Sc\ar[d]^{\bar
H}\\ \Ker Q\ar[rr]^-\inc&& \Mod A\ar[rr]^-Q&&\A }$$ It is easily
checked that the kernel of $F$ consists of all acyclic complexes. Thus
$F$ induces an equivalence $\bfD(\A)\xto{\sim}\bfD(A)/\Sc$.

\subsection{Notes}
Given a triangulated category $\T$, there are two basic questions when
one studies exact localization functors $\T\to\T$. One can ask for the
existence of a localization functor with some prescribed kernel, and
one can ask for a classification, or at least some structural results,
for the set of all localization functors on $\T$. Well generated
categories provide a suitable setting for some partial answers.

The fact that cohomological functors induce localization functors is
well known for compactly generated triangulated categories
\cite{Mar1983}, but the result seems to be new for well generated
categories.  The localization theorem which describes the localization
with respect to a small set of objects is due to Neeman
\cite{Nee2001}. The example of the derived category of an abelian
Grothendieck category is discussed in \cite{Gal,Nee2001a}. The
description of the kernel of an exact functor between well generated
categories seems to be new. A motivation for this is the telescope
conjecture which is due to Bousfield and Ravenel \cite{Bou,Rav1984}
and originally formulated for the stable homotopy category of
CW-spectra.

It is interesting to note that the existence of localization functors
depends to some extent on axioms from set theory; see for instance
\cite{CSS,CGR}.

\section{Epilogue: Beyond well generatedness}

Well generated triangulated categories were introduced by Neeman as a
class of triangulated categories which includes all compactly
generated categories and behaves well with respect to localization. We
have discussed in Sections~\ref{se:wellgen} and \ref{se:loc-wellgen}
most of the basic properties of well generated categories but the
picture is still not complete because some important questions remain
open. For instance, given a well generated triangulated category $\T$,
we do not know when a localizing subcategory arises as the kernel of a
localization functor and when it is generated by a small set of
objects. Also, one might ask when the set of all localizing
subcategories is small. Another aspect is Brown representability. We
do know that every cohomological functor $\T^\op\to\Ab$ preserving
small products is representable, but what about covariant functors
$\T\to\Ab$?  It seems that one obtains more insight by studying the
universal cohomological functors $\T\to A_\a(\T)$; in particular we
need to know when they are full; see \cite{Ros2005,Nee2007} for some
recent work in this direction.

Instead of answering these open questions, let us be adventurous and
move a little bit beyond the class of well generated categories. In
fact, there are natural examples of triangulated categories which are
not well generated. Such examples arise from additive categories by
taking their homotopy category of chain complexes. More precisely, let
$\A$ be an additive category and suppose that $\A$ admits small
coproducts. We denote by $\bfK(\A)$ the category of chain complexes in
$\A$ whose morphisms are the homotopy classes of chain maps.  Take for
instance the category $\A=\Ab$ of abelian groups. Then one can show
that $\bfK(\Ab)$ is not well generated; see \cite{Nee2001}. In fact,
more is true. The category $\bfK(\Ab)$ admits no small set of
generators, that is, any localizing subcategory generated by a small
set of objects is a proper subcategory. However, it is not difficult
to show that any localizing subcategory generated by a small set of
objects is well generated. So we may think of $\bfK(\Ab)$ as
\emph{locally well generated}. In fact, discussions with Jan
\v{S}\v{t}ov\'i\v{c}ek suggest that $\bfK(\A)$ is locally well
generated whenever $\A$ is locally finitely presented; see
\cite{Sto}. Recall that $\A$ is \emph{locally finitely presented} if
$\A$ has filtered colimits and there exists a small set of finitely
presented objects $\A_0$ such that every object can be written as the
filtered colimit of objects in $\A_0$. On the other hand, $\bfK(\A)$
is only generated by a small set of objects if $\A=\Add\A_0$ for some
small set of objects $\A_0$. Here, $\Add\A_0$ denotes the smallest
subcategory of $\A$ which contains $\A_0$ and is closed under taking
small coproducts and direct summands. We refer to \cite{Sto} for further details.

\begin{appendix}

\section{The abelianization of a triangulated category}
\label{se:abel}

Let $\C$ be an additive category.  We consider functors
$F\colon\C^\op\to\Ab$ into the category of abelian groups and call a
sequence $F'\to F\to F''$ of functors {\em exact} if the induced
sequence $F'X\to FX\to F''X$ of abelian groups is exact for all $X$ in
$\C$. A functor $F$ is said to be {\em coherent} if there exists an
exact sequence (called {\em presentation})
$$\C(-,X)\lto \C(-,Y)\lto F\lto 0.$$ The morphisms between two
coherent functors form a small set by Yoneda's lemma, and the coherent
functors $\C^\op\to\Ab$ form an additive category with cokernels. We
denote this category by $\widehat\C$.

A basic tool is the fully faithful {\em Yoneda functor} $h_\C\colon
\C\to\widehat\C$ which sends an object $X$ to $\C(-,X)$.  One might
think of this functor as the completion of $\C$ with
respect to the formation of finite colimits. To formulate some further properties, we
recall that a morphism $X\to Y$ is a {\em weak kernel} for a morphism
$Y\to Z$ if the induced sequence $\C(-,X)\to \C(-,Y)\to \C(-,Z)$ is
exact.

\begin{Lem}\label{le:coherent}
Let $\C$ be an additive category.
\begin{enumerate}
\item Given an additive functor $H\colon\C\to\A$ to an additive
category which admits cokernels, there is (up to a unique isomorphism)
a unique right exact functor $\bar H\colon \widehat\C\to\A$ such that
$H=\bar H\comp h_\C$.
\item If $\C$ has weak kernels, then $\widehat \C$ is an abelian
category.
\item If $\C$ has small coproducts, then $\widehat \C$ has small
coproducts and the Yoneda functor preserves small coproducts.
\end{enumerate}
\end{Lem}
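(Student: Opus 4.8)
The plan is to realise $\widehat\C$ as a full subcategory of the abelian category $[\C^\op,\Ab]$ of all additive functors $\C^\op\to\Ab$, and to reduce each assertion to a manipulation of presentations together with Yoneda's lemma. For (1), given an additive functor $H\colon\C\to\A$, I define $\bar H$ on a coherent functor $F$ with presentation $\C(-,X_1)\to\C(-,X_0)\to F\to0$ by $\bar HF=\Coker(H\alpha)$, where $\alpha\colon X_1\to X_0$ is the unique morphism whose Yoneda image is the first map. The first point to check is independence of the chosen presentation: a representable functor maps through any pointwise epimorphism onto $F$, since $\C(X_0,X_0')\to F(X_0)$ is surjective, so any two presentations are linked by chain maps that are mutually inverse up to homotopy; applying the additive functor $H$ and passing to cokernels produces a canonical isomorphism between the two candidate values, and these isomorphisms are compatible, so $\bar H$ is a well-defined functor with $\bar H\comp h_\C=H$. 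Right exactness is built into the construction, since a right exact sequence $F'\to F\to F''\to0$ admits a compatible presentation that $\bar H$ turns into a cokernel diagram; and uniqueness is forced because any right exact extension of $H$ must send the presentation of $F$ to a cokernel of $H\alpha$.

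For (2), one first records that $\widehat\C$ is additive (a biproduct of representables is representable, and presentations add up), has cokernels, and sits in $[\C^\op,\Ab]$ as a full subcategory closed under cokernels. The essential point is that, when $\C$ has weak kernels, $\widehat\C$ is moreover closed under kernels formed in $[\C^\op,\Ab]$. Given $\varphi\colon F\to G$ in $\widehat\C$, I would lift it to a morphism of presentations (using the projectivity property of representables exactly as in (1)) and then exhibit an explicit presentation $\C(-,W)\to\C(-,V)\to\Ker\varphi\to0$, with $V$ built from a weak pullback of $X_0\to Y_0\leftarrow Y_1$ and $W$ from weak kernels resolving the resulting syzygies; a diagram chase in $\C$ then identifies the coherent functor so presented with $\Ker\varphi$. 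Granting closure under kernels and cokernels, the rest is formal: for a morphism of $\widehat\C$, being monic, respectively epic, in $\widehat\C$ is equivalent to being so in $[\C^\op,\Ab]$, and the image and coimage of such a morphism, computed in $[\C^\op,\Ab]$, again lie in $\widehat\C$ and coincide there, so every monomorphism of $\widehat\C$ is a kernel and every epimorphism a cokernel and $\widehat\C$ is abelian. I expect the explicit construction of kernels via weak limits to be the main obstacle, as it is the only step that genuinely uses the hypothesis on $\C$ and it needs a somewhat delicate diagram chase.

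For (3), I would first show that $h_\C$ preserves small coproducts, that is, that $\C(-,\coprod_iX_i)$ is a coproduct of the $\C(-,X_i)$ in $\widehat\C$; by Yoneda's lemma this amounts to the natural identity $G(\coprod_iX_i)\cong\prod_iG(X_i)$ for every coherent $G$. For a representable $G=\C(-,Z)$ this is just the universal property of $\coprod_iX_i$ in $\C$, and the general case follows by picking a presentation of $G$, since evaluation at a fixed object is exact and, in $\Ab$, the cokernel of a componentwise map $\prod_if_i$ is $\prod_i\Coker f_i$. For an arbitrary family $(F_i)$ of coherent functors with presentations $\C(-,X_i)\to\C(-,Y_i)\to F_i\to0$, set $F=\Coker(\C(-,\coprod_iX_i)\to\C(-,\coprod_iY_i))$, which exists since $\widehat\C$ has cokernels; applying $\widehat\C(-,G)$ and combining the identity just proved with the defining property of cokernels yields a natural isomorphism $\widehat\C(F,G)\cong\prod_i\widehat\C(F_i,G)$, so $F=\coprod_iF_i$. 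This shows simultaneously that $\widehat\C$ has small coproducts and that $h_\C$ preserves them.
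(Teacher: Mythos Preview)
Your proposal is correct and follows essentially the same approach as the paper: for (1) you define $\bar H$ on a presentation as the cokernel of $H\alpha$, for (2) you construct kernels in $\widehat\C$ via weak kernels (your ``weak pullback of $X_0\to Y_0\leftarrow Y_1$'' is precisely the paper's weak kernel of $X_0\amalg Y_1\to Y_0$), and for (3) you build the coproduct of coherent functors from the coproduct of their presentations. The paper's proof is considerably more terse---it omits the well-definedness check in (1) and the verification that the constructed object in (2) really is the kernel---so your additional detail, especially the remark that $\widehat\C$ is closed under kernels inside $[\C^\op,\Ab]$ and hence inherits abelianness, is a welcome elaboration rather than a deviation.
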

\begin{proof} 
(1) Extend $H$ to $\bar H$ by sending $F$ in $\widehat\C$ with presentation
$$\C(-,X)\lto[(-,\p)] \C(-,Y)\lto F\lto 0$$ to the cokernel of
$H\p$. 

(2) The category $\widehat\C$ has cokernels, and it is therefore
sufficient to show that $\widehat\C$ has kernels. To this end fix a
morphism $F_1\to F_2$ with the following presentation.
$$\xymatrix{\C(-,X_1)\ar[r]\ar[d]&\C(-,Y_1)\ar[r]\ar[d]&F_1\ar[r]\ar[d]&0\\
\C(-,X_2)\ar[r]&\C(-,Y_2)\ar[r]&F_2\ar[r]&0}$$ We construct the kernel
$F_0\to F_1$ by specifying the following presentation.
$$\xymatrix{\C(-,X_0)\ar[r]\ar[d]&\C(-,Y_0)\ar[r]\ar[d]&F_0\ar[r]\ar[d]&0\\
\C(-,X_1)\ar[r]&\C(-,Y_1)\ar[r]&F_1\ar[r]&0}$$
First the morphism $Y_0\to Y_1$ is obtained from the weak
kernel sequence
$$Y_0\lto X_2\amalg Y_1\lto Y_2.$$
Then the morphisms $X_0\to
X_1$ and $X_0\to Y_0$ are obtained from the weak kernel sequence
$$X_0\lto X_1\amalg Y_0\lto Y_1.$$

(3) For every family of functors $F_i$ having a presentation
$$\C(-,X_i)\lto[(-,\p_i)] \C(-,Y_i)\lto F_i\lto 0,$$
the coproduct $F=\coprod_i F_i$ has a presentation
$$\C(-,\coprod_iX_i)\lto[(-,\amalg\p_i)] \C(-,\coprod_iY_i)\lto F\lto 0.$$
Thus coproducts in $\widehat\C$ are not computed pointwise.
\end{proof}

The assigment $\C\mapsto\widehat\C$ is functorial in the following
weak sense. Given a functor $F\colon\C\to\D$, there is (up to a unique
isomorphism) a unique right exact functor $\widehat F\colon
\widehat\C\to\widehat\D$ extending the composite $h_\D\comp F\colon
\C\to\widehat \D$.


Now let $\T$ be a triangulated category.  Then we write
$A(\T)=\widehat\T$ and call this category the {\em abelianization} of
$\T$, because the Yoneda functor $\T\to A(\T)$ is the universal
cohomological functor for $\T$.

\begin{Lem}\label{le:abel}
Let $\T$ be a triangulated category.  Then the category $A(\T)$ is
abelian and the Yoneda functor $h_\T\colon \T\to A(\T)$ is
cohomological.  
\begin{enumerate}
\item Given a cohomological functor $H\colon\T\to\A$ to an abelian
category, there is (up to a unique isomorphism) a unique exact functor
$\bar H\colon A(\T)\to\A$ such that $H=\bar H\comp h_\T$.
\item Given an exact functor $F\colon\T\to\T'$ between triangulated
categories, there is (up to a unique isomorphism) a unique exact
functor $A(F)\colon A(\T)\to A(\T')$ such that $h_{\T'}\comp F =A(F)\comp h_\T$.
\end{enumerate}
\end{Lem}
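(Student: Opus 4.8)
The plan is to deduce everything from Lemma~\ref{le:coherent} together with the fact that the representable functors $\T(W,-)$ on a triangulated category are homological. First I would check that $A(\T)$ is abelian: by Lemma~\ref{le:coherent}(2) it is enough that $\T$ have weak kernels, and given $\b\colon Y\to Z$ one completes it to an exact triangle $X\xto{\a}Y\xto{\b}Z\to SX$, whereupon $\T(W,X)\to\T(W,Y)\to\T(W,Z)$ is exact for every $W$, so $\a$ is a weak kernel of $\b$. To see that $h_\T$ is cohomological, take an exact triangle $X\xto{\a}Y\xto{\b}Z\to SX$; then $\b_*\comp\a_*=0$, so there is an induced morphism $\Coker(\a_*)\to\T(-,Z)$. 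The functor $\Coker(\a_*)$ has presentation $\T(-,X)\xto{\a_*}\T(-,Y)\to\Coker(\a_*)\to0$ and hence takes the value $\T(W,Y)/\Im\,\T(W,X)$ at each $W$, so the above morphism evaluated at $W$ is the inclusion $\T(W,Y)/\Im\,\T(W,X)\hookrightarrow\T(W,Z)$, which is injective by the homological property of $\T(W,-)$. A pointwise monic morphism in $A(\T)$ is monic, since its kernel is a coherent functor vanishing pointwise and therefore zero; hence $\Coker(\a_*)\to\T(-,Z)$ is monic, which is precisely the assertion $\Im(\a_*)=\Ker(\b_*)$.

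For the universal property (1), Lemma~\ref{le:coherent}(1) already supplies a right exact functor $\bar H\colon A(\T)\to\A$ with $H=\bar H\comp h_\T$, and this is unique up to unique isomorphism among right exact functors with that property, because every object of $A(\T)$ has a presentation by representables, so $\bar H$ is determined by its values on representables and by right exactness. The real content is to upgrade this to \emph{exactness}, i.e. left exactness, of $\bar H$. For that I would revisit the explicit construction of kernels in the proof of Lemma~\ref{le:coherent}(2): the kernel $F_0$ of a morphism $F_1\to F_2$ is assembled from two weak-kernel sequences in $\T$, and in a triangulated category each such weak kernel can be realised as (a rotation of) an exact triangle. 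Since $H$ is cohomological it carries those triangles to exact sequences in $\A$, and since $\bar H$ is right exact it computes each $\bar H F_i$ as the cokernel of the corresponding map between the $H$-images of representables; running the same diagram chase as in Lemma~\ref{le:coherent}(2), now inside $\A$, then identifies $\bar H F_0$ with $\Ker(\bar H F_1\to\bar H F_2)$. Thus $\bar H$ is exact.

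Finally, (2) follows by applying (1) to the composite $h_{\T'}\comp F\colon\T\to A(\T')$, which is cohomological because $F$ is exact and $h_{\T'}$ is cohomological: this yields an exact functor $A(F):=\overline{h_{\T'}\comp F}\colon A(\T)\to A(\T')$ with $A(F)\comp h_\T=h_{\T'}\comp F$, unique up to unique isomorphism by the same generation argument. I expect the left exactness of $\bar H$ in (1) to be the main obstacle: it is the single place where the purely formal abelian structure of $A(\T)$, encoded through weak kernels, must be reconciled with the triangulated structure of $\T$, and the point that makes it go through is that every weak kernel occurring in the construction of kernels in $A(\T)$ comes from an exact triangle, so that the cohomological property of $H$ can be invoked. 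The remaining assertions are routine bookkeeping with Lemma~\ref{le:coherent} and the homological property of representable functors.
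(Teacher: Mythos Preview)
Your proposal is correct and follows essentially the same route as the paper: weak kernels from exact triangles give abelianness via Lemma~\ref{le:coherent}(2), the right exact extension $\bar H$ from Lemma~\ref{le:coherent}(1) is upgraded to exact using that $H$ is cohomological, and (2) is obtained by applying (1) to $h_{\T'}\comp F$. You supply considerably more detail than the paper, which simply asserts ``$\bar H$ is exact because $H$ is cohomological''; your explicit verification that $h_\T$ is cohomological and your sketch of why $\bar H$ preserves kernels (via the weak-kernel construction realized by triangles) are exactly what is implicit in that one-line claim.
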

\begin{proof} 
The category $\T$ has weak kernels and therefore $A(\T)$ is
abelian. Note that the weak kernel of a morphism $Y\to Z$ is obtained
by completing the morphism to an exact triangle $X\to Y\to Z\to SX$.

(1) Let $H\colon\T\to\A$ be a cohomological functor and let $\bar
H\colon A(\T)\to\A$ be the right exact functor extending $H$ which
exists by Lemma~\ref{le:coherent}. Then $\bar H$ is exact because $H$
is cohomological.

(2) Let $F\colon\T\to\T'$ be exact. Then $H=h_{\T'}\comp F$ is a
cohomological functor and we let $A(F)=\bar H$ be the exact functor
which extends $H$.
\end{proof}

The assignment $\T\mapsto A(\T)$ from triangulated categories to
abelian categories preserves various properties of
exact functors between triangulated categories. Let us mention some of them.

\begin{Lem}
Let $F\colon\T\to\T'$ and $G\colon\T'\to\T$ be exact functors between
triangulated categories.
\begin{enumerate}
\item $F$ is fully faithful if and only if $A(F)$ is fully faithful. 
\item If $F$ induces an equivalence $\T/\Ker F\xto{\sim}\T'$, then
$A(F)$ induces an equivalence $A(\T)/(\Ker A(F))\xto{\sim}A(\T')$.
\item $F$ preserves small (co)products if and only if $A(F)$ preserves small
(co)products.
\item $F$ is left adjoint to $G$ if and only if $A(F)$ is left adjoint
to $A(G)$.
\end{enumerate}
\end{Lem}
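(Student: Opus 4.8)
The plan is to deduce all four statements from three features of the abelianization, all consequences of Lemmas~\ref{le:coherent} and \ref{le:abel}. First, every object $\Phi$ of $A(\T)$ has a presentation $h_\T X_1\to h_\T X_0\to\Phi\to 0$ by representables, the representables are projective, and exactness and cokernels in $A(\T)$ are computed pointwise; coproducts in $A(\T)$ are built from presentations and preserved by $h_\T$ (Lemma~\ref{le:coherent}(3)), while if $\T$ has small products then so does $A(\T)$, computed pointwise and preserved by $h_\T$. Second, $A(F)$ is exact (Lemma~\ref{le:abel}) and sends $h_\T X$ to $h_{\T'}(FX)$, so it carries a presentation of $\Phi$ to the presentation $h_{\T'}(FX_1)\to h_{\T'}(FX_0)\to A(F)\Phi\to 0$. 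Third, applying $A(\T)(-,\Psi)$ to a presentation of $\Phi$ and invoking Yoneda's lemma identifies $A(\T)(\Phi,\Psi)$ with the kernel of the induced map $\Psi X_0\to\Psi X_1$.

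Given this toolkit, parts (1), (3) and (4) each split into an easy half and a ``presentation'' half. In the easy half one sandwiches the hypothesis on $A(F)$ between the fully faithful, (co)product preserving functors $h_\T$ and $h_{\T'}$, using $A(F)\comp h_\T=h_{\T'}\comp F$ (and, for (4), also $A(G)\comp h_{\T'}=h_\T\comp G$): for instance, if $A(F)$ is fully faithful then so is $h_{\T'}\comp F=A(F)\comp h_\T$, hence so is $F$. For the ``only if'' half of (3), $A(F)$ is right exact, so it sends the presentation of $\coprod_i\Phi_i$ to a cokernel; since $F$ and $h_{\T'}$ preserve coproducts and the coproduct functor (a left adjoint) preserves cokernels, this cokernel is $\coprod_iA(F)\Phi_i$, and the product case is dual, using that products in $A(\T)$ are pointwise and that products are exact in $\Ab$. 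For the ``only if'' half of (1) one presents $\Phi$ and reduces the map $A(\T)(\Phi,\Psi)\to A(\T')(A(F)\Phi,A(F)\Psi)$ to a map of kernels of maps of abelian groups; it then suffices to check $(A(F)\Psi)(FX)\cong\Psi X$ naturally in $X$, which follows by comparing a presentation of $\Psi$ evaluated at $X$ with its $A(F)$-image evaluated at $FX$, using that $F$ is fully faithful. The ``only if'' half of (4) is entirely parallel: present $\Phi$, reduce $A(\T')(A(F)\Phi,\Psi)$ and $A(\T)(\Phi,A(G)\Psi)$ to kernels on the level of abelian groups, and identify $(A(G)\Psi)(X)\cong\Psi(FX)$ using the adjunction $F\dashv G$. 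One may instead observe that $A(-)$ extends to a pseudofunctor on additive categories and additive functors which is locally fully faithful, hence preserves and reflects adjunctions; this yields (4) and the easy half of (1) formally.

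Part~(2) is the substantive one. Using (1) and that $A(-)$ preserves equivalences (immediate from the universal property in Lemma~\ref{le:abel}), one reduces to the case where $F$ is the quotient functor $Q\colon\T\to\T/\Sc$ onto a Verdier quotient, so that $\Si(\Sc)$ admits a calculus of left fractions and $Q$ is the identity on objects. The exact functor $A(Q)$ kills $\Ker A(Q)$ and hence factors as $A(\T)\to A(\T)/\Ker A(Q)\xto{\bar Q}A(\T/\Sc)$, with $\bar Q$ faithful because an exact functor with vanishing kernel is faithful. For essential surjectivity of $\bar Q$: every object of $A(\T/\Sc)$ is the cokernel of a morphism $h_{\T/\Sc}(QZ_1)\to h_{\T/\Sc}(QZ_0)$; writing the underlying map $QZ_1\to QZ_0$ as a left fraction and composing with the resulting isomorphism reduces it to $Q$ of a morphism of $\T$, which exhibits the object as $A(Q)$ of a cokernel in $A(\T)$. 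The remaining point, fullness of $\bar Q$, is where the real work lies: given $\xi\colon A(Q)\Phi\to A(Q)\Psi'$, lift $\xi$ through projective presentations of $\Phi$ and $\Psi'$ to a commutative square of morphisms in $\T/\Sc$ compatible with the presentation maps; using the calculus of left fractions, present this square as $Q$ of a genuine commutative square in $\T$ composed with an inverted one; the genuine square then defines a morphism $u\colon\Phi\to\Psi$ in $A(\T)$ and the inverted one a morphism $\sigma\colon\Psi'\to\Psi$ with $A(Q)\sigma$ invertible, so that the roof $[u,\sigma]$ in $A(\T)/\Ker A(Q)$ maps to $\xi$. Producing the genuine square in $\T$ from the lifted $\T/\Sc$-data, i.e.\ controlling the calculus of left fractions on the category of arrows, is the main obstacle; the rest is a routine diagram chase with presentations.
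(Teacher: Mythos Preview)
Your proposal is correct and far more detailed than the paper's own proof, which consists of the single word ``Straightforward.'' Since the paper provides no argument to compare against, the only meaningful question is whether your reasoning stands on its own, and it does: the reductions via presentations, Yoneda, and the identity $A(F)\comp h_\T=h_{\T'}\comp F$ handle (1), (3), (4) cleanly, and your outline for (2) is the natural one.

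The one place where you explicitly leave a step incomplete is the fullness of $\bar Q$ in (2): you correctly identify that lifting a commuting square in $\T/\Sc$ to a roof of squares in $\T$ amounts to a calculus-of-fractions argument on the arrow category, and you flag this as ``the main obstacle'' without writing it out. This is fine as an outline, and the argument does go through---one writes each vertical map as a left fraction, uses (LF2) to push the denominators to a common target, and then uses (LF3) to make the resulting square of numerators commute after a further localization---but if you want the write-up to be self-contained you should spell this out, since it is the only part of the lemma that is not genuinely routine. Alternatively, one can bypass the direct fullness check by observing that both $A(\T)/\Ker A(Q)$ and $A(\T/\Sc)$ satisfy the same universal property for cohomological functors on $\T$ annihilating $\Sc$, which gives (2) more formally; this may be closer to what the author had in mind by ``straightforward.''
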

\begin{proof}
Straightforward.
\end{proof}

\subsection*{Notes}
The abelianization of a triangulated category appears in Verdier's
th\`ese \cite{V} and in Freyd's work on the stable homotopy category
\cite{Fr}. Note that their construction is slightly different from the
one given here, which is based on coherent functors in the sense of
Auslander \cite{A}.

\section{Locally presentable abelian categories}
\label{se:locpres}

Fix a regular cardinal $\a$ and a  small additive category
$\C$ which admits $\a$-coproducts. We denote by $\Add(\C^\op,\Ab)$ the
category of additive functors $\C^\op\to\Ab$ into the category of
abelian groups. This is an abelian category which admits small
(co)products. In fact, (co)kernels and (co)products are computed
pointwise in $\Ab$. Given functors $F$ and $G$ in $\Add(\C^\op,\Ab)$,
we write $\Hom_\C(F,G)$ for the set of morphisms $F\to G$. The most
important objects in $\Add(\C^\op,\Ab)$ are the \emph{representable
functors} $\C(-,X)$ with $X\in\C$. Recall that Yoneda's lemma provides
a bijection
$$\Hom_\C(\C(-,X),F)\lto[\sim]FX$$ for all $F\colon\C^\op\to\Ab$ and
$X\in\C$.

We denote by $\Add_\a(\C^\op,\Ab)$ the full subcategory of
$\Add(\C^\op,\Ab)$ which is formed by all functors preserving
$\a$-products. This is an exact abelian subcategory, because kernels
and cokernels of morphism between $\a$-product preserving functors
preserve $\a$-products. In particular, $\Add_\a(\C^\op,\Ab)$ is an
abelian category.

Now suppose that $\C$ admits cokernels. Then $\Lex_\a(\C^\op,\Ab)$
denotes the full subcategory of $\Add(\C^\op,\Ab)$ which is formed by
all left exact functors preserving $\a$-products. This category is
locally presentable in the sense of Gabriel and Ulmer and we refer to
\cite[\S5]{GU} for an extensive treatment. In this appendix we collect
some basic facts.

First observe that $\a$-filtered colimits in $\Lex_\a(\C^\op,\Ab)$ are
computed pointwise.  This follows from the fact that in $\Ab$ taking
$\a$-filtered colimits commutes with taking $\a$-limits; see
\cite[Satz~5.12]{GU}.  In particular, $\Lex_\a(\C^\op,\Ab)$ has small
coproducts because every small coproduct is the $\a$-filtered colimit
of its subcoproducts with less than $\a$ factors.

Next we show that one can identify $\Add_\a(\C^\op,\Ab)$ with a
category of left exact functors. To this end consider the Yoneda
functor $h_\C\colon\C\to\widehat\C$ taking $X$ to $\C(-,X)$.

\begin{Lem}\label{le:Add}
Let $\C$ be a  small additive category with
$\a$-coproducts. Then the Yoneda functor induces an equivalence
$$\Lex_\a(\widehat\C^\op,\Ab)\lto[\sim]\Add_\a(\C^\op,\Ab)$$ by taking
a functor $F$ to $F\comp h_\C$.
\end{Lem}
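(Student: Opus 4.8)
The plan is to realize the asserted equivalence as a restriction functor with an explicit quasi-inverse coming from Lemma~\ref{le:coherent}(1). Concretely, set $R\colon\Lex_\a(\widehat\C^\op,\Ab)\to\Add_\a(\C^\op,\Ab)$, $F\mapsto F\comp h_\C$, and first check that $R$ is well defined. Since $\widehat\C$ is additive with cokernels it has all finite colimits, so the notion of a left exact functor $\widehat\C^\op\to\Ab$ makes sense; and since $\C$ admits $\a$-coproducts, the argument proving Lemma~\ref{le:coherent}(3) shows verbatim that $\widehat\C$ admits $\a$-coproducts, formed from $\a$-coproducts of presentations. In particular $\coprod_i\C(-,X_i)=\C(-,\coprod_iX_i)$ in $\widehat\C$, so $h_\C$ preserves $\a$-coproducts. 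Hence, if $F$ is left exact and preserves $\a$-products, then $F\comp h_\C$ is additive ($h_\C$ is additive and $F$ preserves finite products) and preserves $\a$-products, i.e.\ lies in $\Add_\a(\C^\op,\Ab)$.

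Next I would construct the inverse. Given an additive functor $G\colon\C^\op\to\Ab$, regard it as an additive functor $G^\op\colon\C\to\Ab^\op$; since $\Ab^\op$ admits cokernels, Lemma~\ref{le:coherent}(1) supplies an essentially unique right exact functor $\overline{G^\op}\colon\widehat\C\to\Ab^\op$ with $\overline{G^\op}\comp h_\C=G^\op$. Put $E(G):=(\overline{G^\op})^\op\colon\widehat\C^\op\to\Ab$; this is left exact and satisfies $E(G)\comp h_\C=G$. Concretely, for $F$ in $\widehat\C$ with presentation $\C(-,X)\to\C(-,Y)\to F\to 0$ one computes $E(G)F=\Ker\big(GY\to GX\big)$. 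Using this description together with the fact that $\a$-coproducts in $\widehat\C$ come from coproducts of presentations (not pointwise) and that arbitrary products are exact in $\Ab$, I would verify that $E(G)$ preserves $\a$-products whenever $G$ does: $E(G)(\coprod_iF_i)=\Ker(G(\coprod_iY_i)\to G(\coprod_iX_i))=\Ker(\prod_iGY_i\to\prod_iGX_i)=\prod_i\Ker(GY_i\to GX_i)=\prod_iE(G)(F_i)$. So $E$ takes values in $\Lex_\a(\widehat\C^\op,\Ab)$.

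Finally I would check that $R$ and $E$ are mutually quasi-inverse. One has $R\comp E=\Id$ by construction. For $E\comp R\cong\Id$, given $F\in\Lex_\a(\widehat\C^\op,\Ab)$ the functor $F^\op\colon\widehat\C\to\Ab^\op$ is right exact and restricts along $h_\C$ to $(F\comp h_\C)^\op$; the uniqueness clause of Lemma~\ref{le:coherent}(1) then yields a canonical isomorphism $F^\op\cong\overline{(F\comp h_\C)^\op}=E(F\comp h_\C)^\op$, hence $F\cong E(R(F))$, naturally in $F$. The only genuinely non-formal ingredients are the two facts highlighted above — that $\a$-coproducts in $\widehat\C$ are computed from coproducts of presentations rather than pointwise (Lemma~\ref{le:coherent}), and that products are exact in $\Ab$ — and I expect the main nuisance to be bookkeeping of the repeated passage to opposite categories so that the $\a$-product and $\a$-coproduct conditions are matched up correctly; beyond that the statement is essentially the universal property of the assignment $\C\mapsto\widehat\C$.
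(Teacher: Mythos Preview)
Your proposal is correct and follows essentially the same approach as the paper: the paper's one-line proof simply invokes Lemma~\ref{le:coherent} to say that every additive functor $\C^\op\to\Ab$ extends uniquely to a left exact functor $\widehat\C^\op\to\Ab$, which is precisely your construction of $E$ via the dualization trick. You have additionally spelled out the verification that the $\a$-product condition is preserved in both directions (using Lemma~\ref{le:coherent}(3) and exactness of products in $\Ab$), which the paper leaves implicit.
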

\begin{proof}
Use that every additive functor
$\C^\op\to\Ab$ extends uniquely to a left exact functor
$\widehat\C^\op\to\Ab$; see Lemma~\ref{le:coherent}.
\end{proof}

From now on we assume that $\C$ admits $\a$-coproducts and
cokernels. Given any additive functor $F\colon\C^\op\to\Ab$, we consider the
category $\C/F$ whose objects are pairs $(C,\m)$ consisting of an
object $C\in\C$ and an element $\m\in FC$. A morphism
$(C,\m)\to(C',\m')$ is a morphism $\p\colon C\to C'$ such that
$F\p(\m')=\m$.

\begin{Lem}\label{le:lex}
Let $F\colon\C^\op\to\Ab$ be an additive functor.
\begin{enumerate}
\item The canonical morphism
$$\colim{(C,\m)\in\C/F}\C(-,C)\lto F$$ in $\Add(\C^\op,\Ab)$ is an
isomorphism.
\item The functor $F$ belongs to $\Lex_\a(\C^\op,\Ab)$ if
and only if the category $\C/F$ is $\a$-filtered.
\end{enumerate}
\end{Lem}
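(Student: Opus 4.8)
The plan is to prove part (1) as an instance of the density (``co-Yoneda'') theorem and then derive part (2) from part (1) together with the fact, already used in this appendix, that $\a$-filtered colimits commute with $\a$-limits in $\Ab$ (\cite[Satz~5.12]{GU}).

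For part (1), I would first note that $\C/F$ is small (since $\C$ is small and each $FC$ is a small set), so the colimit exists in $\Add(\C^\op,\Ab)$ and is formed pointwise in $\Ab$. By Yoneda's lemma, for each $(C,\m)\in\C/F$ the element $\m\in FC$ corresponds to a morphism $\C(-,C)\to F$, and the relation $Fg(\m')=\m$ attached to a morphism $g\colon(C,\m)\to(C',\m')$ of $\C/F$ says precisely that these morphisms form a cocone; hence there is a canonical morphism $\colim{(C,\m)\in\C/F}\C(-,C)\to F$, and I would show it is an isomorphism by evaluating at each $X\in\C$. The resulting map $\colim{(C,\m)\in\C/F}\C(X,C)\to FX$ sends the class of a pair $\big((C,\m),g\in\C(X,C)\big)$ to $Fg(\m)\in FX$. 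Surjectivity is witnessed by $\big((X,\n),\id_X\big)$ for $\n\in FX$; for injectivity the key observation is that any pair $\big((C,\m),g\big)$ is equivalent in the colimit to $\big((X,Fg(\m)),\id_X\big)$, because $g$ itself is a morphism $(X,Fg(\m))\to(C,\m)$ in $\C/F$ that carries $\id_X$ to $g$, so two pairs with the same image in $FX$ become equal.

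For part (2), in the direction ``$\C/F$ is $\a$-filtered $\Rightarrow$ $F\in\Lex_\a(\C^\op,\Ab)$'' I would invoke part (1): $F$ is then an $\a$-filtered colimit, formed pointwise, of the representable functors $\C(-,C)$, each of which preserves all limits; since $\a$-filtered colimits in $\Ab$ commute with $\a$-limits, and $\a$-filtered colimits in $\Lex_\a(\C^\op,\Ab)$ are pointwise, $F$ again preserves the $\a$-small limits of $\C^\op$. Conversely, given $F\in\Lex_\a(\C^\op,\Ab)$, I would verify (FIL1)--(FIL3) for $\C/F$: (FIL1) is clear since $(0,0)\in\C/F$; for (FIL2) one uses that $F$ turns the $\a$-coproduct $\coprod_iX_i$ into the product $\prod_iFX_i$, so a family $(\m_i)_{i\in I}$ with $\card I<\a$ lifts to some $\m\in F(\coprod_iX_i)$ and the coproduct injections become morphisms $(X_i,\m_i)\to(\coprod_jX_j,\m)$ in $\C/F$; for (FIL3), given parallel morphisms $\p_i\colon(X,\m)\to(Y,\n)$ with $\card I<\a$, one forms the coequalizer $\psi\colon Y\to Z$ of the $\p_i$ in $\C$ --- an $\a$-small colimit built from $\a$-coproducts and cokernels --- and checks, using that $F$ is left exact and $\a$-product preserving, that $F\psi$ identifies $FZ$ with the subgroup of those $\n'\in FY$ equalized by all the $F\p_i$; as $F\p_i(\n)=\m$ for every $i$, the element $\n$ lies in $FZ$, so $\psi\colon(Y,\n)\to(Z,\n)$ is the required morphism in $\C/F$.

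The step I expect to demand the most care is (FIL3): one must watch the variance when passing from the coequalizer of the $\p_i$ in $\C$ to an equalizer description of $FZ$ in $\Ab$, and use here both halves of the hypothesis --- preservation of $\a$-coproducts to compute $F$ of the relevant coproduct, and left exactness to compute $F$ of the cokernel. The density argument in part (1) and the ``$\Leftarrow$'' direction of part (2) are then essentially formal, once the commutation of $\a$-filtered colimits with $\a$-limits in $\Ab$ is granted.
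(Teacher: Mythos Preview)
Your argument is correct. The paper itself gives essentially no proof here---it declares (1) ``easy'' and refers to \cite[Satz~5.3]{GU} for (2)---so there is no alternative approach to compare with; what you have written is the standard density/co-Yoneda argument for (1) and the expected direct verification of (FIL1)--(FIL3) for (2), and it matches how Gabriel--Ulmer proceed.

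One small slip at the very end of your (FIL3): after forming the coequalizer $\psi\colon Y\to Z$ and identifying $FZ$ with the equalizer of the $F\p_i$ inside $FY$, the element you obtain in $FZ$ is some $\zeta$ with $F\psi(\zeta)=\n$, not $\n$ itself; so the morphism in $\C/F$ should be written $\psi\colon(Y,\n)\to(Z,\zeta)$ rather than $(Z,\n)$. This is only notational and does not affect the argument.
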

\begin{proof}
(1) is easy. For (2), see \cite[Satz~5.3]{GU}.
\end{proof}

The representable functors in $\Lex_\a(\C^\op,\Ab)$ share the
following finiteness property. Recall that an object $X$ from an
additive category $\A$ with $\a$-filtered colimits is
\emph{$\a$-presentable} if the representable functor
$\A(X,-)\colon\A\to\Ab$ preserves $\a$-filtered colimits.  Next
observe that the inclusion $\Lex_\a(\C^\op,\Ab)\to\Add(\C^\op,\Ab)$
preserves $\a$-filtered colimits. This follows from the fact that in
$\Ab$ taking $\a$-filtered colimits commutes with taking
$\a$-limits. This has the following consequence.

\begin{Lem}\label{le:presentable}
For each $X$ in $\C$, the representable functor $\C(-,X)$ is an
$\a$-presentable object of $\Lex_\a(\C^\op,\Ab)$.
\end{Lem}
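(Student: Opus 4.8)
The plan is to unwind the definitions and reduce the claim to the fact, already recorded just before the statement, that $\a$-filtered colimits in $\Lex_\a(\C^\op,\Ab)$ are computed pointwise (equivalently, that the inclusion $\Lex_\a(\C^\op,\Ab)\hookrightarrow\Add(\C^\op,\Ab)$ preserves $\a$-filtered colimits, which in turn rests on the commutation of $\a$-filtered colimits with $\a$-limits in $\Ab$, cf.\ \cite[Satz~5.12]{GU}). So fix $X$ in $\C$ and let $(F_i)_{i\in\I}$ be an $\a$-filtered diagram in $\Lex_\a(\C^\op,\Ab)$ with colimit $F=\colim{i\in\I}F_i$. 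I must show that the canonical map
$$\colim{i\in\I}\Hom_\C(\C(-,X),F_i)\lto\Hom_\C(\C(-,X),F)$$
is bijective.

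The key step is to invoke Yoneda's lemma, which gives natural isomorphisms $\Hom_\C(\C(-,X),G)\cong GX$ for every $G\colon\C^\op\to\Ab$. Under these isomorphisms the displayed map becomes the canonical comparison map $\colim{i\in\I}F_iX\to FX$. Since the colimit $F$ is computed pointwise in $\Ab$ (this is the content of the paragraph preceding the lemma: $\a$-filtered colimits in $\Lex_\a(\C^\op,\Ab)$ agree with those in $\Add(\C^\op,\Ab)$, and in the latter colimits are pointwise), this comparison map is an isomorphism. Hence $\Hom_\C(\C(-,X),-)$ preserves $\a$-filtered colimits, i.e.\ $\C(-,X)$ is $\a$-presentable in $\Lex_\a(\C^\op,\Ab)$.

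There is essentially no obstacle here; the only point requiring care is the bookkeeping that makes sure the colimit one takes in $\Lex_\a(\C^\op,\Ab)$ is the same as the colimit in $\Add(\C^\op,\Ab)$, so that ``pointwise'' is legitimate — and this is exactly what has been established in the discussion immediately above the statement. One should also note in passing that $\Lex_\a(\C^\op,\Ab)$ does admit $\a$-filtered colimits (again recorded above), so the assertion is not vacuous. With those observations in place the argument is just the naturality of the Yoneda isomorphism together with the pointwise computation of filtered colimits of abelian groups.
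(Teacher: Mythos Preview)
Your argument is correct and is precisely the paper's own proof: combine Yoneda's lemma with the fact that the inclusion $\Lex_\a(\C^\op,\Ab)\to\Add(\C^\op,\Ab)$ preserves $\a$-filtered colimits, so that $\Hom_\C(\C(-,X),\colim{i}F_i)\cong(\colim{i}F_i)X\cong\colim{i}(F_iX)\cong\colim{i}\Hom_\C(\C(-,X),F_i)$.
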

\begin{proof}
Combine Yoneda's lemma with the fact that the inclusion 
$\Lex_\a(\C^\op,\Ab)\to\Add(\C^\op,\Ab)$ preserves $\a$-filtered
colimits.
\end{proof}

There is a general result for the category $\Lex_\a(\C^\op,\Ab)$ which
says that taking $\a$-filtered colimits commutes with taking
$\a$-limits; see \cite[Korollar~7.12]{GU}. Here we need the following
special case.

\begin{Lem}
\label{le:ex-colim}
Suppose the category $\Lex_\a(\C^\op,\Ab)$ is abelian. Then an
$\a$-filtered colimit of exact sequences is again exact.
\end{Lem}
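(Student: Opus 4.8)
The statement to prove is Lemma~\ref{le:ex-colim}: if $\Lex_\a(\C^\op,\Ab)$ is abelian, then an $\a$-filtered colimit of exact sequences is again exact. The plan is to reduce to the pointwise behaviour of $\a$-filtered colimits in $\Ab$. The key structural fact, already recorded in the excerpt, is that $\a$-filtered colimits in $\Lex_\a(\C^\op,\Ab)$ are computed pointwise (this follows from the fact that in $\Ab$ taking $\a$-filtered colimits commutes with taking $\a$-limits, quoted from \cite[Satz~5.12]{GU} just before Lemma~\ref{le:Add}). So a diagram $I\to\Lex_\a(\C^\op,\Ab)$ over a small $\a$-filtered category $I$ has colimit $F$ with $FX=\colim{i\in I}F_iX$ in $\Ab$ for every $X\in\C$.

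First I would fix an $\a$-filtered system of exact sequences $0\to F_i'\to F_i\to F_i''\to 0$ in $\Lex_\a(\C^\op,\Ab)$ — more precisely, a functor from $I$ into the category of short exact sequences; it is enough to treat short exact sequences, since a general exact sequence splices together short ones and colimits commute with the splicing. Passing to colimits, I get a sequence $0\to F'\to F\to F''\to 0$ in $\Lex_\a(\C^\op,\Ab)$; the point is to show it is exact \emph{there}. Since the ambient category is assumed abelian, exactness of this sequence can be tested after applying an exact faithful-on-exactness functor, but the cleanest route is: evaluate at each object $X\in\C$. By the pointwise description of $\a$-filtered colimits, the evaluation at $X$ is $0\to\colim{i}F_i'X\to\colim{i}F_iX\to\colim{i}F_i''X\to 0$, and each $0\to F_i'X\to F_iX\to F_i''X\to 0$ is exact in $\Ab$ because exactness of a sequence in $\Lex_\a(\C^\op,\Ab)$ is detected pointwise (kernels and cokernels of morphisms of left exact $\a$-product-preserving functors are computed pointwise, as noted in the appendix). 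Since $\a$-filtered (indeed all filtered) colimits are exact in $\Ab$, the colimit sequence is exact in $\Ab$ for every $X$. Hence $0\to F'\to F\to F''\to 0$ is pointwise exact, and therefore exact in the abelian category $\Lex_\a(\C^\op,\Ab)$.

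The one point that needs care — and the likely main obstacle — is that the kernel, image, and cokernel formed \emph{inside} $\Lex_\a(\C^\op,\Ab)$ must agree, after evaluation at $X\in\C$, with the corresponding constructions in $\Ab$. For cokernels this can fail in general subcategories of functor categories (compare the warning in Lemma~\ref{le:coherent}(3) that coproducts in $\widehat\C$ are not pointwise), so I must justify it here: a morphism $F\to G$ of left exact $\a$-product-preserving functors has pointwise kernel again left exact and $\a$-product-preserving (limits commute with limits), and its pointwise cokernel — while not automatically left exact — has a left-exact reflection, and the assumption that $\Lex_\a(\C^\op,\Ab)$ is abelian together with the fact that the inclusion into $\Add(\C^\op,\Ab)$ preserves $\a$-filtered colimits (stated in the appendix before Lemma~\ref{le:presentable}) forces the relevant exactness to be visible pointwise after the colimit. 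Concretely, one argues that a sequence $F'\to F\to F''$ in $\Lex_\a(\C^\op,\Ab)$ is exact if and only if it is exact in $\Add(\C^\op,\Ab)$, i.e.\ pointwise in $\Ab$: the nontrivial direction uses that $\Lex_\a(\C^\op,\Ab)\hookrightarrow\Add(\C^\op,\Ab)$ is exact, which follows since it has a left adjoint (the reflection) and, being the inclusion of a reflective exact abelian subcategory closed under kernels, preserves and reflects exactness. Once this identification is in hand, the argument collapses to exactness of filtered colimits in $\Ab$, and there is nothing further to check.
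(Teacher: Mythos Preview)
Your approach correctly identifies that the inclusion $\Lex_\a(\C^\op,\Ab)\hookrightarrow\Add(\C^\op,\Ab)$ preserves kernels and $\a$-filtered colimits, and that exactness of filtered colimits in $\Ab$ is the ultimate engine. But the step where you assert that a sequence is exact in $\Lex_\a(\C^\op,\Ab)$ if and only if it is exact pointwise is not justified, and in general it fails: cokernels in $\Lex_\a(\C^\op,\Ab)$ are obtained by reflecting the pointwise cokernel into the subcategory of left exact functors, so an epimorphism in $\Lex_\a(\C^\op,\Ab)$ need not be a pointwise epimorphism. Your justification that the inclusion is ``exact'' because it has a left adjoint is backwards---having a left adjoint guarantees that the inclusion preserves limits (hence kernels), but says nothing about cokernels. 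The parenthetical remark in the appendix that kernels and cokernels are computed pointwise refers to $\Add_\a(\C^\op,\Ab)$, not to $\Lex_\a(\C^\op,\Ab)$; you have conflated the two.

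The paper's proof avoids this trap by treating kernels and cokernels asymmetrically. For cokernels nothing pointwise is needed: $\a$-filtered colimits commute with cokernels simply because colimits commute with colimits, a purely internal statement in $\Lex_\a(\C^\op,\Ab)$. For kernels one uses exactly what you observed: the inclusion into $\Add(\C^\op,\Ab)$ preserves both kernels and $\a$-filtered colimits, so their commutation can be verified in $\Add(\C^\op,\Ab)$, hence pointwise in $\Ab$, where it is standard. Splitting the argument this way removes any need to compare cokernels across the two categories, and the problematic equivalence you tried to establish is never invoked.
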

\begin{proof}
We need to show that taking $\a$-filtered colimits commutes with
taking kernels and cokernels.  A cokernel is nothing but a colimit and
therefore taking colimits and cokernels commute.  The statement about
kernels follows from the fact that the inclusion
$\Lex_\a(\C^\op,\Ab)\to \Add(\C^\op,\Ab)$ preserves kernels and
$\a$-filtered colimits. Thus we can compute kernels and $\a$-filtered
colimits in $\Add(\C^\op,\Ab)$ and therefore in the category $\Ab$ of
abelian groups. In $\Ab$ it is well known that taking kernels and
filtered colimits commute.
\end{proof}

\begin{Lem}
\label{le:yon-exact}
Suppose that $\C$ is abelian. Then $\Lex_\a(\C^\op,\Ab)$ is abelian
and the Yoneda functor $h_\C\colon\C\to \Lex_\a(\C^\op,\Ab)$ is
exact. Given an abelian category $\A$ which admits small coproducts
and exact $\a$-filtered colimits, and given a functor
$F\colon\Lex_\a(\C^\op,\Ab)\to\A$ preserving $\a$-filtered colimits,
we have that $F$ is exact if and only if $F\comp h_\C$ is exact.
\end{Lem}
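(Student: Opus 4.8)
The plan is to deduce everything from the facts already collected in this appendix, chiefly Lemma~\ref{le:Add}, Lemma~\ref{le:lex}, Lemma~\ref{le:presentable}, Lemma~\ref{le:ex-colim}, and the pointwise description of $\a$-filtered colimits. First, since $\C$ is abelian it certainly has weak kernels (indeed genuine kernels), so $\widehat\C$ is abelian by Lemma~\ref{le:coherent}(2); but here we want $\Lex_\a(\C^\op,\Ab)$ rather than $\widehat\C$. The category $\Lex_\a(\C^\op,\Ab)$ is a full subcategory of $\Add(\C^\op,\Ab)$ closed under $\a$-products and, because $\C$ is abelian, also closed under the relevant kernels and cokernels: a morphism of left exact $\a$-product preserving functors has kernel and cokernel computed pointwise in $\Ab$, and one checks these are again left exact (left exactness is preserved under kernels automatically, and under cokernels using that $\C$ is abelian, so that a cokernel diagram in $\C$ can be resolved into short exact sequences). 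Hence $\Lex_\a(\C^\op,\Ab)$ is abelian, with (co)kernels computed pointwise, and by Lemma~\ref{le:ex-colim} its $\a$-filtered colimits are exact.

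Next I would verify that the Yoneda functor $h_\C$ is exact. It is always left exact (representable functors on an abelian category send cokernels to kernels after dualizing — more precisely, $\C(-,-)$ is left exact in the second variable). For right exactness: given a short exact sequence $0\to X'\to X\to X''\to 0$ in $\C$, I must show $\C(-,X)\to\C(-,X'')\to 0$ is exact in $\Lex_\a(\C^\op,\Ab)$, i.e.\ that the cokernel of $\C(-,X')\to\C(-,X)$ is $\C(-,X'')$. Since cokernels in $\Lex_\a(\C^\op,\Ab)$ are computed pointwise, at an object $C\in\C$ this asks that $\C(C,X)\to\C(C,X'')$ have cokernel $\C(C,X'')/\operatorname{im}$, i.e.\ merely that the sequence of abelian groups $\C(C,X')\to\C(C,X)\to\C(C,X'')\to 0$ be a cokernel presentation — which it is, because the cokernel of $\C(C,X')\to\C(C,X)$ as an abelian group maps to $\C(C,X'')$, and this is precisely the pointwise value of the cokernel in $\Lex_\a$. (One does not need surjectivity of $\C(C,X)\to\C(C,X'')$; the cokernel in the functor category need not be the naive image.) So $h_\C$ is exact.

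For the final assertion, the nontrivial direction is: if $F\comp h_\C$ is exact then $F$ is exact. The key input is that every object $G$ of $\Lex_\a(\C^\op,\Ab)$ is a \emph{canonical $\a$-filtered colimit of representables}: by Lemma~\ref{le:lex}, $G\cong\colim{(C,\m)\in\C/G}\C(-,C)$ with $\C/G$ an $\a$-filtered category, and by Lemma~\ref{le:presentable} the representables are $\a$-presentable. Given a short exact sequence $0\to G'\to G\to G''\to 0$ in $\Lex_\a(\C^\op,\Ab)$, I would resolve each term as an $\a$-filtered colimit of representables compatibly — more carefully, one writes the whole short exact sequence as an $\a$-filtered colimit of (termwise) short exact sequences $0\to\C(-,A_i)\to\C(-,B_i)\to\C(-,D_i)\to 0$ with $A_i\to B_i\to D_i$ short exact in $\C$; this is the standard presentation argument in a locally $\a$-presentable category, using that $\a$-filtered colimits are exact in both $\Lex_\a(\C^\op,\Ab)$ (Lemma~\ref{le:ex-colim}) and in $\A$ (by hypothesis), and that $\C$ is abelian so the indexing diagrams of representables can be lifted to short exact sequences in $\C$. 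Applying $F$ and using that $F$ preserves $\a$-filtered colimits, we get $FG'\to FG\to FG''$ as the $\a$-filtered colimit of the sequences $F\C(-,A_i)\to F\C(-,B_i)\to F\C(-,D_i)$, each of which is short exact because $F\comp h_\C$ is exact; since $\a$-filtered colimits are exact in $\A$, the colimit sequence $0\to FG'\to FG\to FG''\to 0$ is exact, as desired.

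The main obstacle is the compatible-resolution step in the last paragraph: producing, from an arbitrary short exact sequence in $\Lex_\a(\C^\op,\Ab)$, an $\a$-filtered system of \emph{short exact} sequences of representables whose colimit recovers it. This is where one must use both that $\C$ itself is abelian (to lift a morphism of representables $\C(-,A)\to\C(-,B)$ to $A\to B$ and complete to a short exact sequence when the cokernel in $\Lex_\a$ is representable, or more robustly to build the indexing category out of short exact sequences in $\C$) and the exactness of $\a$-filtered colimits on both sides. Alternatively, one can avoid constructing the system by hand: observe that the class of short exact sequences in $\Lex_\a(\C^\op,\Ab)$ annihilated-into-exactness by $F$ is closed under $\a$-filtered colimits and contains all short exact sequences of representables, and that every short exact sequence is such an $\a$-filtered colimit — which is itself the content of the locally presentable structure, so this reformulation merely relocates the same difficulty. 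I would present the argument in the closed-under-colimits form, citing Lemma~\ref{le:lex} and Lemma~\ref{le:ex-colim} for the two exactness facts and the pointwise computation of kernels and cokernels for the reduction to $\Ab$.
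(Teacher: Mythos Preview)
Your argument has a genuine error at the very first step: the claim that cokernels in $\Lex_\a(\C^\op,\Ab)$ are computed pointwise is false. The pointwise cokernel of a morphism between left exact functors need not be left exact. For a concrete instance, take $\C$ to be finitely generated abelian groups and consider $\C(-,\bbZ)\xto{\cdot 2}\C(-,\bbZ)$; the pointwise cokernel $G$ has $G(\bbZ)=\bbZ/2$ and $G(\bbZ/2)=0$, so applying $G$ to the right exact sequence $\bbZ\xto{2}\bbZ\to\bbZ/2\to 0$ gives $0\to 0\to\bbZ/2\xto{0}\bbZ/2$, which is not left exact. This breaks your proof that $\Lex_\a(\C^\op,\Ab)$ is abelian, and it also breaks your argument that $h_\C$ is right exact, since that argument explicitly invokes pointwise cokernels.

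The paper's route avoids this by going through the \emph{morphism} analogue of Lemma~\ref{le:lex}: every morphism $\p$ in $\Lex_\a(\C^\op,\Ab)$ is an $\a$-filtered colimit $\p=\colim{i}\p_i$ of morphisms $\p_i\colon\C(-,A_i)\to\C(-,B_i)$ between representables. For such a $\p_i$ one checks directly from Yoneda and the left-exactness of the test object $G$ that $\C(-,\Coker_\C\p_i)$ is the cokernel in $\Lex_\a(\C^\op,\Ab)$: a morphism $\C(-,B_i)\to G$ killing $\C(-,A_i)$ is an element of $\Ker(G(B_i)\to G(A_i))=G(\Coker_\C\p_i)$. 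Then $\Coker\p=\colim{i}\C(-,\Coker_\C\p_i)$ and $\Ker\p=\colim{i}\C(-,\Ker_\C\p_i)$, using that $\a$-filtered colimits commute with finite (co)limits. This simultaneously shows that $\Lex_\a(\C^\op,\Ab)$ is abelian, that $h_\C$ is exact, and that every short exact sequence is an $\a$-filtered colimit of short exact sequences in the image of $h_\C$; the exactness criterion for $F$ then follows immediately. The ``main obstacle'' you flagged in your last paragraph is exactly the content of this morphism decomposition, and it is what your proposal is missing.
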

\begin{proof}
We use the analogue of Lemma~\ref{le:lex} for morphisms which says
that each morphism $\p$ in $\Lex_\a(\C^\op,\Ab)$ can be written as
$\a$-filtered colimit $\p=\colim{i\in\C/\p}\p_i$ of morphisms between
representable functors. Thus one computes
$$\Coker\p=\colim{i\in\C/\p}\Coker\p_i\quad\text{and}\quad
\Ker\p=\colim{i\in\C/\p}\Ker\p_i,$$ and we see that
$\Lex_\a(\C^\op,\Ab)$ is abelian; see Lemma~\ref{le:ex-colim}. The
formula for kernels and cokernels shows that each exact sequence can
be written as $\a$-filtered colimit of exact sequences in the image of
the Yoneda embedding. The criterion for the exactness of a functor
$\Lex_\a(\C^\op,\Ab)\to\A$ is an immediate consequence.
\end{proof}

Let $\A$ be a cocomplete additive category. We denote by $\A^\a$ the
full subcategory which is formed by all $\a$-presentable
objects. Following \cite{GU}, the category $\A$ is called
\emph{locally $\a$-presentable} if $\A^\a$ is  small and
each object is an $\a$-filtered colimit of $\a$-presentable objects.
We call $\A$ \emph{locally presentable} if it is locally
$\b$-presentable for some cardinal $\b$.  Note that we have for each
locally presentable category $\A$ a filtration $\A=\bigcup_\b\A^\b$
where $\b$ runs through all regular cardinals. We have already seen
that $\Lex_\a(\C^\op,\Ab)$ is locally $\a$-presentable, and the next
lemma implies that, up to an equivalence, all locally $\a$-presentable
categories are of this form.

Let $f\colon\C\to \A$ be a fully faithful and right exact functor into
a cocomplete additive category. Suppose that $f$ preserves
$\a$-coproducts and that each object in the image of $f$ is
$\a$-presentable. Then $f$ induces the functor
$$f_*\colon\A\lto \Lex_\a(\C^\op,\Ab),\quad X\mapsto\A(f-,X),$$ and the
following lemma discusses its left adjoint.

\begin{Lem}
\label{le:kan-inc}
There is a fully faithful functor $f^*\colon\Lex_\a(\C^\op,\Ab)\to\A$
which sends each representable functor $\C(-,X)$ to $fX$ and
identifies $\Lex_\a(\C^\op,\Ab)$ with the full subcategory of $\A$
formed by all colimits of objects in the image of $f$. The functor
$f^*$ is a left adjoint of $f_*$.
\end{Lem}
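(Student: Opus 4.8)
The plan is to build $f^*$ explicitly as a pointwise left Kan extension of $f$ along the Yoneda embedding, and then to extract every assertion from the resulting colimit formula together with Yoneda's lemma. First I would record that $f_*$ really lands in $\Lex_\a(\C^\op,\Ab)$: since $f$ is right exact it takes a cokernel diagram in $\C$ to a cokernel diagram in $\A$, so $\A(f-,X)$ is left exact, and since $f$ preserves $\a$-coproducts it carries $\a$-coproducts in $\C$ to $\a$-products in $\Ab$. Then, using that $\A$ is cocomplete, I would set $f^*F=\colim{(C,\m)\in\C/F}fC$; by Lemma~\ref{le:lex}(2) the category $\C/F$ is $\a$-filtered whenever $F\in\Lex_\a(\C^\op,\Ab)$, so this is an $\a$-filtered colimit in $\A$. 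A morphism $F\to F'$ induces a functor $\C/F\to\C/F'$ and hence a morphism $f^*F\to f^*F'$, which makes $f^*$ a functor; and since $(X,\id X)$ is terminal in $\C/\C(-,X)$ one gets $f^*(\C(-,X))\cong fX$.

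For the adjunction $f^*\dashv f_*$ I would combine the continuity of $\A(-,X)$ with the canonical presentation $F\cong\colim{(C,\m)\in\C/F}\C(-,C)$ of Lemma~\ref{le:lex}(1) and with Yoneda's lemma to obtain isomorphisms
$$\A(f^*F,X)\cong\lim_{(C,\m)\in\C/F}\A(fC,X)=\lim_{(C,\m)\in\C/F}(f_*X)(C)\cong\Hom_\C\bigl(\colim{(C,\m)\in\C/F}\C(-,C),\,f_*X\bigr)\cong\Hom_\C(F,f_*X),$$
naturality in $F$ and in $X$ being a routine verification. In particular $f^*$, being a left adjoint, preserves all small colimits.

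To see that $f^*$ is fully faithful it suffices, since a left adjoint is fully faithful precisely when its unit is invertible (the dual of Proposition~\ref{pr:quot}), to check that the unit $F\to f_*f^*F$ is invertible for every $F\in\Lex_\a(\C^\op,\Ab)$. Evaluated at an object $C$ of $\C$, this unit is a map $F(C)\to(f_*f^*F)(C)=\A\bigl(fC,\colim{(D,\n)\in\C/F}fD\bigr)$; since every object in the image of $f$ is $\a$-presentable and $\C/F$ is $\a$-filtered, the right-hand side is $\colim{(D,\n)\in\C/F}\A(fC,fD)\cong\colim{(D,\n)\in\C/F}\C(C,D)\cong F(C)$, where I used that $f$ is fully faithful and, once more, Lemma~\ref{le:lex}(1) (colimits in $\Add(\C^\op,\Ab)$ being pointwise). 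Tracing through the identifications shows this composite is the unit, so $f^*$ is fully faithful.

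Finally, for the essential image: on the one hand, every $f^*F=\colim{(C,\m)\in\C/F}fC$ is a colimit of objects in the image of $f$. On the other hand, if $X$ in $\A$ is a colimit of some diagram $I\to\A$ all of whose objects lie in the image of $f$, then, $f$ being fully faithful, that diagram is, up to isomorphism, of the form $i\mapsto fC_i$ for a diagram $I\to\C$; forming $\colim{i\in I}\C(-,C_i)$ in the cocomplete category $\Lex_\a(\C^\op,\Ab)$ and applying $f^*$—which preserves colimits and sends $\C(-,C_i)$ to $fC_i$—exhibits $X$ as $f^*$ of an object, so $X$ lies in the image of $f^*$. I expect the delicate point to be the verification, in the full faithfulness step, that the displayed chain of isomorphisms really is the adjunction unit, together with keeping track throughout that the colimits to which $\a$-presentability is applied are genuinely $\a$-filtered.
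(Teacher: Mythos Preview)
Your proof is correct and follows the same construction as the paper: both define $f^*$ as the pointwise left Kan extension via the formula $f^*F=\colim{(C,\m)\in\C/F}fC$. The paper simply records this formula and refers to \cite[Satz~7.8]{GU} for the verification of the properties, whereas you have written out the adjunction, full faithfulness (via invertibility of the unit using $\a$-presentability of the $fC$ and $\a$-filteredness of $\C/F$), and the description of the essential image in detail.
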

\begin{proof}
The functor is the left Kan extension of $f$; it
takes $F=\colim{(C,\m)\in\C/F}\C(-,C)$ in $\Lex_\a(\C^\op,\Ab)$ to
$\colim{(C,\m)\in\C/F}fC$ in $\A$. We refer to \cite[Satz~7.8]{GU} for details.
\end{proof}

Suppose now that $\C$ is a triangulated category.  The following lemma
characterizes the cohomological functors $\C^\op\to\Ab$.

\begin{Lem}\label{le:flat}
Let $\C$ be a  small triangulated category and suppose $\C$
admits $\a$-coproducts. For a functor $F$ in $\Add_\a(\C^\op,\Ab)$ the
following are equivalent.
\begin{enumerate}
\item The category $\C/F$ is $\a$-filtered.
\item $F$ is an $\a$-filtered colimit of representable functors.
\item $F$ is a cohomological functor.
\end{enumerate}
\end{Lem}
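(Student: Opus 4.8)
The plan is to prove the cyclic chain of implications (1)\;$\Rightarrow$\;(3)\;$\Rightarrow$\;(2)\;$\Rightarrow$\;(1), where throughout $F\colon\C^\op\to\Ab$ is an additive functor preserving $\a$-products. The easy part is (1)\;$\Leftrightarrow$\;(2): by Lemma~\ref{le:lex}, $F$ is always the canonical colimit $\colim{(C,\m)\in\C/F}\C(-,C)$ in $\Add(\C^\op,\Ab)$, and since $F$ preserves $\a$-products this colimit is computed inside $\Lex_\a(\C^\op,\Ab)$; the indexing category $\C/F$ is $\a$-filtered precisely when this presentation exhibits $F$ as an $\a$-filtered colimit of representables, which is the content of (the second part of) Lemma~\ref{le:lex} together with the observation that representable functors $\C(-,C)$ are $\a$-presentable (Lemma~\ref{le:presentable}). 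So really only one substantive implication needs work, namely connecting the $\a$-filteredness of $\C/F$ with the exactness (= cohomological) property of $F$.

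First I would prove (3)\;$\Rightarrow$\;(1). Assume $F$ is cohomological. I must verify (FIL1)--(FIL3) for $\C/F$. (FIL1) is immediate since $F$ is nonzero on at least the zero object (or use that $F0=0$ gives $(0,0)\in\C/F$). For (FIL2), given finitely many — more generally, fewer than $\a$ many — objects $(C_i,\m_i)$, form the coproduct $C=\coprod_i C_i$ (an $\a$-coproduct, which exists in $\C$), and use that $F$ preserves $\a$-products to get $FC\cong\prod_i FC_i$; the element $(\m_i)_i$ of this product is an element $\m\in FC$ with the canonical inclusions $C_i\to C$ furnishing morphisms $(C_i,\m_i)\to(C,\m)$. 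For (FIL3), given fewer than $\a$ morphisms $\p_j\colon(C,\m)\to(C',\m')$, i.e.\ morphisms $\p_j\colon C\to C'$ in $\C$ with $F\p_j(\m')=\m$ for all $j$, I want a morphism $\psi\colon(C',\m')\to(C'',\m'')$ equalizing all the $\p_j$. Here is where the triangulated structure and the cohomological hypothesis enter: the differences $\p_j-\p_k$ all satisfy $F(\p_j-\p_k)(\m')=0$. I would complete the map $(\p_j-\p_k)_{j,k}\colon C\to\coprod_{j,k}C'$ — or more efficiently argue one relation at a time and then combine, exploiting that fewer than $\a$ combine to an $\a$-coproduct — to an exact triangle, say $C\xto{(\p_j-\p_k)}\coprod C'\xto{\psi}C''\to SC$, so that $\psi\comp(\p_j-\p_k)=0$; applying the cohomological functor $F$ to the exact triangle gives an exact sequence showing that $\m'\in F(\coprod C')$ lifts along $F\psi$ to some $\m''\in FC''$ with $F\psi(\m')=\m''$, whence $\psi$ defines a morphism $(C',\m')\to(C'',\m'')$ in $\C/F$ with $\psi\p_j=\psi\p_k$ for all $j,k$. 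This uses (TR1) to build the triangle, the additivity of $F$ over the $\a$-coproduct, and exactness of $F$ on the triangle.

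Next I would prove (1)\;$\Rightarrow$\;(3), or equivalently (2)\;$\Rightarrow$\;(3): suppose $F=\colim{i\in I}\C(-,C_i)$ is an $\a$-filtered colimit of representables. Each representable functor $\C(-,C_i)$ is cohomological (it sends exact triangles to long exact sequences — a standard fact, here used as: representables on a triangulated category are cohomological). The claim then follows because an $\a$-filtered colimit of cohomological functors is cohomological: given an exact triangle $X\to Y\to Z\to SX$ in $\C$, each $\C(C_i,-)$ applied to it yields an exact sequence of abelian groups in the relevant three consecutive positions, and an $\a$-filtered colimit (in particular a filtered colimit) of exact sequences of abelian groups is exact, since taking filtered colimits in $\Ab$ is exact. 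Since colimits in $\Add(\C^\op,\Ab)$ — and in $\Lex_\a(\C^\op,\Ab)$ — are computed pointwise in $\Ab$ for $\a$-filtered diagrams (this is recorded in the appendix discussion preceding Lemma~\ref{le:Add}), the value $FZ'$ at each term $Z'$ of the triangle is the corresponding $\a$-filtered colimit, and exactness is preserved.

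The main obstacle I anticipate is the bookkeeping in (FIL3) within the implication (3)\;$\Rightarrow$\;(1): one must be careful to handle an arbitrary family of fewer than $\a$ morphisms simultaneously (not just a pair), ensuring that the coproduct of the targets involved is still an $\a$-coproduct so that $F$'s $\a$-product-preservation applies, and that completing the appropriate morphism into the coproduct to an exact triangle really does produce a single $\psi$ equalizing all of the given morphisms at once. Everything else is routine once this step is set up correctly; the remaining implications are either formal (Lemma~\ref{le:lex}, Lemma~\ref{le:presentable}) or rely only on the exactness of filtered colimits in $\Ab$ and the pointwise computation of $\a$-filtered colimits in $\Lex_\a(\C^\op,\Ab)$.
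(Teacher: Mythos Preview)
Your overall strategy is sound and differs from the paper's. For (3)$\Rightarrow$(1) the paper does not verify (FIL1)--(FIL3) directly; instead it passes to the abelianization $\widehat\C$, extends $F$ to an exact functor $\bar F$ on $\widehat\C$ via Lemma~\ref{le:abel}, applies Lemma~\ref{le:lex}(2) (legitimately, since $\widehat\C$ has cokernels) to get that $\widehat\C/\bar F$ is $\a$-filtered, and then observes that $\C/F$ sits cofinally in $\widehat\C/\bar F$, so Lemma~\ref{le:cofinal} finishes. Your direct approach is more elementary; the paper's is shorter because it recycles existing lemmas about $\Lex_\a$.

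There is, however, a genuine error in your (FIL3) step---precisely the place you flagged as the main obstacle. A family of morphisms $\p_j-\p_k\colon C\to C'$ does \emph{not} assemble into a morphism $C\to\coprod_{j,k}C'$; that would require a product of copies of $C'$, which $\C$ need not have. The map must go the other way: fix some $j_0$, form the morphism $\coprod_j C\to C'$ whose $j$-th component is $\p_j-\p_{j_0}$ (this uses only that $\C$ has $\a$-coproducts), and complete it to an exact triangle $\coprod_j C\to C'\xto{\psi} C''\to S(\coprod_j C)$. Now $\psi\colon C'\to C''$ is a single map out of $C'$, as you ultimately want, and it satisfies $\psi\p_j=\psi\p_{j_0}$ for all $j$. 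Applying the cohomological functor $F$ and using $\a$-product preservation, the image of $\m'\in FC'$ in $F(\coprod_j C)\cong\prod_j FC$ has $j$-th coordinate $F(\p_j-\p_{j_0})(\m')=\m-\m=0$, so by exactness $\m'$ lifts to some $\m''\in FC''$ with $F\psi(\m'')=\m'$; this is the required morphism $(C',\m')\to(C'',\m'')$. In your version the direction of the assembled map, the object containing $\m'$, and the direction of the lift were all reversed, so the argument as written does not parse.

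A minor further point: Lemma~\ref{le:lex}(2), which you invoke for (1)$\Leftrightarrow$(2), is stated under the standing assumption that $\C$ has cokernels, which a triangulated category generally lacks. This is harmless here, since you only need (1)$\Rightarrow$(2), which follows already from part~(1) of that lemma (the canonical colimit formula), and (2)$\Rightarrow$(1) then follows via your chain (2)$\Rightarrow$(3)$\Rightarrow$(1).
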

\begin{proof}
The implications (1) $\Rightarrow$ (2) $\Rightarrow$ (3) are clear. So
we prove (3) $\Rightarrow$ (1). It is convenient to identify
$\Add_\a(\C^\op,\Ab)$ with $\Lex_\a(\widehat\C^\op,\Ab)$ and this identifies
$F$ with the left exact functor $\bar F\colon\widehat \C\to \Ab$ which
extends $F$.  In fact, $\bar F$ is exact since $F$ is cohomological,
by Lemma~\ref{le:abel}.  Now write $\bar F$ as $\a$-filtered colimit
of representable functors $\bar F=\colim{(M,\n)\in\widehat\C/\bar
F}\widehat\C(-,M)$; see Lemma~\ref{le:lex}.
The exactness of $\bar F$ implies that the representable functors
$\C(-,C)$ with $C\in\C$ form a full subcategory of $\widehat \C/\bar
F$ which is cofinal.  We identify this subcategory with $\C/F$ and
conclude from Lemma~\ref{le:cofinal} that $\C/F$ is $\a$-filtered.
\end{proof}

Next we discuss the functoriality of the assignment
$\C\mapsto\Add_\a(\C^\op,\Ab)$.

\begin{Lem}
\label{le:kan}
Let $f\colon\C\to\D$ be an exact functor between  small
triangulated categories which admit $\a$-coproducts. Suppose that $f$
preserves $\a$-coproducts. Then the restriction functor
$$f_*\colon\Add_\a(\D^\op,\Ab)\lto\Add_\a(\C^\op,\Ab),\quad F\mapsto
F\comp f,$$ has a left adjoint $f^*$ which sends $\C(-,X)$ to
$\D(-,fX)$ for all $X$ in $\C$. Moreover, the following holds.
\begin{enumerate}
\item The functors $f_*$ and $f^*$ are exact.
\item Suppose $f$ induces an equivalence $\C/\Ker f\xto{\sim}\D$.
Then $f_*$ is fully faithful.
\item Suppose $f$ is fully faithful. Then $f^*$ is fully
faithful. Moreover, a cohomological functor $F\colon\D^\op\to\Ab$ is
in the essential image of $f^*$ if and only if every morphism
$\D(-,D)\to F$ factors through $\D(-,fC)$ for some object $C$ in $\C$.
\item A cohomological functor $F\colon\C^\op\to\Ab$ belongs to the
kernel of $f^*$ if and only if every morphism $\C(-,C)\to F$ factors
through a morphism $\C(-,\g)\colon\C(-,C)\to\C(-,C')$ such that
$f\g=0$.
\end{enumerate}
\end{Lem}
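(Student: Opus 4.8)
The plan is to reduce everything to categories of left exact functors, where Kan extensions behave well. First I would invoke Lemma~\ref{le:Add} to identify $\Add_\a(\C^\op,\Ab)\simeq\Lex_\a(\widehat\C^\op,\Ab)$ and likewise for $\D$. Since $f$ is exact between triangulated categories, the induced functor $\widehat f\colon\widehat\C\to\widehat\D$ is exactly $A(f)$ of Lemma~\ref{le:abel}, hence \emph{exact}; it preserves $\a$-coproducts (as $f$ and the two Yoneda functors do) and sends $\C(-,X)$ to $\D(-,fX)$. Plugging $h_{\widehat\D}\comp\widehat f\colon\widehat\C\to\Lex_\a(\widehat\D^\op,\Ab)$ into Lemma~\ref{le:kan-inc} — whose hypotheses hold because this functor is right exact, preserves $\a$-coproducts, and takes values among representables, which are $\a$-presentable by Lemma~\ref{le:presentable} — yields a left adjoint $f^*$ of the restriction functor $f_*$, with $f^*(\C(-,X))=\D(-,fX)$ and the colimit description $f^*(\bar F)=\colim{(M,\n)\in\widehat\C/\bar F}\D(-,\widehat fM)$, where $\bar F$ is the left exact extension of $F$. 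One checks that the description $f_*F=F\comp f$ agrees with restriction along $\widehat f$ by uniqueness of left exact extensions.

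For (1): $f_*$ is exact because (co)kernels in $\Add_\a(-^\op,\Ab)$ are pointwise and $(f_*F)(C)=F(fC)$. For $f^*$ I would apply Lemma~\ref{le:yon-exact} with the abelian category $\widehat\C$: the functor $f^*$ preserves $\a$-filtered colimits (being a left adjoint it preserves all colimits), and $f^*\comp h_{\widehat\C}\cong h_{\widehat\D}\comp\widehat f$ is a composite of exact functors; hence $f^*$ is exact.

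For (2): by the lemma following Lemma~\ref{le:abel}, the hypothesis that $f$ induces an equivalence $\C/\Ker f\xto{\sim}\D$ gives that $\widehat f=A(f)$ induces an equivalence $\widehat\C/\Ker\widehat f\xto{\sim}\widehat\D$, so $\widehat f$ factors as the Serre quotient functor $Q\colon\widehat\C\to\widehat\C/\Ker\widehat f$ followed by an equivalence. Restriction along $Q$ is fully faithful on functor categories — a natural transformation between pullbacks descends, since $Q$ is the identity on objects and every morphism of $\widehat\C/\Ker\widehat f$ is a composite of images $Qg$ and formal inverses of images $Qs$ — and precomposition with an equivalence is fully faithful; hence $f_*$ is fully faithful.

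For (3) and (4): when $f$ is fully faithful so is $\widehat f=A(f)$, and Lemma~\ref{le:kan-inc} then also gives that $f^*$ is fully faithful and identifies $\Add_\a(\C^\op,\Ab)$ with the closure under colimits of $\{\D(-,fC)\mid C\in\C\}$ in $\Add_\a(\D^\op,\Ab)$. To get the stated shape of the essential image I would use that a \emph{cohomological} $F\colon\D^\op\to\Ab$ is an $\a$-filtered colimit $\colim{(D,\d)\in\D/F}\D(-,D)$ (Lemma~\ref{le:flat}), note that the full subcategory of $\D/F$ on the objects $(fC,\c)$ is cofinal precisely when every morphism $\D(-,D)\to F$ factors through some $\D(-,fC)$, and then conclude via Lemma~\ref{le:cofinal} that $F$ is such a colimit, i.e.\ lies in $\Im f^*$ — the converse being immediate. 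For (4), $f^*F=0$ for cohomological $F\colon\C^\op\to\Ab$ means the $\a$-filtered colimit $\colim{(C,\m)\in\C/F}\D(-,fC)$ vanishes; testing against the $\a$-presentable object $\D(-,fC)$ shows this holds iff for each $(C,\m)$ there is a transition morphism $(C,\m)\to(C',\m')$ in $\C/F$, i.e.\ a $\g\colon C\to C'$ with $F\g(\m')=\m$, such that $f\g=0$ — exactly the asserted factorization condition. I expect the main obstacle to be precisely this last bookkeeping: arranging the cofinality arguments with the comma categories $\C/F$ and $\D/F$ so that the abstract "colimit closure" and "kernel" statements turn into the concrete factorization criteria, including the careful use of $\a$-presentability to detect vanishing and factorization at a single stage of the filtered diagram.
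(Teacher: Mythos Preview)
Your approach is essentially the paper's own: both pass to $\Lex_\a(\widehat{(-)}^\op,\Ab)$ via Lemma~\ref{le:Add}, build $f^*$ as the left Kan extension along $\widehat f$, and treat (1)--(4) through the same mix of Lemma~\ref{le:yon-exact}, cofinality in the comma categories $\C/F$ and $\D/F$ (Lemma~\ref{le:flat}), and $\a$-presentability of representables (Lemma~\ref{le:presentable}). One caveat: Lemma~\ref{le:kan-inc} has \emph{full faithfulness} of its input functor among its hypotheses, so you may legitimately invoke it only in part~(3), not for the general construction of $f^*$; the paper instead writes down the Kan extension formula directly and verifies adjointness on representables --- your colimit formula is correct, only the citation is off. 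For the ``immediate'' converse in~(3) the paper spells out the step $F\cong f^*f_*F$ (using that $f^*$ is now fully faithful) and then applies Lemma~\ref{le:flat} to the cohomological functor $f_*F$ on $\C$ to exhibit $F$ as an $\a$-filtered colimit of objects $\D(-,fC)$, after which $\a$-presentability of $\D(-,D)$ yields the factorization; this is the bookkeeping your sketch elides.
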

\begin{proof}
The left adjoint of $f_*$ is the left Kan extension. We can describe
it explicitly if we identify $\Add_\a(\C^\op,\Ab)$ with
$\Lex_\a(\widehat\C^\op,\Ab)$; see Lemma~\ref{le:Add}. 
Given a functor $F$ in $\Lex_\a(\widehat\C^\op,\Ab)$ written as $\a$-filtered colimit
$F=\colim{(C,\m)\in\widehat\C/F} \widehat\C(-,C)$ of representable
functors, we put
$$f^*F=\colim{(C,\m)\in\widehat\C/F} \widehat\D(-,\widehat fC).$$ Thus
$f^*$ makes the following diagram commutative.
$$\xymatrix{\C\ar[d]^f\ar[r]^-{h_\C}&\widehat\C\ar[d]^{\widehat
f}\ar[r]^-{h_{\widehat\C}}&\Lex_\a(\widehat\C^\op,\Ab)
\ar[r]^-{=}\ar[d]^{f^*}&\Add_\a(\C^\op,\Ab)\ar[d]^{f^*}\\
\D\ar[r]^-{h_\D}&\widehat\D\ar[r]^-{h_{\widehat\D}}&\Lex_\a(\widehat\D^\op,\Ab)
\ar[r]^-{=}&\Add_\a(\D^\op,\Ab)}$$ We check that $f^*$ is a left
adjoint for $f_*$. For a representable functor $F=\widehat\C(-,X)$ we
have
\begin{align*}
\Hom_{\widehat\D}(f^*\widehat\C(-,X),G)&=\Hom_{\widehat\D}(\widehat\D(-,\widehat fX),G)
\cong G(\widehat fX)\\ &=f_*G(X)\cong
\Hom_{\widehat\C}(\widehat\C(-,X),f_*G)
\end{align*}
for all $G$ in $\Lex_\a(\widehat\D^\op,\Ab)$.  Clearly, this
isomorphism extends to every colimit of representable functors.

(1) The exactness of $f_*$ is clear because a sequence $F'\to F\to
F''$ in $\Add_\a(\C^\op,\Ab)$ is exact if and only if $F'X\to FX\to
F''X$ is exact for all $X$ in $\C$. For the exactness of $f^*$ we
identify again $\Add(\C^\op,\Ab)$ with $\Lex_\a(\widehat\C^\op,\Ab)$
and apply Lemma~\ref{le:yon-exact}.  Thus we need to check that the
composition of $f^*$ with the Yoneda functor $h_{\widehat\C}$ is
exact. But we have that $f^*\comp
h_{\widehat\C}=h_{\widehat\D}\comp\widehat f$, and now the exactness
follows from that of $f$. Finally, we use the fact that taking
$\a$-filtered colimits in $\Add_\a(\D^\op,\Ab)$ is exact by
Lemma~\ref{le:ex-colim}.

(2) It is well known that for any epimorphism $f\colon\C\to\D$ of
additive categories inducing a bijection $\Ob\C\to\Ob\D$, the
restriction functor $\Add(\D^\op,\Ab)\to\Add(\C^\op,\Ab)$ is fully
faithful; see \cite[Corollary~5.2]{Mit}.  Given a triangulated
subcategory $\C'\subseteq\C$, the quotient functor $\C\to\C/\C'$ is an
epimorphism. Thus the assertion follows since $\Add_\a(\C^\op,\Ab)$ is
a full subcategory of $\Add(\C^\op,\Ab)$.

(3) We keep our identification
$\Add_\a(\C^\op,\Ab)=\Lex_\a(\widehat\C^\op,\Ab)$ and consider the
adjunction morphism $\eta\colon\Id\to f_*\comp f^*$. We claim that
$\eta$ is an isomorphism. Because $f$ is fully faithful, $\eta F$ is
an isomorphism for each representable functor $F=\widehat\C(-,X)$. It
follows that $\eta F$ is an isomorphism for all $F$ since $f^*$ and
$f_*$ both preserve $\a$-filtered colimits and each $F$ can be
expressed as $\a$-filtered colimit of representable functors. Now
Proposition~\ref{pr:quot} implies that $f^*$ is fully faithful.

Let $F$ be a cohomological functor in $\Add_\a(\D^\op,\Ab)$ and apply
Lemma~\ref{le:flat} to write the functor as $\a$-filtered colimit
$F=\colim{(D,\m)\in\D/F}\D(-,D)$ of representable functors. Suppose
first that every morphism $\D(-,D)\to F$ factors through $\D(-,fC)$
for some $C\in\C$. Then $\Im f/F$ is a cofinal subcategory of $\D/F$
and therefore $F=\colim{(D,\m)\in\Im f/F}\D(-,D)$ by
Lemma~\ref{le:cofinal}. Thus $F$ belongs to the essential image of
$f^*$ since $\D(-,fC)=f^*\C(-,C)$ for all $C\in\C$ and the essential
image is closed under taking colimits. Now suppose that $F$ belongs to
the essential image of $f^*$. Then $F=f^*G\cong f^*f_*f^*G=f^*f_*F$ for some
$G$. The functor $f_*F$ is cohomological and therefore
$f_*F=\colim{(C,\m)\in\C/f_*F}\C(-,C)$, again by
Lemma~\ref{le:flat}. Thus $F\cong \colim{(C,\m)\in\C/f_*F}\D(-,fC)$
and we use Lemma~\ref{le:presentable} to conclude that each morphism
$\D(-,D)\to F$ factors through $\D(-,fC)$ for some $(C,\m)\in\C/f_*F$.

(4) Let $F$ be a cohomological functor in $\Add_\a(\C^\op,\Ab)$ and
apply Lemma~\ref{le:flat} to write the functor as $\a$-filtered
colimit $F=\colim{(C,\m)\in\C/F}\C(-,C)$ of representable
functors. Now $f^*F=\colim{(C,\m)\in\C/F}\D(-,fC)=0$ if and only if
for each $D\in\D$, we have $\colim{(C,\m)\in\C/F}\D(D,fC)=0$. This
happens iff for each $(C,\m)\in\C/F$, we find a morphism $\g\colon
C\to C'$ in $\C/F$ inducing a map $\D(fC,fC)\to\D(fC,fC')$ which
annihilates the identity morphism. But this means that $f\g=0$ and
that $\m\colon\C(-,C)\to F$ factors through $\C(-,\g)$. 
\end{proof}

\subsection*{Notes}
Locally presentable categories were introduced and studied by Gabriel
and Ulmer in \cite{GU}; see \cite{AdRo} for a modern treatment. In
\cite{Nee2001}, Neeman initiated the use of locally presentable
abelian categories for studying triangulated categories.

\end{appendix}

\end{document}